\numberwithin{equation}{section}
\newcommand{\R}{\mathbb{R}}
\newcommand{\Z}{\mathbb{Z}}
\newcommand{\A}{\mathcal{A}}
\newcommand{\E}{\mathcal{E}}
\newcommand{\D}{\mathcal{D}}
\newcommand{\sR}{\mathcal{R}}
\newcommand{\bars}{\overline s}
\newcommand{\bu}{\bm{u}}
\newcommand{\bw}{\bm{w}}
\newcommand{\bx}{\bm{x}}
\newcommand{\X}{\bm{X}}
\newcommand{\be}{\bm{e}}
\newcommand{\bv}{\bm{v}}
\newcommand{\p}{\partial}
\newcommand{\ts}{\thinspace}
\newcommand{\dive}{{\rm{div}}}
\newcommand{\SB}{{\rm SB}}
\newcommand{\ws}{\wt{s}}
\newcommand{\wR}{\wt{\bm{R}}}
\newcommand{\abs}[1]{\left\lvert #1 \right\rvert}
\newcommand{\norm}[1]{\left\lVert #1 \right\rVert}
\newcommand{\wh}[1]{\widehat{#1}}
\newcommand{\wt}[1]{\widetilde{#1}}
\newcommand{\mc}[1]{\mathcal{#1}}
\newtheorem{theorem}{Theorem}[section]
\newtheorem{lemma}[theorem]{Lemma}
\newtheorem{proposition}[theorem]{Proposition}
\newtheorem{corollary}[theorem]{Corollary}
\theoremstyle{definition}
\newtheorem{remark}[theorem]{Remark}
\theoremstyle{definition}
\newtheorem{definition}{Definition}[section]
\begin{document}
\title{Theoretical justification and error analysis for slender body theory with free ends}
\author{Yoichiro Mori, Laurel Ohm, Daniel Spirn
\footnote{This research was supported in part by NSF grant DMS-1620316 and DMS-1516978, awarded to Y.M., by NSF GRF grant 00039202 and a Torske Kubben Fellowship, awarded to L.O., and by NSF grant DMS-1516565, awarded to D.S. The authors also thank the IMA where most of this work was performed. }\\ \textit{\small School of Mathematics, University of Minnesota, Minneapolis, MN 55455}}
\date{\today}

\maketitle

\begin{abstract}
Slender body theory is a commonly used approximation in computational models of thin fibers in viscous fluids, especially in simulating the motion of cilia or flagella in swimming microorganisms. In \cite{closed_loop}, we developed a PDE framework for analyzing the error introduced by the slender body approximation for closed-loop fibers with constant radius $\epsilon$, and showed that the difference between our closed-loop PDE solution and the slender body approximation is bounded by an expression proportional to $\epsilon|\log\epsilon|$. Here we extend the slender body PDE framework to the free endpoint setting, which is more physically relevant from a modeling standpoint but more technically demanding than the closed loop analysis. The main new difficulties arising in the free endpoint setting are defining the endpoint geometry, identifying the extent of the 1D slender body force density, and determining how the well-posedness constants depend on the non-constant fiber radius. Given a slender fiber satisfying certain geometric constraints at the filament endpoints and a one-dimensional force density satisfying an endpoint decay condition, we show a bound for the difference between the solution to the slender body PDE and the slender body approximation in the free endpoint setting. The bound is a sum of the same $\epsilon|\log\epsilon|$ term appearing in the closed loop setting and an endpoint term proportional to $\epsilon$, where $\epsilon$ is now the maximum fiber radius. 
\end{abstract}

\maketitle

\tableofcontents
\section{Introduction}
Consider a microorganism swimming through a viscous fluid, propelled by a slender flagellum or an array of thin cilia. Understanding how these thin structures interact with the surrounding fluid to propel the microorganism forward may aid in the  design of artificial cilia for use in microfluidic lab-on-a-chip devices, which facilitate rapid chemical reactions by mixing fluids on small scales \cite{van2009printed, hanasoge2018microfluidic, khaderi2011microfluidic}, or in self-propulsion of nanorobots for targeted drug delivery \cite{pak2011high, kanevsky2010modeling, peyer2013bio, kim2016fabrication}. With these applications in mind, it is important to create accurate, theoretically sound mathematical models describing the behavior of slender fibers immersed in fluid. \\

To model the interaction between a thin filament and a viscous fluid in $\R^3$, we begin with the Stokes equations describing the velocity $\bu$ and pressure $p$ of the fluid: 
\begin{equation}\label{stokesEQ}
- \Delta \bu +\nabla p =0, \quad \dive \ts \bu = 0, 
\end{equation}
along with suitable boundary conditions prescribed over the fiber surface. However, from a practical perspective, parameterizing and prescribing boundary conditions over the entire slender body surface is very computationally intensive, especially since we are often interested in simulating tens \cite{tornberg2004simulating} or even hundreds \cite{clague1997numerical} of individual fibers. \\  

This is where slender body theory becomes useful. The basic idea behind slender body theory is to exploit the thinness of the fiber: instead of being modeled as a fully three-dimensional object, the filament is treated as a one-dimensional force density along a curve in $\R^3$. The early pioneers of slender body theory, including Hancock \cite{hancock1953self}, Cox \cite{cox1970motion}, and Batchelor \cite{batchelor1970slender}, treated the filament as a curve of point sources only. Later, Keller and Rubinow \cite{keller1976slender} and Johnson \cite{johnson1980improved} added higher order corrections to this basic slender body theory to develop an improved slender body theory, yielding an approximation to the fiber centerline velocity \eqref{SBT_asymp_free} that has been used as a basis for many computational models of thin fibers, including \cite{tornberg2004simulating, gotz2000interactions, li2013sedimentation, chattopadhyay2009effect, spagnolie2011comparative}, and \cite{smith2007discrete}. Johnson in particular introduced the use of a doublet correction, forming what we will refer to as the classical slender body approximation \eqref{SBT_free0}. \\

However, until recently, there has been a lack of rigorous theoretical justification for slender body theory. In particular, given force data prescribed only along a one-dimensional curve in $\R^3$, it was not immediately obvious how to state the problem to which slender body theory is an approximation as a well-posed PDE. It was therefore unclear how to characterize and analyze the error introduced by approximating a truly three-dimensional fiber as a one-dimensional object. As a starting point, we consider only the static problem of imposing a time-independent force along the fiber. \\

In \cite{closed_loop}, we propose a rigorous error analysis framework for slender body theory given that the fiber is a closed loop with constant radius $\epsilon$. We define the \emph{slender body PDE} as a Stokes boundary value problem with only partial Dirichlet and partial Neumann information specified at each point along the fiber surface. The partial Dirichlet information, which we term the \emph{fiber integrity condition}, allows us to make sense of total force data (the partial Neumann information) prescribed only on a one-dimensional curve in $\R^3$. After tracking the $\epsilon$-dependence in various constants arising in the well-posedness theory for slender body PDE, we compare this PDE solution to the closed-loop slender body approximation along the actual fiber surface and derive an error estimate. We ultimately show that, given a few regularity assumptions on the fiber centerline and the force density along the fiber, the error in the slender body approximation is bounded by an expression proportional to $\epsilon\abs{\log\epsilon}$. \\

The closed loop analysis presents a good first step toward placing slender body theory on a solid theoretical foundation. However, each of the above-mentioned modeling applications concern thin filaments that are not closed loops, but instead have free endpoints. The closed loop problem is easier to write down and analyze, but the free endpoint problem is much more important from a modeling perspective. In this paper, we extend the slender body PDE framework designed in \cite{closed_loop} to derive an error estimate for the slender body approximation for a fiber with free ends. The resulting error bound is the same order as the closed loop estimate, provided the prescribed force decays sufficiently at the fiber endpoints.


\subsection{Free endpoint slender body geometry}\label{geometry_section}
We begin by introducing some geometric considerations for a slender body with free endpoints. Let $\X_{\text{ext}}: [-3/2,3/2] \to \R^3$ denote the coordinates of a curve $\Gamma_0\in\R^3$, parameterized by arc length $\varphi$. We assume $\X_{\text{ext}}\in \mc{C}^2(-3/2,3/2)$ so that the curvature $\kappa(\varphi)$ is well defined for each $\varphi\in(-3/2,3/2)$. \\

As in the closed loop setting (see \cite{closed_loop}), the curve $\Gamma_0$ is assumed to be non-self-intersecting: there exists $c_\Gamma>0$ such that
\begin{equation}\label{no_intersect}
\inf_{\varphi_1\neq \varphi_2} \frac{\abs{\X_\text{ext}(\varphi_1) - \X_\text{ext}(\varphi_2)}}{\abs{\varphi_1-\varphi_2}} \ge c_\Gamma.
\end{equation} 

The region around $\Gamma_0$ can be described via a $\mc{C}^1$ orthonormal frame along $\X_\text{ext}$. We first define the tangent vector
\[ \be_{\rm t}(\varphi) = \frac{d\X_\text{ext}}{d\varphi}(\varphi). \]

Choosing any unit vector $\be_{n_1}(0)$ normal to the tangent vector $\be_{\rm t}(0)$, we define the vector field $\be_{n_1}(\varphi)\in \mc{C}^1$ via parallel transport along $\X_\text{ext}$ such that $\be_{n_1}(\varphi)\perp\be_{\rm t}(\varphi)$ at each $\varphi\in\X_\text{ext}$. Define $\be_{n_2}(s) =\be_{\rm t}(s) \times \be_{n_1}(s)$. Then the triple $\be_{\rm t},\be_{n_1},\be_{n_2}$ forms a $\mc{C}^1$ orthonormal frame known as a \emph{Bishop frame} \cite{bishop1975there}, and satisfies the ODE 
\begin{equation}\label{bishop_ODE}
\frac{d}{d\varphi} \begin{pmatrix}
\be_{\rm t}(\varphi) \\
\be_{n_1}(\varphi) \\
\be_{n_2}(\varphi)
\end{pmatrix} = \begin{pmatrix}
0 & \kappa_1(\varphi) & \kappa_2(\varphi) \\
-\kappa_1(\varphi) & 0 & 0 \\
-\kappa_2(\varphi) & 0 & 0
\end{pmatrix} \begin{pmatrix}
\be_{\rm t}(\varphi) \\
\be_{n_1}(\varphi) \\
\be_{n_2}(\varphi)
\end{pmatrix}.
\end{equation}
Here $\kappa_1$ and $\kappa_2$ are continuous functions of $\varphi$ satisfying the the relation
\begin{equation}\label{curvature_eq}
\kappa(\varphi) = \sqrt{\kappa_1^2(\varphi)+\kappa_2^2(\varphi)},
\end{equation}
where $\kappa(\varphi)$ is the curvature of $\X_\text{ext}(\varphi)$. We define
\begin{equation}\label{kappamax_free}
\kappa_{\max} := \max_{\varphi\in (-3/2,3/2)} \abs{\kappa(\varphi)}.
\end{equation} 
Note that the Bishop frame is similar to the frame constructed in the closed loop setting (\cite{closed_loop}, Lemma 1.1), but in the free endpoint case we may take the coefficient $\kappa_3=0$ as we no longer require periodicity of the frame. \\

We define cylindrical unit vectors with respect to the Bishop frame:
\begin{align*}
\be_{\rho}(\varphi,\theta) &:= \cos\theta\be_{n_1}(\varphi)+ \sin\theta\be_{n_2}(\varphi) \\
\be_\theta(\varphi,\theta) &:= -\sin\theta \be_{n_1}(\varphi)+\cos\theta\be_{n_2}(\varphi).
\end{align*}

Now, for some
\begin{equation}\label{rmax}
r_{\max} = r_{\max}(c_\Gamma,\kappa_{\max})
\end{equation}
we may parameterize points $\bx$ with dist$(\bx,\Gamma_0)\le r_{\max}$ as a tube about $\Gamma_0$:
\begin{equation}\label{tube_free}
\bx = \X_\text{ext}(\varphi)+\rho \be_{\rho}(\varphi,\theta).
\end{equation}

To define a free-end slender body, we consider a subset of the curve $\X_\text{ext}(\varphi)$. Let $0<\epsilon\le r_{\max}/4$ and define 
\begin{equation}\label{SB_centerline}
\X(\varphi) = \{ \X_\text{ext}(\varphi) \ts : \ts -\eta_\epsilon \le \varphi \le \eta_\epsilon \}.
\end{equation} 
 Here $1<\eta_\epsilon< \frac{3}{2}$ is a value close to 1; in particular,
\begin{equation}\label{eta_epsilon}
c_{\eta,0} \epsilon^2 \le \eta_\epsilon-1 \le c_\eta \epsilon^2, \quad c_{\eta,0}, c_\eta >0.
\end{equation}

\begin{figure}[!h]
\centering
\includegraphics[scale=0.65]{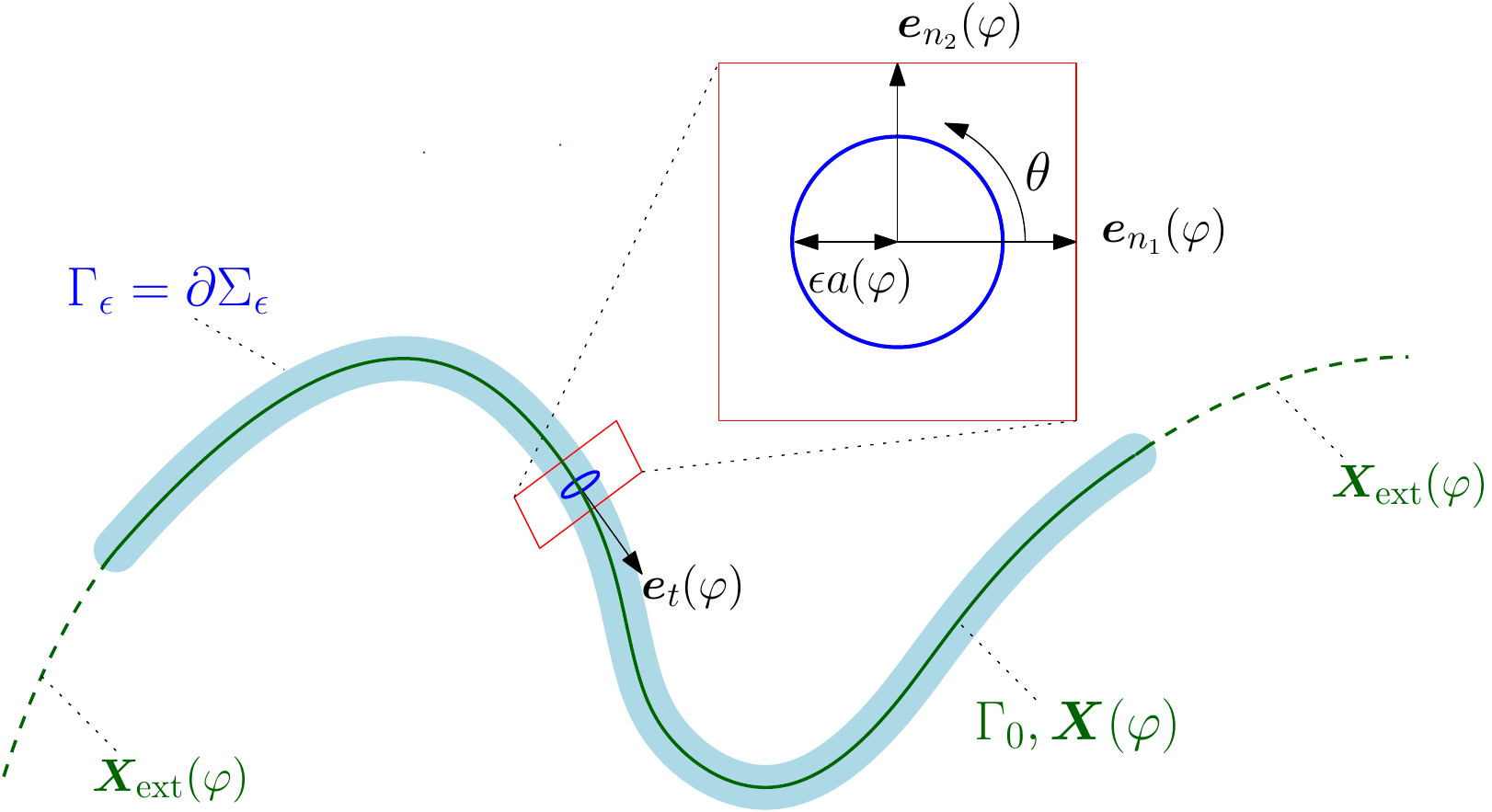} \\
\caption{The geometry of the free-endpoint slender body is specified with respect to a $\mc{C}^1$ Bishop frame \eqref{bishop_ODE} along the fiber centerline $\X(\varphi)$. We assume that the fiber centerline $\X(\varphi)$ can be considered as a subset of a longer curve $\X_\text{ext}(\varphi)$ satisfying the non-self-intersection condition \eqref{no_intersect}. }
\label{fig:free_geom}
\end{figure}

We then define a free-endpoint slender body $\Sigma_\epsilon$ by 
\begin{equation}\label{SB_free}
\Sigma_\epsilon = \big\{\bx\in\R^3 \ts: \ts \bx=\X(\varphi)+\rho\be_\rho(\varphi,\theta), \quad \rho <\epsilon a(\varphi) \big\}.
\end{equation}

Here the radius function $a(\varphi)$ is defined as follows.
\begin{definition}\label{admissible_a}
We say that $a: [-\eta_\epsilon, \eta_\epsilon] \to \R$ is an \emph{admissible slender body radius function} if the following conditions hold: 
\begin{enumerate}
\item (Smoothness) The radius function $a$ is in $\mc{C}^2(-\eta_\epsilon,\eta_\epsilon)$.
\item (Spheroidal endpoints) There exists $0<\delta_a<1$ such that for $\varphi\in (-\eta_\epsilon,-1+\delta_a] \cup [1-\delta_a,\eta_\epsilon)$, we have 
\[ \abs{a(\varphi)-\sqrt{\eta_\epsilon^2-\varphi^2}} \le c_a \epsilon^2\sqrt{\eta_\epsilon^2-\varphi^2}. \]
In particular, $a(\pm \eta_\epsilon)=0$ and we require the following monotonicity in endpoint decay:
\[ a(\varphi_1)\le a(\varphi_2) \quad \text{ if } 1-\delta_a<\abs{\varphi_2}\le\abs{\varphi_1}<\eta_\epsilon. \] 
\item (Non-vanishing away from endpoints) We have $0<a(\varphi) \le 1$ for $\varphi\in (-\eta_\epsilon,\eta_\epsilon)$, and there exists a constant $0<a_0< 1$ such for $\varphi\in (-1+\delta_a,1-\delta_a)$, $a(\varphi)\ge a_0$.  
\item (Controlled derivative) The product $a(\varphi)a'(\varphi)$ is bounded for each $\varphi\in [-\eta_\epsilon,\eta_\epsilon]$. We define the constant
\begin{equation}\label{bar_ca}
\bar c_a := \sup_{\varphi\in (-\eta_\epsilon,\eta_\epsilon)} \abs{a(\varphi)a'(\varphi)}.
\end{equation}
\end{enumerate}
Note that due to the extent $\eta_\epsilon$ of the slender body, the radius function $a(\varphi)$ depends on $\epsilon$. However, to avoid clutter, we will not indicate this dependence in our notation. 
\end{definition}
 
An important example of an admissible radius function $a(\varphi)$ is that of the prolate spheroid with foci at $\pm1$. In this case, $\eta_\epsilon=\sqrt{1+\epsilon^2}$ and the radius function and its derivative are specified for all $\varphi\in(-\eta_{\epsilon},\eta_{\epsilon})$ by 
\begin{align*}
a(\varphi) &= \frac{1}{\sqrt{1+\epsilon^2}}\sqrt{1+\epsilon^2-\varphi^2}, \\
a'(\varphi) &= \frac{-1}{\sqrt{1+\epsilon^2}}\frac{\varphi}{\sqrt{1+\epsilon^2-\varphi^2}}.
\end{align*}
Classical slender body theory has been developed and studied particularly for the case of slender prolate spheroids, or at least spheroidal endpoints \cite{johnson1980improved, chwang1975hydromechanics, kim1987general}. We will often use the prolate spheroid as a motivating example, but note that our methods apply to slightly more general slender body geometries.  \\

We parameterize the slender body surface $\Gamma_\epsilon= \p\Sigma_\epsilon$ as
\begin{equation}\label{Gamma_free}
\Gamma_\epsilon(\varphi,\theta)=\X(\varphi)+\epsilon a(\varphi)\be_\rho(\varphi,\theta).
\end{equation}

The surface element on $\Gamma_\epsilon$ is given by
\begin{equation}\label{surfel_free}
dS = \mc{J}_\epsilon(\varphi, \theta) d\theta d\varphi
\end{equation}
where the Jacobian factor $\mc{J}_\epsilon$ is defined as
\begin{equation}\label{jac_free}
\begin{split}
\mc{J}_\epsilon(\varphi, \theta) &:= \epsilon a(\varphi)\sqrt{(1-\epsilon a(\varphi)\wh\kappa(\varphi,\theta))^2 + \epsilon^2(a'(\varphi))^2} ; \\
\wh\kappa(\varphi,\theta) &:= \kappa_1(\varphi)\cos\theta+\kappa_2(\varphi)\sin\theta.
\end{split}
\end{equation}

As in the closed loop setting, we also define a tubular neighborhood of the slender body centerline ending in spherical caps about the fiber endpoints: 
\begin{equation}\label{mc_O} 
\begin{aligned}
\mathcal{O} &:= \bigg\{\bx \in \Omega_\epsilon \ts : \ts \bx= \X(\varphi)+\rho \be_\rho(\varphi,\theta); \ts \abs{\varphi} \le 1, \ts 0\le \theta<2\pi, \ts \epsilon a(\varphi) < \rho< r_{\max}  \bigg\} \\
&\hspace{2cm} \bigcup \bigg\{\bx \in \Omega_\epsilon \ts : \ts \bx= \X_\text{ext}(\varphi)+\rho \be_\rho(\varphi,\theta); \ts 1<\abs{\varphi} < 1+r_{\max},\\
&\hspace{5cm}  0\le \theta<2\pi, \ts \epsilon a(\varphi) < \rho< \sqrt{r_{\max}^2-(\abs{\varphi}-1)^2}  \bigg\}.
\end{aligned}
\end{equation}
Here $r_{\max}$ is given by \eqref{rmax}, and we define $a(\varphi)\equiv0$ for $\abs{\varphi}\ge \eta_\epsilon$. See Figure \ref{fig:free_geom1} for a depiction of the region $\mc{O}$. Note that the region $\mc{O}$ is $\mc{C}^1$ and, since $r_{\max}$ does not depend on $\epsilon$, the diameter of $\mc{O}$ is bounded independent of $\epsilon$.

\begin{figure}[!h]
\centering
\includegraphics[scale=0.65]{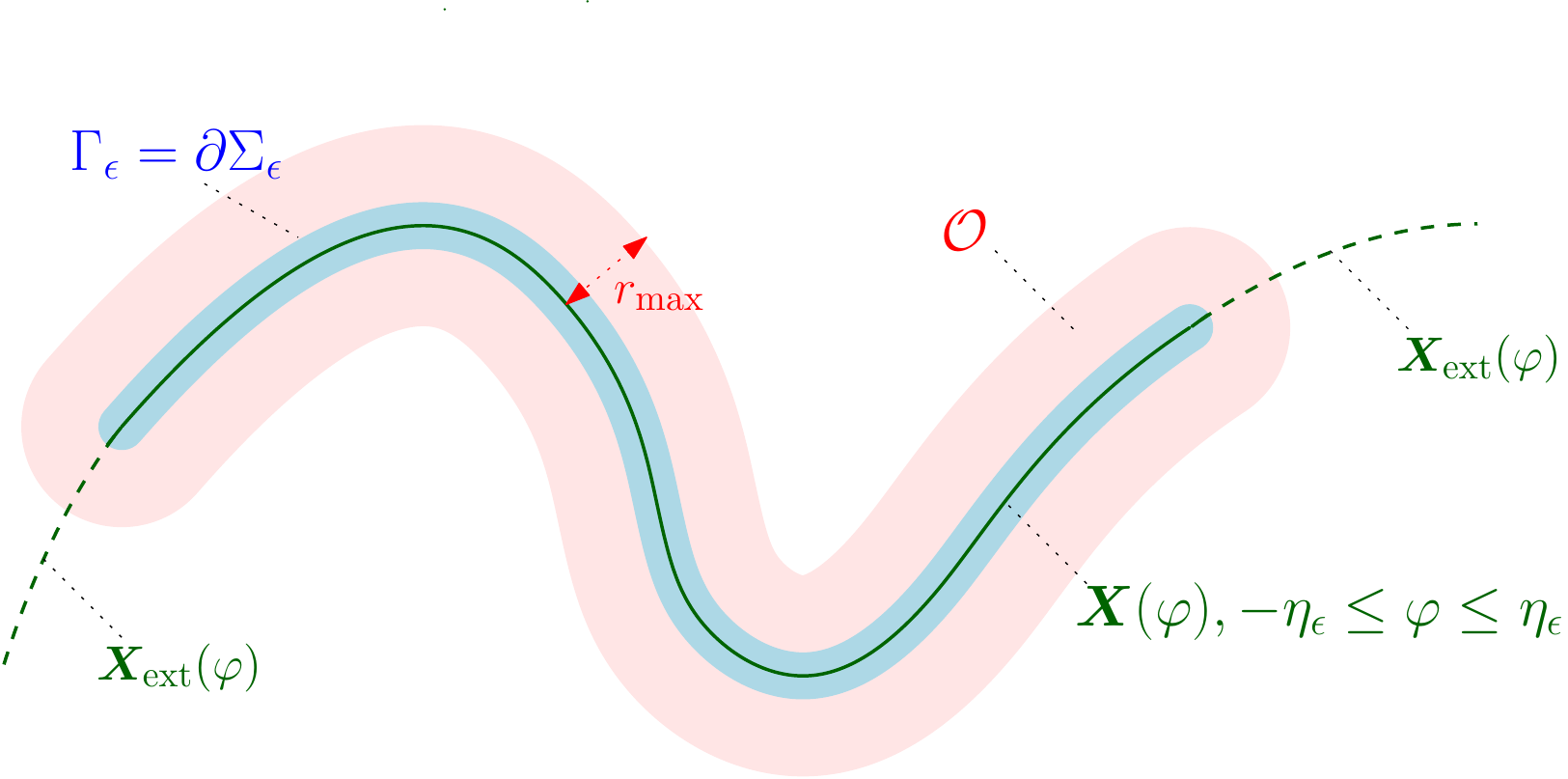} \\
\caption{The neighborhood $\mc{O}$ of the slender body is given by a tubular region about the slender body centerline, with spherical caps at the endpoints. }
\label{fig:free_geom1}
\end{figure}

\subsection{Slender body theory and effective centerline}
The main idea behind slender body theory is to exploit the thin geometry of the filament described in Section \ref{geometry_section} and approximate the fiber as a force density distributed along a one-dimensional curve in three-dimensional space. \\

To build this approximation, we will need the \emph{Stokeslet}: the free-space Green's function for the Stokes equations in $\R^3$. The Stokeslet describes the velocity $\bu$ due to a point source at $\bx_0\in \R^3$ of strength $\bm{f}$, i.e. 
\begin{equation}\label{green_stokes} 
-\Delta\bu+\nabla p=\bm{f}\delta(\bx-\bx_0), \quad \dive\ts \bu =0. 
\end{equation}
It can be shown (see \cite{pozrikidis1992boundary,childress1981mechanics}) that the solution to \eqref{green_stokes} is given by 
\[ \bu = \frac{1}{8\pi} \mc{S}(\bx-\bx_0) \bm{f}, \quad p = \frac{1}{4\pi}\frac{(\bx-\bx_0)\cdot\bm{f}}{\abs{\bx-\bx_0}^3}, \]
where the Stokeslet $\mc{S}$ is defined 
\begin{equation}\label{stokeslet}
\mc{S}(\bx)=\frac{{\bf I}}{\abs{\bx}}+\frac{\bx\bx^{\rm T}}{\abs{\bx}^3}.
\end{equation}

The most basic version of slender body theory, developed by \cite{hancock1953self, batchelor1970slender, cox1970motion}, approximates the force-per-cross-section $\bm{f}(s)$ along the filament by integrating Stokeslets of strength $\bm{f}(s)$ along the the fiber centerline. Later, \cite{keller1976slender, johnson1980improved} improved on this basic model by proposing an additional constraint. To alert the surrounding fluid to the fact that the fiber is in fact a solid, three-dimensional object, \cite{keller1976slender} and \cite{johnson1980improved} require the velocity of the filament to be uniform over each cross section. This ensures the structural integrity of the fiber, as each cross section is constrained to maintain its circular shape over time. In particular, the filament may bend along its centerline, but the radius $\epsilon a(\varphi)$ remains fixed over time at each $\varphi$.  \\


In \cite{closed_loop} we refer to this constant-on-cross-sections constraint as the \emph{fiber integrity condition}. To enforce the fiber integrity condition, we will need to define the \emph{doublet}, given by 
\begin{equation}\label{doublet}
\mc{D}(\bx)= \frac{1}{2}\Delta\mc{S}(\bx) =\frac{{\bf I}}{\abs{\bx}^3}-\frac{3\bx\bx^{\rm T}}{\abs{\bx}^5} .
\end{equation}

On each cross section we then need to add a doublet correction to the Stokeslet term, with a coefficient calculated to cancel the leading order angular dependence in the cross sectional velocity. It can be shown (see \cite{johnson1980improved} and the heuristic in Section 1.2.2 of \cite{closed_loop}) that this coefficient must be proportional to the square of the cross sectional radius $\epsilon^2 a^2$ multiplied by the Stokeslet force $\bm{f}$. \\

However, since the Stokeslet and doublet are singular at $\bx_0$, we will encounter issues in defining, let alone determining the value of, the slender body approximation along $\Gamma_\epsilon$ anywhere the fiber centerline and the actual slender body surface coincide. Clearly this presents a potential problem for the slender body approximation at the fiber endpoints. To address this issue, many authors \cite{johnson1980improved, chwang1975hydromechanics, kim1987general} require that, for a spheroidal slender body, the Stokeslets and doublets are distributed only between the generalized foci of the body (i.e. the foci of the straightened body) at $\varphi=\pm1$. For the (slightly more general) slender fibers that we consider, the notion of generalized focus is still well-defined due to the spheroidal endpoint requirement of Definition \ref{admissible_a}. \\

We define the {\it effective centerline} to be the portion of the fiber centerline $\X$ lying between the generalized foci of the slender body at $\varphi=\pm 1$: 
\begin{equation}\label{effective_centerline}
\text{Effective centerline } = \big\{ \X(s) \ts : \ts -1 \le s \le 1 \big\}.
\end{equation}
Hereafter, we use the variables $s,t \in (-1,1)$ to parameterize the effective centerline. \\

Thus the classical slender body approximation is given by an integral expression over the effective centerline:
\begin{align}\label{SBT_free0}
\bu^{\SB}(\bx) &=\frac{1}{8\pi}\int_{-1}^1 \bigg( \mc{S}(\bm{R})+\frac{\epsilon^2a^2(t)}{2}\mc{D}(\bm{R}) \bigg)\bm{f}(t) \ts dt; \; 
\bm{R}=\bx-\X(t),
\end{align}
where $\mc{S}$ and $\mc{D}$ are as defined in \eqref{stokeslet} and \eqref{doublet}, respectively. The corresponding slender body approximation to the pressure is given by 
 \begin{equation}\label{SBp_free}
p^{\SB}(\bx) = \frac{1}{4\pi}\int_{-1}^1 \frac{\bm{R}\cdot {\bm f}(t)}{|\bm{R}|^3} \ts dt. 
\end{equation}

Importantly, due to the extent of the effective centerline \eqref{effective_centerline}, the slender body approximations \eqref{SBT_free0} and \eqref{SBp_free} are well-defined at every point on the actual slender body surface $\Gamma_\epsilon$. In particular, the expressions \eqref{SBT_free0} and \eqref{SBp_free} do not blow up at the actual fiber endpoints at $\pm \eta_\epsilon$ since the effective centerline does not intersect $\Gamma_\epsilon$. \\

An asymptotic expansion of \eqref{SBT_free0} about $\epsilon=0$ results in an approximation for the motion of the fiber centerline, which we term $\bu^{\SB}_{\rm C}(s)$. Defining the centerline difference
\begin{equation}\label{RC}
\bm{R}_{\rm C}(s,t)=\X(s)-\X(t), \quad s,t\in (-1,1),
\end{equation}
along the effective centerline of the fiber, we have that $\bu^{\SB}_{\rm C}(s)$ is given by
\begin{equation}\label{SBT_asymp_free}
\begin{aligned}
8\pi \ts \bu^{\SB}_{\rm C}(s) &= \big[({\bf I}- 3\be_{\rm t}(s)\be_{\rm t}(s)^{\rm T})+({\bf I}+\be_{\rm t}(s)\be_{\rm t}(s)^{\rm T})L(s)\big]{\bm f}(s) \\
&\qquad + \int_{-1}^1 \left[ \left(\frac{{\bf I}}{|\bm{R}_{\rm C}|}+ \frac{\bm{R}_{\rm C}\bm{R}_{\rm C}^{\rm T}}{|\bm{R}_{\rm C}|^3}\right){\bm f}(t) - \frac{{\bf I}+\be_{\rm t}(s)\be_{\rm t}(s)^{\rm T} }{|s-t|} {\bm f}(s)\right] \ts dt.
\end{aligned}
\end{equation}
Here $L(s) = \log\big( \frac{2(1-s^2)+2\sqrt{(1-s^2)^2+4\epsilon^2a^2(s)}}{\epsilon^2a^2(s)}\big)$. Note that this expression for $L(s)$ agrees with G\"otz \cite{gotz2000interactions} away from the fiber endpoints, but has been adjusted to be well-defined up to the ends (recall that $a(\pm1) =O(\epsilon)$, by Definition \ref{admissible_a}). The expression would also agree with Tornberg-Shelley \cite{tornberg2004simulating} if the radius function $a(s)$ were instead spheroidal with endpoints at exactly $\pm1$ ($a^2(s)=1-s^2$), rather than at $\pm \eta_\epsilon$. However, to ensure that the slender body approximation \eqref{SBT_free0} is well-defined at the fiber endpoints, we use a radius function as defined in Definition \ref{admissible_a}. \\

\begin{figure}[!h]
\centering
\includegraphics[scale=0.6]{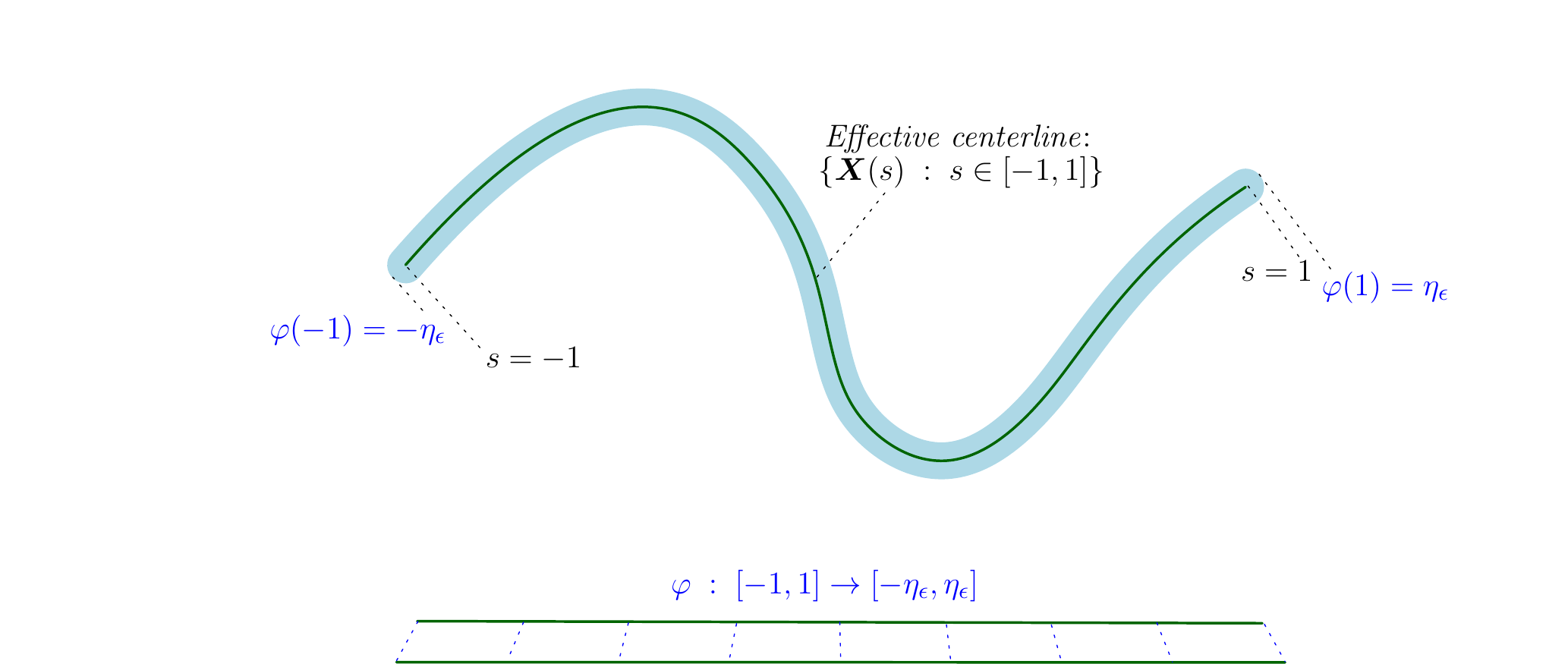} \\
\caption{We think of the arclength parameter $\varphi$ as a function $\varphi(s)$ taking points $s\in[-1,1]$ along the effective centerline to the true centerline, $\varphi(s)\in [-\eta_\epsilon,\eta_\epsilon]$. This allows us to avoid singularities in the slender body approximation \eqref{SBT_free0} at the fiber endpoints. }
\label{fig:free_geom2}
\end{figure}

Now, before we can write down a PDE defining a notion of true solution to the free endpoint slender body problem, we need to be able to reconcile the discrepancy between the slender body force distribution along the effective centerline and the true extent of the actual fiber surface. \\

We instead consider the arc length variable $\varphi$ as a function $\varphi(s)$ that takes $s\in[-1,1]$ to the true centerline, $-\eta_\epsilon \le \varphi(s) \le \eta_\epsilon$. More specifically, we give the following definition of $\varphi(s)$:  
\begin{definition}\label{varphi_def}
We say that $\varphi: [-1,1] \to [-\eta_\epsilon,\eta_\epsilon]$ is an \emph{admissible stretch function} if
\begin{enumerate}
\item (Close to $s$) The function $\varphi\in \mc{C}^1(-1,1)$ is bijective onto $[-\eta_\epsilon,\eta_\epsilon]$ and there exists a constant $c_\varphi>0$ such that $\abs{\varphi(s)-s}\le c_\varphi\epsilon^2$ and $\abs{\varphi'(s)-1}\le c_{\varphi}\epsilon^2$. 
\item (Symmetry) For each $s\in [-1,1]$, $\varphi(-s) = -\varphi(s)$.
\end{enumerate}
\end{definition}

One possibility for the form of $\varphi(\cdot)$ is simply the uniform stretch
\[ \varphi(s) = \eta_\epsilon s; \]
however, it may be useful to be able to specify more complicated functions.

\subsection{Free endpoint slender body PDE}
Using the above definition of effective centerline \eqref{effective_centerline}, we proceed to formulate the free endpoint slender body PDE as a boundary value problem in $\Omega_\epsilon = \R^3\backslash \overline{\Sigma_\epsilon}$. Following a similar outline to the closed loop case \cite{closed_loop}, we show that it is possible to reconstruct a three-dimensional Stokes flow given only a one-dimensional force density $\bm{f}(s)$ and the fiber integrity condition.  \\

Let $\Sigma_\epsilon$ be a slender body with free endpoints as described in Section \ref{geometry_section}, with radius function $a(\varphi)$ as in Definition \ref{admissible_a}, and consider an admissible stretch function $\varphi=\varphi(s)$ as in Definition \ref{varphi_def}. Letting $\bm{\sigma}= \nabla \bu+(\nabla\bu)^{\rm T} + p{\bf I}$ denote the stress tensor, we define the free endpoint slender body PDE as
\begin{equation}\label{free_PDE}
\begin{aligned}
-\Delta \bu + \nabla p &= 0, \ts\ts \dive \ts\bu = 0 \quad \text{in }\Omega_\epsilon = \R^3\backslash\overline{\Sigma_\epsilon} \\
\int_0^{2\pi} ( \bm{\sigma} {\bm n})\big|_{(\varphi(s),\theta)} \ts \mc{J}_\epsilon(\varphi(s),\theta) \varphi'(s) d\theta &= \bm{f}(s) \hspace{1.6cm} \text{ on }\Gamma_\epsilon\\
\bu \big|_{\Gamma_\epsilon} &= \bu\big(\varphi(s)\big) \text{ (unknown but independent of }\theta) \\
\bu\to 0 \ts &\text{ as } |\bx| \to \infty.
\end{aligned}
\end{equation}
Here the Jacobian factor $\mc{J}_\epsilon$ is as defined in \eqref{jac_free}, and the unit normal $\bm{n}$ to $\Gamma_\epsilon$, directed into the slender body, is given by 
\begin{equation}\label{unit_normal} 
{\bm n}(\varphi,\theta) = -\frac{1}{\sqrt{1+\epsilon^2(a')^2}} \be_{\rho}(\varphi,\theta) + \frac{\epsilon a'(\varphi)}{\sqrt{1+\epsilon^2(a')^2}} \be_{\rm t}(\varphi),
\end{equation}
where we have accounted for the variation in the radius function $a(\varphi)$ along the fiber. \\

As in the closed loop setting \cite{closed_loop}, the slender body PDE for free endpoints is a boundary value problem with partial Dirichlet and partial Neumann information specified everywhere along the fiber surface, resulting in a PDE describing semirigid fiber motion. In contrast to the closed loop case, the free-endpoint slender body force data $\bm{f}(s)$, $s\in (-1,1)$, is specified only along the effective centerline \eqref{effective_centerline} but is defined such that the data is stretched to encompass each cross section along the actual fiber surface, $\varphi(s)\in (-\eta_\epsilon,\eta_\epsilon)$. The fiber integrity condition $\bu\big|_{\Gamma_\epsilon}=\bu(\varphi(s))$ is still required to hold along the entire length of the fiber, $-\eta_{\epsilon}< \varphi(s) < \eta_{\epsilon}$. \\

The free endpoint PDE \eqref{free_PDE} gives rise to a natural energy balance that is very similar to the closed loop setting: 
\begin{equation}\label{energy_balance}
\begin{aligned}
\int_{\Omega_\epsilon} 2\abs{\E(\bu)}^2 \ts d\bx &= \int_{\Gamma_\epsilon} \bu(\varphi(s))\cdot( \bm{\sigma}\bm{n})\big|_{(\varphi(s),\theta)} \mc{J}_\epsilon(\varphi(s),\theta) \varphi'(s) \ts d\theta ds \\
&= \int_{-1}^1 \bu(\varphi(s))\cdot \bm{f}(s) \ts ds.
\end{aligned}
\end{equation}
Again, the dissipation per unit time (left hand side) balances the power (force times velocity) exerted by the slender body (right hand side). \\

Using the energy balance \eqref{energy_balance}, we show a well-posedness result for the free endpoint slender body PDE in the function space $D^{1,2}(\Omega_\epsilon)$ (see \eqref{D12_definition}). In order to do so, we will need to impose a decay condition on the force $\bm{f}$ at the fiber endpoints. We define the weighted space $L^2_a(-1,1)$ and corresponding norm $\norm{\cdot}_{L^2_a(-1,1)}$ as
\begin{equation}\label{first_decay_space}
\begin{aligned}
L^2_a(-1,1) &= \bigg\{ \bm{g}\in L^2(-1,1) \ts : \ts \bm{g}(s)\abs{\log(1-s^2)}^{1/2} \in L^2(-1,1)\bigg\}, \\ 
 \norm{\bm{g}}_{L^2_a(-1,1)}^2 &=\int_{-1}^1\abs{\bm{g}(s)}^2 \abs{\log(1-s^2)} \ts ds.
\end{aligned}
\end{equation}

Note that, due to the spheroidal endpoint condition of Definition \ref{admissible_a}, we have 
\begin{equation}\label{first_decay_space2}
 \bm{g}\in L^2_a(-1,1) \text{ if and only if } \bm{g}(s)\abs{\log\big(a(\varphi(s))\big)}^{1/2} \in L^2(-1,1).
 \end{equation}
 Therefore, we will frequently use the condition \eqref{first_decay_space2} as an alternative characterization of the space $L^2_a(-1,1)$, although this characterization implicitly depends on $\epsilon$. \\


Using the definition of $L^2_a(-1,1)$, we show:
\begin{theorem}\emph{(Well-posedness of free-endpoint slender body PDE)}\label{free_end_theorem}
Let $\Omega_\epsilon = \R^3\backslash\overline{\Sigma_\epsilon}$ for a free-endpoint slender body $\Sigma_\epsilon$ as specified in Section \ref{geometry_section}. Given $\bm{f}\in L^2_a(-1,1)$, there exists a unique weak solution $(\bu,p)\in D^{1,2}(\Omega_\epsilon)\times L^2(\Omega_\epsilon)$ to \eqref{free_PDE} satisfying 
\begin{equation}\label{free_bound1}
\norm{\bu}_{D^{1,2}(\Omega_\epsilon)} + \norm{p}_{L^2(\Omega_\epsilon)} \le \abs{\log \epsilon}^{1/2} C\norm{\bm{f}}_{L^2_a(-1,1)},
\end{equation}
where the constant $C$ depends on the constants $c_\Gamma$, $\kappa_{\max}$, $\delta_a$, $c_a$, $\bar c_a$, $a_0$, $c_\varphi$, $c_\eta$, and $c_{\eta,0}$, but not on $\epsilon$. 
\end{theorem}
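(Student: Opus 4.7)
The plan is to recast \eqref{free_PDE} as a constrained variational problem on $D^{1,2}(\Omega_\epsilon)$ and apply a Lax--Milgram argument, mirroring the structure used for closed loops in \cite{closed_loop} but with a weighted trace inequality to accommodate the vanishing radius at the free ends. Introduce the constrained space
\[ V = \big\{\bv \in D^{1,2}(\Omega_\epsilon)^3 \ts : \ts \dive\bv = 0 \text{ in } \Omega_\epsilon, \ts \bv|_{\Gamma_\epsilon}(\varphi(s),\theta) \text{ is independent of } \theta \big\}. \]
Testing \eqref{free_PDE} against $\bv \in V$ and using the force balance condition on $\Gamma_\epsilon$ together with the fiber integrity of $\bv$ leads to the symmetric bilinear form $a(\bu,\bv) = \int_{\Omega_\epsilon} 2\E(\bu):\E(\bv)\,d\bx$ and the linear functional $\ell_{\bm{f}}(\bv) = \int_{-1}^1 \bv(\varphi(s))\cdot \bm{f}(s)\,ds$. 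A weak solution is then $\bu \in V$ solving $a(\bu,\bv) = \ell_{\bm{f}}(\bv)$ for every $\bv \in V$.

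Coercivity of $a$ on $V$ follows from Korn's inequality on the exterior Lipschitz domain $\Omega_\epsilon$. The $\mc{C}^1$ regularity of $\Gamma_\epsilon$ and the $\epsilon$-independent size of the enlarged tubular neighborhood $\mc{O}$ in \eqref{mc_O} permit a Korn estimate with a constant depending only on the fixed geometric quantities $c_\Gamma, \kappa_{\max}, \delta_a, a_0, c_a, \bar c_a, c_\varphi, c_\eta, c_{\eta,0}$, just as in \cite{closed_loop}. The real work is in showing continuity of $\ell_{\bm{f}}$ with the claimed $|\log\epsilon|^{1/2}$ dependence.

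The heart of the argument is a weighted trace estimate of the form
\[ \int_{-1}^1 \frac{|\bv(\varphi(s))|^2}{|\log\epsilon| + |\log a(\varphi(s))|}\, ds \le C\,\|\bv\|_{D^{1,2}(\Omega_\epsilon)}^2, \quad \bv \in V. \]
Granted this, Cauchy--Schwarz together with the alternative characterization \eqref{first_decay_space2} of $L^2_a(-1,1)$ immediately yields $|\ell_{\bm{f}}(\bv)| \le |\log\epsilon|^{1/2} C \|\bm{f}\|_{L^2_a(-1,1)} \|\bv\|_{D^{1,2}(\Omega_\epsilon)}$. To prove the trace estimate I would localize on the tubular neighborhood $\mc{O}$. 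On the middle portion $\{|\varphi| \le 1-\delta_a\}$ the radius $\epsilon a$ is comparable to $\epsilon$ and the standard thin-tube trace from $H^1$ of an annular cylinder to $L^2$ of its axis has constant of order $|\log\epsilon|$, matching the weight; the fiber integrity condition lets the boundary trace be pulled onto the centerline. Near each endpoint the tube tapers as $\epsilon\sqrt{\eta_\epsilon^2 - \varphi^2}$, so I would dyadically partition $\{1-\delta_a < |\varphi| < \eta_\epsilon\}$ in $1 - |s|$: on the shell of scale $2^{-k}$ the local radius is $\sim \epsilon 2^{-k/2}$, so the thin-tube trace there yields a local constant $\sim|\log(\epsilon 2^{-k/2})|$, which is precisely the denominator in the weighted estimate at that scale. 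Summing over $k$ reproduces the continuous weight $|\log\epsilon|+|\log a|$. This dyadic surgery near the endpoints, together with the monotonicity of $a$ and the $\mc{C}^1$ control provided by $\bar c_a$, is the main technical obstacle of the proof.

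With coercivity in hand and the $|\log\epsilon|^{1/2}$-continuity of $\ell_{\bm{f}}$ established, Lax--Milgram on $V$ produces a unique $\bu \in V$ satisfying $a(\bu,\bv) = \ell_{\bm{f}}(\bv)$ for all $\bv \in V$ and obeying $\|\bu\|_{D^{1,2}(\Omega_\epsilon)} \le |\log\epsilon|^{1/2} C\|\bm{f}\|_{L^2_a(-1,1)}$; uniqueness of $\bu$ also follows directly from the energy identity \eqref{energy_balance} applied to the difference of two weak solutions, which forces $\E(\bu-\bu') = 0$ and hence $\bu - \bu' \equiv 0$ in $D^{1,2}(\Omega_\epsilon)$. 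To recover the pressure, I would view $\bv \mapsto a(\bu,\bv) - \ell_{\bm{f}}(\bv)$ as a linear functional on $D^{1,2}(\Omega_\epsilon)^3$: it vanishes on compactly supported divergence-free fields in $\Omega_\epsilon$, so by de Rham's theorem it is realized by $\nabla p$ for some $p \in L^2_{\mathrm{loc}}(\Omega_\epsilon)$, and the surjectivity of $\dive : D^{1,2}(\Omega_\epsilon)^3 \to L^2(\Omega_\epsilon)$ with constant depending only on the geometric data of $\mc{O}$ promotes this to $p \in L^2(\Omega_\epsilon)$ satisfying the matching pressure bound in \eqref{free_bound1}.
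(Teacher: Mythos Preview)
Your overall strategy matches the paper's: variational formulation on the constrained space $V=\A_\epsilon^{\dive}$, Lax--Milgram via Korn coercivity and a weighted trace inequality for continuity of $\ell_{\bm f}$, then pressure recovery by the divergence surjectivity on $D^{1,2}$. Two substantive comments.

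First, your trace argument is overcomplicated. The paper obtains the weighted trace estimate in one line: for each cross section $\varphi$, write $\bu|_{\Gamma_\epsilon}=-\int_{\epsilon a(\varphi)}^1 \partial_\rho\bu\, d\rho$ and apply Cauchy--Schwarz in $\rho$ with weight $\rho^{-1}$ to get $|{\rm Tr}(\bu)(\varphi)|^2\le |\log(\epsilon a(\varphi))|\int_{\epsilon a}^1|\partial_\rho\bu|^2\rho\,d\rho$. Since $|\log\epsilon|+|\log a|=|\log(\epsilon a)|$, this pointwise bound, integrated in $\varphi$ and $\theta$, is exactly the weighted trace inequality you propose. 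No dyadic surgery is needed; the fundamental theorem of calculus in the radial direction already carries the correct endpoint weight.

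Second, and more seriously, you underestimate the Korn inequality. The assertion that ``the $\mc{C}^1$ regularity of $\Gamma_\epsilon$ and the $\epsilon$-independent size of $\mc{O}$ permit a Korn estimate \ldots\ just as in \cite{closed_loop}'' glosses over what is in fact the main technical obstacle of this theorem. In the closed-loop setting the radius is constant; here it vanishes at the endpoints, and the standard route to an $\epsilon$-independent Korn constant---via an extension of $\bu$ into $\Sigma_\epsilon$ whose symmetric gradient is bounded independently of $\epsilon$---does not go through by the same argument. The paper devotes most of its appendix to building such an extension: the fiber is subdivided into segments whose length is comparable to the \emph{local} radius $\epsilon a(p_n)$, a rescaled Korn--Poincar\'e inequality is applied on each segment (with uniformity of the constant across the varying, not-quite-identical segments established via a separate diffeomorphism argument), and the pieces are patched together with a partition of unity whose gradients scale correctly. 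The spheroidal endpoint caps require their own treatment. Without this construction the Korn constant could in principle degenerate as $\epsilon\to 0$, which would ruin \eqref{free_bound1}. This is the step where the real work lies, not the trace inequality.
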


The free-endpoint PDE formulation \eqref{free_PDE} results in an analogous energy estimate to that in the closed-loop setting: the force data $\bm{f}$ specified along a one-dimensional curve controls the fluid velocity and pressure in three dimensions. However, unlike the closed loop setting, the vanishing radius at the fiber endpoints necessitates a decay requirement \eqref{first_decay_space} in $\bm{f}$. This is due to the fact that the unweighted $L^2$ trace inequality diverges logarithmically at the fiber endpoints, and therefore we must include a small weight in the trace to ensure well-posedness. We explore this phenomenon in detail in Appendix \ref{trace_section} and derive the logarithmic $\epsilon$-dependence appearing in the estimate \eqref{free_bound1}. In addition to the trace inequality, showing the $\epsilon$-dependence in \eqref{free_bound1} will require a new proof of $\epsilon$-independence in the Korn inequality for a thin fiber with free, spheroidal endpoints. This is accomplished by way of an $\epsilon$-independent extension operator into the interior of the fiber, which we construct in Appendix \ref{extension_op}. Given this extension operator, the $\epsilon$-independence of the Korn and Sobolev inequalities follow easily in Appendix \ref{korn_stab}. Finally, in Appendix \ref{press_stab} we verify the $\epsilon$-independence in the pressure inequality \eqref{freeP_boundE}, but this is essentially the same proof as in the closed loop setting (\cite{closed_loop}, Appendix A.2.5). \\

Theorem \ref{free_end_theorem} and the estimate \eqref{free_bound1} provide a framework for estimating the difference between the true solution $(\bu,p)$ to \eqref{free_PDE} and the slender body approximation $(\bu^{\SB},p^{\SB})$ \eqref{SBT_free0}. First, however, we will need to impose a stronger decay condition on the prescribed force $\bm{f}$ at the fiber endpoints. This is due to the manner of obtaining the residual estimates in Section \ref{residuals_free}. To this end, we define the space $\mc{C}_a(-1,1)$, along with the norm $\norm{\cdot}_{\mc{C}_a}$, as follows: 

\begin{equation}\label{decay_condition}
\begin{aligned}
\mc{C}_a(-1,1) &= \bigg\{ \bm{g}\in \mc{C}(-1,1) \ts :\ts \frac{\abs{\bm{g}(s)}}{\sqrt{1-s^2}} < \infty, \ts s\in (-1,1)\bigg\}, \\
\norm{\bm{g}}_{\mc{C}_a(-1,1)} &= \sup_{s\in(-1,1)}\frac{\abs{\bm{g}(s)}}{\sqrt{1-s^2}}.
\end{aligned}
\end{equation}

As in the case of the space $L^2_a(-1,1)$, we note that 
\begin{equation}\label{decay_condition2}
 \bm{g}\in \mc{C}_a(-1,1) \text{ if and only if } \sup_{s\in(-1,1)}\frac{\abs{\bm{g}(s)}}{a(\varphi(s))} <\infty,
 \end{equation}
 due to the spheroidal endpoint condition of Definition \ref{admissible_a}. It will be convenient to use this characterization of the space $\mc{C}_a(-1,1)$ in the residual estimates of Section \ref{residuals_free}, but we again note that this characterization implicitly depends on $\epsilon$. \\
 
We will require the prescribed force $\bm{f}(s)$ to belong to the space $\mc{C}^1(-1,1)\cap \mc{C}_a(-1,1)$, where, as usual, the norm $\norm{\cdot}_{\mc{C}^1(-1,1)}$ is given by
\begin{align*}
\norm{\bm{g}}_{\mc{C}^1(-1,1)} := \displaystyle \sup_{s\in (-1,1)}\abs{\bm{g}(s)}+\displaystyle \sup_{s\in (-1,1)}\abs{\bm{g}'(s)}.
\end{align*}

We show the following error estimate for the slender body approximation \eqref{SBT_free0} for a fiber with free endpoints.

\begin{theorem}\emph{(Free-endpoint slender body theory error estimate)}\label{free_err_theorem}
Let $\Omega_\epsilon=\R^3\backslash\overline{\Sigma_\epsilon}$ for a free-endpoint slender body $\Sigma_\epsilon$ as defined in Section \ref{geometry_section}. Given a force $\bm{f}\in \mc{C}^1(-1,1)\cap \mc{C}_a(-1,1)$, let $(\bu,p)$ denote the true solution to the free endpoint PDE \eqref{free_PDE} and let $(\bu^{\SB},p^{\SB})$ denote the corresponding slender body approximation \eqref{SBT_free0}. Then the difference $\bu^{\SB}-\bu$, $p^{\SB}-p$ satisfies 
\begin{equation}\label{err_est_free}
\norm{\bu^{\SB}-\bu}_{D^{1,2}(\Omega_\epsilon)} + \norm{p^{\SB}-p}_{L^2(\Omega_\epsilon)} \le C \big(\epsilon\abs{\log\epsilon}\norm{\bm{f}}_{\mc{C}^1(-1,1)}+\epsilon\norm{\bm{f}}_{\mc{C}_a(-1,1)} \big) ,
\end{equation}
where $C$ depends on the constants $c_\Gamma$, $\kappa_{\max}$, $\delta_a$, $c_a$, $\bar c_a$, $c_\varphi$, $c_\eta$, and $c_{\eta,0}$, but not on $\epsilon$. \\
In addition, along the effective centerline $(s\in(-1,1))$, the difference between the true fiber velocity ${\rm Tr}(\bu)(s)$ and the slender body centerline approximation $\bu^{\SB}_{\rm C}(s)$ \eqref{SBT_asymp_free} satisfies 
\begin{equation}\label{center_err_free}
\norm{{\rm Tr}(\bu) - \bu^{\SB}_{\rm C}}_{L^2(-1,1)} \le C \big( \epsilon \abs{\log\epsilon}^{3/2} \norm{\bm{f}}_{\mc{C}^1(-1,1)} + \epsilon \abs{\log\epsilon}^{1/2} \norm{\bm{f}}_{\mc{C}_a(-1,1)} \big) ,
\end{equation}
where $C$ again depends on $c_\Gamma$, $\kappa_{\max}$, $\delta_a$, $c_a$, $\bar c_a$, $a_0$, $c_\varphi$, $c_\eta$, and $c_{\eta,0}$, but not on $\epsilon$.
\end{theorem}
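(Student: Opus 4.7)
The plan is to derive a boundary value problem for the error $(\bw, q) := (\bu^{\SB} - \bu, p^{\SB} - p)$ and apply Theorem \ref{free_end_theorem} with forcing equal to the residuals measuring how badly $(\bu^{\SB}, p^{\SB})$ fails to solve \eqref{free_PDE}. By construction, $(\bu^{\SB}, p^{\SB})$ is built from Stokeslets and doublets supported on the effective centerline $\{|t|\le 1\}$, which lies strictly inside $\Sigma_\epsilon$, so $(\bu^{\SB}, p^{\SB})$ solves the Stokes equations in $\Omega_\epsilon$ and decays at infinity; hence $(\bw, q)$ solves Stokes in $\Omega_\epsilon$ with decay, and only the boundary conditions on $\Gamma_\epsilon$ fail. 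I would introduce the integrity residual
\[
\bu^R(\varphi,\theta) := \bu^{\SB}\big|_{\Gamma_\epsilon}(\varphi,\theta) - \frac{1}{2\pi}\int_0^{2\pi} \bu^{\SB}\big|_{\Gamma_\epsilon}(\varphi,\theta')\, d\theta',
\]
and the force residual $\bm{f}^R(s) := \bm{f}(s) - \int_0^{2\pi}(\bm{\sigma}^{\SB} \bm{n})|_{(\varphi(s),\theta)}\,\mc{J}_\epsilon(\varphi(s),\theta)\,\varphi'(s)\, d\theta$, where $\bm{\sigma}^{\SB}$ is the stress tensor associated to $(\bu^{\SB},p^{\SB})$. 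Using the $\epsilon$-uniform extension operator of Appendix \ref{extension_op}, I would then lift $\bu^R$ to a divergence-free Stokes field $(\bw^R,q^R)$ on $\Omega_\epsilon$ with $\|\bw^R\|_{D^{1,2}(\Omega_\epsilon)} + \|q^R\|_{L^2(\Omega_\epsilon)} \le C\|\bu^R\|_{H^{1/2}(\Gamma_\epsilon)}$. The difference $(\bw - \bw^R, q - q^R)$ is then a genuine solution of \eqref{free_PDE} with modified one-dimensional force data, and Theorem \ref{free_end_theorem} yields
\[
\|\bw - \bw^R\|_{D^{1,2}(\Omega_\epsilon)} + \|q - q^R\|_{L^2(\Omega_\epsilon)} \le C\,|\log\epsilon|^{1/2}\big(\|\bm{f}^R\|_{L^2_a(-1,1)} + \|\bu^R\|_{H^{1/2}(\Gamma_\epsilon)}\big).
\]

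The heart of the proof is bounding the two residuals. Away from the endpoints, say for $|s| \le 1-\delta$ for some fixed small $\delta>0$, I would Taylor-expand the kernels $\mc{S}(\bm{R})$, $\mc{D}(\bm{R})$ with $\bm{R} = \Gamma_\epsilon(\varphi,\theta)-\X(t)$ about the centerline and expand $\bm{f}(t)$ about $t=s$ inside the integrals defining $\bu^{\SB}|_{\Gamma_\epsilon}$ and $\bm{\sigma}^{\SB}\bm{n}|_{\Gamma_\epsilon}$. The doublet coefficient $\epsilon^2 a^2(t)/2$ in \eqref{SBT_free0} is calibrated exactly to cancel the leading $\theta$-dependent angular mode in the surface velocity, so both residuals in this interior region are $O(\epsilon|\log\epsilon|)\,\|\bm{f}\|_{\mc{C}^1(-1,1)}$, by essentially the same calculation as in the closed-loop case \cite{closed_loop}.

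The main new obstacle is the endpoint region, which splits into a transition zone $|s|\in(1-\delta,1)$, where the effective centerline is still active but $a(\varphi(s))\to 0$, and an end-cap $|\varphi|\in (1,\eta_\epsilon]$, where the surface continues past the effective centerline. In the transition zone the standard expansion degenerates because the small parameter becomes $\epsilon/\sqrt{1-s^2}$ rather than $\epsilon$; here the $\mc{C}_a$ weight saves us, since $|\bm{f}(s)| \le \sqrt{1-s^2}\,\|\bm{f}\|_{\mc{C}_a(-1,1)}$ and the surface element satisfies $\mc{J}_\epsilon \sim \epsilon\, a(\varphi) \sim \epsilon\sqrt{1-\varphi^2}$, so the singular prefactors in the Stokeslet and doublet contributions are precisely offset by the decay of $\bm{f}$, producing an $O(\epsilon)\,\|\bm{f}\|_{\mc{C}_a(-1,1)}$ contribution without the logarithm. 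On the end-cap the effective centerline is a strictly positive distance from $\Gamma_\epsilon$, so $\mc{S}$ and $\mc{D}$ are smooth there; the cap has surface area $O(\epsilon^2)$ by \eqref{eta_epsilon}, and $\bm{f}$ is controlled pointwise by $\epsilon\,\|\bm{f}\|_{\mc{C}_a(-1,1)}$ near $s=\pm 1$, giving again an $O(\epsilon)\,\|\bm{f}\|_{\mc{C}_a(-1,1)}$ contribution. The symmetry of the stretch $\varphi(s)$ (Definition \ref{varphi_def}) and the monotonic spheroidal decay of $a$ (Definition \ref{admissible_a}) are needed for the precise bookkeeping. Summing the interior and endpoint estimates and inserting into the bound above yields \eqref{err_est_free}.

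For the centerline estimate \eqref{center_err_free} I would apply the weighted trace inequality of Appendix \ref{trace_section}, which carries an additional $|\log\epsilon|^{1/2}$ factor, to the surface trace of $\bw - \bw^R$, together with a direct trace bound for $\bw^R$, producing an $L^2(\Gamma_\epsilon)$ bound on $\operatorname{Tr}(\bu - \bu^{\SB})$. Averaging in $\theta$ and invoking the classical asymptotic matching between the surface evaluation $\bu^{\SB}|_{\Gamma_\epsilon}(\varphi(s),\theta)$ and the centerline expression $\bu^{\SB}_{\rm C}(s)$ from \eqref{SBT_asymp_free} — which introduces an additional $O(\epsilon|\log\epsilon|)\,\|\bm{f}\|_{\mc{C}^1(-1,1)} + O(\epsilon)\,\|\bm{f}\|_{\mc{C}_a(-1,1)}$ error by the same type of asymptotic analysis used for the residuals — then yields \eqref{center_err_free}.
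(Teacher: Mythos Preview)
Your overall architecture---Stokes in $\Omega_\epsilon$ for the error, two residuals on $\Gamma_\epsilon$, correct the $\theta$-dependent boundary piece, then invoke well-posedness---matches the paper. The gap is in the correction step. The extension operator of Appendix \ref{extension_op} goes the wrong direction: it extends $D^{1,2}(\Omega_\epsilon)$ functions \emph{into} $\Sigma_\epsilon$ and is used only to prove the $\epsilon$-independent Korn inequality; it cannot manufacture a Stokes field in $\Omega_\epsilon$ from boundary data on $\Gamma_\epsilon$. A genuine Dirichlet Stokes lift of $\bu^R$ would require its own $\epsilon$-independent a priori estimate (not established anywhere in the paper), and after subtracting it the ``modified one-dimensional force data'' acquires the surface stress $\int_0^{2\pi}(\bm{\sigma}^R\bm{n})\mc{J}_\epsilon\varphi'\,d\theta$ of the lift, which you never bound. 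The paper avoids all of this: it builds an explicit cutoff extension $\wt\bv$ of $\bu^{\rm r}$ (equation \eqref{vee_def_free}), supported in a shell of width $\sim\epsilon a$ around $\Gamma_\epsilon$, that is neither Stokes nor divergence-free. Then, rather than reducing to Theorem \ref{free_end_theorem}, it tests the variational identity \eqref{variational_err_free} against $\wt\bu^{\rm e}=\bu^{\rm e}-\wt\bv\in\A_\epsilon$; the resulting cross terms $\E(\bu^{\rm e}):\E(\wt\bv)$ and $p^{\rm e}\,\dive\,\wt\bv$ are handled by the pressure bound \eqref{press_err_free} and by the pointwise $\mc{C}^0$ estimates on $\bu^{\rm r}$, $\partial_\theta\bu^{\rm r}$, $\partial_\varphi\bu^{\rm r}$ from Proposition \ref{ur_ests_free}, integrated over the small support of $\wt\bv$. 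No $H^{1/2}(\Gamma_\epsilon)$ norm ever appears.

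Your interior/transition-zone/end-cap heuristic for the residuals is reasonable but not the paper's mechanism. In the closed loop, Lemmas 3.4--3.5 of \cite{closed_loop} exploit the odd/even symmetry of $\int_{-\ell}^{\ell}\bar s^m(\bar s^2+\epsilon^2a^2)^{-n/2}\,d\bar s$; for free ends the limits $\varphi(s)\pm 1$ are asymmetric, and the broken-symmetry remainder (the $I_4$ terms in Lemmas \ref{est_free2}--\ref{est_free3}) is controlled by the auxiliary Lemma \ref{aux_est}, which is where the $\mc{C}_a$ decay of $\bm{f}$ enters. This yields uniform-in-$s$ pointwise bounds (with an extra $a^{-1}$ factor that is harmless after integration), so no separate end-cap treatment is needed: the same integral lemmas work at every cross section $\varphi(s)\in(-\eta_\epsilon,\eta_\epsilon)$, and Lemma \ref{est_free1_new} trades a factor of $a$ for a factor of $\epsilon$ near the tips.
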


Note that the $\norm{\bm{f}}_{\mc{C}^1(-1,1)}$ terms of estimates \eqref{err_est_free} and \eqref{center_err_free} are identical to the closed loop error estimates in \cite{closed_loop}. However, an additional $\norm{\bm{f}}_{\mc{C}_a(-1,1)}$ is needed in the free end case due to effects near the fiber endpoints. As in the closed loop setting, to prove Theorem \ref{free_err_theorem}, we derive residual estimates for the slender body force $\bm{f}^{\SB}-\bm{f}$ and the $\theta$-dependence in the slender body surface velocity $\bu^{\SB}\big|_{\Gamma_\epsilon}(\varphi,\theta)$. The new difficulties in the residual estimation procedure arising from the free ends are detailed in Section \ref{residuals_free}. Most of the technical lemmas involved in the estimation must be altered from their closed-loop analogues to account for the free endpoints and lack of symmetry in integration in arclength along the fiber. In particular, the new technical lemmas for the free endpoint setting will necessitate a stronger decay condition for $\bm{f}$ at the fiber endpoints than the condition \eqref{first_decay_space} needed to simply show well-posedness of the free endpoint PDE. In Section \ref{err_est_sec}, we use the existence framework established in Section \ref{well_posed} to derive the error estimate \eqref{err_est_free} and the centerline estimate \eqref{center_err_free} stated in Theorem \ref{free_err_theorem}. \\

\begin{remark}{\bf Alternative ways to formulate the slender body PDE.}\label{alternatives}
The formulation of the slender body PDE \eqref{free_PDE} is just one of various possible ways to construct a notion of true solution to the slender body problem. We make note of two particularly important alternatives.

\begin{enumerate}
\item {\bf Different force definition: }
Instead of using the stretch function $\varphi(s)$ (Definition \ref{varphi_def}), we may reconcile the discrepancy between the slender body force extent and the actual fiber length in the following way. Given a force $\bm{f}(s)$, $s\in [-1,1]$, we may instead construct a function $\bm{f}_\epsilon(\varphi;\epsilon)$, $\varphi\in [-\eta_\epsilon,\eta_\epsilon]$, satisfying $\abs{\bm{f}_\epsilon(\varphi;\epsilon)-\bm{f}(s)} \le C\epsilon$. We may then replace the total force boundary condition in \eqref{free_PDE} with
\begin{equation}\label{new_force}
\int_0^{2\pi} ( \bm{\sigma} {\bm n})\big|_{(\varphi,\theta)} \ts \mc{J}_\epsilon(\varphi,\theta) d\theta = \bm{f}_\epsilon(\varphi;\epsilon). 
\end{equation}
Note that there is still a degree of ambiguity in this notion of the slender body force, as the choice of $\bm{f}_\epsilon(\varphi;\epsilon)$ is not unique. Indeed, we may interchange these two constructions of the slender body force as follows. Considering $\varphi$ as the stretch function $\varphi(s)$, we multiply both sides of \eqref{new_force} by $\varphi'(s)$ to obtain 
\begin{align*}
\int_0^{2\pi} ( \bm{\sigma} {\bm n})\big|_{(\varphi(s),\theta)} \ts \mc{J}_\epsilon(\varphi(s),\theta) \varphi'(s) d\theta &= \bm{f}_\epsilon(\varphi(s);\epsilon)\varphi'(s) := \bm{f}(s),
\end{align*}
where we have defined $\bm{f}_\epsilon(\varphi(s);\epsilon)\varphi'(s)$ to be the prescribed force data $\bm{f}(s)$.

\item {\bf Different radius function:}
From a modeling perspective, it may be more useful to consider fibers with uniform radius up to the fiber endpoints. For the sake of analysis, we may approximate this scenario via hemispherical caps. We note that a similar error estimate to Theorem \ref{free_err_theorem} should hold for these fibers, too, given a stronger decay condition on the prescribed force at the fiber endpoints.  \\

Consider a free-end slender body with centerline $\X(\varphi)$ as in \eqref{SB_centerline}, but for $\varphi\in [-1-\epsilon,1+\epsilon]$ rather than $\pm \eta_\epsilon$. Let the radius function $a(\varphi)$ be given by 
\begin{equation}\label{rad_a}
\begin{aligned}
a(\varphi) =  \begin{cases}
1, & \varphi\in[-1,1] \\
\frac{1}{\epsilon}\sqrt{\epsilon^2-(\varphi-1)^2}, & 1< \abs{\varphi}\le 1+\epsilon
\end{cases}; \quad a'(\varphi) =  \begin{cases}
0, & \varphi\in[-1,1] \\
\frac{-(\varphi-1)}{\epsilon\sqrt{\epsilon^2-(\varphi-1)^2}}, & 1< \abs{\varphi}\le 1+\epsilon.
\end{cases}
\end{aligned}
\end{equation}
Technically this surface is only $\mc{C}^{1,1}$, but this should not greatly affect the analysis. If needed, the definition \eqref{rad_a} can be accompanied by an appropriate smoothing near $\varphi=\pm 1$ to ensure that the slender body surface is $\mc{C}^2$. We continue to define the effective centerline as in \eqref{effective_centerline}, but instead of the stretch function given by Definition \ref{varphi_def}, we consider $\varphi: [-1,1] \to [-1-\epsilon,1+\epsilon]$ satisfying $\abs{\varphi(s)-s} \le C\epsilon$. \\

The inequalities of Section \ref{stability0} and the subsequent well-posedness arguments of Section \ref{exist_uniq} can be adapted  to this scenario. The same function space $L^2_a(-1,1)$ \eqref{first_decay_space} should work for the force $\bm{f}(s)$ in this scenario. In fact, the proof of existence of an $\epsilon$-independent extension operator (Lemma \ref{extension_free} in Appendix \ref{stability}), used to show the $\epsilon$-independent Korn inequality \eqref{free_korn}, is simpler in this setting due to the uniform radius. The main difference with this choice of $a(\varphi)$ lies in the residual estimates of Section \ref{residuals_free}; in particular, Lemmas \ref{est_free2} and \ref{est_free3}. Obtaining an analogous bound when the radius function $a$ is given by \eqref{rad_a} will require that the prescribed force $\bm{f}(s)$ decays at least like $1-s^2$ at the fiber endpoints. Although this decay requirement is stronger than that needed for the prolate spheroid, this may not be a significant additional limitation on physically relevant choices for $\bm{f}$. With this modification to Lemmas \ref{est_free2} and \ref{est_free3}, the velocity and force residual estimates proceed as in Sections \ref{vel_resid} and \ref{force_resid}. 

\end{enumerate}
\end{remark}

\section{Well-posedness of free-endpoint slender body PDE}\label{well_posed}
This section is devoted to the proof of Theorem \ref{free_end_theorem}. We begin by defining our notion of a weak solution to \eqref{free_PDE}. In Section \ref{stability0}, we note the $\epsilon$-dependence of key inequalities arising in the well-posedness theory for \eqref{free_PDE}, and in Section \ref{exist_uniq}, we use these inequalities to prove the existence and uniqueness claims of Theorem \ref{free_end_theorem} as well as the estimate \eqref{free_bound1}. \\

As in the closed loop setting \cite{closed_loop}, we look for a velocity $\bu$ satisfying \eqref{free_PDE} in a suitable sense with Stokeslet-like decay at $\infty$ ($\sim \abs{\bx}^{-1}$). We therefore consider velocities $\bu$ belonging to the homogeneous Sobolev space $D^{1,2}(\Omega_\epsilon)$, defined as
\begin{equation}\label{D12_definition}
 D^{1,2}(\Omega_\epsilon) = \{ \bu\in L^6(\Omega_\epsilon) \ts : \ts \nabla \bu\in L^2(\Omega_\epsilon) \}.
 \end{equation}
Recalling the Sobolev inequality 
\begin{equation}\label{sobolev_ineq}
\|\bu\|_{L^6(\Omega_\epsilon)} \le c_S\|\nabla \bu\|_{L^2(\Omega_\epsilon)}
\end{equation}
in the exterior domain $\Omega_\epsilon\subset \R^3$, we have that $D^{1,2}(\Omega_\epsilon)$ is a Hilbert space with norm
\begin{equation}\label{D12_norm}
\|\bu\|_{D^{1,2}(\Omega_\epsilon)} \equiv \|\nabla \bu \|_{L^2(\Omega_\epsilon)}.
\end{equation}
For a more detailed exploration of the function space $D^{1,2}$, see \cite{galdi2011introduction}, Chapter II.6 - II.10. 

We define $D^{1,2}_0(\Omega_\epsilon)$ as the closure of $\mc{C}_0^{\infty}(\Omega_\epsilon)$ in $D^{1,2}(\Omega_\epsilon)$.  \\

Following the same outline as in \cite{closed_loop}, we proceed to define a notion of weak solution to \eqref{free_PDE}. First we recall the definition of the admissible set
\begin{equation}\label{Adiv_def}
\A_\epsilon^{\dive} = \{\bu\in D^{1,2}(\Omega_\epsilon) \ts : \ts \dive\ts \bu =0, \ts \bu|_{\Gamma_\epsilon} = \bu(\varphi) \},
\end{equation}
where the boundary condition $\bu|_{\Gamma_\epsilon} = \bu(\varphi)$ is independent of the angle $\theta$ but otherwise unspecified; i.e. $\bu\in \A_\epsilon^{\dive}$ satisfies
\begin{equation}\label{theta_indep}
 \int_{\Gamma_\epsilon} \bu \frac{\p\phi}{\p\theta} dS =0
 \end{equation}
for any $\phi\in \mc{C}^\infty(\Gamma_\epsilon)$. As noted in the closed loop setting, the trace operator $\rm{Tr}$ on $\A_\epsilon^{\dive}$ functions will be considered as both a function on $\Gamma_\epsilon$ and on the centerline $(-\eta_\epsilon,\eta_\epsilon)$. We show in Section \ref{trace_section} that functions $\bu\in \A_\epsilon^{\dive}$ satisfy the free-endpoint weighted trace inequality 
\begin{equation}\label{free_trace}
\begin{aligned}
\frac{1}{\sqrt{2\pi \epsilon(1+\epsilon\kappa_{\max}+\epsilon\bar c_a)}}\norm{{\rm Tr}(\bu) \abs{\log(a)}^{-1/2}}_{L^2(\Gamma_\epsilon)} &\le \norm{{\rm Tr}(\bu) \abs{\log(a)}^{-1/2}}_{L^2(-\eta_\epsilon,\eta_\epsilon)} \\
& \le c_T \norm{\nabla \bu}_{L^2(\Omega_\epsilon)}. 
\end{aligned}
\end{equation}
Here the weight $\abs{\log(a(\varphi))}^{-1/2}$ is needed to avoid logarithmic divergence of the trace near the fiber endpoints. The $\epsilon$-dependence of the constant $c_T$ is explored in detail in Section \ref{trace_section}. \\

Note that $\A_\epsilon^{\dive}$ is nontrivial: taking any constant function defined on $\Gamma_\epsilon$, we can solve the corresponding Stokes Dirichlet boundary value problem in $\Omega_\epsilon$ to obtain a solution that then belongs to $\A_\epsilon^{\dive}$. Furthermore, from \eqref{theta_indep}, we can see that $\A_\epsilon^{\dive}$ is a closed subspace of $D^{1,2}(\Omega_\epsilon)$. \\


With the space $\A_\epsilon^{\dive}$ as above, we define a weak solution to \eqref{free_PDE} as follows:
\begin{definition}{(Weak solution to free-endpoint slender body PDE)} 
A weak solution $\bu\in \A_\epsilon^{\dive}$ to \eqref{free_PDE} satisfies
\begin{equation}\label{free_weak_sto}
\int_{\Omega_\epsilon} 2 \ts \E(\bu):\E(\bv) \ts d\bx - \int_{-1}^1 \bv(\varphi(s))\cdot \bm{f}(s) \ts ds = 0
\end{equation}
for all $\bv \in \A_\epsilon^{\dive}$. 
\end{definition} 

We can formally verify that weak solutions of the slender body Stokes system \eqref{free_PDE} satisfy \eqref{free_weak_sto}. Note that, away from $\Gamma_\epsilon$, the Stokes equations in $\Omega_\epsilon$ can be rewritten as 
\[\dive \ts \bm{\sigma} = 0, \; \dive\ts \bu = 0; \quad \bm{\sigma} = \nabla\bu +(\nabla\bu)^{\rm T} - p{\bf I}. \]
Let $\mc{C}^\infty(\overline \Omega_\epsilon)$ denote smooth functions supported up to the slender body surface $\Gamma_\epsilon$ but away from $\infty$, and assume $\bu\in \A_\epsilon^{\dive}\cap \mc{C}^\infty(\overline \Omega_\epsilon)$ satisfies the slender body Stokes equation \eqref{free_PDE}, so $\dive\ts \bm{\sigma} = 0$ in $\Omega_\epsilon$. Multiplying this equation by $\bv\in \A_\epsilon^{\dive}$ and integrating by parts, we have 
\begin{align*}
0 &= -\int_{\Omega_\epsilon} \dive \ts  \bm{\sigma} \cdot \bv \ts d\bx = \int_{\Omega_\epsilon}  \bm{\sigma} : \nabla \bv \ts d\bx - \int_{\Gamma_\epsilon} \bv\cdot (\bm{\sigma}{\bm n}) \ts dS \\
&= \int_{\Omega_\epsilon} \big(2\ts \E(\bu): \nabla \bv - p\ts \dive\ts \bv\big) \ts d\bx - \int_{-\eta_\epsilon}^{\eta_\epsilon}\int_0^{2\pi} \bv(\varphi) \cdot( \bm{\sigma}{\bm n})\big|_{\varphi,\theta)} \mc{J}_\epsilon(\varphi,\theta) \ts d\theta d\varphi \\
&= \int_{\Omega_\epsilon} 2\ts\E(\bu): \E(\bv) \ts d\bx - \int_{-1}^1 \bv(\varphi(s)) \cdot\int_0^{2\pi}( \bm{\sigma}{\bm n})\big|_{(\varphi(s),\theta)} \mc{J}_\epsilon(\varphi(s),\theta) \varphi'(s) \ts d\theta ds \\
&= \int_{\Omega_\epsilon} 2\ts\E(\bu): \E(\bv) \ts d\bx - \int_{-1}^1 \bv(\varphi(s)) \cdot\bm{f}(s) \ts ds.
\end{align*}

In the following section, we prove the existence of a unique weak solution $\bu\in \A_\epsilon^{\dive}$ to \eqref{free_PDE}. We will also show the existence of a unique pressure $p\in L^2(\Omega_\epsilon)$ corresponding to a weak solution $\bu$ to \eqref{free_PDE}. Defining the space
\begin{equation}\label{Aspace_def}
\A_\epsilon := \{\bv\in D^{1,2}(\Omega_\epsilon) \ts : \ts \bv|_{\Gamma_\epsilon}=\bv(\varphi) \},
\end{equation}
where we have removed the divergence-free restriction, we will show
\begin{lemma}{\emph{(Existence of pressure)}}\label{free_pressure_ex}
Given $\bu\in \A_\epsilon^{\dive}$ satisfying \eqref{free_weak_sto}, there exists a unique corresponding pressure $p\in L^2(\Omega_\epsilon)$ satisfying
\begin{equation}\label{weak_free_p}
 \int_{\Omega_\epsilon}\big( 2 \ts\mathcal{E}(\bu):\mathcal{E}(\bv) - p\ts\dive\ts \bv\big) \ts d\bx - \int_{-1}^1 \bv(\varphi(s))\cdot\bm{f}(s) \ts ds = 0
\end{equation}
for any $\bv\in \A_\epsilon$. 
\end{lemma}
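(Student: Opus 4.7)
The plan is to recover the pressure from the weak formulation via a de Rham-type argument adapted to the exterior domain $\Omega_\epsilon$ with the fiber integrity boundary condition. Define the continuous linear functional $F: \A_\epsilon \to \R$ by
\[ F(\bv) = \int_{\Omega_\epsilon} 2\ts\E(\bu):\E(\bv) \ts d\bx - \int_{-1}^1 \bv(\varphi(s))\cdot \bm{f}(s) \ts ds, \]
which is bounded on $\A_\epsilon$ by Cauchy--Schwarz, the weighted trace inequality \eqref{free_trace}, and the hypothesis $\bm{f}\in L^2_a(-1,1)$. By the weak equation \eqref{free_weak_sto}, $F$ vanishes on the subspace $\A_\epsilon^{\dive}\subset\A_\epsilon$. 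The goal is to produce $p\in L^2(\Omega_\epsilon)$ such that $F(\bv) = \int_{\Omega_\epsilon} p\ts\dive\ts\bv\ts d\bx$ for all $\bv\in \A_\epsilon$, which is precisely \eqref{weak_free_p}.

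The technical core is the construction of a bounded right inverse $\mc{B}: L^2(\Omega_\epsilon) \to \A_\epsilon$ for the divergence operator, i.e., $\dive\ts\mc{B}(q) = q$ and $\norm{\mc{B}(q)}_{D^{1,2}(\Omega_\epsilon)}\le C\norm{q}_{L^2(\Omega_\epsilon)}$, where $\mc{B}(q)$ may be taken to have zero trace on $\Gamma_\epsilon$ (a special case of the $\theta$-independent boundary condition built into $\A_\epsilon$). Such a $\mc{B}$ is constructed in the closed-loop setting in \cite{closed_loop}, Appendix A.2.5, and the same construction extends here: one combines a Bogovskii-type operator on a bounded neighborhood of $\Sigma_\epsilon$ (using the $\mc{C}^1$ regularity of $\mc{O}$ from \eqref{mc_O}, whose diameter is $\epsilon$-independent) with a standard right inverse for the divergence on the exterior region away from the fiber, patched by a partition of unity. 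The quantitative $\epsilon$-dependence of $\norm{\mc{B}}$ is the subject of Appendix \ref{press_stab}; for mere existence of $p$, any finite bound suffices.

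With $\mc{B}$ in hand, any $\bv\in\A_\epsilon$ decomposes as $\bv = \bw + \mc{B}(\dive\ts\bv)$, where $\bw := \bv - \mc{B}(\dive\ts\bv)$ is divergence-free and inherits the $\theta$-independent boundary condition from $\bv$ and $\mc{B}(\dive\ts\bv)$, hence lies in $\A_\epsilon^{\dive}$. Thus $F(\bv) = F(\bw) + F(\mc{B}(\dive\ts\bv)) = F(\mc{B}(\dive\ts\bv))$. The map $q\mapsto F(\mc{B}(q))$ is a bounded linear functional on $L^2(\Omega_\epsilon)$, so the Riesz representation theorem supplies a unique $p\in L^2(\Omega_\epsilon)$ with $F(\mc{B}(q)) = \int_{\Omega_\epsilon} p\ts q\ts d\bx$ for every $q\in L^2(\Omega_\epsilon)$. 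Taking $q = \dive\ts\bv$ yields \eqref{weak_free_p}.

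Uniqueness of $p$ follows from the surjectivity of $\dive:\A_\epsilon\to L^2(\Omega_\epsilon)$ furnished by $\mc{B}$: if $p_1,p_2 \in L^2(\Omega_\epsilon)$ both satisfy \eqref{weak_free_p}, their difference annihilates the entire range of $\dive$, so $p_1 = p_2$. The main obstacle in this plan is the construction of $\mc{B}$ that respects the fiber-integrity boundary condition on $\Gamma_\epsilon$, in particular at the free endpoints where the radius $\epsilon a(\varphi)$ vanishes. This is handled by working not with $\Sigma_\epsilon$ itself but with the enlarged $\mc{C}^1$ neighborhood $\mc{O}$ from \eqref{mc_O}, whose spherical caps smooth out the endpoint degeneracy and whose geometry does not collapse as $\epsilon\to 0$, so that the Bogovskii construction on $\mc{O}$ proceeds as in the closed-loop case.
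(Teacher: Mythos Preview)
Your proposal is correct and follows essentially the same approach as the paper, which defers the proof to \cite{closed_loop}, Section 2.2: define the functional $F$ on $\A_\epsilon$, observe it vanishes on $\A_\epsilon^{\dive}$, and recover $p$ via the bounded right inverse for the divergence (Lemma~\ref{press_est_free}, equation \eqref{p_ineq1}) together with Riesz representation. The only minor remark is that the paper works with $\bv\in D^{1,2}_0(\Omega_\epsilon)$ for the right inverse rather than $\A_\epsilon$, but as you note this is a subspace of $\A_\epsilon$ and the decomposition $\bv = \bw + \mc{B}(\dive\ts\bv)$ goes through unchanged.
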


Note that if $(\bu,p)\in (\A_\epsilon^{\dive}\cap \mc{C}_0^{\infty}(\overline \Omega_\epsilon)) \times \mc{C}_0^{\infty}(\overline \Omega_\epsilon)$ satisfies \eqref{weak_free_p}, then, integrating by parts,
\begin{align*}
 0 &= -\int_{\Omega_{\epsilon}}\left(2\ts\dive(\E(\bu))\cdot \bv - \nabla p\cdot \bv\right) \ts d\bx + \int_{\Gamma_{\epsilon}}\left(2\ts \E(\bu){\bm n} - \nabla p\ts{\bm n}\right)\cdot \bv \ts dS  - \int_{-1}^1 \bv(\varphi(s))\cdot{\bm f}(s) \ts ds \\
 &= -\int_{\Omega_{\epsilon}} (\Delta\bu-\nabla p) \cdot\bv \ts d\bx + \int_{-1}^1\int_0^{2\pi}(\bm{\sigma}{\bm n})\cdot \bv(\varphi) \ts \mathcal{J}_{\epsilon}(\varphi,\theta)\ts d\theta d\varphi  - \int_{-1}^1 \bv(\varphi(s))\cdot{\bm f}(s) \ts ds \\
 &= \int_{\Omega_{\epsilon}} (-\Delta\bu+\nabla p) \cdot\bv \ts d\bx + \int_{-1}^1\bv(\varphi(s))\cdot\bigg(\int_0^{2\pi}(\bm{\sigma}{\bm n}) \ts \mathcal{J}_{\epsilon}(\varphi(s),\theta) \varphi'(s) \ts d\theta - {\bm f}(s)\bigg) ds.
\end{align*}
Thus $(\bu,p)$ actually satisfies \eqref{free_PDE} pointwise almost everywhere, and any smooth enough $(\bu,p)$ satisfying \eqref{weak_free_p} is in fact a classical solution to the free-endpoint slender body PDE.

\subsection{Key inequalities and $\epsilon$-dependence}\label{stability0}
Using the notion \eqref{free_weak_sto} of weak solution to \eqref{free_PDE}, we can easily obtain existence and uniqueness results for the free endpoint slender body PDE. The main difficulty in completing the proof of Theorem \ref{free_end_theorem} is in determining the $\epsilon$-dependence of each of the constants arising in the well-posedness theory. As we ultimately hope to use the energy framework to derive an error estimate for the slender body approximation in terms of $\epsilon$, we must characterize how each such constant depends on $\epsilon$. In this section we track the $\epsilon$-dependence of the constants in the Sobolev, trace, Korn, and pressure inequalities, each of which will be used in proving Theorem \ref{free_end_theorem}. The proofs of each inequality can be found in Appendix \ref{stability}. \\

%
%


We begin by noting that the Sobolev inequality \eqref{sobolev_ineq} holds in $\Omega_\epsilon$ with a constant independent of $\epsilon$.
\begin{lemma}\emph{(Free endpoint Sobolev inequality)}\label{sob_free}
Let $\Omega_\epsilon$ be as in Section \ref{geometry_section}. Then the constant $c_S$ arising in the Sobolev inequality
\[ \norm{\bu}_{L^6(\Omega_\epsilon)} \le c_S\norm{\nabla\bu}_{L^2(\Omega_\epsilon)} \]
depends only on $c_\Gamma$, $\kappa_{\max}$, $c_a$, $\bar c_a$, $a_0$, $\delta_a$, $c_\eta$, and $c_{\eta,0}$, but is independent of $\epsilon$.
\end{lemma}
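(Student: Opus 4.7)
The plan is to reduce the inequality on the exterior domain $\Omega_\epsilon$ to the standard Sobolev inequality on $\R^3$ via an $\epsilon$-independent extension. Concretely, I will use the extension operator $E_\epsilon: D^{1,2}(\Omega_\epsilon) \to D^{1,2}(\R^3)$ promised in Appendix \ref{extension_op}, which satisfies $(E_\epsilon \bu)\big|_{\Omega_\epsilon} = \bu$ together with the bound
\begin{equation*}
\norm{\nabla E_\epsilon \bu}_{L^2(\R^3)} \le C_E \norm{\nabla \bu}_{L^2(\Omega_\epsilon)},
\end{equation*}
where $C_E$ depends on $c_\Gamma$, $\kappa_{\max}$, $c_a$, $\bar c_a$, $a_0$, $\delta_a$, $c_\eta$, $c_{\eta,0}$ but not on $\epsilon$. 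The $\epsilon$-independence of $C_E$ is the main content, and it is the crux of the whole stability package rather than of this particular lemma.

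Once the extension is in hand, the proof is essentially one line. Applying the classical Sobolev inequality in $\R^3$ to $E_\epsilon \bu \in D^{1,2}(\R^3)$ (which is valid since $D^{1,2}(\R^3) \hookrightarrow L^6(\R^3)$ with a universal constant $\bar c_S$, see Galdi \cite{galdi2011introduction}, Ch.~II.6), and restricting back to $\Omega_\epsilon$, I obtain
\begin{equation*}
\norm{\bu}_{L^6(\Omega_\epsilon)} \le \norm{E_\epsilon \bu}_{L^6(\R^3)} \le \bar c_S \norm{\nabla E_\epsilon \bu}_{L^2(\R^3)} \le \bar c_S \, C_E \norm{\nabla \bu}_{L^2(\Omega_\epsilon)}.
\end{equation*}
Setting $c_S := \bar c_S \, C_E$ yields the claimed estimate with the stated dependence on the geometric constants and no dependence on $\epsilon$.

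The expected main obstacle is conceptual rather than arising in this lemma itself: one must ensure that the extension operator $E_\epsilon$ is genuinely $\epsilon$-independent in its operator norm, which is nontrivial because $\Sigma_\epsilon$ degenerates to a one-dimensional curve as $\epsilon \to 0$ and has vanishing radius at the spheroidal endpoints. The spheroidal endpoint condition in Definition \ref{admissible_a}, together with the bound on $a(\varphi)a'(\varphi)$ via $\bar c_a$, is precisely what allows a reflection-type extension through $\Gamma_\epsilon$ whose Jacobian remains uniformly controlled. Since this construction is deferred to Appendix \ref{extension_op}, the proof of Lemma \ref{sob_free} itself will be short, amounting to invoking the extension and the $\R^3$ Sobolev inequality, with all the $\epsilon$-tracking packaged into the constant $C_E$.
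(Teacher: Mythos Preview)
Your approach is essentially the same as the paper's: extend $\bu$ to all of $\R^3$ with an $\epsilon$-independent bound and apply the Sobolev inequality on $\R^3$. The only nuance is that the extension operator in Appendix~\ref{extension_op} (Corollary~\ref{extension_free2}) is stated with a bound on the \emph{symmetric} gradient $\E(\wt T_\epsilon\bu)$ rather than on $\nabla \wt T_\epsilon\bu$, so the paper inserts one extra step---the Korn inequality on $\R^3$ with constant $\sqrt{2}$---to pass from $\norm{\E(\wt T_\epsilon\bu)}_{L^2(\R^3)}$ to $\norm{\nabla(\wt T_\epsilon\bu)}_{L^2(\R^3)}$ before invoking Sobolev.
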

The proof of Lemma \ref{sob_free} appears in Appendix \ref{korn_stab}. \\

Next, we will need to show a trace inequality \eqref{free_trace} for $\A_\epsilon$ functions \eqref{Aspace_def}. As in the closed loop setting \cite{closed_loop}, because of $\theta$-independence along $\Gamma_\epsilon$, we are essentially seeking a codimension 2 trace inequality for $D^{1,2}$ functions in $\R^3$. In contrast to the closed loop setting, however, we encounter issues in the $L^2$ trace diverging logarithmically toward the fiber endpoints, where the fiber becomes a truly codimension 2 object. This will necessitate the inclusion of a small weight in the trace inequality at the fiber endpoints. 

\begin{lemma}\emph{(Free endpoint trace inequality)}\label{free_trace_lemma}
Let $\Omega_\epsilon=\R^3\backslash \overline{\Sigma_\epsilon}$ for $\Sigma_\epsilon$ as in Section \ref{geometry_section} with radius $a=a(\varphi)$ given by Definition \ref{admissible_a}. For $\bu\in \A_\epsilon$ \eqref{Aspace_def}, the following one-dimensional trace inequality holds:
\begin{equation}\label{free_trace_ineq}
\norm{{\rm Tr}(\bu) \abs{\log(a)}^{-1/2}}_{L^2(-\eta_\epsilon,\eta_\epsilon)} \le \abs{\log\epsilon}^{1/2}C \norm{\nabla \bu}_{L^2(\Omega_\epsilon)},
\end{equation}
where the constant $C$ depends only on $c_\Gamma$, and $\kappa_{\max}$. 
\end{lemma}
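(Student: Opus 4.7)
The plan is to exploit the $\theta$-independence of the trace on $\Gamma_\epsilon$ and turn the estimate into a codimension-2 trace bound via radial integration in cylindrical Bishop coordinates, with the $|\log(a)|^{-1/2}$ weight compensating for the logarithmic blow-up of the trace as $a(\varphi)\to 0$ near the fiber endpoints.

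I would first set up Bishop cylindrical coordinates $(\varphi,\theta,\rho)$ around the extended centerline $\X_\text{ext}$, valid on the tubular neighborhood of radius $r_{\max}$; since $r_{\max}$ depends only on $c_\Gamma$ and $\kappa_{\max}$, this coordinate patch is $\epsilon$-independent, and the Jacobian $\rho|1-\rho\widehat{\kappa}(\varphi,\theta)|$ is bounded above and below by $\rho$ up to a fixed constant. Fix a reference radius $r_0\in (0,r_{\max})$ independent of $\epsilon$. For $\bu\in\A_\epsilon$, the trace $\bu(\varphi,\theta,\epsilon a(\varphi))$ is independent of $\theta$, and the fundamental theorem of calculus gives
\[
\mathrm{Tr}(\bu)(\varphi) \;=\; \bu(\varphi,\theta,r_0)\;-\;\int_{\epsilon a(\varphi)}^{r_0}\partial_\rho\bu(\varphi,\theta,\rho)\,d\rho.
\]
Averaging over $\theta$ and applying Cauchy--Schwarz in $\rho$ with the split $d\rho=\rho^{-1/2}\cdot\rho^{1/2}d\rho$ produces the logarithmic factor
\[
|\mathrm{Tr}(\bu)(\varphi)|^2 \;\le\; C\,\frac{1}{2\pi}\!\int_0^{2\pi}\!|\bu(\varphi,\theta,r_0)|^2\,d\theta \;+\; C\,\log\!\Bigl(\tfrac{r_0}{\epsilon a(\varphi)}\Bigr)\!\int_0^{2\pi}\!\!\int_{\epsilon a(\varphi)}^{r_0}\!|\partial_\rho\bu|^2\,\rho\,d\rho\,d\theta.
\]

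Next I would divide by $|\log a(\varphi)|$ and integrate in $\varphi\in(-\eta_\epsilon,\eta_\epsilon)$. The key pointwise estimate is
\[
\frac{\log(r_0/(\epsilon a(\varphi)))}{|\log a(\varphi)|}\;\le\; C\,|\log\epsilon|,
\]
which is exactly the source of the $|\log\epsilon|^{1/2}$ factor in \eqref{free_trace_ineq}. In the interior region where $a(\varphi)\ge a_0$, one has $|\log a|\gtrsim 1$ and $\log(r_0/(\epsilon a))\lesssim|\log\epsilon|$, giving the ratio $O(|\log\epsilon|)$. In the endpoint region the spheroidal condition of Definition~\ref{admissible_a} gives $a(\varphi)\gtrsim\sqrt{\eta_\epsilon^2-\varphi^2}$, so both $|\log a|$ and $\log(r_0/(\epsilon a))$ are comparable to $|\log\epsilon|+|\log\sqrt{\eta_\epsilon^2-\varphi^2}|$ and the ratio is $O(1)$. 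After this pointwise bound, the remaining volume-type integral is converted to Cartesian via the Jacobian and controlled by $\|\nabla\bu\|_{L^2(\Omega_\epsilon)}^2$.

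For the boundary term at $\rho=r_0$, since the cylinder $\{\rho=r_0\}$ lies at a distance uniformly bounded below in $\epsilon$ from $\Gamma_\epsilon$, I would apply a standard codimension-1 trace inequality on a fixed $\epsilon$-independent domain, followed by the Sobolev inequality of Lemma~\ref{sob_free}, to bound $\int_0^{2\pi}|\bu(\varphi,\theta,r_0)|^2 d\theta$, integrated against $|\log a(\varphi)|^{-1}d\varphi$, by $C\|\nabla\bu\|_{L^2(\Omega_\epsilon)}^2$ with an $\epsilon$-independent constant (the $|\log a|^{-1}$ weight is integrable on $(-\eta_\epsilon,\eta_\epsilon)$ independently of $\epsilon$).

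The main obstacle is calibrating the weight across the whole fiber: the logarithmic divergence of the unweighted trace near the true endpoints $\pm\eta_\epsilon$ must be absorbed by the weight $|\log a|^{-1/2}$, while in the bulk the weight is benign and one just pays the codimension-2 price of $|\log\epsilon|^{1/2}$. The spheroidal endpoint clause of Definition~\ref{admissible_a} is what makes the endpoint estimate tight; without it (for instance with a faster-vanishing radius) the ratio bound above would fail and a stronger weight would be required.
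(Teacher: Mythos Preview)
Your proposal follows the same core argument as the paper: apply the fundamental theorem of calculus in the radial variable, use Cauchy--Schwarz with the split $\rho^{-1/2}\cdot\rho^{1/2}$ to produce the factor $|\log(\epsilon a(\varphi))|$, divide by $|\log a(\varphi)|$, and invoke the ratio bound $|\log(\epsilon a)|/|\log a|\le C|\log\epsilon|$ before integrating in $\varphi$ and $\theta$.

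The one procedural difference is how the far field is handled. The paper first applies the $\epsilon$-independent partition of unity of Remark~\ref{straight_rem} and the straightening Lemma~\ref{straightening_lemma}, so that it may work with a function supported in $\{\rho<r_{\max}\le 1\}$ about a straight centerline; the fundamental theorem of calculus then reads $\bu(\varphi,\theta,\epsilon a)=\int_{\epsilon a}^{1}\partial_\rho\bu\,d\rho$ with no boundary remainder, and the Jacobian is exactly $\rho$. You instead keep the curved Bishop coordinates and the boundary term at $\rho=r_0$, disposing of the latter via a codimension-one trace plus the Sobolev inequality of Lemma~\ref{sob_free}. Both routes are sound and cost roughly the same (the paper's cutoff step itself implicitly needs Sobolev to control $\nabla(\phi_1\bu)$), so this is a cosmetic rather than a substantive divergence.

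One point to tighten in your write-up: your claims that $|\log a|\gtrsim 1$ in the bulk and that $|\log a|^{-1}$ is integrable uniformly in $\epsilon$ both presuppose $a$ is bounded strictly below $1$, whereas Definition~\ref{admissible_a} only gives $a\le 1$. The paper's proof passes over the same point without comment, but you should be aware that the ratio bound degenerates wherever $a(\varphi)=1$.
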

The proof of Lemma \ref{free_trace_lemma} appears in Appendix \ref{trace_section}.\\

In addition, the proof of Theorem \ref{free_end_theorem} will require a Korn inequality bounding the gradient $\nabla\bu$ in $\Omega_\epsilon$ by the symmetric gradient $\E(\bu)$.  
\begin{lemma}\emph{(Free endpoint Korn inequality)}\label{korn_free}
Let $\Omega_\epsilon$ be as in Section \ref{geometry_section}. Then the constant $c_K$ arising in the Korn inequality 
\begin{equation}\label{free_korn}
 \norm{\nabla \bu}_{L^2(\Omega_\epsilon)} \le c_K\norm{\E(\bu)}_{L^2(\Omega_\epsilon)} 
 \end{equation}
depends on the constants $c_{\Gamma}$, $\kappa_{\max}$, $c_a$, $\bar c_a$, $a_0$, $\delta_a$, $c_\eta$, and $c_{\eta,0}$, but not on $\epsilon$.  
\end{lemma}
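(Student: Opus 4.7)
The plan is to reduce the exterior-domain inequality \eqref{free_korn} to the classical Korn inequality on the whole space $\R^3$, bridging the two by means of the $\epsilon$-independent extension operator constructed in Appendix \ref{extension_op}. The entire content of the proof thus gets packaged into the statement that this extension is bounded uniformly in $\epsilon$ not just in $D^{1,2}$, but with respect to the symmetric-gradient seminorm.

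First I would invoke Lemma \ref{extension_free} from Appendix \ref{extension_op} to obtain a bounded linear extension $E : D^{1,2}(\Omega_\epsilon) \to D^{1,2}(\R^3)$ with $E\bu|_{\Omega_\epsilon} = \bu$ and $\norm{\E(E\bu)}_{L^2(\R^3)} \le C_E \norm{\E(\bu)}_{L^2(\Omega_\epsilon)}$, where $C_E$ depends only on the listed geometric constants $c_\Gamma, \kappa_{\max}, c_a, \bar c_a, a_0, \delta_a, c_\eta, c_{\eta,0}$. Second, I would apply the whole-space Korn identity: for any $\bv \in D^{1,2}(\R^3)$,
\[
\norm{\nabla \bv}_{L^2(\R^3)}^2 \le 2 \norm{\E(\bv)}_{L^2(\R^3)}^2 .
\]
This follows from a short Plancherel computation, or equivalently by integrating by parts the pointwise identity $2|\E(\bv)|^2 - |\nabla \bv|^2 = \sum_{ij}(\partial_i v_j)(\partial_j v_i)$ and using the $L^6$ decay of $D^{1,2}$ functions to discard the boundary term at infinity. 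Combining with the trivial restriction inequality $\norm{\nabla \bu}_{L^2(\Omega_\epsilon)} \le \norm{\nabla E\bu}_{L^2(\R^3)}$ yields
\[
\norm{\nabla \bu}_{L^2(\Omega_\epsilon)} \le \sqrt{2}\, C_E\, \norm{\E(\bu)}_{L^2(\Omega_\epsilon)},
\]
which is \eqref{free_korn} with $c_K = \sqrt{2}\, C_E$.

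The real difficulty is concentrated in the first step and is what forces the construction of Appendix \ref{extension_op} to be nonstandard. A generic extension --- reflection across $\Gamma_\epsilon$, say, or harmonic extension into $\Sigma_\epsilon$ --- controls $\norm{\nabla E\bu}_{L^2(\R^3)}$ by $\norm{\nabla \bu}_{L^2(\Omega_\epsilon)}$ but will not in general propagate control of the symmetric gradient, since rigid motions lie in the kernel of $\E$ but not of $\nabla$. I therefore expect the extension in Appendix \ref{extension_op} to be constructed by first subtracting a cross-sectional rigid motion of $\bu$ adapted to the tube parametrization \eqref{tube_free}, extending the remainder (small in $\E$ by construction) across $\Gamma_\epsilon$, and then adding the rigid motion back. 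The delicate sub-region will be the spheroidal endpoint caps, where $a(\varphi) \to 0$ and the domain becomes genuinely codimension-2: the quantitative bound \eqref{eta_epsilon} ensuring $\eta_\epsilon - 1 = O(\epsilon^2)$, together with the spheroidal endpoint condition of Definition \ref{admissible_a}, are exactly what should allow the construction to close with $\epsilon$-independent constants. Once that extension is in hand, the Korn inequality \eqref{free_korn} itself is essentially immediate.
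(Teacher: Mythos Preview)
Your proposal is correct and matches the paper's proof essentially line for line: extend $\bu$ to $\R^3$ via the $\epsilon$-independent symmetric-gradient-bounded extension of Appendix \ref{extension_op} (the paper invokes Corollary \ref{extension_free2} rather than Lemma \ref{extension_free}, to handle the curved centerline), apply the whole-space Korn inequality $\norm{\nabla\bv}_{L^2(\R^3)} \le \sqrt{2}\,\norm{\E(\bv)}_{L^2(\R^3)}$, and restrict. Your informal description of the extension construction --- subtracting local rigid motions before reflecting --- is also accurate to what Appendix \ref{extension_op} actually does.
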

The proof of Lemma \ref{korn_free} will rely on the existence of an operator extending $\bu\in D^{1,2}(\Omega_\epsilon)$ into the interior of the slender body $\Sigma_\epsilon$ with symmetric gradient bounded independent of $\epsilon$. The existence of such an extension operator for slender bodies with spheroidal endpoints is proved in Appendix \ref{extension_op}. Lemma \ref{korn_free} is then shown in Appendix \ref{korn_stab}. \\

Finally, we will need the following lemma bounding the pressure within the fluid by the symmetric gradient of the fluid velocity. We show that the constant arising in this bound is independent of $\epsilon$. 

\begin{lemma}\emph{(Free endpoint pressure inequality)}\label{press_est_free}
Let $\Omega_\epsilon$ be as in Section \ref{geometry_section}. Then the constant $c_P$ arising in both the estimate for the solution $\bv\in D^{1,2}_0(\Omega_\epsilon)$ to 
\begin{equation}\label{p_ineq1}
 \dive\ts\bv = p \ts \text{ in }\Omega_\epsilon, \quad \norm{\bv}_{D^{1,2}(\Omega_\epsilon)}\le c_P\norm{p}_{L^2(\Omega_\epsilon)}, 
 \end{equation}
and the pressure inequality
\begin{equation}\label{p_ineq2}
 \norm{p}_{L^2(\Omega_\epsilon)} \le c_P\norm{\E(\bu)}_{L^2(\Omega_\epsilon)} 
 \end{equation}
depends on $c_\Gamma$ and $\kappa_{\max}$ but is independent of $\epsilon$.
\end{lemma}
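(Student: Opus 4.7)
My plan proceeds in two stages. \textbf{Stage 1 (duality).} First, I would reduce the pressure estimate \eqref{p_ineq2} to the divergence surjectivity bound \eqref{p_ineq1}. Given $(\bu,p) \in \A_\epsilon^{\dive} \times L^2(\Omega_\epsilon)$ satisfying the weak formulation \eqref{weak_free_p}, apply \eqref{p_ineq1} to produce $\bv \in D^{1,2}_0(\Omega_\epsilon)$ with $\dive\bv = p$ and $\norm{\nabla\bv}_{L^2(\Omega_\epsilon)} \le c_P \norm{p}_{L^2(\Omega_\epsilon)}$. Since $\bv$ vanishes on $\Gamma_\epsilon$, it lies in $\A_\epsilon$ with zero centerline trace, so the forcing term in \eqref{weak_free_p} vanishes when tested against $\bv$, and we obtain
\begin{equation*}
\norm{p}_{L^2(\Omega_\epsilon)}^2 \;=\; \int_{\Omega_\epsilon} p\,\dive\bv\, d\bx \;=\; \int_{\Omega_\epsilon} 2\,\E(\bu):\E(\bv)\, d\bx \;\le\; 2 c_P \norm{\E(\bu)}_{L^2(\Omega_\epsilon)}\, \norm{p}_{L^2(\Omega_\epsilon)},
\end{equation*}
giving \eqref{p_ineq2} after absorbing the factor of $2$ into $c_P$.

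\textbf{Stage 2 (surjectivity of $\dive$).} The substantive work is \eqref{p_ineq1} with an $\epsilon$-independent constant, which I would obtain by transcribing the closed-loop argument of \cite{closed_loop}, Appendix A.2.5, onto the free-endpoint geometry. Cover $\Omega_\epsilon$ by the outer region $\Omega_\epsilon\setminus\overline{\mc{O}}$ and the tubular neighborhood $\mc{O}\setminus\overline{\Sigma_\epsilon}$ from \eqref{mc_O}; since $r_{\max}$ depends only on $c_\Gamma$ and $\kappa_{\max}$ by \eqref{rmax}, both pieces have $\epsilon$-independent Lipschitz geometry. Using a subordinate partition of unity $\{\chi_1,\chi_2\}$, split $p = p_1 + p_2$ with $p_j = \chi_j p - c_j$, the constants $c_j$ chosen so each piece lies in the range of the appropriate right inverse of $\dive$. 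On the outer region, a standard Bogovskii right inverse combined with the usual exterior-domain (Stokeslet-type) far-field correction yields $\bv_1 \in D^{1,2}_0$ with $\dive\bv_1=p_1$ and $\norm{\nabla\bv_1}_{L^2} \le C\norm{p_1}_{L^2}$, with $C$ depending only on the fixed outer geometry.

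\textbf{Inside the tube.} On $\mc{O}\setminus\overline{\Sigma_\epsilon}$, pass to tubular coordinates $(\varphi,\rho,\theta)$ and expand $p_2 = \sum_{n\in\Z} p_2^{(n)}(\varphi,\rho)e^{in\theta}$. Non-zero Fourier modes are absorbed explicitly by setting the azimuthal component of $\bv_2$ proportional to $p_2^{(n)}/(in)$; the bound depends only on $r_{\max}$ and on the $\epsilon$-independent Jacobian factors \eqref{jac_free} (thus on $\kappa_{\max}$, $a_0$, $\bar c_a$). The zero mode $p_2^{(0)}(\varphi,\rho)$ is handled by a two-dimensional construction on the meridional domain whose cylindrical part $\{\epsilon a(\varphi) < \rho < r_{\max},\,\abs{\varphi}\le 1\}$ is a shrinking annulus with outer radius bounded below by $r_{\max}$, so the classical two-dimensional right inverse produces a bound uniform in $\epsilon$.

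\textbf{Main obstacle.} The delicate point is the treatment of the two spherical endpoint caps, where $a(\varphi)$ collapses to zero and the cylindrical decomposition breaks down. I would resolve this by invoking the $\epsilon$-independent extension operator from Appendix \ref{extension_op} to extend $p_2$ across the caps into the full $\mc{C}^1$ region $\mc{O}$; the existence and uniformity of this extension rely crucially on the quantitative spheroidal endpoint condition and monotone endpoint decay of $a$ in Definition \ref{admissible_a}. Once $p_2$ lives on the fixed $\mc{C}^1$ domain $\mc{O}$, a standard Bogovskii inversion yields the cap contribution to $\bv_2$ with an $\epsilon$-independent constant. Gluing $\bv_1$ and $\bv_2$ via the partition of unity, with the customary zeroth-order commutator $[\dive,\chi_j]p$ absorbed into the bulk Bogovskii step, then closes \eqref{p_ineq1}.
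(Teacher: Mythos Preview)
Your Stage 1 is correct and matches the paper. Stage 2, however, diverges substantially from the paper's argument and contains genuine gaps.

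The paper's route is far more direct. The key observation is purely geometric: because $\epsilon a(\varphi)\le\epsilon\le r_{\max}/4$ everywhere, the region $\mc{O}$ (which, per \eqref{mc_O}, already excludes $\Sigma_\epsilon$) satisfies a uniform \emph{interior sphere condition} with radius $r_{\max}/2$. By the Maz'ya-type covering argument, $\mc{O}$ is then a finite union of domains star-shaped with respect to balls of radius $r_{\max}/2$, with the number of pieces and all diameters depending only on $c_\Gamma$ and $\kappa_{\max}$. Galdi's explicit Bogovskii bound (Theorem III.3.1 and equation III.3.27) then gives an $\epsilon$-independent constant on $\mc{O}$ immediately; the rest is identical to the closed-loop case. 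In particular, your ``main obstacle'' --- the spheroidal caps where $a\to0$ --- is a phantom: near the endpoints the slender body is \emph{thinner}, so the interior sphere condition is, if anything, easier to verify there. No Fourier expansion, no extension operator, and no separate endpoint treatment are needed.

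Your proposed construction also has technical problems in its own right. First, setting the azimuthal component of $\bv_2$ proportional to $p_2^{(n)}/(in)$ does not produce an $H^1$ function from $L^2$ data: bounding $\nabla\bv_2$ would require control of $\partial_\rho p_2^{(n)}$ and $\partial_\varphi p_2^{(n)}$, which you do not have. Second, the extension operator of Appendix \ref{extension_op} acts on $H^1$ velocity fields and is engineered to preserve symmetric-gradient bounds; it is not an $L^2$ extension for pressure data, and even if you extended $p_2$ by zero into $\Sigma_\epsilon$ and ran Bogovskii on the filled-in region, the resulting $\bv_2$ would not vanish on $\Gamma_\epsilon$ and hence would not lie in $D^{1,2}_0(\Omega_\epsilon)$. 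Replace Stage 2 with the star-shaped decomposition of $\mc{O}$ and the proof closes in a few lines.
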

The proof of Lemma \ref{press_est_free} is nearly identical to the closed loop setting \cite{closed_loop}, but the free endpoint argument is detailed in Appendix \ref{press_stab} to highlight some differences arising from the fiber endpoint geometry. 


\subsection{Existence, uniqueness, and energy estimate}\label{exist_uniq}
Using the inequalities of Section \ref{stability0}, the proof of Theorem \ref{free_end_theorem} proceeds exactly as in the closed loop setting \cite{closed_loop}. 

\begin{proof}[Proof of Theorem \ref{free_end_theorem}:] 
Given $\bm{f}\in L^2_a(-1,1)$, where the decay space $L^2_a(-1,1)$ is as defined in \eqref{first_decay_space}, we show existence of a unique weak solution $\bu\in \A_\epsilon^{\dive}$ to \eqref{free_weak_sto} by considering the bilinear form
\[ \mc{B}[\bu,\bv] := \int_{\Omega_\epsilon} 2 \ts \E(\bu):\E(\bv) \ts d\bx. \]
By Korn's inequality \eqref{korn_free}, we have that for any $\bu\in \A_\epsilon^{\dive}$,
\begin{align*}
\mc{B}[\bu,\bu] &= \int_{\Omega_\epsilon} 2 \ts |\E(\bu)|^2 \ts d\bx \ge \int_{\Omega_\epsilon} \frac{2}{c_K^2} |\nabla\bu|^2 \ts d\bx,
\end{align*}
so $\mc{B}[\cdot,\cdot]$ is coercive on $\A_\epsilon^{\dive}$. Additionally, $\mc{B}[\cdot,\cdot]$ is bounded: for $\bu$, $\bv$ in $\A_\epsilon^{\dive}$, we have
\begin{align*}
\abs{\mc{B}(\bu,\bv)} &\le \int_{\Omega_\epsilon} 2 |\E(\bu)| |\E(\bv)| \ts d\bx  \le 2 \|\E(\bu)\|_{L^2(\Omega_\epsilon)}\|\E(\bv)\|_{L^2(\Omega_\epsilon)} \le 2 \|\nabla\bu\|_{L^2(\Omega_\epsilon)}\|\nabla\bv\|_{L^2(\Omega_\epsilon)}.
\end{align*}

Finally, we show that for $\bm{f}\in L^2_a(-1,1)$ and $\bv\in \A_\epsilon^{\dive}$, the linear functional
\[ L(\bm{f}) := \int_{-1}^1 \bm{f}(s)\cdot \bv(\varphi(s)) \ts ds \]
is bounded. By Cauchy-Schwarz and the trace inequality \eqref{free_trace} on $\A_\epsilon^{\dive}$, we have
\begin{align*}
\int_{-1}^1 \bm{f}(s)\cdot\bv(\varphi(s)) \ts ds &\le \|\bm{f}\|_{L^2_a(-1,1)}\|\bv \abs{\log(a)}^{-1/2}\|_{L^2(-\eta_\epsilon,\eta_\epsilon)} \le c_T \|\nabla \bv\|_{L^2(\Omega_\epsilon)}\|\bm{f}\|_{L^2_a(-1,1)}.
\end{align*}

Thus the form $\mc{B}[\cdot,\cdot]$ is bounded and coercive on $\A_\epsilon^{\dive}$ and the functional $L(\cdot)$ is bounded on $\A_\epsilon^{\dive}$, and by the Lax-Milgram theorem there exists a unique solution $\bu\in \A_\epsilon^{\dive}$ to \eqref{free_weak_sto}. Furthermore, taking $\bv=\bu$ in \eqref{free_weak_sto} and using the Korn inequality \eqref{free_korn} and the trace inequality \eqref{free_trace}, we have 
\begin{align*}
\|\nabla \bu\|_{L^2(\Omega_\epsilon)}^2 &\le c_K^2 \|\E(\bu)\|_{L^2(\Omega_\epsilon)}^2 \le \frac{c_K^2}{2}\norm{\bm{f}}_{L^2_a(-1,1)} \norm{\bu \abs{\log(a)}^{-1/2}}_{L^2(-\eta_\epsilon,\eta_\epsilon)} \\
&\le \frac{c_K^2}{2}\bigg(\frac{1}{4 \delta}\norm{\bm{f}}_{L^2_a(-1,1)}^2+ \delta \norm{\bu \abs{\log(a)}^{-1/2}}_{L^2(-\eta_\epsilon,\eta_\epsilon)}^2\bigg) \\
& \le \frac{c_K^2}{2}\bigg(\frac{1}{4 \delta} \|\bm{f}\|_{L^2_a(-1,1)}^2+  \delta c_T^2\|\nabla \bu\|_{L^2(\Omega_\epsilon)}^2 \bigg)
\end{align*}
for $ \delta\in \R_+$. Taking $ \delta=\frac{1}{c_T^2c_K^2}$, we obtain
\begin{equation}\label{free_stokes_est}
\|\nabla \bu\|_{L^2(\Omega_\epsilon)} \le \frac{1}{2}c_K^2c_T\|\bm{f}\|_{L^2_a(-1,1)}.
\end{equation}

As in the closed-loop setting, we may use the existence of a unique velocity $\bu\in D^{1,2}(\Omega_\epsilon)$ satisfying \eqref{free_weak_sto} to show Lemma \ref{free_pressure_ex}, the existence of a unique pressure $p\in L^2(\Omega_\epsilon)$. The proof is identical to that in \cite{closed_loop}, Section 2.2, and we do not repeat it here. \\

Using Lemma \ref{press_est_free}, we may consider $\bv\in D^{1,2}_0(\Omega_\epsilon)$ satisfying
\begin{equation}
\begin{aligned}
\dive \ts\bv &=p \quad \text{ in }\Omega_\epsilon \\
\norm{\bm{v}}_{D^{1,2}(\Omega_\epsilon)} &\le c_P\norm{p}_{L^2(\Omega_\epsilon)}
\end{aligned}
\end{equation} 
for $c_P$ independent of $\epsilon$. Substituting this $\bv$ into \eqref{weak_free_p}, we then have
\begin{align*}
\int_{\Omega_{\epsilon}} p^2 \ts d\bx &= \int_{\Omega_{\epsilon}} 2 \ts\E(\bu):\E(\bv) \ts d\bx \le 2\|\E(\bu)\|_{L^2(\Omega_{\epsilon})}\|\E(\bv)\|_{L^2(\Omega_{\epsilon})} \le 2\|\E(\bu)\|_{L^2(\Omega_{\epsilon})}\|\nabla\bv\|_{L^2(\Omega_{\epsilon})} \\
& \le \frac{1}{ \delta}\|\E(\bu)\|_{L^2(\Omega_{\epsilon})}^2+ \delta\|\nabla\bv\|_{L^2(\Omega_{\epsilon})}^2 \le \frac{1}{ \delta}\|\E(\bu)\|_{L^2(\Omega_{\epsilon})}^2+ \delta c_P^2\|p\|_{L^2(\Omega_{\epsilon})}^2, \quad  \delta>0. 
\end{align*}
Taking $ \delta = \frac{1}{2c_P^2}$, we obtain
\begin{equation}\label{freeP_boundE}
\norm{p}_{L^2(\Omega_\epsilon)} \le 2c_P\norm{\E(\bu)}_{L^2(\Omega_\epsilon)}. 
\end{equation} 

Combining \eqref{free_stokes_est} and \eqref{freeP_boundE} we obtain
\begin{equation}\label{free_est_withC}
\norm{\bu}_{D^{1,2}(\Omega_\epsilon)} + \norm{p}_{L^2(\Omega_\epsilon)} \le \frac{1}{2}c_K^2c_T(1+2c_P)\|\bm{f}\|_{L^2_a(-1,1)} \le C\abs{\log\epsilon}^{1/2}\|\bm{f}\|_{L^2_a(-1,1)},
\end{equation}
where we have used the $\epsilon$-dependence of $c_K$, $c_T$, and $c_P$ given by Lemmas \ref{free_trace_lemma} - \ref{press_est_free}.
\end{proof}

\section{Free endpoint slender body residuals}\label{residuals_free}
In this section, we use the slender body approximation \eqref{SBT_free0} to estimate the $\theta$-dependence in the slender body velocity $\bu^{\SB}\big|_{\Gamma_\epsilon}(\varphi,\theta)$ and the residual between the prescribed force $\bm{f}$ and the slender body approximation $\bm{f}^{\SB}$. We follow a similar procedure to the closed loop setting (see \cite{closed_loop}, Section 3). However, in the free endpoint setting, the integrals arising from \eqref{SBT_free0} are no longer periodic in $s$ and therefore we no longer get to take advantage of the same cancellations due to integration symmetry that we used in the closed loop case. We instead prove a series of new technical lemmas to use for estimating the free endpoint integrals. These new technical lemmas will require us to impose a stronger decay condition than \eqref{first_decay_space} on the prescribed force $\bm{f}$ at the fiber endpoints. In particular, we will require $\bm{f}$ to belong to the decay space $\mc{C}^1(-1,1)\cap \mc{C}_a(-1,1)$, defined in \eqref{decay_condition}. \\

In Section \ref{lemmas}, we prove a series of general integral estimates (see Lemmas \ref{est_free1} - \ref{center_lem_free}) that will facilitate estimation of the residuals between the slender body approximation \eqref{SBT_free0} and the true solution to \eqref{free_PDE}. In Section \ref{vel_resid} we rely on the general lemmas in Section \ref{lemmas} to derive a $\mc{C}^0$ bound on the $\theta$-dependence in the slender body velocity approximation. Additionally, we use Lemma \ref{center_lem_free} to bound the residual between the slender body approximation on the fiber surface $\bu^{\SB}\big|_{\Gamma_\epsilon}$ and the centerline approximation $\bu^{\SB}_{\rm C}(s)$, given by \eqref{SBT_asymp_free}. Then in Section \ref{force_resid} we prove bounds on the slender body force residual, relying on the lemmas in Section \ref{lemmas} as well as an additional estimate (Lemma \ref{est_free4}) taking advantage of $\theta$-integration.

\subsection{Free endpoint integral estimates}\label{lemmas}
Recall that within the neighborhood $\mc{O}$ of the slender body surface $\Gamma_\epsilon$, we may parameterize points as
\[ \bx(\rho,\theta,s) = \X(\varphi(s))+ \rho\be_\rho(\varphi(s),\theta) \]
and write $\bm{R}$ as
\begin{equation}\label{R_free}
\bm{R}(\varphi(s),\theta,\rho;t) = \X(\varphi(s)) - \X(t)+ \rho \be_\rho(\varphi(s),\theta).
\end{equation}

As in the closed loop setting, the following derivative identities hold, due to the Bishop frame \eqref{bishop_ODE}: 
\begin{equation}\label{deriv_IDs}
\frac{\p \bm{R}}{\p \rho}=\be_\rho(\varphi,\theta), \quad \frac{1}{\rho}\frac{\p \bm{R}}{\p \theta} =\be_\theta(\varphi,\theta), \quad
\frac{1}{1-\rho\widehat{\kappa}}\frac{\p \bm{R}}{\p \varphi} =\be_{\rm t}(\varphi),
\end{equation}
where $\wh\kappa$ is given by \eqref{jac_free}. Note that in the closed loop framework, the analogous identity to $\varphi$-derivative of \eqref{deriv_IDs} includes an additional term $\kappa_3 \p \bm{R}/\p\theta$, which arises due to the fact that the moving frame used in the closed loop setting must be periodic, and therefore there is an additional non-zero coefficient $\kappa_3$ in the moving frame ODE \eqref{bishop_ODE} connecting $\be_{n_1}(\varphi)$ and $\be_{n_2}(\varphi)$. \\

Now, for $\X\in \mc{C}^2(-\eta_\epsilon,\eta_\epsilon)$, we can write
\begin{equation}\label{X_expand}
\X(\varphi(s)) - \X(t) = (\varphi(s)-t)\be_{\rm t}(\varphi(s)) + (\varphi(s)-t)^2\bm{Q}(s,t); \quad \abs{\bm{Q}(s,t)}\le \frac{\kappa_{\max}}{2},
\end{equation}
for $\varphi(s)\in (-\eta_\epsilon,\eta_\epsilon)$ and $t\in (-1,1)$. Then, along the slender body surface $\Gamma_\epsilon$, we have
\begin{equation}\label{R_expand}
\bm{R} = \bars\be_{\rm t}(\varphi(s))+ \epsilon a(\varphi(s))\be_\rho(\varphi(s),\theta) + \bars^2\bm{Q}(\bars,s); \quad \bars := \varphi(s) - t.
\end{equation}
As in the closed loop setting, we hereafter consider $\bm{R}$ as a function of $\bars$ and $s$ rather than $t$ and $s$. Using \eqref{R_expand}, we note the following bounds on $\bm{R}$, which, aside from replacing the uniform radius $\epsilon$ with the varying $\epsilon a(\varphi(s))$, are identical to Lemma 3.1 in the closed loop setting \cite{closed_loop}.

\begin{lemma}\label{R_ests_free}
For $a=a(\varphi(s))$ as in Definition \ref{admissible_a} and $\bm{R}$ as in \eqref{R_expand}, we have
\begin{align}
\label{RQ_free}
\abs{\abs{\bm{R}}-\sqrt{\bars^2+\epsilon^2a^2}}&\le \frac{\kappa_{\max}}{2} \bars^2,\\
\label{Rlb_free}
\abs{\bm{R}}&\ge C\sqrt{\bars^2+\epsilon^2a^2},
\end{align}
where $\abs{\bars}\le 1+\eta_\epsilon$ and $C$ depends only on $\kappa_{\max}$ and $c_\Gamma$. 
\end{lemma}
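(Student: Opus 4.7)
For the first bound \eqref{RQ_free}, the plan is to exploit the orthogonality of the Bishop frame: since $\be_{\rm t}(\varphi(s))\perp\be_\rho(\varphi(s),\theta)$ and both are unit vectors, the principal part of $\bm{R}$ in \eqref{R_expand} satisfies
\[
\bigl|\bars\be_{\rm t}(\varphi(s))+\epsilon a(\varphi(s))\be_\rho(\varphi(s),\theta)\bigr|=\sqrt{\bars^2+\epsilon^2 a^2}.
\]
An application of the reverse triangle inequality together with the $\bm{Q}$-bound from \eqref{X_expand} then yields
\[
\Bigl|\,|\bm{R}|-\sqrt{\bars^2+\epsilon^2 a^2}\,\Bigr|\le|\bars^2\bm{Q}(\bars,s)|\le\tfrac{\kappa_{\max}}{2}\bars^2,
\]
which is \eqref{RQ_free}.

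For the lower bound \eqref{Rlb_free}, I would split into a near-range and a far-range regime. In the near range, say $|\bars|\le 1/\kappa_{\max}$, I use \eqref{RQ_free} together with $\frac{\kappa_{\max}}{2}\bars^2\le\frac12|\bars|\le\frac12\sqrt{\bars^2+\epsilon^2 a^2}$, which gives $|\bm{R}|\ge\frac12\sqrt{\bars^2+\epsilon^2 a^2}$ outright. In the far range $|\bars|>1/\kappa_{\max}$, the expansion \eqref{R_expand} is too crude and one must instead return to the original form $\bm{R}=\X_{\text{ext}}(\varphi(s))-\X_{\text{ext}}(t)+\epsilon a(\varphi(s))\be_\rho$ and invoke the non-self-intersection hypothesis \eqref{no_intersect} applied to $\X_{\text{ext}}$ on $[-3/2,3/2]$ (which contains both $\varphi(s)$ and $t$). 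This gives $|\X_{\text{ext}}(\varphi(s))-\X_{\text{ext}}(t)|\ge c_\Gamma|\bars|$, and the triangle inequality together with $a\le 1$ yields $|\bm{R}|\ge c_\Gamma|\bars|-\epsilon$. Dividing by $\sqrt{\bars^2+\epsilon^2 a^2}\le|\bars|\sqrt{1+\kappa_{\max}^2\epsilon^2}$ (using $|\bars|\ge 1/\kappa_{\max}$), one obtains a ratio bounded below by a constant depending only on $c_\Gamma$ and $\kappa_{\max}$, provided $\epsilon$ is taken small enough, e.g.\ $\epsilon\le c_\Gamma/(2\kappa_{\max})$, which is already consistent with the standing assumption $\epsilon\le r_{\max}/4$.

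The main obstacle is really only the choice of the cutoff $|\bars|\sim 1/\kappa_{\max}$ separating the two regimes: too small, and one loses control in the far range; too large, and the quadratic remainder in \eqref{RQ_free} dominates the leading term. The cutoff $1/\kappa_{\max}$ arises naturally from requiring $\frac{\kappa_{\max}}{2}|\bars|\le\frac12$ so that the remainder $\bars^2\bm{Q}$ can be absorbed into at most half of $\sqrt{\bars^2+\epsilon^2a^2}$. With this choice, both regimes combine to give \eqref{Rlb_free} with $C=C(c_\Gamma,\kappa_{\max})$, which is the assertion.
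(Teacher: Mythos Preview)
Your proposal is correct and follows essentially the standard approach. The paper does not actually give a proof of this lemma in the text; it simply remarks that the bounds ``aside from replacing the uniform radius $\epsilon$ with the varying $\epsilon a(\varphi(s))$, are identical to Lemma 3.1 in the closed loop setting \cite{closed_loop},'' and your argument is precisely the kind of direct computation that underlies that closed-loop result: reverse triangle inequality plus orthogonality of the frame for \eqref{RQ_free}, and a near-range/far-range split using \eqref{RQ_free} and the non-self-intersection condition \eqref{no_intersect} for \eqref{Rlb_free}.

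One minor remark: your assertion that the smallness requirement $\epsilon\le c_\Gamma/(2\kappa_{\max})$ is ``already consistent with the standing assumption $\epsilon\le r_{\max}/4$'' is plausible but not something the paper makes explicit, since $r_{\max}$ is only specified as some function of $c_\Gamma$ and $\kappa_{\max}$. It would be cleaner simply to note that the lower bound holds for $\epsilon$ sufficiently small depending on $c_\Gamma,\kappa_{\max}$, in line with how the subsequent lemmas in the paper are stated.
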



Similarly, working with respect to $s$ and $\bars$, we can expand the $a^2(\varphi(s)-\bars)$ factor appearing in the doublet coefficient of \eqref{SBT_free0}. Since $a\in \mc{C}^2$, by Definition \ref{admissible_a}, for $\bars \in (\varphi(s)-1,\varphi(s)+1)$ we have
\begin{equation}\label{a_expand}
a^2(\varphi(s)-\bars) = a^2(\varphi(s)) + \bars A(\varphi(s),\bars); \quad \abs{A(\varphi(s),\bars)} \le 2\bar c_a,
\end{equation}
where $\bar c_a$ is as in \eqref{bar_ca}. \\

Then for $\bx$ within the region $\mc{O}$ \eqref{mc_O} (and in particular at the slender body surface $\Gamma_\epsilon$), the slender body approximation \eqref{SBT_free0} can be written in terms of $\bars$ and $s$ as 
\begin{equation}\label{SBT_free}
\bu^{\SB}(\bx) =\frac{1}{8\pi}\int_{\varphi(s)-1}^{\varphi(s)+1} \bigg( \mc{S}(\bm{R})+\frac{\epsilon^2a^2(\varphi(s))}{2}\mc{D}(\bm{R})+ \frac{\epsilon^2\bars A(\varphi(s),\bars)}{2}\mc{D}(\bm{R}) \bigg)\bm{f}(\varphi(s)-\bars) \ts d\bars 
\end{equation}
for $\bm{R}$ as in \eqref{R_expand} and for $\mc{S}$ and $\mc{D}$ as in \eqref{stokeslet} and \eqref{doublet}, respectively. The corresponding slender body pressure can be written
\begin{equation}\label{SBp_free}
p^{\SB}(\bx) = \frac{1}{4\pi}\int_{\varphi(s)-1}^{\varphi(s)+1} \frac{\bm{R}\cdot {\bm f}(\varphi(s)-\bars)}{|\bm{R}|^3} \ts d\bars. 
\end{equation}

From now on, we will use $a$ without an explicit argument to refer to $a=a(\varphi(s))$, the slender body radius function at cross section $\varphi(s)$. \\

We recall a simple integral bound (Lemma 3.2 in the closed loop setting \cite{closed_loop}) that continues to hold for free ends.    
\begin{lemma}\label{integral_est}
Let $m\ge 0$ and $n>0$ be integers, and let $a=a(\varphi(s))$ be given by Definition \eqref{admissible_a}. Then, for $s\in (-1,1)$, we have 
\begin{equation}\label{int_est_eqn}
\int_{\varphi(s)-1}^{\varphi(s)+1} \frac{\abs{\bars}^m}{(\bars^2+\epsilon^2 a^2)^{n/2}}d\bars \le \begin{cases}
4\abs{\log (\epsilon a) }, & n=m+1 \\
\pi (\epsilon a)^{m+1-n} , & n\ge m+2.
\end{cases}
\end{equation}
\end{lemma}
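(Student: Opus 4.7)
The plan is to rescale the integral to a dimensionless form and then treat the two cases separately by elementary estimates on $u^m/(u^2+1)^{n/2}$.

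First, since the integrand is non-negative and even in $\bars$, and since the interval of integration is contained in $[-L,L]$ with $L = 1+|\varphi(s)| \le 1+\eta_\epsilon$, I would bound
\[ \int_{\varphi(s)-1}^{\varphi(s)+1}\frac{|\bars|^m}{(\bars^2+\epsilon^2 a^2)^{n/2}}\,d\bars \;\le\; 2\int_0^{L}\frac{\bars^m}{(\bars^2+\epsilon^2 a^2)^{n/2}}\,d\bars. \]
By \eqref{eta_epsilon}, $L$ is bounded by a universal constant independent of $s$ (and essentially independent of $\epsilon$). Next, substitute $u=\bars/(\epsilon a)$ to rewrite the integral as
\[ 2(\epsilon a)^{m+1-n}\int_0^{L/(\epsilon a)}\frac{u^m}{(u^2+1)^{n/2}}\,du. \]

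For the marginal case $n=m+1$, I would split the $u$-integral at $u=1$. On $[0,1]$ use $u^m/(u^2+1)^{(m+1)/2}\le u^m$, giving a contribution at most $1/(m+1)$. On $[1,L/(\epsilon a)]$ use $u^m/(u^2+1)^{(m+1)/2}\le 1/u$, giving $\log(L/(\epsilon a)) = |\log(\epsilon a)|+\log L$. Combining, the integral is bounded by $2/(m+1)+2\log L+2|\log(\epsilon a)|$. Using that $\epsilon \le r_{\max}/4$ and $a\le 1$ force $\epsilon a$ to be small enough that $|\log(\epsilon a)|$ absorbs the absolute constants $2/(m+1)+2\log L$, one obtains the stated bound $\le 4|\log(\epsilon a)|$.

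For the supercritical case $n\ge m+2$, the prefactor $(\epsilon a)^{m+1-n}$ is already exposed by the substitution, and what remains is to bound the dimensionless integral by extending the upper limit to $\infty$. Writing $u=\tan\theta$ transforms $\int_0^\infty u^m/(u^2+1)^{n/2}\,du$ into the trigonometric integral $\int_0^{\pi/2}\sin^m\theta\,\cos^{n-m-2}\theta\,d\theta$, which is monotonically decreasing in $n$ and so is bounded by its value at $n=m+2$, namely the Wallis integral $\int_0^{\pi/2}\sin^m\theta\,d\theta$. Since every Wallis integral is at most $\pi/2$, multiplying by the factor $2$ in front gives exactly the factor $\pi$ appearing in the statement.

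The main obstacle is mere bookkeeping of the constants, specifically showing that the bound $4|\log(\epsilon a)|$ (rather than $C|\log(\epsilon a)|$ for some larger $C$) and the clean coefficient $\pi$ are indeed attainable. Both reductions follow from ensuring $\epsilon$ is small enough that $|\log(\epsilon a)|$ dominates the $O(1)$ constants entering from $L\le 1+\eta_\epsilon$ and $m\ge 0$; this is justified by the standing hypothesis $\epsilon\le r_{\max}/4$ together with the bound $a\le 1$ from Definition \ref{admissible_a}.
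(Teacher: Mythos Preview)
Your argument is correct and is the standard way to prove this estimate. The paper does not actually give its own proof here; it simply recalls the result from the closed-loop reference \cite{closed_loop} (Lemma 3.2 there), so there is nothing to compare against beyond noting that your rescaling-plus-Wallis approach is exactly the expected one.

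One small remark on the constant in the case $n=m+1$: your justification that $\epsilon\le r_{\max}/4$ and $a\le 1$ force $|\log(\epsilon a)|$ to dominate the $O(1)$ terms $2/(m+1)+2\log L$ is not quite airtight as written, since $r_{\max}=r_{\max}(c_\Gamma,\kappa_{\max})$ carries no a priori upper bound in the paper. What you actually need is $\epsilon a\le e^{-2}$ or so, which is not guaranteed by $\epsilon\le r_{\max}/4$ alone. However, every lemma in Section~\ref{lemmas} that invokes this bound (Lemmas \ref{est_free1}--\ref{est_free3}) explicitly assumes ``$\epsilon$ sufficiently small,'' so the constant $4$ is recoverable under that standing hypothesis; you should state it that way rather than appealing to $r_{\max}/4$.
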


%

From Lemmas \ref{R_ests_free} and \ref{integral_est}, we easily obtain the free endpoint version of Lemma 3.3 in \cite{closed_loop}. 
\begin{lemma}\label{est_free1}
Let $a=a(\varphi(s))$ be as in Definition \ref{admissible_a} and let $\bm{R}$ be as in \eqref{R_expand}. Consider $s\in (-1,1)$. For integers $m\ge 0$ and $n>0$, and for $\epsilon$ sufficiently small, we have
\begin{equation}
\int_{\varphi(s)-1}^{\varphi(s)+1} \frac{\abs{\bars}^m}{\abs{\bm{R}}^n} d\bars \le \begin{cases}
C \abs{\log (\epsilon a)}, & n=m+1 \\
C (\epsilon a)^{m+1-n}, & n\ge m+2,
\end{cases}
\end{equation}
where the constants $C$ depend only on $n$, $c_\Gamma$, and $\kappa_{\max}$.
\end{lemma}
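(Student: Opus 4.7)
The plan is to prove Lemma \ref{est_free1} by a direct reduction to the polynomial-denominator integral bound already established in Lemma \ref{integral_est}, using the pointwise lower bound on $|\bm{R}|$ furnished by Lemma \ref{R_ests_free}.

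First I would invoke the estimate \eqref{Rlb_free}, which gives a constant $C_0 = C_0(c_\Gamma, \kappa_{\max}) > 0$ with
\[
|\bm{R}(\bars,s,\theta)| \;\ge\; C_0 \sqrt{\bars^{\ts 2} + \epsilon^2 a^2}
\]
for all $\bars \in (\varphi(s)-1, \varphi(s)+1)$ and all $s \in (-1,1)$. Raising this to the $n$-th power and inverting yields the pointwise bound
\[
\frac{1}{|\bm{R}|^n} \;\le\; \frac{C_0^{-n}}{(\bars^{\ts 2} + \epsilon^2 a^2)^{n/2}}.
\]
Multiplying by $|\bars|^m$ and integrating in $\bars$ over $(\varphi(s)-1, \varphi(s)+1)$, I reduce the claim immediately to the bound supplied by Lemma \ref{integral_est}, with a final constant $C = C_0^{-n} \cdot \max\{4, \pi\}$ depending only on $n$, $c_\Gamma$, and $\kappa_{\max}$, which splits into the logarithmic case $n = m+1$ and the algebraic case $n \ge m+2$ exactly as stated.

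The only mild subtlety is that the constant $C_0$ from Lemma \ref{R_ests_free} is guaranteed to be positive only once $\epsilon$ is small enough that the quadratic remainder $\bars^2 \bm{Q}$ in the expansion \eqref{R_expand} cannot cancel the leading $\bars \be_{\rm t} + \epsilon a \be_\rho$ portion of $\bm{R}$; this is precisely the content of \eqref{Rlb_free} and explains the qualifier ``for $\epsilon$ sufficiently small'' in the hypothesis. Beyond this, no further work is required: because the proof is pointwise in $\bars$ (rather than exploiting cancellations in the integrand), the lack of periodicity in the free-endpoint setting causes no trouble, and the substitution $a = a(\varphi(s))$ is harmless since $a$ is frozen at the evaluation point $s$ inside the integral. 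I do not anticipate any genuine obstacle; the lemma is essentially a packaging of Lemmas \ref{R_ests_free} and \ref{integral_est}.
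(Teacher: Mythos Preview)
Your proposal is correct and matches the paper's approach exactly: the paper states that the lemma follows directly from Lemmas \ref{R_ests_free} and \ref{integral_est}, which is precisely the reduction you carry out. Your explicit tracking of the constant $C_0^{-n}\cdot\max\{4,\pi\}$ and the remark on the role of the smallness of $\epsilon$ are reasonable elaborations of what the paper leaves implicit.
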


It will also be useful to prove an alternate version of Lemma \ref{est_free1} where we rely on the extent of the effective centerline \eqref{effective_centerline} and the smallness of $a(\varphi)$ at the fiber endpoints to trade a factor of $\epsilon$ for a factor of $a$. We show: 
\begin{lemma}\label{est_free1_new}
Let $a=a(\varphi(s))$ be as in Definition \ref{admissible_a} and let $\bm{R}$ be as in \eqref{R_expand}. Consider $s\in (-1,1)$. For integers $m\ge 0$ and $n\ge m+1$, and for $\epsilon$ sufficiently small, we have
\begin{equation}\label{eps_for_a}
\int_{\varphi(s)-1}^{\varphi(s)+1} \frac{\abs{\bars}^m}{\abs{\bm{R}}^n}\ts d\bars \le \begin{cases}
C\epsilon^{m-n} a^{m+2-n}, & n\ge m+2 \\
C\abs{\log\epsilon}, & n=m+1
\end{cases}
\end{equation}
where the constant $C$ depends on $n$, $c_\Gamma$, $\kappa_{\max}$, $c_a$, $c_\eta$, and $c_{\eta,0}$, but not on $\epsilon$.
\end{lemma}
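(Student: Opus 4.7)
The plan is to split the argument based on whether $|\varphi(s)|\le 1$ or $|\varphi(s)|>1$; this distinguishes whether the integration variable $\bars\in(\varphi(s)-1,\varphi(s)+1)$ can reach $0$. In the first case I will reduce directly to Lemma~\ref{est_free1} by algebraic manipulation; the second case is the real obstacle and requires a direct estimation exploiting the endpoint geometry.

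\textbf{Case 1 ($|\varphi(s)|\le 1$).} The goal here is the uniform lower bound $a(\varphi(s))\ge c\epsilon$, which trades the old bound against the new one. In the bulk $|\varphi(s)|\le 1-\delta_a$, Definition~\ref{admissible_a} gives $a\ge a_0$, so $a\ge\epsilon$ for $\epsilon$ small. In the edge strip $|\varphi(s)|\in(1-\delta_a,1]$, the spheroidal condition combined with $\eta_\epsilon-1\ge c_{\eta,0}\epsilon^2$ yields $a\ge\tfrac{1}{2}\sqrt{\eta_\epsilon^2-\varphi(s)^2}\ge\tfrac{1}{2}\sqrt{\eta_\epsilon^2-1}\ge\tfrac{1}{2}\sqrt{2c_{\eta,0}}\,\epsilon$. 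With $\epsilon\le Ca$ in hand, the desired bound follows from Lemma~\ref{est_free1}: for $n\ge m+2$ we write $(\epsilon a)^{m+1-n}=(\epsilon/a)\epsilon^{m-n}a^{m+2-n}\le C\epsilon^{m-n}a^{m+2-n}$; for $n=m+1$, $|\log(\epsilon a)|\le|\log\epsilon|+|\log a|\le C|\log\epsilon|$ since $a\ge c\epsilon$.

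\textbf{Case 2 ($|\varphi(s)|\in(1,\eta_\epsilon)$).} Set $\alpha=|\varphi(s)|-1>0$ and $\mu=\eta_\epsilon-|\varphi(s)|\in[0,\eta_\epsilon-1]$, so $\alpha=(\eta_\epsilon-1)-\mu$ and the integration range is bounded away from $\bars=0$ with $|\bars|\ge\alpha$. The spheroidal lower bound gives $\epsilon a\ge c'\epsilon\sqrt{\mu}$, and the constraint $\mu\le\eta_\epsilon-1\le c_\eta\epsilon^2$ gives $a\le C\epsilon$. Using $|\bm{R}|^2\ge c(\bars^2+\epsilon^2a^2)$ from Lemma~\ref{R_ests_free} and substituting $\bars=\epsilon au$ reduces the integral to $C(\epsilon a)^{m+1-n}$ times a $u$-integral over a finite interval. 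Standard tail estimates then control this by $C\max(\alpha,\epsilon a)^{m+1-n}$ when $n\ge m+2$ and by $C|\log\max(\alpha,\epsilon a)|+C$ when $n=m+1$.

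The heart of the argument, and the main obstacle, is the key inequality $\max(\alpha,\epsilon a)\ge c\epsilon^2$. To show it, consider $f(\mu)=(\eta_\epsilon-1)-\mu+c'\epsilon\sqrt{\mu}$ on $[0,\eta_\epsilon-1]$. Since $f$ is concave, its minimum is attained at one of the endpoints: $f(0)=\eta_\epsilon-1\ge c_{\eta,0}\epsilon^2$ and $f(\eta_\epsilon-1)\ge c'\sqrt{c_{\eta,0}}\,\epsilon^2$, so $\alpha+\epsilon a\ge f(\mu)\ge c\epsilon^2$ and hence $\max(\alpha,\epsilon a)\ge c\epsilon^2/2$. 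For $n=m+1$ this immediately gives $\le C|\log(c\epsilon^2/2)|\le C|\log\epsilon|$. For $n\ge m+2$, we obtain $\max(\alpha,\epsilon a)^{m+1-n}\le C\epsilon^{-2(n-m-1)}$; using $a\le C\epsilon$, so $(a/\epsilon)^{n-m-2}\le C^{n-m-2}$, this bound rearranges exactly into $C\epsilon^{m-n}a^{m+2-n}$, completing the proof.
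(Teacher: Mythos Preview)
Your proof is correct and follows essentially the same strategy as the paper: split into a region where $a\ge c\epsilon$ (reduce to Lemma~\ref{est_free1}) and an endpoint region where the integration variable stays $\ge c\epsilon^2$ away from zero, then use $a\le C\epsilon$ there to convert $\epsilon^{2(m+1-n)}$ into $\epsilon^{m-n}a^{m+2-n}$. The only real difference is the split point: the paper cuts at $|\varphi(s)|=\eta_\epsilon-c_{\eta,0}\epsilon^2/2$, which makes $\alpha\ge c_{\eta,0}\epsilon^2/2$ automatic in the endpoint region and avoids your concavity argument for $\max(\alpha,\epsilon a)\ge c\epsilon^2$; your cut at $|\varphi(s)|=1$ is geometrically cleaner but forces you to balance $\alpha$ against $\epsilon a$ when $\varphi(s)$ is just above~$1$.
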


\begin{proof}
We show \eqref{eps_for_a} first for cross sections $\varphi(s)$ away from the very endpoints of the fiber. For $\abs{\varphi(s)} \le \eta_\epsilon - c_{\eta,0}\frac{\epsilon^2}{2}$, using \eqref{eta_epsilon} and Definition \ref{admissible_a}, we have that
\[ a(\varphi(s)) \ge C\epsilon, \]
where $C$ depends on $c_a$, $c_{\eta,0}$ and $c_\eta$. Then, by Lemma \ref{est_free1}, we have
\begin{equation}\label{middle_est} 
\int_{\varphi(s)-1}^{\varphi(s)+1} \frac{\abs{\bars}^m}{\abs{\bm{R}}^n}\ts d\bars \le \begin{cases}
C \epsilon^{m-n} a^{m+2-n}, & n\ge m+2 \\
C\abs{\log\epsilon}, & n=m+1,
\end{cases}
\end{equation}
where $C$ depends on $n$, $c_\Gamma$, $\kappa_{\max}$, $c_a$, $c_{\eta,0}$, and $c_\eta$. \\

We now show \eqref{eps_for_a} holds for cross sections $\eta_\epsilon - c_{\eta,0}\frac{\epsilon^2}{2}< \varphi(s)< \eta_\epsilon$, up to the fiber endpoint. The bound for $-\eta_\epsilon <\varphi(s)<-\eta_\epsilon + c_{\eta,0}\frac{\epsilon^2}{2}$ then follows by symmetry of $\varphi(s)$. \\

For $\eta_\epsilon - c_{\eta,0}\frac{\epsilon^2}{2}< \varphi(s)< \eta_\epsilon$, we have 
\[ \varphi(s)-1 \ge \eta_\epsilon - 1 - c_{\eta,0}\frac{\epsilon^2}{2} \ge \frac{c_{\eta,0}}{2}\epsilon^2, \]
using \eqref{eta_epsilon}. Then 
\begin{align*}
\int_{\varphi(s)-1}^{\varphi(s)+1} \frac{\abs{\bars}^m}{\abs{\bm{R}}^n}\ts d\bars &\le C\int_{\varphi(s)-1}^{\varphi(s)+1} \frac{\abs{\bars}^m}{(\bars^2+\epsilon^2a^2)^{n/2}}\ts d\bars \le C\int_{\varphi(s)-1}^{\varphi(s)+1} \abs{\bars}^{m-n} \ts d\bars \\
&\le \begin{cases}
C\epsilon^{2(m+1-n)}, & n\ge m+2 \\
C\abs{\log\epsilon}, & n=m+1
\end{cases}
 \le \begin{cases}
 C\epsilon^{m-n} a^{m+2-n} , & n\ge m+2 \\
C\abs{\log\epsilon}, & n=m+1,
\end{cases}
\end{align*}
where in the last inequality we used that, by Definition \ref{admissible_a}, we have
\[ a(\varphi(s)) \le C\epsilon. \]

Combining the endpoint estimates with the estimate \eqref{middle_est}, we obtain \eqref{eps_for_a}.
\end{proof}


Before we state the free endpoint analogues of Lemmas 3.4 and 3.5 in the case of the closed loop \cite{closed_loop}, we will need an auxiliary lemma making use of the decay condition \eqref{decay_condition} on the prescribed force $\bm{f}$. As in the closed loop setting, we want to be able to rely on direct integration whenever possible to obtain more refined integral bounds than Lemma \ref{est_free1}. However, for the closed loop, we could rely on symmetry in integrating in $\bars$ along the fiber centerline to achieve additional cancellation. In the case of free endpoints, since integration in $\bars$ is no longer symmetric, we must instead rely on the decay condition \eqref{decay_condition} for $\bm{f}$ at the fiber endpoints in order to obtain similarly improved integral estimates. Toward this end, we show the following auxiliary estimate for functions satisfying the decay condition \eqref{decay_condition}.

\begin{lemma}\label{aux_est}
Let $0 \le \varphi(s) < \eta_\epsilon$ and let $a=a(\varphi(s))$ be as in Definition \ref{admissible_a}. Suppose $\bm{g}\in \mc{C}_a(-1,1)$. Then for $j\ge 1$, we have
\begin{equation}\label{decay_est}
\abs{\frac{\bm{g}(s)}{((\varphi(s)-1)^2+\epsilon^2a^2)^{j/2}}} \le C \epsilon^{-j+1}a^{-j}\norm{\bm{g}}_{\mc{C}_a(-1,1)},
\end{equation}
where $C$ depends only on $\delta_a$, $c_a$, and $c_\eta$. 
\end{lemma}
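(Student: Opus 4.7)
My plan is to start from the pointwise bound $|\bm{g}(s)| \le \|\bm{g}\|_{\mc{C}_a(-1,1)}\sqrt{1-s^2}$ and estimate the ratio against $\epsilon^{-j+1}a^{-j}$ by a case analysis on the location of $\varphi(s)$. By the symmetry $\varphi(-s) = -\varphi(s)$ from Definition \ref{varphi_def}, it suffices to treat $\varphi(s) \in [0,\eta_\epsilon)$, which I will split into a bulk region $[0, 1-\delta_a]$ and an endpoint region $(1-\delta_a, \eta_\epsilon)$.

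In the bulk region, $(\varphi(s)-1)^2 \ge \delta_a^2$ and $\sqrt{1-s^2} \le 1$, so the left-hand side is bounded by $\delta_a^{-j}\|\bm{g}\|_{\mc{C}_a(-1,1)}$. Because $a \le 1$ and $\epsilon \le 1$ force $\epsilon^{-j+1}a^{-j} \ge 1$ for every $j \ge 1$, this proves the claim in this region with $C = \delta_a^{-j}$.

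In the endpoint region I will use the spheroidal condition in Definition \ref{admissible_a}(2), which together with $\eta_\epsilon + \varphi(s) \in [1, 2\eta_\epsilon]$ yields two-sided bounds $c_1(\eta_\epsilon-\varphi(s)) \le a^2(\varphi(s)) \le c_2(\eta_\epsilon - \varphi(s))$ with constants depending only on $c_a$. Writing $\varphi(s) - 1 = -(\eta_\epsilon - \varphi(s)) + \mu$ for $\mu := \eta_\epsilon - 1 \le c_\eta\epsilon^2$, I further split based on the size of $a$ relative to $\epsilon$. When $a(\varphi(s)) \le C_0\epsilon$ for a threshold $C_0$ depending on $c_a$ and $c_\eta$, the trivial lower bound $((\varphi(s)-1)^2 + \epsilon^2 a^2)^{j/2} \ge (\epsilon a)^j$ together with $|\bm{g}(s)| \le C a(\varphi(s))\|\bm{g}\|_{\mc{C}_a(-1,1)}$ (justified below) produces a factor $C\epsilon^{-j}a^{1-j}$, and the hypothesis $a \lesssim \epsilon$ absorbs the extra $a/\epsilon$. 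When $a(\varphi(s)) > C_0\epsilon$, the lower bound $\eta_\epsilon - \varphi(s) \ge a^2/c_2$ forces $\eta_\epsilon - \varphi(s) \ge 2\mu$ by the choice of $C_0$, whence $|\varphi(s)-1| \ge (\eta_\epsilon-\varphi(s))/2 \ge a^2/(2c_2)$, so the denominator is bounded below by a constant times $a^{2j}$; combining with $|\bm{g}(s)| \le Ca\|\bm{g}\|_{\mc{C}_a(-1,1)}$ gives $Ca^{1-2j}\|\bm{g}\|_{\mc{C}_a(-1,1)}$, and $a \gtrsim \epsilon$ converts this to the required $C\epsilon^{-j+1}a^{-j}\|\bm{g}\|_{\mc{C}_a(-1,1)}$.

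The main technical subtlety will be justifying the inequality $|\bm{g}(s)| \le Ca(\varphi(s))\|\bm{g}\|_{\mc{C}_a(-1,1)}$ in the endpoint region, i.e., transferring decay in $s$ (built into the $\mc{C}_a$ norm via $\sqrt{1-s^2}$) to decay of $a(\varphi(s))$. This is the quantitative content of the equivalence \eqref{decay_condition2} and relies on both the spheroidal endpoint assumption and the near-identity property $|\varphi(s) - s| \le c_\varphi\epsilon^2$ from Definition \ref{varphi_def}; with these, one checks via the two-sided $a^2 \sim \eta_\epsilon^2 - \varphi^2$ bound that $\sqrt{1-s^2}$ and $a(\varphi(s))$ agree up to multiplicative constants on the endpoint region. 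It is precisely this comparison, rather than the denominator manipulation, that makes the case split unavoidable.
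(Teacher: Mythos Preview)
Your proposal is correct and follows essentially the same approach as the paper: a bulk/endpoint split at $\varphi(s)=1-\delta_a$, followed in the endpoint region by a further split according to whether $a(\varphi(s))$ is comparable to $\epsilon$ or not. The paper phrases the second split as $\varphi(s)\gtrless 1-\epsilon^2$ and parametrizes the intermediate case via $|1-\varphi(s)|=\epsilon^k$ to deduce $a\sim\epsilon^{k/2}$, whereas you work directly with the equivalent threshold $a\gtrless C_0\epsilon$ and the lower bound $|\varphi(s)-1|\ge a^2/(2c_2)$; your route is slightly more direct, and you are also more explicit than the paper about why $\sqrt{1-s^2}\lesssim a(\varphi(s))$ holds (the paper invokes the equivalent characterization \eqref{decay_condition2} without spelling this out).
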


\begin{proof}
We first consider cross sections $\varphi(s)$ toward the middle of the fiber: $\varphi(s)\le 1-\delta_a$, where $\delta_a$ is as in Definition \ref{admissible_a}. Here,
\[ \abs{\frac{\bm{g}(s)}{((\varphi(s)-1)^2+\epsilon^2a^2)^{j/2}}} \le \frac{\abs{\bm{g}(s)}}{\abs{\varphi(s)-1}^j} \le \delta_a^{-j} \abs{\bm{g}(s)} \le C \epsilon^{-j+1}a^{-j}\abs{\bm{g}(s)},\]
since $j\ge 1$. \\

For $\varphi(s)>1-\delta_a$, the radius function $a(\varphi(s))$ is spheroidal (see Definition \ref{admissible_a}) and we can take advantage of the decay condition \eqref{decay_condition} on $\bm{g}$. First note that for $\varphi(s)> 1-\epsilon^2$, we have 
\[ a(\varphi(s)) \le \epsilon(1+c_a\epsilon) \sqrt{3c_\eta+2} \le C\epsilon,\]
where we have used Definition \ref{admissible_a} along with \eqref{eta_epsilon}.  \\

Then 
\[ \abs{\frac{\bm{g}(s)}{((\varphi(s)-1)^2+\epsilon^2a^2)^{j/2}}} \le \frac{a\norm{\bm{g}}_{\mc{C}_a(-1,1)}}{(\epsilon a)^j} \le C\norm{\bm{g}}_{\mc{C}_a(-1,1)}\epsilon^{-j+1} a^{-j}. \]

If $\delta_a \le \epsilon^2$, then we are done. If not, consider $1-\delta_a < \varphi(s)\le 1-\epsilon^2$. Write $\abs{1-\varphi(s)}=\epsilon^k$, $\frac{\log\delta_a}{\log\epsilon}<k \le 2$. Then there exist $C_1$, $C_2$ independent of $\epsilon$ such that 
\[ C_1\le \frac{a(\varphi(s))}{\epsilon^{k/2}} \le C_2, \]
since, using \eqref{eta_epsilon},
\begin{equation}\label{estest}
\frac{\sqrt{\eta_\epsilon^2-\varphi^2(s)}}{\epsilon^{k/2}} = \frac{\sqrt{\epsilon^k\abs{1+\varphi(s)}+\abs{\eta_\epsilon-1}\abs{\eta_\epsilon+1}}}{\epsilon^{k/2}} \le \sqrt{2+3c_\eta \epsilon^{2-k}} \le \sqrt{2+3c_\eta}. 
\end{equation}
Furthermore,
\[ \frac{\sqrt{\epsilon^k\abs{1+\varphi(s)}+\abs{\eta_\epsilon-1}\abs{\eta_\epsilon+1}}}{\epsilon^{k/2}} \ge \sqrt{1+\varphi(s)} \ge 1,\]
and therefore, using the spheroidal endpoint condition of Definition \ref{admissible_a} and \eqref{estest}, we have 
\[ 1- c_a\epsilon^2\sqrt{2+3c_\eta} \le \frac{a(\varphi(s))}{\epsilon^{k/2}} \le \sqrt{2+3c_\eta } + c_a\epsilon^2\sqrt{2+3c_\eta}.\]

Thus, using that $a\sim \epsilon^{k/2}$ for $\log\delta_a/\log\epsilon <k\le 2$, we have
\begin{align*}
 \abs{\frac{\bm{g}(s)}{((\varphi(s)-1)^2+\epsilon^2a^2)^{j/2}}} &\le \frac{a\norm{\bm{g}}_{\mc{C}_a(-1,1)}}{\abs{\varphi(s)-1}^j} = \frac{a\norm{\bm{g}}_{\mc{C}_a(-1,1)}}{\epsilon^{jk}} \le C\norm{\bm{g}}_{\mc{C}_a(-1,1)}\epsilon^{k(-j+1/2)} \\
 & \le C \norm{\bm{g}}_{\mc{C}_a(-1,1)} \epsilon^{k(-j+1)/2}a^{-j} \le C \norm{\bm{g}}_{\mc{C}_a(-1,1)} \epsilon^{-j+1}a^{-j},
 \end{align*}
 since $j\ge 1$. 
 
\end{proof}

We use Lemma \ref{aux_est} to show the free end analogue of Lemma 3.4 from the closed loop case \cite{closed_loop}. 
\begin{lemma}\label{est_free2}
Let $a=a(\varphi(s))$ be as in Definition \ref{admissible_a} and let $\bm{R}$ be as in \eqref{R_expand}. Let $\bm{g}\in \mc{C}^1(-1,1)\cap \mc{C}_a(-1,1)$ satisfy \eqref{decay_condition}, and consider $s\in(-1,1)$. For odd integer $m>0$, integer $n\ge m+2$, and $\epsilon$ sufficiently small, we have
\begin{equation}
\begin{aligned}
\bigg| \int_{\varphi(s)-1}^{\varphi(s)+1}& \frac{\bars^m}{\abs{\bm{R}}^n} \bm{g}(\varphi(s)-\bars) d\bars \bigg| \\
&\le \begin{cases}
C\big(\norm{\bm{g}}_{\mc{C}^1(-1,1)} \abs{\log(\epsilon a)}+\norm{\bm{g}}_{\mc{C}_a(-1,1)}a^{-1} \big), & n=m+2 \\
C\big(\norm{\bm{g}}_{\mc{C}^1(-1,1)} (\epsilon a)^{m+2-n}+ \norm{\bm{g}}_{\mc{C}_a(-1,1)} \epsilon^{m+2-n}a^{m+1-n} \big), & n\ge m+3,
\end{cases}
\end{aligned}
\end{equation}
where the constants $C$ depend on the constants $c_\Gamma$, $\kappa_{\max}$, $\delta_a$, $c_a$, $c_\varphi$, and $c_\eta$ from Section \ref{geometry_section}, but not on $\epsilon$.
\end{lemma}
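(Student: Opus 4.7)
The plan is to follow the closed-loop strategy of isolating a pointwise value of $\bm{g}$ so that the remaining integrand is odd in $\bars$, but now to compensate for the fact that the range $[\varphi(s)-1,\varphi(s)+1]$ is no longer symmetric about $\bars=0$. Writing
\[
\bm{g}(\varphi(s)-\bars) = \bm{g}(s) + \bigl(\bm{g}(\varphi(s)-\bars)-\bm{g}(s)\bigr)
\]
(well-defined because $\varphi(s)-\bars\in[-1,1]$ on the integration range) splits the target integral into a principal piece $\bm{g}(s)\,I_1$ with $I_1=\int \bars^m/|\bm{R}|^n\,d\bars$ and a remainder piece $I_2$.

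For the remainder $I_2$, the mean value theorem together with $|\varphi(s)-s|\le c_\varphi\epsilon^2$ (Definition \ref{varphi_def}) gives $|\bm{g}(\varphi(s)-\bars)-\bm{g}(s)|\le(|\bars|+c_\varphi\epsilon^2)\|\bm{g}\|_{\mc{C}^1(-1,1)}$, so $|I_2|$ is dominated by $\|\bm{g}\|_{\mc{C}^1}$ times integrals of $|\bars|^{m+1}/|\bm{R}|^n$ and $\epsilon^2|\bars|^m/|\bm{R}|^n$. Lemma \ref{est_free1} applied with the incremented exponent $m+1$ produces exactly the announced $\|\bm{g}\|_{\mc{C}^1}|\log(\epsilon a)|$ (for $n=m+2$) or $\|\bm{g}\|_{\mc{C}^1}(\epsilon a)^{m+2-n}$ (for $n\ge m+3$); the $\epsilon^2$-prefactor on the second piece is absorbed because $\epsilon^2(\epsilon a)^{m+1-n}=\epsilon(\epsilon a)^{m+2-n}(\epsilon/a)$ and $\epsilon/a$ is bounded by a constant times $|\log(\epsilon a)|$ on the admissible regime.

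The delicate work is the principal piece $\bm{g}(s)\,I_1$. I would first use Lemma \ref{R_ests_free} to replace $|\bm{R}|^{-n}$ by $(\bars^2+\epsilon^2 a^2)^{-n/2}$; the difference is bounded by $C\bars^2/(\bars^2+\epsilon^2 a^2)^{(n+1)/2}$, and applying Lemma \ref{integral_est} to the resulting $|\bars|^{m+2}/(\bars^2+\epsilon^2 a^2)^{(n+1)/2}$ integral again produces only $\|\bm{g}\|_{\mc{C}^1}$ contributions of the desired order. For the remaining
\[
\bm{g}(s)\int_{\varphi(s)-1}^{\varphi(s)+1}\frac{\bars^m}{(\bars^2+\epsilon^2 a^2)^{n/2}}\,d\bars,
\]
I would compute the antiderivative explicitly by substituting $u=\bars^2+\epsilon^2 a^2$ and expanding $(u-\epsilon^2 a^2)^{(m-1)/2}$ as a polynomial in $u$ (which is possible precisely because $m$ is odd). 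Since $n\ge m+2$ forces every exponent $k-n/2+1$, $0\le k\le(m-1)/2$, to be strictly negative, no logarithmic term arises, and boundary evaluation at $\bars=\varphi(s)\pm 1$ produces a finite sum of terms of the form $\bm{g}(s)\,(\epsilon a)^{m-1-2k}\bigl((\varphi(s)\pm 1)^2+\epsilon^2 a^2\bigr)^{-(n-2-2k)/2}$. Applying Lemma \ref{aux_est} with $j=n-2-2k\ge 1$ bounds each such term by $C\|\bm{g}\|_{\mc{C}_a}\epsilon^{m+2-n}a^{m+1-n}$; for $n=m+2$ only the $k=0$ term is present, producing the sharper $\|\bm{g}\|_{\mc{C}_a}a^{-1}$. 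The opposite boundary (whose distance from $0$ is of order $1$, by symmetry of $\varphi(s)$) is always subsumed because $(\varphi(s)\pm 1)^2+\epsilon^2 a^2\ge 1$ there, so that contribution is bounded by $|\bm{g}(s)|\le\|\bm{g}\|_{\mc{C}_a}$.

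The main obstacle is the careful bookkeeping in the polynomial expansion, namely verifying that the exponents $k-n/2+1$ never vanish (guaranteed by $n\ge m+2>m+1\ge 2k+2$) and that each boundary term reduces exactly to the claimed order $\epsilon^{m+2-n}a^{m+1-n}$ after invoking Lemma \ref{aux_est}. Conceptually, the decay assumption $\bm{g}\in\mc{C}_a(-1,1)$, which had no analogue in the closed-loop lemma, is precisely what is needed to kill the otherwise divergent boundary contributions at the fiber endpoints, substituting for the symmetry cancellation that was available when integrating in $\bars$ over a periodic range.
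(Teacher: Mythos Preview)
Your approach is essentially the paper's: isolate $\bm{g}(s)$, replace $|\bm{R}|^{-n}$ by $(\bars^2+\epsilon^2a^2)^{-n/2}$, and reduce the principal piece to boundary evaluations of the explicit antiderivative of $\bars^m/(\bars^2+\epsilon^2a^2)^{n/2}$, controlled via Lemma~\ref{aux_est}. The paper phrases the last step as subtracting the vanishing integral over $(-\infty,\infty)$ (oddness of $m$) and bounding the resulting tail integrals $I_3,I_4$, but since the antiderivative tends to zero at $\pm\infty$, this is exactly your direct boundary evaluation.

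There is, however, one real slip in your remainder estimate. For the contribution $\epsilon^2\int|\bars|^m/|\bm{R}|^n\,d\bars$, Lemma~\ref{est_free1} gives at best $C\epsilon^2(\epsilon a)^{m+1-n}=(\epsilon a)^{m+2-n}\cdot(\epsilon/a)$ (your displayed identity has a spurious extra factor of $\epsilon$), and you assert that $\epsilon/a\le C|\log(\epsilon a)|$. This is false: for fixed $\epsilon$ and $s\to\pm 1$ one has $a=a(\varphi(s))\to 0$, so $\epsilon/a$ blows up like $a^{-1}$ while $|\log(\epsilon a)|$ grows only logarithmically in $a$. The correct fix, used in the paper, is to invoke Lemma~\ref{est_free1_new} rather than Lemma~\ref{est_free1} for this term: that lemma trades a power of $\epsilon$ for a power of $a$, giving $\epsilon^2\int|\bars|^m/|\bm{R}|^n\le C\epsilon^2\cdot\epsilon^{m-n}a^{m+2-n}=C(\epsilon a)^{m+2-n}$ for $n\ge m+2$, which for $n=m+2$ is a constant and is absorbed into the $|\log(\epsilon a)|$ term. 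A minor additional misstatement: for $n=m+2$ with $m>1$ there are still several $k$-terms in your antiderivative expansion, not only $k=0$; but each produces the same order $\epsilon^{m+2-n}a^{m+1-n}$ after Lemma~\ref{aux_est}, so this is harmless.
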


Note that the $\mc{C}^1$ term of Lemma \ref{est_free2} is exactly the same as Lemma 3.4 from \cite{closed_loop}, with the varying radius $\epsilon a$ replacing the uniform radius $\epsilon$. However, in the free end setting, we gain a new term due to the fiber endpoints with an additional factor of $1/a^2$. To avoid blowup at the fiber endpoints in the final error estimate \eqref{err_est_free}, this second term will require additional structure on $\bm{g}$. In particular, we will need $\bm{g}\in \mc{C}_a(-1,1)$. 

\begin{proof}[Proof of Lemma \ref{est_free2}:]
We show Lemma \ref{est_free2} for $0\le \varphi(s) < \eta_\epsilon$; the result for $-\eta_\epsilon < \varphi(s) \le 0$ follows by symmetry of $\varphi(s)$. \\

We first note that since $m$ is odd, we have 
\[ \int_{-\infty}^{\infty} \frac{\bars^m}{(\bars^2+\epsilon^2a^2)^{n/2}} d\bars =0, \]
and thus we can write
\begin{equation}
\begin{aligned}
\int_{\varphi(s)-1}^{\varphi(s)+1} \frac{\bars^m}{\abs{\bm{R}}^n} \bm{g}(\varphi(s)-\bars) d\bars &= \int_{\varphi(s)-1}^{\varphi(s)+1} \frac{\bars^m}{\abs{\bm{R}}^n} \bm{g}(\varphi(s)-\bars) d\bars - \bm{g}(s)\int_{-\infty}^{\infty} \frac{\bars^m}{(\bars^2+\epsilon^2a^2)^{n/2}} d\bars \\
&= I_1+I_2+I_3+I_4; \\
I_1 &= \int_{\varphi(s)-1}^{\varphi(s)+1} \frac{\bars^m}{\abs{\bm{R}}^n} \big( \bm{g}(\varphi(s)-\bars) - \bm{g}(s)\big) d\bars \\
I_2 &= \int_{\varphi(s)-1}^{\varphi(s)+1} \bars^m\bigg(\frac{1}{\abs{\bm{R}}^n} - \frac{1}{(\bars^2+\epsilon^2a^2)^{n/2}}\bigg) \bm{g}(s) d\bars \\
I_3 &= -\int_{\varphi(s)+1}^{\infty} \frac{\bars^m}{(\bars^2+\epsilon^2a^2)^{n/2}} \bm{g}(s) d\bars \\
I_4 &= - \int_{-\infty}^{\varphi(s)-1} \frac{\bars^m}{(\bars^2+\epsilon^2a^2)^{n/2}} \bm{g}(s) d\bars.
\end{aligned}
\end{equation}
In the closed loop setting, we relied on symmetry of the integrals about $\bars=0$ for complete cancellation of $I_3$ and $I_4$. Here, however, such symmetry only holds near the fiber center $(\varphi(s)=0)$. Instead we must rely on decay of $\bm{g}$ near the fiber endpoints \eqref{decay_condition} to make $I_4$ small. Note that in the case $\varphi(s)\le 0$, the decay condition is needed to make the integral $I_3$ small. \\

To estimate $I_1$, we note that, using Definition \ref{varphi_def}, 
\begin{align*}
\abs{\bm{g}(\varphi(s)-\bars) - \bm{g}(s)} &\le \big(\abs{\varphi(s)-s}+\abs{\bars}\big) \norm{\bm{g}'}_{\mc{C}(-1,1)} \le (c_{\varphi}\epsilon^2 + \abs{\bars})\norm{\bm{g}'}_{\mc{C}(-1,1)},
\end{align*}
and therefore
\begin{align*}
\abs{I_1} &\le \int_{\varphi(s)-1}^{\varphi(s)+1} \frac{c_{\varphi}\epsilon^2\abs{\bars}^m+\abs{\bars}^{m+1}}{\abs{\bm{R}}^n}  \norm{\bm{g}'}_{\mc{C}(-1,1)} \le \begin{cases}
C \norm{\bm{g}'}_{\mc{C}(-1,1)}(c_\varphi +\abs{\log(\epsilon a)}), & n=m+2 \\
C \norm{\bm{g}'}_{\mc{C}(-1,1)}(\epsilon a)^{m+2-n}, & n\ge m+3,
\end{cases}
\end{align*}
where we have used Lemmas \ref{est_free1} and \ref{est_free1_new}. \\

As in the closed loop setting, to estimate $I_2$, we use that
\[ \frac{1}{\abs{\bm{R}}^n} - \frac{1}{(\bars^2+\epsilon^2a^2)^{n/2}} = \bigg(\frac{1}{\abs{\bm{R}}} - \frac{1}{\sqrt{\bars^2+\epsilon^2a^2}} \bigg) \sum_{\ell=0}^{n-1} \frac{1}{\abs{\bm{R}}^\ell(\bars^2+\epsilon^2a^2)^{(n-1-\ell)/2}}.\]

By Lemma \ref{R_ests_free}, we have
\[ \abs{\frac{1}{\abs{\bm{R}}} - \frac{1}{\sqrt{\bars^2+\epsilon^2a^2}}} = \frac{\abs{\abs{\bm{R}}-\sqrt{\bars^2+\epsilon^2a^2}}}{\abs{\bm{R}}\sqrt{\bars^2+\epsilon^2a^2}} \le \frac{C\bars^2}{\bars^2+\epsilon^2a^2}. \]
and therefore, using the bound \eqref{Rlb_free} in Lemma \ref{R_ests_free}, we have 
\begin{equation}\label{Rn_est}
\abs{\frac{1}{\abs{\bm{R}}^n}- \frac{1}{(\bars^2+\epsilon^2a^2)^{n/2}}} \le \frac{C\bars^2}{(\bars^2+\epsilon^2a^2)^{(n+1)/2}}.
\end{equation}
Then, using Lemma \ref{integral_est}, we have
\begin{align*}
\abs{I_2} &\le C\norm{\bm{g}}_{\mc{C}(-1,1)}\int_{\varphi(s)-1}^{\varphi(s)+1} \frac{\bars^{m+2}}{(\bars^2+\epsilon^2a^2)^{(n+1)/2}} d\bars \\ 
&\le \begin{cases}
C\norm{\bm{g}}_{\mc{C}(-1,1)}\abs{\log(\epsilon a)},  & n=m+2\\
C\norm{\bm{g}}_{\mc{C}(-1,1)} (\epsilon a)^{m+2-n}, & n\ge m+3.
\end{cases} 
\end{align*}

Next we estimate $I_3$. Using that $\varphi(s)\ge0$ and $n\ge m+2$, we have
\begin{align*}
\abs{I_3} &\le \norm{\bm{g}}_{\mc{C}(-1,1)} \int_{\varphi(s)+1}^{\infty} \frac{\bars^m}{(\bars^2+\epsilon^2a^2)^{n/2}} d\bars \le  \norm{\bm{g}}_{\mc{C}(-1,1)} \int_{\varphi(s)+1}^{\infty} \bars^{m-n} \ts d\bars \\
&= \frac{(\varphi(s)+1)^{m-n+1}}{n-m-1} \norm{\bm{g}}_{\mc{C}(-1,1)} \le \norm{\bm{g}}_{\mc{C}(-1,1)}.
\end{align*}

Finally we estimate $I_4$. Again, this term is not present in the closed loop setting because of cancellation due to symmetry in $\bars$. In the free end setting, this term is the reason for the decay condition \eqref{decay_condition} at the fiber endpoints. First, note that direct integration in $\bars$ yields
\begin{align*}
I_4 = \bm{g}(s)\sum_{\ell=0,\text{ even}}^{m-1} C_\ell \frac{(\varphi(s)-1)^{m-1-\ell}(\epsilon a)^\ell}{((\varphi(s)-1)^2+\epsilon^2a^2)^{(n-2)/2}}.  
\end{align*}

Then, using Lemma \ref{aux_est}, we have
\begin{align*}
\abs{I_4} &\le C\abs{\bm{g}(s)}\sum_{\ell=0,\text{ even}}^{m-1}  \frac{(\epsilon a)^\ell}{((\varphi(s)-1)^2+\epsilon^2a^2)^{(n-m-1+\ell)/2}} \\
&\le C\norm{\bm{g}}_{\mc{C}_a(-1,1)} \epsilon^{m+2-n} a^{m+1-n},
\end{align*}
since $n\ge m+2$. Combining the estimates for $I_1$ through $I_4$, we obtain Lemma \ref{est_free2}.
\end{proof}

Next we show the following free endpoint analogue of Lemma 3.5 in the closed loop setting \cite{closed_loop}. Again, the $\mc{C}^1$ term of this estimate is essentially identical to the analogous closed loop estimate. However, as in Lemma \ref{est_free2}, an additional $\mc{C}_a$ term is required to account for the fiber endpoints. 

\begin{lemma}\label{est_free3}
Let $\bm{R}$ be as in \eqref{R_expand}, $a=a(\varphi(s))$ as in Definition \ref{admissible_a}, and $\bm{g}\in \mc{C}^1(-1,1)\cap \mc{C}_a(-1,1)$ satisfying \eqref{decay_condition}. Consider even integer $m\ge 0$ and odd integer $n\ge m+3$. Then, for $\varphi(s)\in(-\eta_\epsilon,\eta_\epsilon)$ and sufficiently small $\epsilon$, we have
\begin{equation}
\begin{split}
\bigg| \int_{\varphi(s)-1}^{\varphi(s)+1} &\frac{\bars^m}{\abs{\bm{R}}^n}\bm{g}(\varphi(s)-\bars)d\bars - (\epsilon a)^{m+1-n}d_{mn} \bm{g}(s) \bigg| \\
&\le C\big(\norm{\bm{g}}_{\mc{C}^1(-1,1)}(\epsilon a)^{m+2-n} +\norm{\bm{g}}_{\mc{C}_a(-1,1)}\epsilon^{m+2-n} a^{m+1-n} \big),\\
d_{mn}&=\int_{-\infty}^\infty \frac{\tau^m}{(\tau^2+1)^{n/2}}d\tau.
\end{split}
\end{equation}
The constant $C$ depends on $c_\Gamma$, $\kappa_{\max}$, $\delta_a$, $c_a$, $c_\varphi$, and $c_\eta$ from Section \ref{geometry_section}, but not on $\epsilon$. 
\end{lemma}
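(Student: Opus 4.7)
The plan is to mirror the proof of Lemma \ref{est_free2}, adjusted for the even-$m$, odd-$n$ regime. Since the rescaling $\bars=\epsilon a\tau$ gives $\bm{g}(s)(\epsilon a)^{m+1-n}d_{mn}=\bm{g}(s)\int_{-\infty}^{\infty}\bars^m(\bars^2+\epsilon^2a^2)^{-n/2}\,d\bars$, I would subtract this quantity from the left-hand side and decompose the resulting difference as $I_1+I_2+I_3+I_4$, where
\begin{align*}
I_1 &= \int_{\varphi(s)-1}^{\varphi(s)+1}\frac{\bars^m}{\abs{\bm{R}}^n}\bigl(\bm{g}(\varphi(s)-\bars)-\bm{g}(s)\bigr)\,d\bars, \\
I_2 &= \bm{g}(s)\int_{\varphi(s)-1}^{\varphi(s)+1}\bars^m\Bigl(\tfrac{1}{\abs{\bm{R}}^n}-\tfrac{1}{(\bars^2+\epsilon^2a^2)^{n/2}}\Bigr)\,d\bars, \\
I_3 &= -\bm{g}(s)\int_{\varphi(s)+1}^{\infty}\frac{\bars^m}{(\bars^2+\epsilon^2a^2)^{n/2}}\,d\bars, \\
I_4 &= -\bm{g}(s)\int_{-\infty}^{\varphi(s)-1}\frac{\bars^m}{(\bars^2+\epsilon^2a^2)^{n/2}}\,d\bars.
\end{align*}
By the symmetry of $\varphi$ (Definition \ref{varphi_def}), it suffices to treat $0\le\varphi(s)<\eta_\epsilon$.

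The pieces $I_1$, $I_2$, $I_3$ all collapse into the $\mc{C}^1$ term of the stated bound. For $I_1$, the Taylor estimate $\abs{\bm{g}(\varphi(s)-\bars)-\bm{g}(s)}\le(\abs{\bars}+c_\varphi\epsilon^2)\norm{\bm{g}}_{\mc{C}^1(-1,1)}$ splits the integrand into two pieces: the $\abs{\bars}$ piece is handled by Lemma \ref{est_free1} with $n\ge (m+1)+2$ to produce $(\epsilon a)^{m+2-n}$, while the $c_\varphi\epsilon^2$ piece is handled by Lemma \ref{est_free1_new} rather than Lemma \ref{est_free1}, since the former trades the unusable $\epsilon^2(\epsilon a)^{m+1-n}$ for the cleaner $\epsilon^2\cdot\epsilon^{m-n}a^{m+2-n}=(\epsilon a)^{m+2-n}$ even when $a\ll\epsilon$. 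The $I_2$ estimate is identical in structure to that in Lemma \ref{est_free2}: Lemma \ref{R_ests_free} gives $\abs{\abs{\bm{R}}^{-n}-(\bars^2+\epsilon^2a^2)^{-n/2}}\le C\bars^2(\bars^2+\epsilon^2a^2)^{-(n+1)/2}$, after which Lemma \ref{integral_est} produces $(\epsilon a)^{m+2-n}$. For $I_3$, since $\varphi(s)\ge 0$ and $n\ge m+3$, the integrand is dominated by $\bars^{m-n}$ on $[\varphi(s)+1,\infty)\subset[1,\infty)$, giving $\abs{I_3}\le C\norm{\bm{g}}_{\mc{C}^1(-1,1)}(\epsilon a)^{m+2-n}$ because $(\epsilon a)^{m+2-n}\ge 1$.

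The main obstacle is $I_4$, the one genuinely new ingredient compared to Lemma \ref{est_free2}: for odd $m$ there, the analogous tail integral was handled by direct antidifferentiation plus Lemma \ref{aux_est}, whereas for even $m$ the integrand has no antisymmetry and in fact carries the bulk of the full-line integral. I would split into two subcases. For $0\le\varphi(s)\le 1$, substituting $\tau=-\bars$ converts $I_4$ into $\bm{g}(s)\int_{1-\varphi(s)}^{\infty}\tau^m(\tau^2+\epsilon^2a^2)^{-n/2}\,d\tau$, and splitting the integration region at $\max(1-\varphi(s),\epsilon a)$ yields the pointwise bound
\[
\int_{-\infty}^{\varphi(s)-1}\frac{\bars^m}{(\bars^2+\epsilon^2a^2)^{n/2}}\,d\bars \;\le\; \frac{C}{((\varphi(s)-1)^2+\epsilon^2a^2)^{(n-m-1)/2}},
\]
at which point Lemma \ref{aux_est} with $j=n-m-1\ge 2$ (valid precisely because $\bm{g}\in\mc{C}_a(-1,1)$) converts this to $C\norm{\bm{g}}_{\mc{C}_a(-1,1)}\epsilon^{m+2-n}a^{m+1-n}$. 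For $1<\varphi(s)<\eta_\epsilon$ (which forces $s$ within $O(\epsilon^2)$ of $\pm 1$), the spheroidal endpoint condition gives both $\varphi(s)-1\le c_\eta\epsilon^2$ and $a\le C\epsilon$; splitting $\int_{-\infty}^{\varphi(s)-1}=\int_{-\infty}^{0}+\int_{0}^{\varphi(s)-1}$ and rescaling $\bars=\epsilon a\tau$ in each integral gives a total of order $(\epsilon a)^{m+1-n}$. Combining with $\abs{\bm{g}(s)}\le Ca\norm{\bm{g}}_{\mc{C}_a(-1,1)}$ (from the $\mc{C}_a$ definition together with the spheroidal relation between $\sqrt{1-s^2}$ and $a$) and with $a\le C\epsilon$ converts the product $\abs{\bm{g}(s)}(\epsilon a)^{m+1-n}$ into the desired $C\norm{\bm{g}}_{\mc{C}_a(-1,1)}\epsilon^{m+2-n}a^{m+1-n}$. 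Summing the bounds on $I_1,\dots,I_4$ yields the stated estimate.
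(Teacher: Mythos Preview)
Your proof is correct and follows essentially the same approach as the paper: the identical decomposition $I_1+I_2+I_3+I_4$, the same estimates on $I_1,I_2,I_3$ via Lemmas \ref{est_free1}, \ref{est_free1_new}, \ref{R_ests_free}, and \ref{integral_est}, and the same reliance on Lemma \ref{aux_est} for $I_4$. The only minor variation is in the $I_4$ bookkeeping---the paper splits at $\varphi(s)=1-\delta_a$ and explicitly antidifferentiates using odd $n$, whereas you split at $\varphi(s)=1$ and bound the tail integral directly before invoking Lemma \ref{aux_est}---but both routes land on the same $\epsilon^{m+2-n}a^{m+1-n}\norm{\bm{g}}_{\mc{C}_a}$ bound.
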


\begin{proof}
Again, it suffices to prove Lemma \ref{est_free3} for $0\le \varphi(s)<\eta_\epsilon$; the result for $-\eta_\epsilon <\varphi(s)\le 0$ follows by symmetry of $\varphi(s)$ (see Definition \ref{varphi_def}). \\

As in the proofs of Lemma \ref{est_free2} and the closed loop Lemma 3.5 \cite{closed_loop}, we write
\begin{equation}
\begin{split}
\int^{\varphi(s)-1}_{\varphi(s)+1}& \frac{\bars^m \bm{g}(\varphi(s)-\bars)}{\abs{\bm{R}}^n}d\bars - \bm{g}(s)\int_{-\infty}^{\infty} \frac{\bars^m}{(\bars^2+\epsilon^2a^2)^{n/2}}d\bars =I_1+I_2+I_3 +I_4, \\
I_1 &=\int_{\varphi(s)-1}^{\varphi(s)+1} \frac{\bars^m (\bm{g}(\varphi(s)-\bars)- \bm{g}(s))}{\abs{\bm{R}}^n}d\bars,\\
I_2 &=\int_{\varphi(s)-1}^{\varphi(s)+1} \bars^m\bigg(\frac{1}{\abs{\bm{R}}^n}-\frac{1}{(\bars^2+\epsilon^2a^2)^{n/2}}\bigg)\bm{g}(s) \ts d\bars,\\
I_3 &=\bm{g}(s)\int_{\varphi(s)+1}^\infty \frac{\bars^m}{(\bars^2+\epsilon^2a^2)^{n/2}}d\bars, \\
I_4 &=\bm{g}(s)\int_{-\infty}^{\varphi(s)-1} \frac{\bars^m}{(\bars^2+\epsilon^2a^2)^{n/2}}d\bars.
\end{split}
\end{equation}

We estimate $I_1$, $I_2$, and $I_3$ exactly as in Lemma \ref{est_free2}, yielding
\begin{align*}
\abs{I_1} &\le C\norm{\bm{g}'}_{\mc{C}(-1,1)}(\epsilon a)^{m+2-n}, \\
\abs{I_2} &\le C\norm{\bm{g}}_{\mc{C}(-1,1)}(\epsilon a)^{m+2-n},\\
\abs{I_3} &\le \norm{\bm{g}}_{\mc{C}(-1,1)}.
\end{align*}

For $I_4$, we first consider $\varphi(s) \le 1-\delta_a$, where $\delta_a$ is the constant defined in Definition \ref{admissible_a}. Here,  
\begin{align*}
\abs{I_4} &\le \norm{\bm{g}}_{\mc{C}(-1,1)}\int_{1-\varphi(s)}^{\infty} \bars^{m-n}d\bars = \frac{(1-\varphi(s))^{m-n+1}}{n-m-1}\norm{\bm{g}}_{\mc{C}(-1,1)} \le \frac{\delta_a^{m-n+1}}{n-m-1}\norm{\bm{g}}_{\mc{C}(-1,1)}.
\end{align*}
Since $\delta_a$ is a constant independent of $\epsilon$ and $n\ge m+3$, we obtain the desired estimate for $\varphi(s)\le 1-\delta_a$. \\

For $\varphi(s)>1-\delta_a$, using that $n$ is odd, we can directly integrate the expression for $I_4$ to obtain
\begin{align*}
I_4 &= \bm{g}(s) \sum_{\ell=0, \text{ even}}^{n-1} C_\ell  (\epsilon a)^{m-n+1-\ell}\frac{(\varphi(s)-1)^{n-2-\ell}}{((\varphi(s)-1)^2+\epsilon^2a^2)^{(n-2)/2}}.
\end{align*}

Then, using Lemma \ref{aux_est}, we have
\begin{align*}
\abs{I_4} &\le C\abs{\bm{g}(s)} \sum_{\ell=0, \text{ even}}^{n-1} \frac{(\epsilon a)^{m-n+1-\ell}}{((\varphi(s)-1)^2+\epsilon^2a^2)^{\ell/2}} \le C\norm{\bm{g}}_{\mc{C}_a(-1,1)} \epsilon^{m+2-n}a^{m+1-n} .
\end{align*}

Furthermore, for $\varphi(s)\in (-\eta_\epsilon,\eta_\epsilon)$, we have 
\[ \int_{-\infty}^{\infty} \frac{\bars^m}{(\bars^2+\epsilon^2a^2)^{n/2}}d\bars = (\epsilon a)^{m+1-n}\int_{-\infty}^{\infty} \frac{\tau^m}{(\tau^2+1)^{n/2}} d\tau \equiv (\epsilon a)^{m+1-n}d_{nm}. \]
Combining this expression with the estimates for $I_1$ through $I_4$ yields Lemma \ref{est_free3}.
\end{proof}

Finally, we show a lemma related to the centerline estimate \eqref{center_err_free} of Theorem \ref{free_err_theorem}. This will be the free end analogue to Lemma 3.6 in the closed loop setting \cite{closed_loop}. In fact, the result in the free endpoint setting is nearly identical to the closed loop case since the estimate \eqref{center_err_free} extends only along the effective centerline of the slender body ($s\in(-1,1)$ rather than $\varphi(s)\in (-\eta_\epsilon,\eta_\epsilon)$). Accordingly, to show the following lemma, it will be useful to define a few quantities that have not yet appeared in the free end setting. \\

First, for the centerline computation, it will often be convenient to work in terms of 
\[ \ws := s-t \quad  \text{ for } s,t \text{ both in } (-1,1)\]
rather than in terms of $\bars=\varphi(s)-t$. Recall the difference $\bm{R}_{\rm C}$ \eqref{RC} between two points on the effective fiber centerline. Using the $\mc{C}^2$ regularity of $\X$, we write
\begin{equation}\label{RC_def}
\bm{R}_{\rm C}(s,\ws) = \X(s) - \X(s-\ws) = \ws\be_{\rm t}(s)+\ws^2\bm{Q}(s,\ws); \quad \ws:= s-t
\end{equation}

It will also be useful to distinguish between $\bm{R}$ evaluated at $\varphi(s)$ along the true fiber centerline, and $\bm{R}$ evaluated at $s$ along the effective centerline. Thus we define 
\begin{equation}\label{tR_def}
\wR(s,\ws,\theta) = \bm{R}(s,\ws,\theta) = \ws \be_{\rm t}(s) + \epsilon a(s) \be_\rho(s,\theta)+\ws^2\bm{Q}(s,\ws).
\end{equation}
Hereafter, $\bm{R}$ will always be evaluated at $\varphi(s)\in (-\eta_\epsilon,\eta_\epsilon)$, while $\wR$ will always be evaluated at $s\in (-1,1)$. \\

Using Lemma \ref{R_ests_free} and \eqref{no_intersect}, we note the following useful upper and lower bounds for $\bm{R}_{\rm C}$ and $\wR$ in terms of $\ws$:
\begin{equation}\label{upper_bds}
\big| |\wR|-\sqrt{\ws^2+\epsilon^2a^2(s)} \big| \le \frac{\kappa_{\max}}{2}\ws^2, \quad \big| \abs{\bm{R}_{\rm C}}-\abs{\ws} \big|\le \frac{\kappa_{\max}}{2}\ws^2,
\end{equation}
\begin{equation}\label{lower_bds}
|\wR|\ge C\sqrt{\ws^2+\epsilon^2a^2(s)}, \quad \abs{\bm{R}_{\rm C}} \ge c_\Gamma\abs{\ws}.
\end{equation}

Using \eqref{RC_def} and \eqref{tR_def}, we show the following lemma.
\begin{lemma}\label{center_lem_free}
Let $\bm{R}$ be as in \eqref{R_expand} and $\bm{R}_{\rm C}$ as in \eqref{RC_def}, and let $n=1,3$. Then for $\bm{g}\in \mc{C}^1(-1,1)$ and $\epsilon$ sufficiently small, we have
\begin{equation}\label{cent_lem_eq}
\begin{aligned}
\bigg|\int_{s-1}^{s+1} \frac{|\ws|^{n-1}}{|\wR|^n}\bm{g}(s-\ws)d\ws - &\int_{s-1}^{s+1} \bigg(\frac{\abs{\ws}^{n-1}}{\abs{\bm{R}_{\rm C}}^n} \bm{g}(s-\ws)-\frac{\bm{g}(s)}{\abs{\ws} } \bigg)d\ws + L(s)\bm{g}(s) + (n-1)\bm{g}(s) \bigg| \\
& \le \epsilon \abs{\log\epsilon} C\norm{\bm{g}}_{\mc{C}^1(-1,1)},
\end{aligned}
\end{equation} 
where
\[ L(s) = -\log\bigg( \frac{2(1-s^2)+2\sqrt{(1-s^2)^2+\epsilon^2a^2(s)}}{\epsilon^2a^2(s)}\bigg) \]
and the constant $C$ depends on $c_\Gamma$ and $\kappa_{\max}$ but not on $\epsilon$.
\end{lemma}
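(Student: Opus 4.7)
The integrand on the left has two formal singularities: $\bm{g}(s)/|\ws|$ and $|\ws|^{n-1}\bm{g}(s-\ws)/|\bm{R}_{\rm C}|^n$, which cancel to leading order near $\ws=0$ because $|\bm{R}_{\rm C}| = |\ws| + O(\ws^2)$. The first step is to rewrite the left-hand side of \eqref{cent_lem_eq} by grouping these singular pieces together so the combined integrand is absolutely integrable on $(s-1,s+1)$, then isolate a $\bm{g}(s)$ factor from the non-singular remainder. Writing $\bm{g}(s-\ws)=\bm{g}(s)+h(\ws)$ with $|h(\ws)|\le |\ws| \norm{\bm{g}}_{\mc{C}^1}$, the $h(\ws)$ part multiplies $|\ws|^{n-1}(|\wR|^{-n}-|\bm{R}_{\rm C}|^{-n})$. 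Using the identity $|\wR|^2-|\bm{R}_{\rm C}|^2 = \epsilon^2 a^2(s)+2\ws^2 \epsilon a(s)(\be_\rho\cdot\bm{Q})$ (obtained directly from \eqref{RC_def}-\eqref{tR_def} once one notices $\be_{\rm t}\perp\be_\rho$), together with the lower bounds \eqref{lower_bds}, a case split $|\ws|\gtrless \epsilon a$ shows this contribution is bounded by $C\epsilon\norm{\bm{g}}_{\mc{C}^1}$ after integration, which is better than the target.

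\textbf{Main integral.} What remains is to prove the scalar identity
\[
J(s):=\int_{s-1}^{s+1}\!\left[\frac{|\ws|^{n-1}}{|\wR|^{n}}-\frac{|\ws|^{n-1}}{|\bm{R}_{\rm C}|^{n}}+\frac{1}{|\ws|}\right]d\ws = -L(s)-(n-1)+O(\epsilon\abs{\log\epsilon}).
\]
Introduce the reference lengths $r_1=\sqrt{\ws^2+\epsilon^2a^2(s)}$ and $r_0=|\ws|$ and decompose $J(s)=J_{\rm main}(s)+J_{\rm err}(s)$, where $J_{\rm main}$ replaces $|\wR|\mapsto r_1$ and $|\bm{R}_{\rm C}|\mapsto r_0$, and $J_{\rm err}$ collects the swap errors $A_1-A_2$ with $A_1=|\ws|^{n-1}|\wR|^{-n}-|\ws|^{n-1}r_1^{-n}$ and likewise $A_2$. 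Since $|\ws|^{n-1}/r_0^{n}=1/|\ws|$, the two $1/|\ws|$ terms cancel inside $J_{\rm main}$, so
\[
J_{\rm main}(s) = \int_{s-1}^{s+1}\frac{|\ws|^{n-1}}{r_1^{n}}\, d\ws.
\]
For $n=1$ this is $\sinh^{-1}(\ws/\epsilon a)$ evaluated between the limits, which rearranges (after rationalizing $\ws-1+\sqrt{(\ws-1)^2+\epsilon^2 a^2}$ by its conjugate) to $-L(s)+O(\epsilon)$. For $n=3$ use $\ws^2/r_1^3 = 1/r_1 - \epsilon^2 a^2/r_1^3$ and the antiderivative $[\ws/r_1]$ to get $-L(s)-2+O(\epsilon)$; note $(n-1)=2$ matches the limit contribution $\ws/r_1|_{s-1}^{s+1}$ up to $O(\epsilon)$ for $s\in(-1,1)$.

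\textbf{Swap-error cancellation (main obstacle).} The essential difficulty is showing $J_{\rm err}(s)=O(\epsilon\abs{\log\epsilon})$, since neither $\int|A_1|\,d\ws$ nor $\int|A_2|\,d\ws$ is small by itself --- both are $O(1)$. Write
\[
A_1 = \frac{r_1^2-|\wR|^2}{r_1|\wR|(r_1+|\wR|)}|\ws|^{n-1}\cdot(\text{telescoping factor}),\qquad A_2 = \frac{r_0^2-|\bm{R}_{\rm C}|^2}{r_0|\bm{R}_{\rm C}|(r_0+|\bm{R}_{\rm C}|)}|\ws|^{n-1}\cdot(\text{telescoping factor}),
\]
and note that the numerators differ only by $-2\ws^2\epsilon a(\be_\rho\cdot\bm{Q})$, while the denominators differ by factors controlled by $r_1^k-r_0^k\le C\epsilon^2 a^2 r_1^{k-2}$ for $k=1,2$. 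Grouping, one obtains the pointwise bound
\[
|A_1-A_2|\le C\frac{\epsilon a\,\ws^2}{(\ws^2+\epsilon^2a^2)^{(n+2)/2}} + C\frac{\epsilon^2 a^2\,|\ws|^{n-1}}{|\ws|^{n}\sqrt{\ws^2+\epsilon^2 a^2}\,(\sqrt{\ws^2+\epsilon^2 a^2}+|\ws|)},
\]
valid for both $n=1$ and $n=3$. The first term integrates via Lemma \ref{integral_est} to $C\epsilon\abs{\log\epsilon}$ (the log coming from the $n=1$-type kernel $\int\ws^2/r_1^{3}\le \int 1/r_1$); the second term is bounded by splitting $|\ws|\lessgtr \epsilon a$, giving $C\epsilon$ on each piece. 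This is where the shared structure $\bm{Q}$ between $\wR$ and $\bm{R}_{\rm C}$ is essential: without the common tangential correction $\ws^2\bm{Q}$ canceling between $\delta=|\wR|^2-r_1^2$ and $\delta'=|\bm{R}_{\rm C}|^2-r_0^2$, the cubic-in-$\ws$ pieces of $A_1$ and $A_2$ would each contribute $O(1)$ to the integral. Combining all three estimates yields \eqref{cent_lem_eq}.
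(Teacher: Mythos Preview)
Your overall strategy is sound and is essentially a reorganization of the paper's argument: both separate off the $\bm{g}(s-\ws)-\bm{g}(s)$ contribution, both exploit that $|\wR|^2-|\bm{R}_{\rm C}|^2$ and $r_1^2-r_0^2$ share the common term $\epsilon^2a^2$, and both compute the explicit logarithmic integral. The paper compares $I_R=|\wR|^{-1}-|\bm{R}_{\rm C}|^{-1}$ directly to the explicit kernel $\dfrac{(\epsilon a)^2}{|\ws|r_1(r_1+|\ws|)}$ and then handles $n=3$ by a separate reduction (its $\bm{J}_3$); you instead insert the intermediate scales $r_1,r_0$ symmetrically. Either route works.

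However, your displayed pointwise bound for $|A_1-A_2|$ is wrong on both terms, and this is a genuine gap. First, for $n=3$ the first term should be $C\epsilon a\,|\ws|^{n+1}/r_1^{n+2}$, not $C\epsilon a\,\ws^2/r_1^{n+2}$: you have dropped the prefactor $|\ws|^{n-1}$ when passing from $A_1$ to the $(\delta-\delta')$ piece. As written, $\int \epsilon a\,\ws^2/r_1^5\,d\ws\sim(\epsilon a)^{-1}$ by Lemma~\ref{integral_est}, which is useless. Second, your second term $\dfrac{\epsilon^2a^2\,|\ws|^{n-1}}{|\ws|^n r_1(r_1+|\ws|)}$ is, for $n=1$, exactly $\dfrac{1}{|\ws|}-\dfrac{1}{r_1}$; this is not integrable at $\ws=0$, and on $|\ws|>\epsilon a$ it integrates to $O(1)$, not $O(\epsilon)$. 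In fact this expression is the paper's \emph{main} correction kernel (the one that produces $L(s)$), not an error term.

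The fix is straightforward once you keep track of the $|\delta'|\le C|\ws|^3$ factor. Writing $A_1-A_2=-\dfrac{\delta-\delta'}{D_1}-\delta'\Big(\dfrac{1}{D_1}-\dfrac{1}{D_0}\Big)$ with $D_1=|\wR|r_1(r_1+|\wR|)$ (and the analogous product of five factors for $n=3$), the first piece is $O(\epsilon a|\ws|^{n+1}/r_1^{n+2})$ and integrates to $C\epsilon|\log\epsilon|$. For the second piece on $|\ws|>\epsilon a$ one has $|D_1-D_0|\le C|\ws|^{2(n-1)}\big(|r_1-r_0|+||\wR|-|\bm{R}_{\rm C}||\big)\le C|\ws|^{2n-2}(\epsilon^2a^2/|\ws|+\epsilon a|\ws|)$, which combined with $|\delta'|\le C|\ws|^3$ and $D_0D_1\ge C|\ws|^{4n+2}$ gives an integrand of size $C(\epsilon^2a^2/\ws^2+\epsilon a)$, integrating to $O(\epsilon)$. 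On $|\ws|<\epsilon a$ you simply bound $|A_1|+|A_2|$ separately, each giving $O(\epsilon a)$ after integration. With these corrections your argument goes through.
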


\begin{proof}
Note that this result follows by almost exactly the same arguments used to show Lemma 3.6 in \cite{closed_loop}.
We begin by considering a bound for the expression
\begin{equation}\label{bmJ}
\begin{aligned}
\bm{J} = \int_{s-1}^{s+1} &\bigg[\bigg(\frac{\abs{\ws}^{n-1}}{|\wR|^n} - \frac{\abs{\ws}^{n-1}}{\abs{\bm{R}_{\rm C}}^n} \bigg) \bm{g}(s-\ws) \\
&\qquad + \frac{(\epsilon a)^2 \bm{g}(s)}{\abs{\ws}\sqrt{\ws^2+(\epsilon a)^2} (\abs{\ws}+\sqrt{\ws^2+(\epsilon a)^2})} + (n-1)\bm{g}(s)\bigg]d\ws. 
\end{aligned}
\end{equation}
Here and for the remainder of this proof only, we have $a=a(s)$ instead of the usual $a=a(\varphi(s))$, unless otherwise specified. \\

To obtain a bound for $\bm{J}$, we consider $\bm{J}$ as the sum
\begin{align*}
\bm{J} &= \bm{J}_1 + \bm{J}_2+\bm{J}_3; \\
\bm{J}_1 &:= \int_{s-1}^{s+1} \bigg(\frac{\ws^{n-1}}{|\wR|^n} - \frac{\ws^{n-1}}{\abs{\bm{R}_{\rm C}}^n} \bigg) (\bm{g}(s-\ws)-\bm{g}(s))d\ws \\
\bm{J}_2 &:= \int_{s-1}^{s+1} \bigg(\frac{1}{|\wR|} - \frac{1}{\abs{\bm{R}_{\rm C}}} +\frac{(\epsilon a)^2}{\abs{\ws}\sqrt{\ws^2+(\epsilon a)^2} (\abs{\ws}+\sqrt{\ws^2+(\epsilon a)^2})}  \bigg) \bm{g}(s) d\ws \\
\bm{J}_3 &:= \int_{s-1}^{s+1} \bigg(\frac{\ws^{n-1}}{|\wR|^n} -\frac{1}{|\wR|} - \frac{\bars^{n-1}}{\abs{\bm{R}_{\rm C}}^n} +\frac{1}{\abs{\bm{R}_{\rm C}}} \bigg) \bm{g}(s) d\ws + (n-1) \bm{g}(s).
\end{align*}

For ease of notation in estimating $\bm{J}_1$ through $\bm{J}_3$, we use \eqref{RC_def} and \eqref{tR_def} to define the quantity  
\begin{equation}\label{IR_free}
I_R := \frac{1}{|\wR|} - \frac{1}{\abs{\bm{R}_{\rm C}}} = \frac{-\epsilon^2a^2 - 2\epsilon a \ws \bm{Q}\cdot\be_\rho}{|\wR|\abs{\bm{R}_{\rm C}}(|\wR|+\abs{\bm{R}_{\rm C}})}.
\end{equation}

Now, note that by \eqref{lower_bds}, we have 
\[ \abs{\frac{\ws^{n-1}}{|\wR|^n} - \frac{\ws^{n-1}}{\abs{\bm{R}_{\rm C}}^n}} \le \abs{I_R}\sum_{\ell=0}^{n-1}\frac{\ws^{n-1}}{|\wR|^\ell \abs{\bm{R}_{\rm C}}^{n-1-\ell}}\le C\abs{I_R}  \le C\bigg( \frac{(\epsilon a)^2}{\abs{\ws}(\ws^2+ (\epsilon a)^2)} + \frac{\epsilon a \abs{\ws}}{\ws^2+(\epsilon a)^2}\bigg), \]
and, since $\bm{g}\in \mc{C}^1(-1,1)$, we have 
\begin{align*}
\abs{\bm{g}(s-\ws) - \bm{g}(s)} &\le \ws \norm{\bm{g}'}_{\mc{C}(-1,1)}.
\end{align*}

Then, by Lemma \ref{integral_est} with $s$ in place of $\varphi(s)$, we have 
\begin{align*}
\abs{\bm{J}_1} &\le C\norm{\bm{g}'}_{\mc{C}(-1,1)} \int_{s-1}^{s+1} \frac{(\epsilon a)^2+\epsilon a\ws^2}{\ws^2+(\epsilon a)^2} d\ws \le \epsilon a(s) C\norm{\bm{g}'}_{\mc{C}(-1,1)}.
\end{align*}

Next, using \eqref{IR_free}, the integrand of $\bm{J}_2$ satisfies 
\begin{align*}
\bigg| I_R &+\frac{(\epsilon a)^2}{\abs{\ws}\sqrt{\ws^2+(\epsilon a)^2} (\abs{\ws}+\sqrt{\ws^2+(\epsilon a)^2})} \bigg| \\
&\le \abs{\frac{1}{\sqrt{\ws^2+(\epsilon a)^2}} - \frac{1}{|\wR|}} \frac{(\epsilon a)^2}{\abs{\bm{R}_{\rm C}} (\abs{\bm{R}_{\rm C}}+|\wR|)} + \abs{\frac{1}{\abs{\ws}} - \frac{1}{\abs{\bm{R}_{\rm C}}} } \frac{(\epsilon a)^2}{\sqrt{\ws^2+(\epsilon a)^2} (\abs{\bm{R}_{\rm C}}+|\wR|)}  \\ 
&\quad +\abs{\frac{1}{(\abs{\ws}+\sqrt{\ws^2+(\epsilon a)^2})}- \frac{1}{(\abs{\bm{R}_{\rm C}}+|\wR|)}}\frac{(\epsilon a)^2}{\abs{\ws}\sqrt{\ws^2+ (\epsilon a)^2}} + \frac{C\epsilon a\ws^2}{|\wR|\abs{\bm{R}_{\rm C}}(\abs{\bm{R}_{\rm C}}+|\wR|)} \\
&\le C \frac{(\epsilon a)^2+\epsilon a \abs{\ws}}{\ws^2+(\epsilon a)^2}.
\end{align*}
Here we have used \eqref{upper_bds} to bound each of the difference expressions, and \eqref{lower_bds} to bound each of the denominators. Then by Lemma \ref{int_est_eqn} we obtain
\begin{align*}
\abs{\bm{J}_2} \le C \int_{s-1}^{s+1} \frac{((\epsilon a)^2+\epsilon a \abs{\ws})|\bm{g}(s)|}{\ws^2+(\epsilon a)^2} d\ws \le C \epsilon\abs{\log\epsilon} \norm{\bm{g}}_{\mc{C}(-1,1)}.
\end{align*}

Now, for $n=3$, we must also estimate $\bm{J}_3$. We have
\begin{align*}
\abs{\bm{J}_3} &\le \norm{\bm{g}}_{\mc{C}(-1,1)}\int_{s-1}^{s+1} \bigg( \bigg|\frac{\ws^2+ (\epsilon a)^2-|\wR|^2}{|\wR|^3} - \frac{\ws^2-\abs{\bm{R}_{\rm C}}^2}{\abs{\bm{R}_{\rm C}}^3}\bigg| +\bigg|\frac{(\epsilon a)^2}{|\wR|^3} - \frac{(\epsilon a)^2}{\sqrt{\ws^2+(\epsilon a)^2}^3} \bigg|  \bigg) d\ws \\
&\qquad + \norm{\bm{g}}_{\mc{C}(-1,1)}\bigg|\int_{s-1}^{s+1} \frac{(\epsilon a)^2}{\sqrt{\ws^2+(\epsilon a)^2}^3}d\ws - 2 \bigg| \\
&\le \norm{\bm{g}}_{\mc{C}(-1,1)}\int_{s-1}^{s+1} \bigg(\frac{2\ws^3\bm{Q}\cdot\be_{\rm t}-\ws^4\abs{\bm{Q}}^2-2\epsilon a \ws^2\bm{Q}\cdot\be_\rho}{|\wR|^3} - \frac{2\ws^3\bm{Q}\cdot\be_{\rm t}-\ws^4\abs{\bm{Q}}^2}{\abs{\bm{R}_{\rm C}}^3} \bigg) d\ws \\
&\qquad +C \norm{\bm{g}}_{\mc{C}(-1,1)}\int_{s-1}^{s+1} \frac{\ws^2}{(\ws^2+(\epsilon a)^2)^2} d\ws  + \norm{\bm{g}}_{\mc{C}(-1,1)}\bigg|\frac{2}{\sqrt{1+4(\epsilon a)^2}} - 2 \bigg| \\
&\le C\norm{\bm{g}}_{\mc{C}(-1,1)}\int_{s-1}^{s+1} (|\ws|^3+\ws^4)\bigg(\abs{I_R} \sum_{\ell=0}^2\frac{1}{\abs{\bm{R}_{\rm C}}^\ell|\wR|^{2-\ell}}  \bigg) d\ws+C \epsilon a \abs{\log(\epsilon a)} \norm{\bm{g}}_{\mc{C}(-1,1)} \\
&\le C\norm{\bm{g}}_{\mc{C}(-1,1)}\int_{s-1}^{s+1} \frac{(\epsilon a)^2+\epsilon a \ws^2 + (\epsilon a)^2|\ws|+\epsilon a |\ws|^3}{|\wR|^2} d\ws+ C \epsilon \abs{\log\epsilon} \norm{\bm{g}}_{\mc{C}(-1,1)} \\ 
&\le C\epsilon \abs{\log\epsilon} \norm{\bm{g}}_{\mc{C}(-1,1)}.
\end{align*}
Here we have used \eqref{RC_def}, \eqref{tR_def}, and \eqref{Rn_est} in the second inequality, \eqref{IR_free} and Lemmas \ref{integral_est} and \ref{est_free1} in the third inequality, and \eqref{lower_bds} in the fourth inequality. \\

It remains to show that $\bm{J}$ is in fact close to the expression \eqref{cent_lem_eq} of Lemma \ref{center_lem_free}. First note that 
\begin{align*}
\int_{s-1}^{s+1}&\bigg(\frac{(\epsilon a)^2}{\abs{\ws}\sqrt{\ws^2+(\epsilon a)^2} (\abs{\ws}+\sqrt{\ws^2+(\epsilon a)^2})} -\frac{1}{\abs{\ws}} \bigg) d\ws \\
&= -\int_{s-1}^{s+1}\frac{\abs{\ws}\sqrt{\ws^2+(\epsilon a)^2}+\ws^2}{\abs{\ws}\sqrt{\ws^2+ (\epsilon a)^2} (\abs{\ws}+\sqrt{\ws^2+ (\epsilon a)^2})} d\ws = -\int_{s-1}^{s+1}\frac{1}{\sqrt{\ws^2+(\epsilon a)^2} } d\ws \\
& = - \log\bigg(\frac{\big(\sqrt{(s+1)^2+(\epsilon a)^2}+(s+1)\big) \big(\sqrt{(s-1)^2+(\epsilon a)^2}-(s-1) \big)}{(\epsilon a)^2} \bigg).
\end{align*}

Now, let  
\[ L_1=\big(\sqrt{(s+1)^2+(\epsilon a)^2}+(s+1)\big) \big(\sqrt{(s-1)^2+(\epsilon a)^2}-(s-1) \big), \quad L_2=2(1-s^2)+2\sqrt{(1-s^2)^2+4\epsilon^2a^2}. \]
We work with the positive half of the fiber ($s\ge0$); the result for the negative half follows by exactly the same arguments. We can show that 
\begin{align*}
\abs{L_1-L_2} &= \bigg|\frac{(1-s)\epsilon^2a^2}{\sqrt{(1+s)^2+\epsilon^2a^2}+(1+s)} + \frac{[(1+s)^2-4]}{\sqrt{(1-s^2)^2+\epsilon^2a^2(1+s)^2}+\sqrt{(1+s^2)^2+4\epsilon^2a^2}} \\
&\qquad +\frac{[(1+s)^2-4]\epsilon^2a^2+\epsilon^2a^2(1-s)^2+\epsilon^4a^4}{\sqrt{(1-s^2)^2+\epsilon^2a^2(1+s)^2+\epsilon^2a^2(1-s)^2+\epsilon^4a^4}+\sqrt{(1-s^2)^2+4\epsilon^2a^2}} \bigg| \\
&\le C\frac{(1-s)\epsilon^2 a^2+\epsilon^4a^4}{(1-s)+\epsilon a}.
\end{align*}
Furthermore, we have that both
\[ L_1, L_2\ge C\big((1-s)+\epsilon a\big), \]
and therefore 
\[ \abs{\log L_1 - \log L_2} \le C\frac{(1-s)\epsilon^2a^2+\epsilon^4a^4}{\big((1-s)+\epsilon a \big)^2}. \]

Now, if $s\le 1-\varphi^{-1}(\delta_a)$ for $\delta_a$ as in Definition \ref{admissible_a}, then $\abs{\log L_1 - \log L_2} \le C\epsilon^2a^2$. \\

 Furthermore, if $s\ge1-\epsilon$, then, using that $a(s)\ge C\epsilon$ for $s\in(-1,1)$ by Definition \ref{admissible_a}, we have $\abs{\log L_1 - \log L_2} \le C\epsilon$. \\
 
 If $\varphi^{-1}(\delta_a)\le \epsilon$, then we conclude $\abs{\log L_1 - \log L_2} \le C\epsilon$. If not, we consider $1-\varphi^{-1}(\delta_a)\le s \le 1-\epsilon$. Writing $1-s=\epsilon^k$ for $\log(\varphi^{-1}(\delta_a))/\log\epsilon \le k \le 1$, as in the proof of Lemma \ref{aux_est}, we have 
\[ C_1\le \frac{a(s)}{\epsilon^{k/2}} \le C_2, \]
for some $C_1$, $C_2$ independent of $\epsilon$, by Definition \ref{admissible_a}. Then
\[ \abs{\log L_1 - \log L_2} \le C\frac{\epsilon^{2+2k}}{\epsilon^{2k}(1+\epsilon^{1-k/2})^2} \le C\epsilon^2,\]
and therefore
\[ \bigg| \bm{J} - \int_{s-1}^{s+1} \bigg(\frac{\abs{\ws}^{n-1}}{\abs{\bm{R}_{\rm C}}^n} \bm{g}(s-\ws)-\frac{\bm{g}(s)}{\abs{\ws} } \bigg)d\ws + L(s)\bm{g}(s) + (n-1)\bm{g}(s) \bigg| \le C\epsilon \norm{\bm{g}}_{\mc{C}(-1,1)}. \]
Thus we obtain Lemma \ref{center_lem_free}.
\end{proof}

\subsection{Slender body velocity residual}\label{vel_resid}
We first wish to determine the difference between the approximate slender body velocity $\bu^{\SB}$ \eqref{SBT_free} along the surface $\Gamma_\epsilon$ and our (partial) boundary velocity data: $\bu\big|_{\Gamma_\epsilon}$ is independent of $\theta$. Thus we seek to measure the degree to which the slender body approximation $\bu^{\SB}\big|_{\Gamma_\epsilon}$ depends on $\theta$. Using Lemmas \ref{est_free1}, \ref{est_free2}, and \ref{est_free3} in place of the corresponding closed loop lemmas (see Lemmas 3.3, 3.4, and 3.5, respectively, in \cite{closed_loop}), we can show a series of propositions analogous to Propositions 3.7, 3.8, and 3.9 in the closed loop setting \cite{closed_loop} which will allow us to quantify this $\theta$-dependence. Additionally, we show a bound for the residual between the slender body approximation \eqref{SBT_free} along the fiber surface and the centerline approximation \eqref{SBT_asymp_free}, which will eventually allow us to prove estimate \eqref{center_err_free} of Theorem \ref{free_err_theorem}. We begin with the following proposition. 

\begin{proposition}\label{theta_free}
Let $a=a(\varphi(s))$ be as in Definition \ref{admissible_a}, and let $\bm{f}\in \mc{C}^1(-1,1)\cap \mc{C}_a(-1,1)$ satisfy \eqref{decay_condition}. Consider $\bu^{\SB}(\bx)$ \eqref{SBT_free} for $\bx\in \Gamma_\epsilon$. For sufficiently small $\epsilon$, we have 
\begin{equation}
\abs{\frac{1}{\epsilon a} \frac{\p\bu^{\SB}}{\p\theta}} \le C \big( \norm{\bm{f}}_{\mc{C}^1(-1,1)}\abs{\log(\epsilon a)}+\norm{\bm{f}}_{\mc{C}_a(-1,1)}a^{-1} \big)
\end{equation}
where the constant $C$ depends on the constants $c_\Gamma$, $\kappa_{\max}$, $\delta_a$, $c_a$, $\bar c_a$, $c_\varphi$, and $c_\eta$ from Section \ref{geometry_section}, but not on $\epsilon$.
\end{proposition}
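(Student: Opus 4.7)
The plan is to differentiate the representation \eqref{SBT_free} of $\bu^{\SB}$ in $\theta$, observe that only $\bm{R}$ depends on $\theta$ (since at the surface $\rho=\epsilon a$ is fixed), and then expand all the resulting integrands into monomials $\bars^m/|\bm{R}|^n$ to which the technical Lemmas \ref{est_free1}, \ref{est_free2}, and \ref{est_free3} can be directly applied. From the derivative identities \eqref{deriv_IDs} we have $\partial \bm{R}/\partial \theta = \epsilon a\,\be_\theta$, and using the formulas
\[
\partial_{\bm{R}} \mc{S}(\bm{R})[\bm{w}] = -\frac{(\bm{w}\cdot\bm{R})\bf I}{|\bm{R}|^3} + \frac{\bm{w}\bm{R}^{\rm T}+\bm{R}\bm{w}^{\rm T}}{|\bm{R}|^3} - \frac{3(\bm{w}\cdot\bm{R})\bm{R}\bm{R}^{\rm T}}{|\bm{R}|^5},
\]
and the analogous expression for $\mc{D}$, chain rule gives $\partial_\theta \bu^{\SB}$ as a sum of integrals whose integrands are products of $\epsilon a$, a component of $\bm{f}(\varphi(s)-\bars)$, and factors of the form $\bars^j (\epsilon a)^k/|\bm{R}|^n$ with the parities of $j$ and $n$ being linked by the structure of $\bm{R} = \bars\be_{\rm t} + \epsilon a \be_\rho + \bars^2\bm{Q}$.

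Next, after dividing by $\epsilon a$, I would decompose each integrand into (i) a ``principal'' part depending only on $\bars\be_{\rm t} + \epsilon a \be_\rho$, (ii) a ``curvature'' correction from the $\bars^2\bm{Q}$ term, and (iii) a ``doublet coefficient'' correction from $\epsilon^2\bars A(\varphi(s),\bars)$ in \eqref{SBT_free}. The principal part contains the potentially dangerous terms, namely those with $n=m+1$ (which produce $|\log(\epsilon a)|$ via Lemma \ref{est_free1}) and those with $n=m+2$ for odd $m$ (controlled by Lemma \ref{est_free2} giving $|\log(\epsilon a)|$ plus the endpoint term $\|\bm{f}\|_{\mc{C}_a}/a$) and with $n\ge m+3$ for even $m$ (controlled by Lemma \ref{est_free3}, which identifies a leading constant $(\epsilon a)^{m+1-n}d_{mn}\bm{f}(s)$ together with a remainder of the desired order). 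The curvature correction carries extra powers of $\bars$ that gain regularity by Lemma \ref{est_free1_new}, and the doublet-coefficient correction gains an additional factor of $\epsilon$ which is enough to make it subleading. The $\mc{C}_a$ endpoint contributions assemble into the second term on the right-hand side of the proposition.

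The main obstacle is the same as in the closed-loop setting: the leading term from Lemma \ref{est_free3} applied to $\mc{S}$-derivatives (with $m=0$, $n=3$) produces a factor $(\epsilon a)^{-2}\bm{f}(s)$, which after multiplication by the prefactor $\epsilon a$ and division by $\epsilon a$ gives an $O((\epsilon a)^{-1})$ contribution---much too large. One must verify that this leading term is exactly cancelled by the corresponding leading term coming from the doublet $\mc{D}$, whose coefficient $\epsilon^2 a^2/2$ in \eqref{SBT_free} was chosen for precisely this purpose. The doublet-correction term involving $A(\varphi(s),\bars)$ contributes only at subleading order because of the extra $\bars$ factor, which can be absorbed using Lemma \ref{est_free2} or \ref{est_free1_new}. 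Once this cancellation is carried out componentwise (tangential, $\be_\rho$, and $\be_\theta$ components separately, as in the closed-loop argument), the remaining integrals are all controlled by $|\log(\epsilon a)|\|\bm{f}\|_{\mc{C}^1}+a^{-1}\|\bm{f}\|_{\mc{C}_a}$, with the $\mc{C}_a$-term arising exclusively from the free-endpoint remainders in Lemmas \ref{est_free2} and \ref{est_free3} that were absent in \cite{closed_loop}. Collecting all contributions and absorbing $\epsilon$-independent constants into $C$ yields the stated bound.
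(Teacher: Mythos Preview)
Your proposal is correct and follows essentially the same route as the paper. The paper splits $\frac{1}{\epsilon a}\partial_\theta\bu^{\SB}$ into $(\partial_\theta\bu^{\SB})_1$ (Stokeslet plus doublet with coefficient $\epsilon^2a^2/2$) and $(\partial_\theta\bu^{\SB})_2$ (the $\bars A(\varphi(s),\bars)$ correction), handles $(\partial_\theta\bu^{\SB})_1$ by invoking the closed-loop Proposition~3.7 with Lemmas~\ref{est_free1}--\ref{est_free3} substituted in, and bounds $(\partial_\theta\bu^{\SB})_2$ directly via Lemma~\ref{est_free1} after pulling out a factor of $a$ from $\bm{f}$; your outline spells out the Stokeslet--doublet cancellation that the paper leaves inside the closed-loop reference, but the structure and the lemmas used are the same.
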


\begin{proof}
We begin by writing out the $\theta$-derivative as  
\begin{equation}\label{theta_deriv_def}
\begin{aligned}
\frac{1}{\epsilon a}\frac{\p \bu^{\SB}}{\p \theta} &= (\p_\theta \bu^{\SB})_1+ (\p_\theta \bu^{\SB})_2; \\
(\p_\theta \bu^{\SB})_1&:=  \frac{1}{8\pi}\int_{\varphi(s)-1}^{\varphi(s)+1} \frac{1}{\epsilon a}\bigg(\frac{\p}{\p\theta}\mc{S}(\bm{R}) + \frac{\epsilon^2a^2}{2} \frac{\p }{\p\theta}\mc{D}(\bm{R}) \bigg) \bm{f}(\varphi(s)-\bars)\ts d\bars  \\
(\p_\theta \bu^{\SB})_2 &:= \frac{1}{8\pi} \int_{\varphi(s)-1}^{\varphi(s)+1} \frac{3\epsilon^2 \bars A(\varphi(s),\bars)}{2}\bigg(\frac{\bars^2\bm{Q}\cdot\be_{\theta}}{|\bm{R}|^5} +\frac{\be_{\theta}\bm{R}^{\rm T}+\bm{R}\be_{\theta}^{\rm T}}{|\bm{R}|^5} \\
&\hspace{2cm} -\frac{5\bm{R}\bm{R}^{\rm T}(\bars^2\bm{Q}\cdot\be_{\theta})}{|\bm{R}|^7} \bigg)\bm{f}(\varphi(s)-\bars)\ts d\bars \\
\end{aligned}
\end{equation}
where $A$ is as defined in \eqref{a_expand}. Recall that, unless explicitly stated otherwise, we have $a=a(\varphi(s))$. \\ 

We first note that each of the terms appearing in $(\p_\theta \bu^{\SB})_1$ can be estimated exactly as in the proof of Proposition 3.7 in the closed loop setting \cite{closed_loop}, but with Lemmas 3.3, 3.4, and 3.5 replaced by Lemmas \ref{est_free1}, \ref{est_free2}, and \ref{est_free3}, respectively. Doing so, we obtain 
\[ \abs{(\p_\theta \bu^{\SB})_1} \le C \big( \norm{\bm{f}}_{\mc{C}^1(-1,1)}\abs{\log(\epsilon a)}+\norm{\bm{f}}_{\mc{C}_a(-1,1)}a^{-1} \big), \]
where the constant $C$ depends on $c_\Gamma$, $\kappa_{\max}$, $\delta_a$, $c_a$, $c_\varphi$, and $c_\eta$ but not on $\epsilon$. \\

Thus it remains to bound the additional term $(\p_\theta \bu^{\SB})_2$ resulting from expansion \eqref{a_expand} of the doublet coefficient. Using \eqref{a_expand} and Lemma \ref{est_free1}, we have
\begin{align*}
\abs{(\p_\theta \bu^{\SB})_2} &\le 4\pi\norm{\bm{f}}_{\mc{C}_a(-1,1)} 3\epsilon^2a \int_{\varphi(s)-1}^{\varphi(s)+1}\bigg(\frac{C\abs{\bars}^3\abs{A}}{|\bm{R}|^5} +\frac{\abs{\bars}\abs{A}}{\abs{\bm{R}}^4}\bigg) d\bars \\
&\le C \norm{\bm{f}}_{\mc{C}_a(-1,1)} a^{-1}.
\end{align*}

Combining the estimates for $(\p_\theta \bu^{\SB})_1$ and $(\p_\theta \bu^{\SB})_2$, we obtain Proposition \ref{theta_free}.
\end{proof}

We also show the following proposition, which is the free end analogue of Proposition 3.8 in \cite{closed_loop}.
\begin{proposition}\label{theta_s_free}
Let $a=a(\varphi(s))$ for $s\in (-1,1)$ be as in Definition \ref{admissible_a}, and let $\bm{f}\in \mc{C}^1(-1,1)\cap \mc{C}_a(-1,1)$ satisfy \eqref{decay_condition}. Consider $\bu^{\SB}(\bx)$ \eqref{SBT_free} for $\bx\in \Gamma_\epsilon$. For sufficiently small $\epsilon$, we have
\begin{equation}
\abs{\frac{\p}{\p\theta}\frac{\p\bu^{\SB}}{\p \varphi} } \le C \big( \norm{\bm{f}}_{\mc{C}^1(-1,1)}+\norm{\bm{f}}_{\mc{C}_a(-1,1)}a^{-1} \big)
\end{equation}
where the constant $C$ depends on $c_\Gamma$, $\kappa_{\max}$, $\delta_a$, $c_a$, $c_\varphi$, and $c_\eta$ from Section \ref{geometry_section}, but not on $\epsilon$.
\end{proposition}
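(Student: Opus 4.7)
The proof will follow the same strategy as its closed-loop analogue (Proposition 3.8 in \cite{closed_loop}), with Lemmas \ref{est_free1}, \ref{est_free2}, \ref{est_free3} taking the place of the corresponding closed-loop lemmas, plus two new classes of contributions arising from the free-endpoint geometry.

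First I would differentiate \eqref{SBT_free} in $\varphi$ under the integral sign. Using \eqref{deriv_IDs} together with the surface identity $\p_\varphi \bx\big|_{\Gamma_\epsilon} = (1-\epsilon a\wh\kappa)\be_{\rm t}+\epsilon a'(\varphi)\be_\rho$, this produces a linear combination of Stokeslet/doublet gradient terms contracted with $\be_{\rm t}$ and with $\epsilon a'(\varphi)\be_\rho$, together with terms in which the $\varphi$-derivative falls on the coefficient $\epsilon^2 a^2(\varphi)/2$ or on the remainder $A(\varphi,\bars)$. Taking $\p/\p\theta$ of the result and using $\p_\theta \bm{R}=\epsilon a\be_\theta$, the integrand becomes a finite sum of scalar kernels of the form $\bars^m/\abs{\bm{R}}^n$ (possibly decorated by $\epsilon a$, $\epsilon a a'$, $a^2$, or $A$) multiplied by $\bm{f}(\varphi(s)-\bars)$.

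The terms present already in the closed-loop computation account for the $\norm{\bm{f}}_{\mc{C}^1(-1,1)}$ part of the bound once Lemmas \ref{est_free1}, \ref{est_free2}, \ref{est_free3} are applied in place of the closed-loop Lemmas 3.3--3.5. The additional terms coming from the $\epsilon a'(\varphi)\be_\rho$ piece of $\p_\varphi\bx$ and from the $\bars A(\varphi,\bars)$ factor carry an extra power of $a(\varphi)$ or $\bars$; their coefficients are controlled uniformly in $\epsilon$ by $\bar c_a$ from Definition \ref{admissible_a}, so the same integral lemmas bound these contributions by $C\norm{\bm{f}}_{\mc{C}^1(-1,1)}$ as well.

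The genuinely new piece is the $\mc{C}_a$ component. Lemmas \ref{est_free2} and \ref{est_free3} each produce an endpoint residual of the form $\norm{\bm{f}}_{\mc{C}_a(-1,1)}\epsilon^{m+2-n}a^{m+1-n}$ via Lemma \ref{aux_est}; summed across all the $(m,n)$ generated by two surface derivatives, the worst case is $n=m+2$ with $m=1$, which produces exactly $\norm{\bm{f}}_{\mc{C}_a(-1,1)} a^{-1}$. The main obstacle is the bookkeeping: one must verify that the extra $\varphi$-differentiation does not push any term into an $(m,n)$ combination that would generate an $a^{-2}$ scaling or reintroduce the $\abs{\log(\epsilon a)}$ factor appearing in Proposition \ref{theta_free} but absent here. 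The count works out because the $\theta$-derivative of $\bm{R}$ contributes a factor of $\epsilon a$ that compensates the additional $1/\abs{\bm{R}}$ produced by the $\varphi$-differentiation of $\nabla\mc{S}$ or $\nabla\mc{D}$, so the net scaling of the mixed kernel matches that of $\frac{1}{\epsilon a}\p_\theta \bu^{\SB}$ estimated in Proposition \ref{theta_free}, with the logarithm removed thanks to one extra odd power of $\bars$ that places the worst term into the $n\ge m+3$ regime of Lemma \ref{est_free3} rather than the $n=m+2$ regime of Lemma \ref{est_free2}.
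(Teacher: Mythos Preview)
Your high-level strategy matches the paper's: follow the closed-loop Proposition 3.8, substituting Lemmas \ref{est_free1}--\ref{est_free3} for the closed-loop integral lemmas and handling the extra $\bars A$ doublet remainder from \eqref{a_expand} separately. There are two organizational differences worth noting.

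First, the paper does not differentiate directly; it writes $\partial_\varphi\bu^{\SB} = (1-\epsilon a\wh\kappa)\bm I^{\SB}$ with $\bm I^{\SB}$ defined via the identity $\frac{1}{1-\rho\wh\kappa}\partial_\varphi\bm R=\be_{\rm t}$, and then expands $\partial_\theta\partial_\varphi\bu^{\SB} = (1-\epsilon a\wh\kappa)\partial_\theta\bm I^{\SB} - \epsilon a(\partial_\theta\wh\kappa)\bm I^{\SB}$. This cleanly separates the work into a crude bound $|\bm I^{\SB}|\le C(\epsilon a)^{-1}\|\bm f\|_{\mc C}$ (so the second summand is $O(\|\bm f\|_{\mc C})$) and a direct estimate of $\partial_\theta\bm I^{\SB}$; your explanation of why no logarithm appears is essentially this cancellation, but the $\bm I^{\SB}$ factorization makes the bookkeeping transparent.

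Second, the paper interprets $\partial_\varphi$ as the coordinate partial at fixed $\rho$ and differentiates the original representation \eqref{SBT_free0}, in which the doublet coefficient $a^2(t)$ is independent of $\bx$. Hence there is no $\epsilon a'\be_\rho$ contribution and no term from differentiating the coefficient: the only genuinely new piece relative to the closed loop is the $\bars A$ remainder $(\partial_\varphi\bu^{\SB})_2$, which the paper bounds crudely using Lemma \ref{est_free1} alone (not Lemmas \ref{est_free2}--\ref{est_free3}) to get $C\|\bm f\|_{\mc C_a}a^{-1}$. Your surface-tangential interpretation introduces extra terms that can certainly be bounded the same way via $\bar c_a$, but they are not what the paper computes.
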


\begin{proof}
Following the same outline as the proof of Proposition 3.8 in the closed loop setting \cite{closed_loop}, we rewrite 
\begin{equation}\label{deriv_thetas_def}
\begin{aligned}
\frac{\p}{\p\theta}\frac{\p\bu^{\SB}}{\p \varphi} &= (1-\epsilon a\wh\kappa)\frac{\p \bm{I}^{\SB}}{\p\theta} - \epsilon a\frac{\p\wh\kappa}{\p\theta}\bm{I}^{\SB} ; \\
\bm{I}^{\SB} &:= \frac{1}{1-\epsilon a\wh\kappa}\frac{\p \bu^{\SB}}{\p\varphi} = (\p_\varphi \bu^{\SB})_1+(\p_\varphi \bu^{\SB})_2; \\
(\p_\varphi \bu^{\SB})_1 :=& \frac{1}{8\pi}\int_{\varphi(s)-1}^{\varphi(s)+1} \frac{1}{1-\epsilon a\wh\kappa} \bigg( \frac{\p}{\p\varphi}\mc{S}(\bm{R})+ \frac{\epsilon^2a^2}{2}\frac{\p}{\p\varphi}\mc{D}(\bm{R})\bigg)\bm{f}(\varphi(s)-\bars) \ts d\bars, \\
(\p_\varphi \bu^{\SB})_2 :=& \frac{1}{8\pi}\int_{\varphi(s)-1}^{\varphi(s)+1} \frac{3\epsilon^2 \bars A(\varphi(s),\bars)}{2} \bigg(\frac{\bars+\bars^2\bm{Q}\cdot\be_{\rm t}}{|\bm{R}|^5} + \frac{\be_{\rm t}\bm{R}^{\rm T}+\bm{R}\be_{\rm t}^{\rm T}}{|\bm{R}|^5}\\
&\hspace{3cm} -\frac{5\bm{R}\bm{R}^{\rm T}(\bars+\bars^2\bm{Q}\cdot\be_{\rm t})}{|\bm{R}|^7} \bigg)\bm{f}(\varphi(s)-\bars) \ts d\bars.
\end{aligned}
\end{equation}
where $A$ is as defined in \eqref{a_expand}. Again, unless explicitly stated otherwise, we have $a=a(\varphi(s))$. \\

We begin by bounding $(\p_\varphi \bu^{\SB})_1$ and $\p /\p\theta(\p_\varphi \bu^{\SB})_1$. Here, each term in both expressions can be estimated following the same steps as in the proof of Proposition 3.8 in the closed loop case \cite{closed_loop}, but with Lemmas 3.3, 3.4, and 3.5 replaced by Lemmas \ref{est_free1}, \ref{est_free2}, and \ref{est_free3}, respectively. Note that in the closed loop setting, the analogous result includes a factor of $\kappa_3\neq0$ arising from the closed-loop moving frame. Unlike the Bishop frame \eqref{bishop_ODE}, the closed-loop moving frame is required to be periodic in $s$, and therefore the moving frame ODE must include a non-zero coefficient relating $\be_{n_1}(\varphi)$ and $\be_{n_2}(\varphi)$. In the free end case, we repeat the same computations as in the closed loop setting, but take $\kappa_3=0$. In doing so, we obtain
\begin{align*}
\abs{(\p_\varphi \bu^{\SB})_1} &\le C \norm{\bm{f}}_{\mc{C}(-1,1)}(\epsilon a)^{-1} \\
\abs{\frac{\p}{\p\theta}(\p_\varphi \bu^{\SB})_1} &\le C \big(\norm{\bm{f}}_{\mc{C}^1(-1,1)}+\norm{\bm{f}}_{\mc{C}_a(-1,1)}a^{-1} \big),
\end{align*}
where the first $C$ depends on $c_\Gamma$ and $\kappa_{\max}$, and the second $C$ depends on $c_\Gamma$, $\kappa_{\max}$, $\delta_a$, $c_a$, $c_\varphi$, and $c_\eta$, but not on $\epsilon$. \\

Next we bound $(\p_\varphi \bu^{\SB})_2$. Using Lemma \ref{est_free1}, we have
\begin{align*}
\abs{(\p_\varphi \bu^{\SB})_2} &\le \frac{3}{8\pi} \epsilon^2a \bar c_a \norm{\bm{f}}_{\mc{C}_a(-1,1)}\int_{\varphi(s)-1}^{\varphi(s)+1} \bigg(3\frac{\bars^2+C\abs{\bars}^3}{\abs{\bm{R}}^5}+\frac{\abs{\bars}}{\abs{\bm{R}}^4} \bigg) d\bars \le C\norm{\bm{f}}_{\mc{C}_a(-1,1)} a^{-1},
\end{align*}
where $C$ depends on $c_{\Gamma}$, $\kappa_{\max}$, and $\bar c_a$. \\

Finally, we bound $\p /\p\theta(\p_\varphi \bu^{\SB})_2$. Using \eqref{deriv_IDs} and Lemma \ref{est_free1}, we have
\begin{align*}
\abs{\frac{\p}{\p\theta}(\p_\varphi \bu^{\SB})_2} &\le C \epsilon^3a^2\norm{\bm{f}}_{\mc{C}_a(-1,1)}\int_{\varphi(s)-1}^{\varphi(s)+1} \bigg(\frac{\bars^2+\abs{\bars}^3}{\abs{\bm{R}}^6} + \frac{\abs{\bars}}{\abs{\bm{R}}^5} \bigg) \ts d\bars \le C \norm{\bm{f}}_{\mc{C}_a(-1,1)}a^{-1},
\end{align*}
where $C$ depends on $c_{\Gamma}$, $\kappa_{\max}$, and $\bar c_a$. \\

Plugging these estimates into \eqref{deriv_thetas_def} and using \eqref{jac_free} and \eqref{curvature_eq} to bound $\p \wh\kappa/\p\theta$ by $\kappa_{\max}$, we obtain
\begin{align*}
\abs{\frac{\p}{\p\theta}\frac{\p\bu^{\SB}}{\p \varphi}} &\le (1+\epsilon a\kappa_{\max})\abs{\frac{\p \bm{I}^{\SB}}{\p\theta}} + \epsilon a\kappa_{\max}\abs{\bm{I}^{\SB}}  \\
&\le C\big(\norm{\bm{f}}_{\mc{C}^1(-1,1)}+\norm{\bm{f}}_{\mc{C}_a(-1,1)}a^{-1} \big).
\end{align*}
\end{proof}

We now define the following velocity residual, which measures the $\theta$-dependence of the slender body surface velocity $\bu^{\SB}\big|_{\Gamma_\epsilon}$: 
\begin{equation}\label{ur_free}
\bu^{\rm{r}}(\theta,\varphi(s)) := \bu^{\SB}(\epsilon a(\varphi(s)), \theta, \varphi(s)) - \frac{1}{2\pi}\int_0^{2\pi}\bu^{\SB}(\epsilon a(\varphi(s)),\omega,\varphi(s)) \ts d\omega.
\end{equation}

Using the above definition of $\bu^{\rm r}$, we can show the following free end analogue to Proposition 3.9 in the closed loop setting \cite{closed_loop}: 
\begin{proposition}\label{ur_ests_free}
Let $a=a(\varphi(s))$ for $s\in(-1,1)$ be as in Definition \ref{admissible_a} and let the prescribed force $\bm{f}\in \mc{C}^1(-1,1)\cap \mc{C}_a(-1,1)$ satisfy \eqref{decay_condition}. Consider the residual $\bu^{\rm r}$ defined in \eqref{ur_free}. For sufficiently small $\epsilon$, we have 
\begin{align}
\label{urest_free}
\abs{\bu^{\rm r}}&\le C\big(\norm{\bm{f}}_{\mc{C}^1(-1,1)}\epsilon \abs{\log\epsilon}+\norm{\bm{f}}_{\mc{C}_a(-1,1)}\epsilon \big),\\
\label{urtheta_free}
\abs{\frac{1}{\epsilon a}\frac{\p \bu^{\rm r}}{\p \theta}}&\le C\big( \norm{\bm{f}}_{\mc{C}^1(-1,1)}\abs{\log(\epsilon a)} +\norm{\bm{f}}_{\mc{C}_a(-1,1)}a^{-1}\big),\\
\label{urs_free}
\abs{\frac{\p \bu^{\rm r}}{\p \varphi}}&\le C\big(\norm{\bm{f}}_{\mc{C}^1(-1,1)} +\norm{\bm{f}}_{\mc{C}_a(-1,1)}a^{-1} \big),
\end{align}
where the constants $C$ depend on $c_\Gamma$, $\kappa_{\max}$, $\delta_a$, $c_a$, $c_\varphi$, and $c_\eta$, but not on $\epsilon$. 
\end{proposition}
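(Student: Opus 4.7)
The plan is to reduce each of the three estimates to a $\theta$-derivative bound already supplied by Propositions \ref{theta_free} and \ref{theta_s_free}, via a one-dimensional Poincar\'e-type inequality on the circle. Specifically, for any smooth periodic $g\colon[0,2\pi]\to\R^3$, the intermediate value theorem applied to the mean-zero periodic function $g-\bar g$ produces a $\theta_0$ with $g(\theta_0)=\frac{1}{2\pi}\int_0^{2\pi}g(\omega)\,d\omega$, and integrating $g'$ along the shorter arc from $\theta_0$ to $\theta$ yields
$$\left|g(\theta)-\frac{1}{2\pi}\int_0^{2\pi}g(\omega)\,d\omega\right|\le \pi\sup_\tau|g'(\tau)|.$$

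Estimate \eqref{urtheta_free} is then immediate: the $\theta$-average term in \eqref{ur_free} is $\theta$-independent, so $\frac{\p\bu^{\rm r}}{\p\theta}=\frac{\p\bu^{\SB}}{\p\theta}$, and Proposition \ref{theta_free} applies verbatim.

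For \eqref{urest_free}, I apply the circle Poincar\'e inequality to $g(\theta)=\bu^{\SB}(\epsilon a,\theta,\varphi(s))$ and then invoke Proposition \ref{theta_free} to obtain
$$|\bu^{\rm r}|\le \pi\,\epsilon a\cdot C\bigl(\norm{\bm{f}}_{\mc{C}^1(-1,1)}\abs{\log(\epsilon a)}+\norm{\bm{f}}_{\mc{C}_a(-1,1)}\,a^{-1}\bigr).$$
The second term collapses to $C\epsilon\norm{\bm{f}}_{\mc{C}_a(-1,1)}$, while for the first I use the elementary inequalities $a\abs{\log\epsilon}\le\abs{\log\epsilon}$ and $a\abs{\log a}\le e^{-1}$, valid for $a\in(0,1]$, to conclude $\epsilon a\abs{\log(\epsilon a)}\le C\epsilon\abs{\log\epsilon}$ for small $\epsilon$. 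This produces \eqref{urest_free}.

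For \eqref{urs_free}, smoothness of the integrand allows $\p/\p\varphi$ to be exchanged with the $\theta$-integration in \eqref{ur_free}, so $\frac{\p\bu^{\rm r}}{\p\varphi}$ is precisely the $\theta$-deviation of $\frac{\p\bu^{\SB}}{\p\varphi}$ restricted to $\Gamma_\epsilon$. Applying the circle Poincar\'e inequality one more time gives $\bigl|\frac{\p\bu^{\rm r}}{\p\varphi}\bigr|\le\pi\sup_\theta\bigl|\frac{\p}{\p\theta}\frac{\p\bu^{\SB}}{\p\varphi}\bigr|$, and Proposition \ref{theta_s_free} furnishes exactly the claimed bound. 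The main bookkeeping subtlety is the logarithm conversion from $\abs{\log(\epsilon a)}$ to $\abs{\log\epsilon}$ used in \eqref{urest_free}; otherwise this proposition is essentially a pointwise Poincar\'e wrap-up of Propositions \ref{theta_free} and \ref{theta_s_free}, and no new free-endpoint difficulty arises beyond what those two propositions already encode through the $\mc{C}_a$-norm.
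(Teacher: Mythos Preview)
Your proposal is correct and follows essentially the same approach as the paper, which defers to the closed-loop analogue: each of the three estimates is reduced to Propositions \ref{theta_free} and \ref{theta_s_free} via a Poincar\'e-type inequality on the circle, with the elementary bound $\epsilon a\,|\log(\epsilon a)|\le C\epsilon|\log\epsilon|$ handling the logarithm in \eqref{urest_free}. One minor nit: your intermediate-value-theorem justification should be applied componentwise since $g$ is $\R^3$-valued, or replaced by the direct estimate $|g(\theta)-\bar g|\le\frac{1}{2\pi}\int_0^{2\pi}|g(\theta)-g(\omega)|\,d\omega\le\pi\sup_\tau|g'(\tau)|$.
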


\begin{proof}
The proof exactly follows the proof of Proposition 3.9 in the closed loop setting \cite{closed_loop}, but uses Propositions \ref{theta_free} and \ref{theta_s_free} instead of the closed loop analogues (Propositions 3.7 and 3.8, respectively,  in \cite{closed_loop}). 
\end{proof}

Finally, we show the following bound for the residual between the slender body approximation $\bu^{\SB}\big|_{\Gamma_\epsilon}$ given by \eqref{SBT_free} and the centerline approximation $\bu^{\SB}_{\rm C}$ given by \eqref{SBT_asymp_free}. Note that the following estimate holds along the effective centerline ($s\in(-1,1)$).

\begin{proposition}\label{center_prop}
Let $\bu^{\SB}(s,\theta)$ denote \eqref{SBT_free} evaluated along the slender body surface $\Gamma_\epsilon$ for $s\in(-1,1)$, and let $\bu^{\SB}_{\rm C}(s)$ be the slender body centerline equation \eqref{SBT_asymp_free}. Then the residual $|\bu^{\SB}(s,\theta)-\bu^{\SB}_{\rm C}(s)|$ satisfies 
\begin{equation}\label{center_residual}
\abs{\bu^{\SB}(s,\theta)-\bu^{\SB}_{\rm C}(s)} \le C\big( \epsilon\abs{\log\epsilon}\norm{\bm{f}}_{\mc{C}^1(-1,1)} +  \epsilon\norm{\bm{f}}_{\mc{C}_a(-1,1)} \big),
\end{equation}
where $C$ depends on $c_\Gamma$, $\kappa_{\max}$, $c_\eta$, $\delta_a$, $c_a$, $\bar c_a$, and $c_\varphi$, but not on $\epsilon$.
\end{proposition}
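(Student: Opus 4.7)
The plan is to expand the integrand of $\bu^{\SB}(s,\theta)$ in \eqref{SBT_free} term by term and match each resulting piece with the corresponding term of the centerline expansion \eqref{SBT_asymp_free}, using Lemma \ref{center_lem_free} to extract the leading-order local contributions and Lemmas \ref{est_free1}--\ref{est_free3} to bound the remainders. The argument parallels the closed-loop analogue, but additionally must account for the $O(\epsilon^2)$ mismatch between $\varphi(s)$ and $s$ introduced by the stretch function.

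First I would substitute the expansion \eqref{R_expand} for $\bm{R}$ and the decomposition \eqref{a_expand} into the Stokeslet and doublet, written as
\[ \mc{S}(\bm{R}) = \frac{{\bf I}}{|\bm{R}|} + \frac{\bm{R}\bm{R}^{\rm T}}{|\bm{R}|^3}, \qquad \mc{D}(\bm{R}) = \frac{{\bf I}}{|\bm{R}|^3} - \frac{3\bm{R}\bm{R}^{\rm T}}{|\bm{R}|^5}, \]
and then change the integration variable from $\bars = \varphi(s)-t$ to $\ws = s-t$, absorbing the $O(\epsilon^2)$ shift in the integration limits together with the $O(\epsilon^2)$ discrepancies $\abs{\varphi(s)-s}$ and $\abs{a(\varphi(s))-a(s)}$ (from Definitions \ref{admissible_a} and \ref{varphi_def}) as error terms controlled by Lemmas \ref{est_free1} and \ref{est_free1_new}. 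After this reduction the surface integrand becomes a sum of scalar kernels $\ws^{n-1}/|\wR|^n$ paired with constant tensor coefficients $\be_{\rm t}\be_{\rm t}^{\rm T}$, $\be_{\rm t}\be_\rho^{\rm T}+\be_\rho\be_{\rm t}^{\rm T}$, and $\be_\rho\be_\rho^{\rm T}$, plus higher-order terms in $\ws$ involving $\bm{Q}$ and the $\frac{\epsilon^2\bars A}{2}\mc{D}$ remainder.

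The local terms in \eqref{SBT_asymp_free} are then reconstructed as follows. Applying Lemma \ref{center_lem_free} with $n=1$ to the identity piece of $\mc{S}$ and with $n=3$ to the $\be_{\rm t}\be_{\rm t}^{\rm T}\ws^2/|\wR|^3$ piece of $\frac{\bm{R}\bm{R}^{\rm T}}{|\bm{R}|^3}$ yields the $({\bf I}+\be_{\rm t}\be_{\rm t}^{\rm T})L(s)\bm{f}(s)$ contribution (together with the singular-subtracted centerline integral $\int[\mc{S}(\bm{R}_{\rm C})\bm{f}(t) - \frac{{\bf I}+\be_{\rm t}\be_{\rm t}^{\rm T}}{|s-t|}\bm{f}(s)]\,dt$) plus an additional $-2\be_{\rm t}\be_{\rm t}^{\rm T}\bm{f}(s)$. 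The $\be_\rho\be_\rho^{\rm T}$ piece of the Stokeslet combines with the leading part of the doublet correction $\frac{\epsilon^2 a^2}{2}\mc{D}(\wR)$, evaluated through the explicit integrals of Lemma \ref{integral_est}, to cancel the $\theta$-dependent leading behavior and produce exactly ${\bf I}\bm{f}(s) - \be_{\rm t}\be_{\rm t}^{\rm T}\bm{f}(s)$, so the total local contribution is $({\bf I}-3\be_{\rm t}\be_{\rm t}^{\rm T})\bm{f}(s)+({\bf I}+\be_{\rm t}\be_{\rm t}^{\rm T})L(s)\bm{f}(s)$, matching \eqref{SBT_asymp_free}. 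All odd-power $\ws$ cross terms, the $\bm{Q}$ corrections, and the $\frac{\epsilon^2\bars A}{2}\mc{D}$ remainder are controlled by Lemmas \ref{est_free1}--\ref{est_free3}, contributing $O(\epsilon|\log\epsilon|)\norm{\bm{f}}_{\mc{C}^1(-1,1)} + O(\epsilon)\norm{\bm{f}}_{\mc{C}_a(-1,1)}$, with the $\mc{C}_a$ norm arising precisely where Lemmas \ref{est_free2} and \ref{est_free3} invoke the endpoint decay hypothesis \eqref{decay_condition}.

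The main obstacle is the tensor bookkeeping, namely identifying which combination of lemma outputs reconstructs the precise tensor structure of the local term of \eqref{SBT_asymp_free}, together with reconciling the two expressions for $L(s)$: the form appearing in Lemma \ref{center_lem_free} differs from the one in \eqref{SBT_asymp_free} by a factor of $4$ inside the inner square root. This discrepancy contributes only $O(\epsilon^2)$ to the logarithm uniformly in $s\in(-1,1)$, by an algebraic check analogous to the one carried out at the end of the proof of Lemma \ref{center_lem_free}, using $a(s)\ge C\epsilon$ near the endpoints (Definition \ref{admissible_a}).
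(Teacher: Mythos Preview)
Your proposal is correct and follows essentially the same route as the paper: split the Stokeslet and doublet contributions, apply Lemma \ref{center_lem_free} (with $n=1$ and $n=3$) to extract the nonlocal integral and the $L(s)$ term, extract the explicit local tensor $({\bf I}-\be_{\rm t}\be_{\rm t}^{\rm T})\bm{f}(s)$ from the doublet, and bound all remaining cross terms via Lemmas \ref{est_free1}--\ref{est_free3}. One small correction: the doublet leading terms are obtained through Lemma \ref{est_free3} (which isolates the constant $d_{mn}$), not Lemma \ref{integral_est}, which only gives upper bounds; and the paper simply works directly with $\wR$ and $\ws$ on $(-1,1)$ rather than explicitly carrying out the $\bars\to\ws$ change you describe, though your more careful treatment of the $O(\epsilon^2)$ shift and of the $L(s)$ discrepancy is entirely appropriate.
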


\begin{proof}
Recalling the definition of $\ws$ and $\wR$ (see \eqref{RC_def}, \eqref{tR_def}), for $s\in(-1,1)$, we rewrite the Stokeslet term of $\bu^{\SB}(s,\theta)$ \eqref{SBT_free} as
\begin{equation}
\begin{aligned}
\int_{s-1}^{s+1}&\mc{S}(\wR)\bm{f}(s-\ws)d\ws = \mc{S}_1 + \mc{S}_2; \\
\mc{S}_1 &:= \int_{s-1}^{s+1} \frac{\bm{f}(s-\ws)}{|\wR|} d\ws, \quad \mc{S}_2 := \int_{s-1}^{s+1} \frac{\wR\wR^{\rm T}}{|\wR|^3} \bm{f}(s-\ws) d\ws.
\end{aligned}
\end{equation}

Defining
\begin{equation}\label{cent_int1}
\bm{J}_{\mc{S},1} = \int_{s-1}^{s+1} \bigg(\frac{ \bm{f}(s-\ws)}{\abs{\bm{R}_{\rm C}}}- \frac{\bm{f}(s)}{\abs{\ws}} \bigg) d\ws - \bm{f}(s)\log(\epsilon^2),
\end{equation}
we have that Lemma \ref{center_lem_free} implies
\begin{align*}
\abs{\mc{S}_1 - \bm{J}_{\mc{S},1}} &\le \epsilon \abs{\log\epsilon}C \norm{\bm{f}}_{\mc{C}^1(-1,1)}.
\end{align*}

Similarly, to estimate $\mc{S}_2$, we define
\begin{equation}\label{cent_int2}
\bm{J}_{\mc{S},2} = \int_{s-1}^{s+1} \bigg(\frac{ \bm{R}_{\rm C}\bm{R}_{\rm C}^{\rm T}}{\abs{\bm{R}_{\rm C}}^3}\bm{f}(s-\ws) - \frac{\be_{\rm t}(s)\be_{\rm t}(s)^{\rm T}}{\abs{\ws}}\bm{f}(s) \bigg) d\ws + \big[L(s) - 2\big]\be_{\rm t}(s)(\be_{\rm t}(s)\cdot\bm{f}(s)),
\end{equation}
where $L(s) = -\log\big( \frac{2(1-s^2)+2\sqrt{(1-s^2)^2+\epsilon^2a^2(s)}}{\epsilon^2a^2(s)}\big)$. We then have that Lemma \ref{center_lem_free} implies 
 \begin{align*}
\bigg|\mc{S}_2- \bm{J}_{\mc{S},2}& - \int_{s-1}^{s+1} \frac{\epsilon^2a^2(s)\be_\rho\be_{\rho}(s,\theta)^{\rm T}}{|\wR|^3} \bm{f}(s-\ws)d\ws \bigg| \le C\epsilon\abs{\log\epsilon}\norm{\bm{f}}_{\mc{C}^1(-1,1)} \\
& + C\epsilon a(s)\int_{s-1}^{s+1} \frac{ \ws^2+ \abs{\ws}}{|\wR|^3}\abs{\bm{f}}d\ws + C\norm{\bm{f}}_{\mc{C}(-1,1)}\int_{s-1}^{s+1} \big(|\ws|^3 +\ws^4 \big)\bigg|\frac{1}{|\wR|^3} - \frac{1}{\abs{\bm{R}_{\rm C}}^3} \bigg| d\ws  \\
&\le C\epsilon\abs{\log\epsilon}\norm{\bm{f}}_{\mc{C}^1(-1,1)} + C\epsilon a(s)\norm{\bm{f}}_{\mc{C}(-1,1)}\int_{s-1}^{s+1}\frac{\epsilon a(s)+ \ws^2+ \epsilon a(s)\abs{\ws}+|\ws|^3}{\ws^2+\epsilon^2a^2(s)} d\ws \\
&\le C\epsilon\abs{\log\epsilon}\norm{\bm{f}}_{\mc{C}^1(-1,1)}.
\end{align*}
Here we have used \eqref{IR_free} and \eqref{lower_bds} to estimate $\big| |\wR|^{-3}-\abs{\bm{R}_{\rm C}}^{-3} \big|$. Then, using Lemma \ref{est_free3}, we have
\begin{align*}
\big|\mc{S}_2- \bm{J}_{\mc{S},2} - 2\be_\rho(s,\theta)(\be_{\rho}(s,\theta)\cdot\bm{f}(s)) \big|  &\le C\big( \epsilon\abs{\log\epsilon}\norm{\bm{f}}_{\mc{C}^1(-1,1)} +  \epsilon\norm{\bm{f}}_{\mc{C}_a(-1,1)} \big).
\end{align*}

Adding the estimates for $\mc{S}_1$ and $\mc{S}_2$, the Stokeslet term of \eqref{SBT_free} satisfies
\begin{equation}\label{center_stokeslets}
\begin{aligned}
\bigg|\int_{s-1}^{s+1}&\mc{S}(\wR)\bm{f}(s-\ws)d\ws - \bm{J}_{\mc{S},1}-\bm{J}_{\mc{S},2}  - 2\be_\rho(\be_{\rho}\cdot\bm{f}(s)) \bigg| \\
 &\le C\big( \epsilon\abs{\log\epsilon}\norm{\bm{f}}_{\mc{C}^1(-1,1)} +  \epsilon\norm{\bm{f}}_{\mc{C}_a(-1,1)} \big).
\end{aligned}
\end{equation}

Next we estimate the doublet terms of \eqref{SBT_free}, which, for $s\in(-1,1)$, we rewrite as 
\begin{equation}
\begin{aligned}
\int_{s-1}^{s+1} \bigg(&\frac{\epsilon^2a^2(s)}{2}\mc{D}(\wR)+ \frac{\epsilon^2\ws A}{2}\mc{D}(\wR) \bigg)\bm{f}(s-\ws) \ts d\ws = \mc{D}_1 + \mc{D}_2 + \mc{D}_3; \\
\mc{D}_1&:= \frac{\epsilon^2a^2(s)}{2}\int_{s-1}^{s+1} \frac{\bm{f}(s-\ws)}{|\wR|^3} \ts d\ws, \quad \mc{D}_2 := -\frac{3\epsilon^2a^2(s)}{2}\int_{s-1}^{s+1} \frac{\wR\wR^{\rm T}}{|\wR|^5} \bm{f}(s-\ws) \ts d\ws, \\
\mc{D}_3 &:= \int_{s-1}^{s+1} \frac{\epsilon^2\ws A}{2}\bigg(\frac{{\bf I}}{|\wR|^3} -3\frac{\wR\wR^{\rm T}}{|\wR|^5} \bigg)\bm{f}(s-\ws) \ts d\ws.
\end{aligned}
\end{equation}

First note that Lemma \ref{est_free3} implies
\begin{align*}
\abs{\mc{D}_1- \bm{f}(s)} \le C \big(\epsilon a \norm{\bm{f}}_{\mc{C}^1(-1,1)}+\epsilon \norm{\bm{f}}_{\mc{C}_a(-1,1)} \big).
\end{align*}

Next, we have that $\D_2$ satisfies 
\begin{align*}
\big| \mc{D}_2&+ \big(\be_{\rm t}(s)\be_{\rm t}(s)^{\rm T}+2\be_\rho(s,\theta)\be_\rho(s,\theta)^{\rm T} \big) \bm{f}(s) \big| \\
&\le  \abs{\frac{3\epsilon^2a^2(s)}{2}\int_{s-1}^{s+1} \frac{\ws^2\be_{\rm t}\be_{\rm t}(s)^{\rm T}}{|\wR|^5} \bm{f}(s-\ws)d\ws + \be_{\rm t}(s)(\be_{\rm t}\cdot\bm{f}(s))} \\
&\quad +\abs{\frac{3\epsilon^4a^4(s)}{2}\int_{s-1}^{s+1} \frac{\be_\rho\be_\rho^{\rm T}}{|\wR|^5} \bm{f}(s-\ws)d\ws + 2\be_\rho(s,\theta)(\be_\rho\cdot\bm{f}(s))} +C \epsilon a\norm{\bm{f}}_{\mc{C}(-1,1)} \\
&\le  C\big(\epsilon a \norm{\bm{f}}_{\mc{C}^1(-1,1)}+\epsilon \norm{\bm{f}}_{\mc{C}_a(-1,1)} \big).
\end{align*}
Here we have used \eqref{R_expand} along with Lemma \ref{est_free1} in the first inequality, and Lemma \ref{est_free3} in the second inequality. \\

Finally, by Lemma \ref{est_free1} and \eqref{decay_condition}, we have
\begin{align*}
\abs{\mc{D}_3} \le C\epsilon^2\int_{s-1}^{s+1} \frac{\abs{\ws}\abs{\bm{f}}}{|\wR|^3} d\ws \le C\epsilon\norm{\bm{f}}_{\mc{C}_a(-1,1)}.
\end{align*}

Then, defining
\begin{equation}\label{JD1_def}
\bm{J}_{\mc{D},1} = ({\bf I}-\be_{\rm t}(s)\be_{\rm t}(s)^{\rm T})\bm{f}(s),
\end{equation}
the doublet terms of \eqref{SBT_free} satisfy
\begin{equation}\label{center_doublets}
\begin{aligned}
\bigg| \int_{s-1}^{s+1}& \bigg(\frac{\epsilon^2a^2(s)}{2}\mc{D}(\wR)+ \frac{\epsilon^2\ws A}{2}\mc{D}(\wR) \bigg)\bm{f}(s-\ws) \ts d\ws -\bm{J}_{\mc{D},1}  + 2\be_\rho(\be_\rho\cdot\bm{f}(s)) \bigg| \\
& \le  C \big(\epsilon a \norm{\bm{f}}_{\mc{C}^1(-1,1)}+\epsilon \norm{\bm{f}}_{\mc{C}_a(-1,1)} \big). 
\end{aligned}
\end{equation}

Combining \eqref{center_stokeslets} and \eqref{center_doublets}, we have that, for $s\in(-1,1)$, $\bu^{\SB}\big|_{\Gamma_\epsilon}$ satisfies
\begin{equation}\label{uSB_center_est}
\abs{\bu^{\SB}(s,\theta) - \bm{J}_{\mc{S},1}- \bm{J}_{\mc{S},2}- \bm{J}_{\mc{D},1}} \le C\big( \epsilon\abs{\log\epsilon}\norm{\bm{f}}_{\mc{C}^1(-1,1)} +  \epsilon\norm{\bm{f}}_{\mc{C}_a(-1,1)} \big).
\end{equation}

Noting that $\bm{J}_{\mc{S},1}+ \bm{J}_{\mc{S},2}+ \bm{J}_{\mc{D},1}=\bu^{\SB}_{\rm C}(s)$ \eqref{SBT_asymp_free}, we obtain Proposition \ref{center_prop}.
\end{proof}

\subsection{Slender body force residual}\label{force_resid}
It remains to estimate the slender body approximation to the total surface force, given by
\begin{equation}\label{force_free}
{\bm f}^{\SB}(s) = \int_0^{2\pi} \big(2\E(\bu^{\SB})-p^{\SB}{\bf I}\big){\bm n}\big|_{(\varphi(s),\theta)} \mc{J}_\epsilon(\varphi(s),\theta) \varphi'(s) \ts d\theta
\end{equation}
for $\bu^{\SB}$ and $p^{\SB}$ as defined in \eqref{SBT_free} and \eqref{SBp_free} and the free-endpoint Jacobian factor $\mc{J}_\epsilon$ given by \eqref{jac_free}. As in the velocity residual estimate, we will need to prescribe a force $\bm{f}\in \mc{C}^1(-1,1)\cap \mc{C}_a(-1,1)$ \eqref{decay_condition} in order to obtain a $\mc{C}^0$ bound for the force residual $\bm{f}-\bm{f}^{\SB}$. \\

For the force estimate, however, we encounter more differences from the closed loop setting than for the velocity estimate. One such difference is that, unlike in the closed loop setting \cite{closed_loop}, where the surface normal ${\bm n}$ was simply the vector $\be_{\rho}$, the variation in radius $\epsilon a(\varphi)$ along the length of the slender body means that the free end surface normal vector is more complicated. Accounting for variation in radius $\epsilon a(\varphi(s))$ along the length of the fiber, the unit normal to $\Gamma_\epsilon$, directed into the slender body, is given by 
\begin{equation}\label{normal_to_gamma} 
{\bm n}(\varphi,\theta) = -\frac{1}{\sqrt{1+\epsilon^2(a')^2}} \be_{\rho}(\varphi,\theta) + \frac{\epsilon a'(\varphi)}{\sqrt{1+\epsilon^2(a')^2}} \be_{\rm t}(\varphi).
\end{equation}

Then, with respect to the Bishop frame \eqref{bishop_ODE} about the slender body centerline, the strain rate in the direction normal to the slender body surface can be expressed as 
\begin{equation}\label{strain_rate_expr}
\begin{aligned}
2\E(\bu){\bm n}|_{\Gamma_\epsilon} &= \frac{1}{\sqrt{1+\epsilon^2(a')^2}} \big(\E_\rho(\bu) + \epsilon a' \E_t(\bu) \big); \\
\E_\rho(\bu) &:= -\frac{\p \bu}{\p \rho}- \bigg(\frac{\p\bu}{\p\rho}\cdot\be_{\rho}\bigg)\be_{\rho}- \frac{1}{\epsilon a} \bigg(\frac{\p\bu}{\p\theta}\cdot\be_{\rho}\bigg)\be_{\theta} -\frac{1}{1-\epsilon a\wh\kappa} \bigg(\frac{\p\bu}{\p \varphi}\cdot\be_{\rho}\bigg)\be_{\rm t},\\
\E_t(\bu)&:= \frac{1}{1-\epsilon a\wh\kappa}\frac{\p \bu}{\p \varphi}+ \frac{1}{1-\epsilon a\wh\kappa}\bigg(\frac{\p \bu}{\p \varphi}\cdot\be_{\rm t} \bigg)\be_{\rm t} +\bigg(\frac{\p\bu}{\p \rho}\cdot\be_{\rm t}\bigg)\be_{\rho} +\frac{1}{\epsilon a}\bigg( \frac{\p\bu}{\p\theta}\cdot\be_{\rm t}\bigg)\be_{\theta}. 
\end{aligned}
\end{equation}

For future reference, we write down the full expressions for the components of $\E(\bu^{\SB})\bm{n}\big|_{\Gamma_\epsilon}$. Using \eqref{SBT_free} and the identities \eqref{deriv_IDs}, and considering $\bm{R}$ as a function of $\bars$ and $s$ rather than $t$ and $s$, we have 
\begin{equation}\label{uSB_derivs1}
\begin{split}
\frac{\p \bu^{\SB}}{\p\rho} &= (\p_\rho \bu^{\SB})_1 + (\p_\rho \bu^{\SB})_2; \\
(\p_\rho \bu^{\SB})_1 &:= \frac{1}{8\pi}\int_{\varphi(s)-1}^{\varphi(s)+1} \bigg[\frac{\bm{R}_0\cdot\be_{\rho}+\epsilon a}{|{\bm R}|^3} - \frac{\be_{\rho}{\bm R}^{\rm T}+{\bm R}\be_{\rho}^{\rm T}}{|{\bm R}|^3} +\frac{3\bm{R}\bm{R}^{\rm T}(\bm{R}_0\cdot\be_{\rho}+\epsilon a)}{|{\bm R}|^5} \\
&\hspace{2cm} +\frac{3(\epsilon a)^2}{2}\bigg(\frac{\bm{R}_0\cdot\be_{\rho}+\epsilon a}{|{\bm R}|^5}+ \frac{\be_{\rho}\bm{R}^{\rm T}+\bm{R}\be_{\rho}^{\rm T}}{|\bm{R}|^5} \\
&\hspace{4cm} -\frac{5\bm{R}\bm{R}^{\rm T}(\bm{R}_0\cdot\be_{\rho}+\epsilon a)}{|\bm{R}|^7} \bigg)\bigg] {\bm f}(\varphi(s)-\bars) \ts d\bars, \\
(\p_\rho \bu^{\SB})_2 &:= \frac{1}{8\pi}\int_{\varphi(s)-1}^{\varphi(s)+1} \frac{3\epsilon^2 \bars A(\varphi(s),\bars)}{2}\bigg(\frac{\bm{R}_0\cdot\be_{\rho}+\epsilon a}{|{\bm R}|^5}+ \frac{\be_{\rho}\bm{R}^{\rm T}+\bm{R}\be_{\rho}^{\rm T}}{|\bm{R}|^5} \\
&\hspace{3cm} -\frac{5\bm{R}\bm{R}^{\rm T}(\bm{R}_0\cdot\be_{\rho}+\epsilon a)}{|\bm{R}|^7} \bigg){\bm f}(\varphi(s)-\bars) \ts d\bars;
\end{split}
\end{equation}

\begin{equation}\label{uSB_derivs2}
\begin{split}
\frac{1}{\epsilon a}\frac{\p \bu^{\SB}}{\p \theta} &= (\p_\theta \bu^{\SB})_1 + (\p_\theta \bu^{\SB})_2, \\
(\p_\theta \bu^{\SB})_1 &:= \frac{1}{8\pi}\int_{\varphi(s)-1}^{\varphi(s)+1} \bigg[\frac{\bm{R}_0\cdot\be_{\theta}}{|\bm{R}|^3} - \frac{\be_{\theta}\bm{R}^{\rm T}+\bm{R}\be_{\theta}^{\rm T}}{|\bm{R}|^3} +\frac{3\bm{R}\bm{R}^{\rm T}(\bm{R}_0\cdot\be_{\theta})}{|\bm{R}|^5} \\
&\qquad +\frac{3(\epsilon a)^2}{2}\bigg(\frac{\bm{R}_0\cdot\be_{\theta}}{|\bm{R}|^5} + \frac{\be_{\theta}\bm{R}^{\rm T}+\bm{R}\be_{\theta}^{\rm T}}{|\bm{R}|^5}  -\frac{5\bm{R}\bm{R}^{\rm T}(\bm{R}_0\cdot\be_{\theta})}{|\bm{R}|^7} \bigg)\bigg] \bm{f}(\varphi(s)-\bars)\ts d\bars, \\
(\p_\theta \bu^{\SB})_2 &:= \frac{1}{8\pi}\int_{\varphi(s)-1}^{\varphi(s)+1} \frac{3\epsilon^2 \bars A(\varphi(s),\bars)}{2}\bigg(\frac{\bm{R}_0\cdot\be_{\theta}}{|\bm{R}|^5} + \frac{\be_{\theta}\bm{R}^{\rm T}+\bm{R}\be_{\theta}^{\rm T}}{|\bm{R}|^5} \\
&\hspace{3cm} -\frac{5\bm{R}\bm{R}^{\rm T}(\bm{R}_0\cdot\be_{\theta})}{|\bm{R}|^7} \bigg) \bm{f}(\varphi(s)-\bars)\ts d\bars;
\end{split}
\end{equation}

\begin{equation}\label{uSB_derivs3}
\begin{split}
\frac{1}{1-\epsilon a\wh\kappa}\frac{\p\bu^{\SB}}{\p \varphi} &= (\p_\varphi \bu^{\SB})_1 +(\p_\varphi \bu^{\SB})_2, \\
(\p_\varphi \bu^{\SB})_1 :=& \frac{1}{8\pi}\int_{\varphi(s)-1}^{\varphi(s)+1} \bigg[\frac{\bm{R}_0\cdot\be_{\rm t}}{|\bm{R}|^3} - \frac{\be_{\rm t}\bm{R}^{\rm T}+\bm{R}\be_{\rm t}^{\rm T}}{|\bm{R}|^3} +\frac{3\bm{R}\bm{R}^{\rm T}(\bm{R}_0\cdot\be_{\rm t})}{|\bm{R}|^5} \\
&\quad +\frac{3(\epsilon a)^2}{2}\bigg(\frac{\bm{R}_0\cdot\be_{\rm t}}{|\bm{R}|^5} + \frac{\be_{\rm t}\bm{R}^{\rm T}+\bm{R}\be_{\rm t}^{\rm T}}{|\bm{R}|^5} -\frac{5\bm{R}\bm{R}^{\rm T}(\bm{R}_0\cdot\be_{\rm t})}{|\bm{R}|^7} \bigg)\bigg]\bm{f}(\varphi(s)-\bars) \ts d\bars, \\
(\p_\varphi \bu^{\SB})_2 :=& \frac{1}{8\pi}\int_{\varphi(s)-1}^{\varphi(s)+1} \frac{3\epsilon^2 \bars A(\varphi(s),\bars)}{2} \bigg(\frac{\bm{R}_0\cdot\be_{\rm t}}{|\bm{R}|^5} + \frac{\be_{\rm t}\bm{R}^{\rm T}+\bm{R}\be_{\rm t}^{\rm T}}{|\bm{R}|^5}\\
&\hspace{3cm} -\frac{5\bm{R}\bm{R}^{\rm T}(\bm{R}_0\cdot\be_{\rm t})}{|\bm{R}|^7} \bigg)\bm{f}(\varphi(s)-\bars) \ts d\bars.
\end{split}
\end{equation}
Here we use
\begin{equation}\label{R0_0}
\bm{R}_0(\varphi(s),\bars) = \X(\varphi(s)) - \X(\varphi(s)-\bars) = \bars\be_{\rm t}(\varphi(s))+\bars^2\bm{Q}(\varphi(s),\bars); \quad \abs{\bm{R}_0}\le \abs{\bars}+ C\bars^2,
\end{equation}
to distinguish from the $\bm{R}_{\rm C}= \X(s) - \X(t)$, which extends only along the effective centerline, $s\in (-1,1)$. \\


Given the form \eqref{strain_rate_expr} of the normal strain rate, it will be more convenient to consider the Jacobian factor $\mc{J}_\epsilon$ \eqref{jac_free} in the following way. For $\varphi(s)\in (-\eta_\epsilon,\eta_\epsilon)$ and $a=a(\varphi(s))$ as in Definition \ref{admissible_a}, we have that $\mc{J}_\epsilon$ satisfies 
\begin{equation}\label{J_bound}
\abs{\mc{J}_\epsilon(\varphi(s),\theta) - \epsilon a \sqrt{1+\epsilon^2(a')^2}} \le c_J (\epsilon a)^2,
\end{equation}
where $c_J$ depends only on $\kappa_{\max}$. The bound \eqref{J_bound} follows from noting that
\begin{align*}
\big|\mc{J}_\epsilon(\varphi(s),\theta) - \epsilon a\sqrt{1+\epsilon^2(a')^2}\big| &= \bigg| \frac{\epsilon^2 a^2\big[(1-\epsilon a\wh\kappa)^2+\epsilon^2 (a')^2\big] -\epsilon^2 a^2(1+\epsilon^2(a')^2) }{\epsilon a \sqrt{(1-\epsilon a\wh\kappa)^2+\epsilon^2 (a')^2} + \epsilon a \sqrt{1+\epsilon^2(a')^2} } \bigg| \\
&\le \epsilon a \big|(1-\epsilon a\wh\kappa)^2+\epsilon^2 (a')^2 - 1 -\epsilon^2 (a')^2\big| \le 3 \kappa_{\max}(\epsilon a)^2.
\end{align*}

We then define
\begin{equation}\label{Fdef}
\wt{\bm F}(s) := \int_0^{2\pi} \bigg(\E_\rho(\bu^{\SB}) + \epsilon a' \E_t(\bu^{\SB}) -(-p^{\SB}\be_\rho + \epsilon a' p^{\SB}\be_{\rm t}\big) \bigg)\bigg|_{(\varphi(s),\theta)} \epsilon a \ts d\theta,
\end{equation}
where $\E_\rho(\cdot)$ and $\E_t(\cdot)$ are as defined in \eqref{strain_rate_expr}. \\

We can show that the expression \eqref{Fdef} for $\wt{\bm F}$ is in fact close to $\bm{f}^{\SB}$, which will greatly simplify our calculation of the slender body force residual. 
\begin{proposition}\label{fandF}
Let the prescribed force $\bm{f}\in \mc{C}_a(-1,1)$ satisfy \eqref{decay_condition} and let $\bm{f}^{\SB}$ be as in \eqref{force_free} and $\wt{\bm F}$ as in \eqref{Fdef}. Then the difference $\abs{\bm{f}^{\SB}(s)-\wt{\bm F}(s)}$ satisfies
\begin{equation}\label{fandF_est}
\abs{\bm{f}^{\SB}(s)-\wt{\bm F}(s)} \le \epsilon C \norm{\bm{f}}_{\mc{C}_a(-1,1)},
\end{equation}
where the constant $C$ depends on $\kappa_{\max}$, $c_{\Gamma}$, $c_{\varphi}$, $c_a$, $c_\eta$, $c_{\eta,0}$, and $\bar c_a$, but not on $\epsilon$. 
\end{proposition}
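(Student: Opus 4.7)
The plan is to reduce the residual $\bm{f}^{\SB}(s) - \wt{\bm F}(s)$ to the integral of a small Jacobian-like prefactor against essentially the same vector quantity that already appears in the definition of $\wt{\bm F}$. Combining \eqref{strain_rate_expr} with the formula \eqref{unit_normal} for the unit normal, the surface traction on $\Gamma_\epsilon$ satisfies $(\bm{\sigma}{\bm n})|_{\Gamma_\epsilon} = \tfrac{1}{\sqrt{1+\epsilon^2(a')^2}}\bm{T}$, where
\[
\bm{T}(\varphi(s),\theta) := \E_\rho(\bu^{\SB}) + \epsilon a'\E_t(\bu^{\SB}) + p^{\SB}\be_\rho - \epsilon a' p^{\SB}\be_{\rm t}
\]
is exactly the integrand of \eqref{Fdef}. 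This yields the identity
\[
\bm{f}^{\SB}(s) - \wt{\bm F}(s) = \int_0^{2\pi}\bigg(\frac{\mc{J}_\epsilon(\varphi(s),\theta)\varphi'(s)}{\sqrt{1+\epsilon^2(a')^2}} - \epsilon a\bigg)\bm{T}(\varphi(s),\theta)\,d\theta,
\]
reducing the task to pointwise estimates on the scalar prefactor and on $|\bm{T}|$.

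Next, I would bound each factor. For the prefactor, \eqref{J_bound} directly gives $\bigl|\tfrac{\mc{J}_\epsilon}{\sqrt{1+\epsilon^2(a')^2}} - \epsilon a\bigr| \le c_J(\epsilon a)^2/\sqrt{1+\epsilon^2(a')^2}$, and combining with $|\varphi'(s)-1|\le c_\varphi\epsilon^2$ from Definition \ref{varphi_def} (using $\mc{J}_\epsilon/\sqrt{1+\epsilon^2(a')^2}\le C\epsilon a$) yields
\[
\left|\frac{\mc{J}_\epsilon\varphi'(s)}{\sqrt{1+\epsilon^2(a')^2}} - \epsilon a\right| \le \frac{C(\epsilon a)^2}{\sqrt{1+\epsilon^2(a')^2}} + C\epsilon^3 a.
\]
For $|\bm{T}|$, I would unwind the explicit integral representations \eqref{uSB_derivs1}--\eqref{uSB_derivs3} and \eqref{SBp_free}, apply Lemma \ref{est_free1} termwise using the crude pointwise bound $|\bm{f}(t)|\le \|\bm{f}\|_{\mc{C}_a}$ (valid since $\sqrt{1-t^2}\le 1$), and combine with the elementary inequality $1+\epsilon|a'|\le \sqrt{2}\sqrt{1+\epsilon^2(a')^2}$ to obtain $|\bm{T}| \le C\|\bm{f}\|_{\mc{C}_a}(\epsilon a)^{-1}\sqrt{1+\epsilon^2(a')^2}$. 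Multiplying the two estimates and integrating over $\theta$ gives
\[
\bigl|\bm{f}^{\SB}(s)-\wt{\bm F}(s)\bigr| \le C\|\bm{f}\|_{\mc{C}_a}\bigl(\epsilon a + \epsilon^2\sqrt{1+\epsilon^2(a')^2}\bigr),
\]
which is already bounded by $\epsilon C\|\bm{f}\|_{\mc{C}_a}$ so long as $\epsilon\sqrt{1+\epsilon^2(a')^2}$ remains $O(1)$.

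The main obstacle is thus the very-endpoint regime $a(\varphi(s))\lesssim \epsilon$ (equivalently $\eta_\epsilon-|\varphi(s)|\lesssim \epsilon^2$ via the spheroidal structure of Definition \ref{admissible_a}(2)), where $|a'|\sim 1/a$ and the second term in the above bound can blow up. To handle this, I would refine the pointwise estimate of $|\E_t|$ and $|p^{\SB}|$ by exploiting the full decay $|\bm{f}(t)|\le \sqrt{1-t^2}\|\bm{f}\|_{\mc{C}_a}$: after the substitution $t=1-\tau$ (on the endpoint closer to $s=1$), the $\bars$-integration lies in $\tau\in[\eta_\epsilon-|\varphi(s)|,2]$ with $\sqrt{1-t^2}\approx \sqrt{2\tau}$ near the lower limit, and a direct computation using $|\bm{R}|^2\gtrsim \bars^2+\epsilon^2 a^2$ produces the improved bound $|\E_t|,|p^{\SB}|\le C\|\bm{f}\|_{\mc{C}_a}\epsilon^{-1}$, gaining an extra factor of $a$ over the naive $(\epsilon a)^{-1}$. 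This precisely absorbs the $\epsilon|a'|\sim \epsilon/a$ coming from $\sqrt{1+\epsilon^2(a')^2}$, and integrating against the prefactor estimate of the previous step yields the uniform bound $|\bm{f}^{\SB}(s)-\wt{\bm F}(s)|\le \epsilon C\|\bm{f}\|_{\mc{C}_a}$ with constants depending only on $\kappa_{\max},\,c_\Gamma,\,c_\varphi,\,c_a,\,c_\eta,\,c_{\eta,0},\,\bar c_a$.
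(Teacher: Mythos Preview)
Your proposal is correct and follows essentially the paper's approach: both bound $|\bm{f}^{\SB}-\wt{\bm F}|$ as the product of a small Jacobian discrepancy (controlled via \eqref{J_bound} and $|\varphi'-1|\le c_\varphi\epsilon^2$) times the full surface traction, and both reduce matters to showing $|2\E(\bu^{\SB})|+|p^{\SB}|\le C\epsilon^{-1}\|\bm{f}\|_{\mc{C}_a}$. The paper obtains this $\epsilon^{-1}$ bound by combining the $\mc{C}_a$ decay with Lemmas \ref{est_free1} and \ref{est_free1_new} (the latter already packaging the endpoint structure $\bars\gtrsim\epsilon^2$), whereas your explicit substitution $t=1-\tau$ and direct integration is simply a hands-on proof of the same estimate.
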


\begin{proof}
We have
\begin{equation}\label{f_diff}
\begin{aligned}
\abs{\bm{f}^{\SB}(s)- \wt{\bm F}(s)} &\le \int_0^{2\pi}\abs{2\E(\bu^{\SB}){\bm n}-p^{\SB}{\bm n}}\bigg(\abs{\mc{J}_\epsilon(\varphi(s),\theta) - \epsilon a\sqrt{1+\epsilon^2\dot a^2}}\\
&\hspace{3cm} + \abs{\mc{J}_\epsilon(\varphi(s),\theta)}\abs{\varphi'(s)-1}\bigg)\ts d\theta \\
&\le \epsilon^2 C \big(\abs{2\E(\bu^{\SB})}+ \abs{p^{\SB}}\big)\big( a^2 + \epsilon a+ (\epsilon a)^2+ \epsilon^2\big),
\end{aligned}
\end{equation}
where we used \eqref{J_bound} and Definition \ref{varphi_def}, as well as the bound \eqref{bar_ca}.  \\

Now, using the $\bu^{\SB}$ derivative expressions \eqref{uSB_derivs1} - \eqref{uSB_derivs3} in the strain rate expression \eqref{strain_rate_expr}, and recalling the form of $\bm{R}_0$ \eqref{R0_0}, we have
\begin{align*}
\abs{2\E(\bu^{\SB})} &\le C\bigg(\abs{\frac{\p \bu^{\SB}}{\p \rho}} + \frac{1}{\epsilon a} \abs{\frac{\p\bu^{\SB}}{\p\theta}} + \frac{1}{\abs{1-\epsilon a\wh\kappa}} \abs{\frac{\p \bu^{\SB}}{\p \varphi}}\bigg) \\
&\le aC\norm{\bm{f}}_{\mc{C}_a(-1,1)}\int_{\varphi(s)-1}^{\varphi(s)+1} \bigg[\frac{\abs{\bm{R}_0}}{\abs{\bm{R}}^3} + \frac{1}{\abs{\bm{R}}^2}+\frac{\epsilon a}{\abs{\bm{R}}^3} \\
&\quad + (\epsilon a)^2\bigg(\frac{\abs{\bm{R}_0}}{\abs{\bm{R}}^5} +\frac{1}{\abs{\bm{R}}^4}+ \frac{\epsilon a}{\abs{\bm{R}}^5}\bigg) + C\epsilon^2 \bigg(\frac{\abs{\bars \bm{R}_0}}{\abs{\bm{R}}^5} +\frac{\abs{\bars}}{\abs{\bm{R}}^4}+ \frac{\epsilon a\abs{\bars}}{\abs{\bm{R}}^5}\bigg) \bigg] \ts d\bars \\
&\le \epsilon^{-1}C\norm{\bm{f}}_{\mc{C}_a(-1,1)} .
\end{align*}
Here we have used Lemma \ref{est_free1} to estimate the first six terms in the integral expression, and Lemma \ref{est_free1_new} to estimate the last three terms. \\ 

Furthermore, 
\begin{equation}\label{pterm_est}
 \abs{p^{\SB}} \le aC\norm{\bm{f}}_{\mc{C}_a(-1,1)}\int_{\varphi(s)-1}^{\varphi(s)+1} \frac{1}{|\bm{R}|^2} \ts d\bars \le C\norm{\bm{f}}_{\mc{C}_a(-1,1)}\epsilon^{-1},
 \end{equation}
by Lemma \ref{est_free1}. \\

Plugging these estimates for $\abs{2\E(\bu^{\SB})}$ and $ \abs{p^{\SB}}$ into \eqref{f_diff}, we obtain \eqref{fandF_est}.
\end{proof}

Thus, to obtain a residual estimate between the prescribed force $\bm{f}$ and the slender body approximation $\bm{f}^{\SB}$, it suffices to compare $\bm{f}$ and $\wt{\bm F}$. \\
As in the closed loop setting, we can rely on $\theta$-integration to obtain improved bounds for the force residual, removing a factor of $\log\epsilon$ that would arise from only using Lemmas \ref{est_free1} - \ref{est_free3}. We show the following free end analogue of Lemma 3.12 from \cite{closed_loop}. 
\begin{lemma}\label{est_free4}
Let $\bm{R}$ be as in \eqref{R_expand} for $\varphi(s)\in (-\eta_\epsilon,\eta_\epsilon)$ and let $g\in \mc{C}(-1,1)$. Suppose $m$ is a non-negative integer and $n=m+1$ or $n=m+2$. Then for $k\in \Z$, $k\neq 0$, $\theta_0\in \R$, and $\epsilon$ sufficiently small, we have 
\begin{equation}
\abs{\int_0^{2\pi}\int_{\varphi(s)-1}^{\varphi(s)+1} \frac{\bars^m g(\varphi(s)-\bars)}{\abs{\bm{R}}^n} \cos(k(\theta+\theta_0)) \ts d\bars d\theta} \le C\begin{cases}
  \epsilon\abs{\log\epsilon} \norm{g}_{\mc{C}(-1,1)}, & n=m+1 \\
 \norm{g}_{\mc{C}(-1,1)}, & n=m+2,
 \end{cases},
\end{equation}
where the constant $C$ depends on $n$, $c_{\Gamma}$, and $\kappa_{\max}$, but not on $\epsilon$. 
\end{lemma}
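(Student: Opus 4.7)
My strategy is to exploit the mean-zero property $\int_0^{2\pi} \cos(k(\theta+\theta_0))\,d\theta = 0$ for $k\neq 0$ by subtracting a carefully chosen $\theta$-independent function from $|\bm{R}|^{-n}$ inside the integrand. From \eqref{R_expand} and the orthogonality $\be_{\rm t}\perp\be_\rho$, I decompose
\[
|\bm{R}|^2 = A(\bars,s) + B(\bars,s)\ts \gamma(\bars,s,\theta),
\]
where $A := \bars^2 + \epsilon^2 a^2 + 2\bars^3(\be_{\rm t}\cdot\bm{Q}) + \bars^4|\bm{Q}|^2$, $B := 2\epsilon a\bars^2$, and $\gamma := \be_\rho\cdot\bm{Q}$. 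Writing $\bm{Q}$ in the Bishop basis shows that $\gamma(\theta) = Q_{n_1}\cos\theta + Q_{n_2}\sin\theta$ carries all the $\theta$-dependence of $|\bm{R}|^2$, and $|\gamma|\le|\bm{Q}|\le \kappa_{\max}/2$. Combining the lower bound $|\bm{R}|^2\ge C(\bars^2+\epsilon^2 a^2)$ from Lemma \ref{R_ests_free} with $|B\gamma|\le \epsilon a\bars^2\kappa_{\max}$ yields, for $\epsilon$ sufficiently small, $A \ge \tfrac{C}{2}(\bars^2+\epsilon^2 a^2)$ and $\mu := B\gamma/A$ satisfies $|\mu|\le C'\epsilon a\le \tfrac12$.

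The key pointwise estimate then follows from a first-order Taylor expansion of $(1+\mu)^{-n/2}$ about $\mu=0$:
\[
\left| |\bm{R}|^{-n} - A^{-n/2}\right| \;=\; A^{-n/2}\bigl|(1+\mu)^{-n/2}-1\bigr| \;\le\; C A^{-n/2 - 1}|B\gamma| \;\le\; \frac{C\epsilon a \bars^2}{(\bars^2+\epsilon^2 a^2)^{(n+2)/2}}.
\]
Since $A^{-n/2}$ is $\theta$-independent, cosine orthogonality allows me to replace $|\bm{R}|^{-n}$ by $|\bm{R}|^{-n}-A^{-n/2}$ in the double integral without changing its value. Taking absolute values and using the pointwise estimate, I obtain
\[
|I| \;\le\; 2\pi C\epsilon a\ts\norm{g}_{\mc{C}(-1,1)}\int_{\varphi(s)-1}^{\varphi(s)+1}\frac{|\bars|^{m+2}}{(\bars^2+\epsilon^2 a^2)^{(n+2)/2}}\,d\bars.
\]

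The two cases now follow from Lemma \ref{integral_est} with parameters $m' = m+2$, $n' = n+2$. When $n = m+1$, $n' = m'+1$ and the integral is bounded by $4|\log(\epsilon a)|$, giving $|I|\le C\epsilon a|\log(\epsilon a)|\norm{g}_{\mc{C}}\le C\epsilon|\log\epsilon|\norm{g}_{\mc{C}}$; the last step uses $|\log(\epsilon a)|\le |\log\epsilon|+|\log a|$, the monotonicity of $x\mapsto x|\log x|$ on $(0,1/e)$, and $a|\log a|\le 1/e$ on $(0,1]$. When $n=m+2$, $n' = m'+2$ and the integral is bounded by $\pi(\epsilon a)^{-1}$, giving $|I|\le C\norm{g}_{\mc{C}}$. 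The main technical obstacle is verifying that $|\mu|\le \tfrac12$ holds uniformly in $\bars\in[\varphi(s)-1,\varphi(s)+1]$ so that the Taylor expansion is valid; once this is established from the lower bound in Lemma \ref{R_ests_free} and the uniform bound on $|\bm{Q}|$, everything else is bookkeeping with Lemma \ref{integral_est}.
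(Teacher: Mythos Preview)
Your proof is correct and follows essentially the same approach as the paper: your $\theta$-independent subtraction $A^{-n/2}$ is exactly the paper's $(|\bm{R}_0|^2+(\epsilon a)^2)^{-n/2}$, since $A=|\bm{R}_0|^2+(\epsilon a)^2$ by \eqref{R0_0}, and both arguments arrive at the same pointwise bound $C\epsilon a\,|\bars|^{m+2}/(\bars^2+\epsilon^2a^2)^{(n+2)/2}$ before invoking Lemma~\ref{integral_est}. The only cosmetic difference is that you obtain this bound via a Taylor expansion of $(1+\mu)^{-n/2}$, whereas the paper uses the telescoping factorization of $|\bm{R}|^{-n}-(|\bm{R}_0|^2+(\epsilon a)^2)^{-n/2}$.
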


Note that the presence of $k\in \Z$ and $\theta_0\in \R$ in the statement of Lemma \ref{est_free4} means that the result applies to $\frac{\bars^m g(\varphi(s)-\bars)}{\abs{\bm{R}}^n}$ integrated against $\sin\theta$ or against odd triples of the form $\sin^j\theta \cos^\ell \theta$, $j+\ell$ odd (see discussion following Lemma 3.12 in \cite{closed_loop}). \\

\begin{proof}
The proof is essentially identical to the closed loop case (see \cite{closed_loop}, Lemma 3.12). We write
\begin{equation}
\begin{aligned}
I &=\int_0^{2\pi}\int_{\varphi(s)-1}^{\varphi(s)+1}\frac{\bars^m g(\varphi(s)-\bars)}{\abs{\bm{R}}^n}\cos(k(\theta+\theta_0))d\bars \ts d\theta \\
&= \int_0^{2\pi}\int_{\varphi(s)-1}^{\varphi(s)+1}\bigg(\frac{1}{\abs{\bm{R}}^n}-\frac{1}{(\abs{\bm{R}_0}^2+(\epsilon a)^2)^{n/2}} \bigg)\bars^m g(\varphi(s)-\bars)\cos(k(\theta+\theta_0))d\bars \ts d\theta,
\end{aligned}
\end{equation}
where we use that both $\bm{R}_0(s,\bars)$ and $\epsilon a(\varphi(s))$ are independent of $\theta$, and thus the second term integrates to zero. We have that the integrand of $I$ satisfies 
\begin{align*}
\bigg|\bigg(\frac{1}{\abs{\bm{R}}^n}& -\frac{1}{(\abs{\bm{R}_0}^2+(\epsilon a)^2)^{n/2}} \bigg)\bars^m g(\varphi(s)-\bars)\cos(k(\theta+\theta_0))\bigg| \\
&\le \frac{\norm{g}_{\mc{C}(-1,1)}\abs{\bars}^m\abs{\abs{\bm{R}}^2-(\abs{\bm{R}_0}^2+(\epsilon a)^2)}}{\abs{\bm{R}}(\abs{\bm{R}_0}^2+(\epsilon a)^2)^{1/2}(\abs{\bm{R}}+(\abs{\bm{R}_0}^2+(\epsilon a)^2)^{1/2})} \sum_{j=0}^{n-1}\frac{1}{\abs{\bm{R}}^j(\abs{\bm{R}_0}^2+(\epsilon a)^2)^{(n-1-j)/2}}.
\end{align*}
Using \eqref{X_expand} and \eqref{R_expand}, we can show
\[ \abs{\abs{\bm{R}}^2-\abs{\bm{R}_0}^2-(\epsilon a)^2}\le C\epsilon a \bars^2.\]
Furthermore, by Lemma \ref{R_ests_free} and \eqref{no_intersect}, we have
\[ \abs{\bm{R}}\ge C\sqrt{\bars^2+(\epsilon a)^2}, \quad \abs{\bm{R}_0}\ge c_\Gamma\abs{\bars}.\]
Altogether we have
\begin{align*}
\abs{I}&\le  \epsilon a C\norm{g}_{\mc{C}(-1,1)}\int_0^{2\pi}\int_{\varphi(s)-1}^{\varphi(s)+1}\frac{\abs{\bars}^{m+2}}{(\bars^2+(\epsilon a)^2)^{(n+2)/2}} \ts d\bars d\theta \le C\begin{cases}
  \epsilon\abs{\log\epsilon} \norm{g}_{\mc{C}(-1,1)}, & n=m+1 \\
 \norm{g}_{\mc{C}(-1,1)}, & n=m+2,
 \end{cases}
\end{align*}
where we have used Lemma \ref{integral_est}.

\end{proof}


Now we are equipped to carry out residual estimates between $\wt{\bm F}$ and the prescribed force $\bm{f}$. We write
\begin{equation}\label{Fdef_split}
\begin{split}
\wt{\bm F}(s) &= \wt{ \bm{F}}_\rho(s) + \wt{\bm{F}}_t(s); \\
\wt{ \bm{F}}_\rho(s)&:=  \epsilon a \int_0^{2\pi} \bigg(\E_\rho(\bu^{\SB}) +p^{\SB}\be_\rho \bigg)\bigg|_{(\varphi(s),\theta)} \ts d\theta \\
\wt{ \bm{F}}_{\rm t}(s)&:= \epsilon^2 (aa') \int_0^{2\pi} \bigg(\E_t(\bu^{\SB}) - p^{\SB}\be_{\rm t} \bigg)\bigg|_{(\varphi(s),\theta)} \ts d\theta,
\end{split}
\end{equation}
where $\E_\rho$ and $\E_t$ are as defined in \eqref{strain_rate_expr}. \\

We note that the component $\wt{ \bm{F}}_\rho(s)$ in fact recovers most of the true slender body force $\bm{f}(s)$. In particular, following the force residual estimate in the closed loop setting \cite{closed_loop}, we easily obtain the following proposition.
\begin{proposition}\label{Frho_est}
Let $\Sigma_\epsilon$ be as in Section \ref{geometry_section} and let $a=a(\varphi(s))$ be as in Definition \ref{admissible_a}. Consider $\bm{f}\in \mc{C}^1(-1,1)\cap \mc{C}_a(-1,1)$ satisfying \eqref{decay_condition}, and $\wt{ \bm{F}}_\rho(s)$ as defined in \eqref{Fdef_split}. Then $\wt{ \bm{F}}_\rho(s)$ satisfies 
\begin{equation}
\abs{\wt{ \bm{F}}_\rho(s)- \bm{f}(s)} \le C \big(\epsilon a \norm{\bm{f}}_{\mc{C}^1(-1,1)} +\epsilon \norm{\bm{f}}_{\mc{C}_a(-1,1)} \big),
\end{equation}
where the constant $C$ depends on $c_\Gamma$, $\kappa_{\max}$, $\delta_a$, $c_a$, $\bar c_a$, $c_\varphi$, and $c_\eta$, but not on $\epsilon$.
\end{proposition}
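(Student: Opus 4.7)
The plan is to substitute the derivative expressions \eqref{uSB_derivs1}--\eqref{uSB_derivs3} for $\bu^{\SB}$ and the pressure \eqref{SBp_free} into the definition \eqref{Fdef_split} of $\wt{\bm F}_\rho(s)$, then perform the $\theta$-integration explicitly. Writing $\E_\rho(\bu^{\SB})$ via \eqref{strain_rate_expr}, the quantity under the integral is a linear combination of scalar integrands of the form
\[
\frac{\bars^{m}\,g_\theta(\theta)}{\abs{\bm{R}}^{n}}\,\bm{f}(\varphi(s)-\bars)
\qquad\text{and}\qquad
\frac{\epsilon^2 a^2\,\bars^{m}\,g_\theta(\theta)}{\abs{\bm{R}}^{n}}\,\bm{f}(\varphi(s)-\bars),
\]
where $g_\theta(\theta)$ is a polynomial in $\cos\theta,\sin\theta$ that arises from dotting $\bm{R}$ against $\be_\rho,\be_\theta,\be_{\rm t}$. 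The overall structure is identical to the closed-loop computation in \cite{closed_loop}, with the uniform radius $\epsilon$ replaced throughout by $\epsilon a(\varphi(s))$ and with the extra doublet-coefficient pieces $(\p_\rho\bu^{\SB})_2, (\p_\theta\bu^{\SB})_2,(\p_\varphi\bu^{\SB})_2$ coming from the expansion \eqref{a_expand}.

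The leading terms which reproduce $\bm{f}(s)$ come from the pressure together with the symmetric combination of Stokeslet and doublet pieces with even $g_\theta$. After $\theta$-integration these reduce to integrals in $\bars$ with even $m$ and odd $n$, which Lemma \ref{est_free3} evaluates (up to the stated error) as $(\epsilon a)^{m+1-n}d_{mn}\bm{f}(s)$. Combining the constants $d_{mn}$ just as in the closed loop case yields exactly $\bm{f}(s)$, so the entire task reduces to bounding the remainders. Next, those terms for which $g_\theta$ is odd (an odd total power of $\cos\theta, \sin\theta$) survive only through the $\theta$-dependence of $|\bm{R}|$; rather than pulling out a $|\log\epsilon|$ via Lemma \ref{est_free1}, I would apply Lemma \ref{est_free4} to exploit the angular integration and obtain an $O(1)$ (rather than $O(\abs{\log\epsilon})$) bound. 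After multiplication by the prefactor $\epsilon a$ in \eqref{Fdef_split}, these contribute at most $C\epsilon\,a\,\norm{\bm{f}}_{\mc{C}(-1,1)}$.

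The remaining remainders from Lemmas \ref{est_free2} and \ref{est_free3} have two pieces: a $\mc{C}^1$ piece of size $(\epsilon a)^{m+2-n}\norm{\bm{f}}_{\mc{C}^1}$, and an endpoint piece of size $\epsilon^{m+2-n}a^{m+1-n}\norm{\bm{f}}_{\mc{C}_a}$. In each relevant case $(m,n)$ (those arising from the Stokeslet/doublet derivatives), after multiplying by the overall $\epsilon a$ prefactor of $\wt{\bm{F}}_\rho$, the $\mc{C}^1$ piece becomes $O(\epsilon a\,\norm{\bm f}_{\mc{C}^1})$ and the endpoint piece becomes $O(\epsilon\,\norm{\bm f}_{\mc{C}_a})$, as desired. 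The secondary pieces $(\p_\rho\bu^{\SB})_2,(\p_\theta\bu^{\SB})_2,(\p_\varphi\bu^{\SB})_2$ carry an extra $\epsilon^2 \bars A(\varphi(s),\bars)$ with $|A|\le 2\bar c_a$, giving one extra power of $\bars$ and two extra powers of $\epsilon$; here Lemma \ref{est_free1} together with the decay characterization \eqref{decay_condition2} suffices, and the resulting contribution is bounded by $C\epsilon\norm{\bm f}_{\mc{C}_a}$.

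The main obstacle will be bookkeeping: carefully matching the combinatorics of Stokeslet vs.\ doublet contributions at each pair $(m,n)$ to confirm that the $d_{mn}$ constants in the leading surviving terms sum precisely to the identity acting on $\bm{f}(s)$, so that what actually needs to be estimated is truly only the remainder. A secondary subtlety is that, in the free-endpoint setting, the endpoint remainders in Lemma \ref{est_free3} contain the bad factor $\epsilon^{m+2-n}a^{m+1-n}$; one must verify that in every case $(m,n)$ that appears the overall prefactor $\epsilon a$ combines with $a^{m+1-n}$ to give a non-negative power of $a$, so that no inverse power of $a(\varphi(s))$ survives and the bound remains uniform up to the fiber endpoints. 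This non-blowup at the ends is what ultimately necessitates the decay hypothesis $\bm{f}\in\mc{C}_a(-1,1)$, and the verification proceeds exactly as in the analogous closed-loop argument once the $(\epsilon a)$-rescaling is tracked.
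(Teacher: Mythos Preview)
Your proposal is correct and follows essentially the same approach as the paper: split $\wt{\bm F}_\rho$ into the main piece $F_1$ (the subscript-$1$ terms plus pressure) and the secondary piece $F_2$ (the subscript-$2$ terms from the expansion \eqref{a_expand}), invoke the closed-loop force-residual computation for $F_1$ with Lemmas \ref{est_free1}--\ref{est_free4} replacing their closed-loop analogues, and bound $F_2$ directly via Lemma \ref{est_free1} and the decay condition. The paper's proof is terser---it simply cites Propositions 3.14--3.19 of \cite{closed_loop} for $F_1$---but your more explicit description of how the $d_{mn}$ constants combine, why Lemma \ref{est_free4} is needed for odd $g_\theta$, and why the endpoint remainders stay bounded in $a$ is exactly the content hidden behind that citation.
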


\begin{proof}
Using \eqref{strain_rate_expr} and the notation of \eqref{uSB_derivs1} - \eqref{uSB_derivs3}, we write
\begin{equation}\label{Frho_split}
\begin{aligned}
\wt{\bm F}_\rho(s) &= F_1 + F_2; \\
F_1 &:= -\epsilon a \int_0^{2\pi}\bigg( (\p_\rho \bu^{\SB})_1+ \big((\p_\rho \bu^{\SB})_1\cdot\be_{\rho}\big)\be_{\rho}+ \big((\p_\theta \bu^{\SB})_1\cdot\be_{\rho}\big)\be_{\theta} \\
&\hspace{3cm} + \big((\p_\varphi \bu^{\SB})_1\cdot\be_{\rho}\big)\be_{\rm t} - p^{\SB}\be_\rho \bigg) d\theta, \\
F_2 &:= -\epsilon a \int_0^{2\pi}\bigg( (\p_\rho \bu^{\SB})_2+ \big((\p_\rho \bu^{\SB})_2\cdot\be_{\rho}\big)\be_{\rho}+ \big((\p_\theta \bu^{\SB})_2\cdot\be_{\rho}\big)\be_{\theta} \\
&\hspace{4cm}+ \big((\p_\varphi \bu^{\SB})_2\cdot\be_{\rho}\big)\be_{\rm t}  \bigg) d\theta.
\end{aligned}
\end{equation}
Here $F_2$ accounts for the additional doublet term resulting from the expansion \eqref{a_expand} of the doublet coefficient. \\

Note that each of the terms arising in the full expression for $F_1$ can be estimated following the proofs of Propositions 3.14 through 3.19 in the closed loop setting \cite{closed_loop}, using Lemmas \ref{est_free1}, \ref{est_free2}, \ref{est_free3}, and \ref{est_free4} in place of the closed loop Lemmas 3.3, 3.4, 3.5, and 3.12, respectively. Doing this, we obtain 
\[ \abs{F_1-\bm{f}(s)} \le C \big(\epsilon a \norm{\bm{f}}_{\mc{C}^1(-1,1)} +\epsilon \norm{\bm{f}}_{\mc{C}_a(-1,1)} \big), \]
where the constant $C$ depends on $c_\Gamma$, $\kappa_{\max}$, $\delta_a$, $c_a$, $\bar c_a$, $c_\varphi$, and $c_\eta$, but not on $\epsilon$. Note that the bulk of the prescribed force is recovered from the term $F_1$. \\

Next we estimate $F_2$. Using Lemma \eqref{est_free1}, we have
\begin{align*}
\abs{F_2} &\le C \epsilon a^2 \norm{\bm{f}}_{\mc{C}_a(-1,1)}\int_{\varphi(s)-1}^{\varphi(s)+1}  \epsilon^2 \bigg(\frac{\abs{\bars \bm{R}_0}}{\abs{\bm{R}}^5} +\frac{\abs{\bars}}{\abs{\bm{R}}^4}+ \frac{\epsilon a\abs{\bars}}{\abs{\bm{R}}^5}\bigg) d\bars \le \epsilon C\norm{\bm{f}}_{\mc{C}_a(-1,1)}.
\end{align*}

Combining the estimates for $F_1$ and $F_2$, we obtain Proposition \ref{Frho_est}.
\end{proof}

Since $\wt{\bm F}_{\rho}(s)$ is close to the true prescribed force $\bm{f}(s)$, it remains only to show that $\wt{ \bm{F}}_{\rm t}$ is small. In particular, we show:
\begin{proposition}\label{Ft_est}
Let $\Sigma_\epsilon$ be a slender body as defined in Section \ref{geometry_section}. Consider $\bm{f}\in \mc{C}_a(-1,1)$ satisfying \eqref{decay_condition}, and $\wt{ \bm{F}}_{\rm t}(s)$ as defined in \eqref{Fdef_split}. Then $\wt{ \bm{F}}_{\rm t}(s)$ satisfies
\begin{equation}
\abs{\wt{ \bm{F}}_{\rm t}(s)} \le \epsilon C \norm{\bm{f}}_{\mc{C}_a(-1,1)},
\end{equation}
where the constant $C$ depends on $c_\Gamma$, $\kappa_{\max}$, $c_a$, $\bar c_a$, $c_{\eta}$, and $c_{\eta,0}$, but not on $\epsilon$.
\end{proposition}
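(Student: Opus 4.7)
The plan is to reduce \eqref{Fdef_split} to a pointwise estimate that pairs the small prefactor $\epsilon^2(aa')$ with a uniform $O(\epsilon^{-1})$ bound on the $\theta$-integrated traction. The starting observation is the controlled-derivative condition in Definition \ref{admissible_a} (part 4), which gives $|\epsilon^2(aa')(\varphi(s))|\le \epsilon^2 \bar c_a$ for every $s\in(-1,1)$; this absorbs the sensitive $a'$ factor that would otherwise blow up near the fiber endpoints. Note that this is the sole place the spheroidal-endpoint geometry is felt in the estimate: without $|aa'|\le \bar c_a$ the bound would fail, since $|a'|$ itself is not uniformly controlled as $|\varphi(s)|\to \eta_\epsilon$.

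With that prefactor extracted, it suffices to establish
\[ \left|\int_0^{2\pi}\bigl(\E_t(\bu^{\SB})-p^{\SB}\be_{\rm t}\bigr)\big|_{(\varphi(s),\theta)}\,d\theta\right| \le C\epsilon^{-1}\|\bm{f}\|_{\mc{C}_a(-1,1)}\]
uniformly in $s$. I would expand $\E_t(\bu^{\SB})$ via \eqref{strain_rate_expr}, substitute the derivative expressions \eqref{uSB_derivs1}--\eqref{uSB_derivs3} and the pressure integral \eqref{SBp_free}, and bound the resulting $\bars$-integrals term-by-term using Lemma \ref{est_free1} together with Lemma \ref{est_free1_new} to maintain uniformity near the endpoints. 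The kernels appearing here form exactly a sub-collection of those already estimated inside the proof of Proposition \ref{fandF}, which yields $|2\E(\bu^{\SB})|+|p^{\SB}|\le C\epsilon^{-1}\|\bm{f}\|_{\mc{C}_a(-1,1)}$ pointwise; integrating in $\theta$ then contributes at most a factor of $2\pi$. No additional $\theta$-cancellation from Lemma \ref{est_free4} is needed, since the target bound is crude compared to what we already know.

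The main obstacle is tracking uniformity of the $\epsilon^{-1}$ bound near the endpoints, where naive applications of Lemma \ref{est_free1} give kernel estimates of the form $(\epsilon a)^{-1}$ that degrade when $a(\varphi(s))\ll 1$. However, the short interval $|\bars|\lesssim\epsilon a$ producing these singular contributions is localized near $\bars=0$, where $|\bm{f}(\varphi(s)-\bars)|\le C\, a(\varphi(s))\,\|\bm{f}\|_{\mc{C}_a(-1,1)}$ via the characterization \eqref{decay_condition2} and the spheroidal-endpoint comparability of $a$. This extra factor of $a(\varphi(s))$ converts $(\epsilon a)^{-1}$ into $\epsilon^{-1}$, restoring uniformity; contributions from $|\bars|\gtrsim\epsilon a$ are handled by Lemma \ref{est_free1_new}, which trades a factor of $\epsilon$ for a factor of $a$ and is thus insensitive to the radius collapsing. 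This is precisely the mechanism at play in Propositions \ref{fandF} and \ref{Frho_est}, so no new ingredient beyond the lemmas of Section \ref{lemmas} is required. Combining the prefactor bound $\epsilon^2 \bar c_a$ with the $O(\epsilon^{-1})$ integrand bound finally yields
\[ |\wt{\bm F}_{\rm t}(s)|\le \epsilon^2 \bar c_a\cdot 2\pi\cdot C\epsilon^{-1}\|\bm{f}\|_{\mc{C}_a(-1,1)}=\epsilon\, C\,\|\bm{f}\|_{\mc{C}_a(-1,1)},\]
as claimed, with the asserted dependence of $C$ on $c_\Gamma,\kappa_{\max},c_a,\bar c_a,c_\eta,c_{\eta,0}$.
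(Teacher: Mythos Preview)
Your proposal is correct and takes essentially the same approach as the paper: bound the prefactor by $\epsilon^2\bar c_a$ via Definition \ref{admissible_a}(4), then control $|\E_t(\bu^{\SB})|+|p^{\SB}|$ pointwise by $C\epsilon^{-1}\|\bm{f}\|_{\mc{C}_a}$ using exactly the kernel expansion and the combination of Lemmas \ref{est_free1} and \ref{est_free1_new} already carried out in the proof of Proposition \ref{fandF}. Your explicit discussion of why the $(\epsilon a)^{-1}$ kernel bounds upgrade to $\epsilon^{-1}$ via the $\mc{C}_a$ decay of $\bm{f}$ is the mechanism behind the paper's extraction of the factor $a$ in front of the integral, so the two arguments coincide.
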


\begin{proof}
Using the form of $\E_t$ \eqref{strain_rate_expr} with the integral expressions \eqref{uSB_derivs1} - \eqref{uSB_derivs3}, along with the form of $\bm{R}_0$ \eqref{R0_0}, we have
\begin{align*}
\abs{\wt{ \bm{F}}_{\rm t}(s)} &\le 2\pi \epsilon^2 \abs{aa'} \big(\abs{\E_t(\bu^{\SB})}+ \abs{p^{\SB}} \big) \\
& \le \epsilon^2aC \norm{\bm{f}}_{\mc{C}_a(-1,1)}\int_{\varphi(s)-1}^{\varphi(s)+1} \bigg[\frac{\abs{\bm{R}_0}}{\abs{\bm{R}}^3}+\frac{1}{\abs{\bm{R}}^2} + \frac{\epsilon a}{\abs{\bm{R}}^3}\\
&\qquad+ (\epsilon a)^2\bigg(\frac{\abs{\bm{R}_0}}{\abs{\bm{R}}^5}+ \frac{1}{\abs{\bm{R}}^4} + \frac{\epsilon a}{\abs{\bm{R}}^5} \bigg) + \epsilon^2\bigg(\frac{\abs{\bars\bm{R}_0}}{\abs{\bm{R}}^5}+ \frac{\abs{\bars}}{\abs{\bm{R}}^4} + \frac{\epsilon a\abs{\bars}}{\abs{\bm{R}}^5} \bigg)\bigg] d\bars \\
&\le \epsilon C \norm{\bm{f}}_{\mc{C}_a(-1,1)}.
\end{align*}
Here we have used Lemma \ref{est_free1} to estimate the first six terms of the integral expression, and Lemma \ref{est_free1_new} to estimate the last three terms. 
\end{proof}

Putting everything together, we obtain the following proposition.
\begin{proposition}\label{free_f_est}
Let $\Sigma_\epsilon$ be a slender body as defined in Section \ref{geometry_section} and let $a=a(\varphi(s))$ be as in Definition \ref{admissible_a}. Consider $\bm{f}\in \mc{C}^1(-1,1)\cap \mc{C}_a(-1,1)$ satisfying \eqref{decay_condition} and consider the corresponding slender body approximation $\bm{f}^{\SB}$. Then
\begin{equation}
\abs{\bm{f}^{\SB}(s)- \bm{f}(s)} \le C \big(\epsilon a \norm{\bm{f}}_{\mc{C}^1(-1,1)} +\epsilon \norm{\bm{f}}_{\mc{C}_a(-1,1)} \big),
\end{equation}
where the constant $C$ depends on $c_\Gamma$, $\kappa_{\max}$, $\delta_a$, $c_a$, $c_\varphi$, $c_\eta$, and $\bar c_a$ but not on $\epsilon$.
\end{proposition}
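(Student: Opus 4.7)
The plan is to prove Proposition \ref{free_f_est} by a direct triangle inequality assembling the three prior results in Section \ref{force_resid}. I would write
\[
\bm{f}^{\SB}(s) - \bm{f}(s) = \bigl(\bm{f}^{\SB}(s) - \wt{\bm F}(s)\bigr) + \bigl(\wt{\bm F}_\rho(s) - \bm{f}(s)\bigr) + \wt{\bm F}_{\rm t}(s),
\]
using the splitting \eqref{Fdef_split} of $\wt{\bm F} = \wt{\bm F}_\rho + \wt{\bm F}_{\rm t}$ to isolate the piece of the slender body surface traction that recovers the prescribed force from the remaining pieces.

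For the first term, Proposition \ref{fandF} yields the bound $\epsilon C \norm{\bm{f}}_{\mc{C}_a(-1,1)}$, which absorbs the mismatch between $\bm{f}^{\SB}$ and $\wt{\bm F}$ coming from replacing the exact Jacobian $\mc{J}_\epsilon$ by $\epsilon a \sqrt{1+\epsilon^2 (a')^2}$ and from the stretch factor $\varphi'(s)$. For the second term, Proposition \ref{Frho_est} supplies exactly the desired estimate $C(\epsilon a \norm{\bm{f}}_{\mc{C}^1(-1,1)} + \epsilon \norm{\bm{f}}_{\mc{C}_a(-1,1)})$, carrying the main content of the argument (i.e.\ confirming that the $\be_\rho$-component of $2\E(\bu^{\SB}){\bm n} - p^{\SB}{\bm n}$ integrated against the Jacobian reproduces the one-dimensional force data up to the stated error). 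For the third term, Proposition \ref{Ft_est} gives $\epsilon C \norm{\bm{f}}_{\mc{C}_a(-1,1)}$, which handles the tangential correction $\wt{\bm F}_{\rm t}$ that comes with an extra prefactor $\epsilon^2 a a'$ and is therefore automatically small.

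Adding the three bounds and collecting constants produces the claimed estimate, with $C$ depending on the union of the constants listed in Propositions \ref{fandF}, \ref{Frho_est}, and \ref{Ft_est}, namely $c_\Gamma$, $\kappa_{\max}$, $\delta_a$, $c_a$, $\bar c_a$, $c_\varphi$, $c_\eta$, and $c_{\eta,0}$, but not on $\epsilon$. Since the proof is merely an assembly of previously established bounds, there is no genuine obstacle: the technical difficulty has already been discharged inside Propositions \ref{fandF}, \ref{Frho_est}, and \ref{Ft_est}, where the residual integrals are controlled by the integral lemmas of Section \ref{lemmas}, and in particular by the new endpoint-aware Lemmas \ref{est_free1_new}, \ref{est_free2}, \ref{est_free3}, and \ref{est_free4} that demand the $\mc{C}_a(-1,1)$ decay condition on $\bm{f}$.
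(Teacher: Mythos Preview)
Your proposal is correct and matches the paper's own proof essentially verbatim: the paper also applies the triangle inequality to the decomposition $\bm{f}^{\SB}-\bm{f}=(\bm{f}^{\SB}-\wt{\bm F})+(\wt{\bm F}_\rho-\bm{f})+\wt{\bm F}_{\rm t}$ and invokes Propositions \ref{fandF}, \ref{Frho_est}, and \ref{Ft_est} in turn. Your observation that the constant list should include $c_{\eta,0}$ (inherited from Proposition \ref{Ft_est}) is a fair catch; the paper's statement of Proposition \ref{free_f_est} omits it.
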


\begin{proof}
Using Propositions \ref{fandF}, \ref{Frho_est}, and \ref{Ft_est}, we have
\begin{align*}
\abs{\bm{f}(s)-\bm{f}^{\SB}(s)} &\le \abs{\bm{f}(s)-\wt{\bm F}(s)}+ \abs{\wt{\bm{F}}(s)-\bm{f}^{\SB}(s)}  \\
&\le \abs{\bm{f}(s)-\wt{\bm F}(s)}+ \abs{\wt{\bm{F}}_\rho(s)-\bm{f}^{\SB}(s)} + \abs{\wt{\bm F}_t(s)}\\
& \le C \big(\epsilon a \norm{\bm{f}}_{\mc{C}^1(-1,1)} +\epsilon \norm{\bm{f}}_{\mc{C}_a(-1,1)} \big).
\end{align*}
\end{proof}

\section{Free endpoint error estimate}\label{err_est_sec}
Using the above residual estimates for the slender body force $\bm{f}^{\SB}$ and $\theta$-dependence in $\bu^{\SB}\big|_{\Gamma_\epsilon}$, we can use the variational framework established in Section \ref{well_posed} to show the $D^{1,2}$ error estimate of Theorem \ref{free_err_theorem}. 

\subsection{Setup of framework}
Given $\bm{f}\in \mc{C}^1(-1,1)\cap \mc{C}_a(-1,1)$, let $(\bu,p)$ be the true solution to \eqref{free_PDE} along with corresponding stress tensor $\bm{\sigma}= -p{\bf I}+2\E(\bu)$, and let $\bu^{\SB}$, $p^{\SB}$, $\bm{\sigma}^{\SB}$ be the corresponding slender body approximation \eqref{SBT_free}. Define $\bu^{\rm e}= \bu^{\SB}-\bu$, $p^{\rm e}= p^{\SB}-p$, and $\bm{\sigma}^{\rm e} = -p^{\rm e}{\bf I} +2\E(\bu^{\rm e})$. Then $(\bu^{\rm e},p^{\rm e})$ satisfies
\begin{equation}\label{free_err_PDE}
\begin{aligned}
-\Delta \bu^{\rm e} + \nabla p^{\rm e} &= 0 \\
\dive \ts\bu^{\rm e} &= 0 \qquad \text{ in }\Omega_\epsilon \\
\int_0^{2\pi} ( \bm{\sigma}^{\rm e} {\bm n})\big|_{(\varphi(s),\theta)} \ts \mc{J}_\epsilon(\varphi(s),\theta) \varphi'(s) d\theta &= \bm{f}^{\rm e}(s) \quad \text{on }\Gamma_\epsilon \\
\bu^{\rm e} \big|_{\Gamma_\epsilon} &= \bar\bu^{\rm e}\big(\varphi(s)\big) + \bu^{\rm r}(\varphi(s),\theta) \\
\bu^{\rm e} \to 0 \ts &\text{ as } |\bx| \to \infty.
\end{aligned}
\end{equation}
Here, the function $\bar \bu^{\rm e}(\varphi) := (\bu^{\SB} - \bu^{\rm r})\big|_{\Gamma_\epsilon}(\varphi)-\bu\big|_{\Gamma_\epsilon}(\varphi)$ is unknown, since the true boundary value $\bu\big|_{\Gamma_\epsilon}(\varphi)$ is unknown, but is $\theta$-independent, by definition of $\bu^{\rm r}$ \eqref{ur_free}. The residuals $\bm{f}^{\rm e}(s):= \bm{f}^{\SB}(s) - \bm{f}(s)$ and $\bu^{\rm r}(\varphi(s),\theta)$ are both known quantities (see Propositions \ref{free_f_est} and \ref{ur_ests_free}, respectively). \\

As in the closed loop case \cite{closed_loop}, we write \eqref{free_err_PDE} in the following variational form. For any $\bw\in D^{1,2}(\Omega_\epsilon)$, the error $(\bu^{\rm e},p^{\rm e})$ satisfies
\begin{equation}\label{variational_err_free}
\int_{\Omega_\epsilon} \bigg(2\E(\bu^{\rm e}):\E(\bw) - p^{\rm e}\dive\ts \bw \bigg) \ts d\bx=\int_{\Gamma_\epsilon} (\bm{\sigma}^{\rm e}\bm{n})\cdot\bw\ts dS.
\end{equation}

Note that unless the test function $\bw$ additionally satisfies the $\theta$-independence condition $\bw\big|_{\Gamma_\epsilon}=\bw(s)$, i.e. $\bw$ in fact belongs to the function space $\A_\epsilon$ \eqref{Aspace_def}, we cannot simplify the expression \eqref{variational_err_free} any further. In particular, without the $\theta$-independence condition we cannot make use of the force data $\bm{f}^{\rm e}(s)$. This presents a small issue for deriving an error estimate, since, in fact, the function $\bu^{\rm e}$ is not in $\A_\epsilon$. \\

However, we can use \eqref{variational_err_free} and exactly follow the proof of Lemma \ref{free_pressure_ex} to show the following pressure estimate:
\begin{equation}\label{press_err_free}
\norm{p^{\rm e}}_{L^2(\Omega_\epsilon)} \le c_P\norm{\E(\bu^{\rm e})}_{L^2(\Omega_\epsilon)},
\end{equation}
where the constant $c_P$ is as in Lemma \ref{press_est_free}. \\

Following the closed loop arguments, we make a specific choice of $\bw$. In particular, we take 
\begin{equation}\label{w_choice}
\bw = \wt\bu^{\rm e}:= \bu^{\rm e} - \wt\bv,
\end{equation}
where $\wt\bv\in D^{1,2}(\Omega_\epsilon)$ with $\wt\bv\big|_{\Gamma_\epsilon} = \bu^{\rm r}(\varphi(s),\theta)$ is a correction function that we construct explicitly to satisfy a bound in terms of $\norm{\bm{f}}_{\mc{C}^1(-1,1)}$ and $\norm{\bm{f}}_{\mc{C}_a(-1,1)}$. Note that $\wt\bu^{\rm e}\big|_{\Gamma_\epsilon} = \bar\bu^{\rm e}(s)$, and therefore $\wt\bu^{\rm e}$ is in the desired function space $\A_\epsilon$.  \\

Using this $\wt\bu^{\rm e}$ in place of $\bw$ in \eqref{variational_err_free}, we then obtain 
\begin{equation}\label{variational_err_free2}
\int_{\Omega_\epsilon} \bigg(2\E(\bu^{\rm e}):\E(\wt\bu^{\rm e}) - p^{\rm e}\dive\ts \wt\bu^{\rm e} \bigg) \ts d\bx= \int_{-1}^1 \bar\bu^{\rm e}(\varphi(s))\cdot \bm{f}^{\rm e}(s) \ts ds,
\end{equation}
from which we can derive the $D^{1,2}(\Omega_\epsilon)$ error estimate for $(\bu^{\rm e},p^{\rm e})$ stated in Theorem \ref{free_err_theorem}. \\

Before we make use of \eqref{variational_err_free2}, we must construct the correction $\wt\bv$ such that $\nabla\wt\bv$ can be bounded by $\norm{\bm{f}}_{\mc{C}^1(-1,1)}$ and $\norm{\bm{f}}_{\mc{C}_a(-1,1)}$. \\

The true force $\bm{f}(s)$ and the radius function $a(\varphi(s))$ are both at least $\mc{C}^1$, and therefore the $\theta$-dependent component of the slender body velocity $\bu^{\rm r}(\varphi(s),\theta)$ \eqref{ur_free} is also at least $\mc{C}^1$ on $\Gamma_\epsilon$. Thus, as in the closed loop setting \cite{closed_loop}, we can begin to construct the desired $\wt\bv$ by simply extending $\bu^{\rm r}(\varphi(s),\theta)$ radially from $\Gamma_\epsilon$ into the interior of $\Omega_\epsilon$. For $\varphi(s)\in(-\eta_\epsilon,\eta_\epsilon)$, we define
\[ \bu^{\SB}_{\text{ext}}(\rho,\varphi(s),\theta) = \begin{cases}
\bu^{\rm r}(\varphi(s),\theta) & \text{if } \rho<4\epsilon a(\varphi(s)), \\
0 & \text{otherwise}.
\end{cases} \] 

Define $\psi(\rho,\varphi(s))$ to be a smooth cutoff equal to 1 for $\rho<2\epsilon a(\varphi(s))$ and equal to $0$ for $\rho>4\epsilon a(\varphi(s))$ with smooth decay between. We require that this decay satisfies
\begin{equation}\label{free_psi_decay}
\abs{\frac{\p\psi}{\p\rho}} \le \frac{C}{\epsilon a(\varphi(s))}, \quad \abs{\frac{\p \psi}{\p\varphi}} \le \frac{C a'(\varphi(s))}{\epsilon}.
\end{equation}

Then define $\wt\bv$ as 
\begin{equation}\label{vee_def_free}
\wt\bv(\rho,\varphi(s),\theta) = \psi(\rho,\varphi(s))\bu^{\SB}_{\text{ext}}(\rho,\varphi(s),\theta).
\end{equation}

As we will eventually require an $L^2(\Omega_\epsilon)$ estimate of $\nabla\wt\bv$, we note that 
\[\nabla\wt\bv = \psi\nabla\bu^{\SB}_{\text{ext}} + (\nabla\psi)(\bu^{\SB}_\text{ext})^{\rm T}. \]

We have that
\[ \phi\nabla\bu^{\SB}_{\text{ext}} = \frac{\psi}{\epsilon a}\frac{\p\bu^{\rm r}}{\p \theta} \be_\theta^{\rm T} + \frac{\psi}{1-\rho\wh\kappa}\frac{\p \bu^{\rm r}}{\p \varphi}\be_{\rm t}^{\rm T} \]
and
\[(\nabla\phi)(\bu^{\SB}_\text{ext})^{\rm T} = \frac{\p \psi}{\p \rho} \be_\rho(\bu^{\rm r})^{\rm T} + \frac{1}{1-\rho\wh\kappa}\frac{\p \psi}{\p \varphi}\be_{\rm t}(\bu^{\rm r})^{\rm T}. \]

Thus, we can bound $\nabla \wt\bv$ as
\begin{align*}
\abs{\nabla\wt\bv} &\le \abs{\frac{1}{\epsilon a}\frac{\p \bu^{\rm r}}{\p \theta}}+\frac{1}{1-4\epsilon a\kappa_{\max}}\abs{\frac{\p\bu^{\rm r}}{\p\varphi}} + \abs{\frac{\p\psi}{\p\rho}}\abs{\bu^{\rm r}} + \frac{1}{1-4\epsilon a\kappa_{\max}}\abs{\frac{\p\psi}{\p\varphi}}\abs{\bu^{\rm r}} \\
&\le C \big( \abs{\log(\epsilon a)} \norm{\bm{f}}_{\mc{C}^1(-1,1)} + a^{-1} \norm{\bm{f}}_{\mc{C}_a(-1,1)} \big),
\end{align*}
where we recall from Definition \ref{admissible_a} that the product $\abs{aa'}$ is bounded for each $\varphi(s)\in (-\eta_\epsilon,\eta_\epsilon)$.\\

Then, relying on the extent of supp$(\wt\bv)$, we have
\begin{equation}\label{vee_est_free}
\begin{aligned}
\norm{\wt\bv}_{L^2(\Omega_\epsilon)} &= \bigg( \int_{-\eta_\epsilon}^{\eta_\epsilon}\int_0^{2\pi}\int_{\epsilon a}^{4\epsilon a} \abs{\nabla\wt\bv}^2 \rho(1-\rho\wh\kappa) d\rho d\theta d\varphi \bigg)^{1/2} \\
&\le C\bigg( \int_{-\eta_\epsilon}^{\eta_\epsilon}\int_0^{2\pi} 12(\epsilon a)^2 \big( \abs{\log(\epsilon a)} \norm{\bm{f}}_{\mc{C}^1(-1,1)} \\
&\hspace{4cm}+ a^{-1} \norm{\bm{f}}_{\mc{C}_a(-1,1)} \big)^2 (1+4\epsilon a \kappa_{\max} ) d\theta d\varphi \bigg)^{1/2} \\
&\le C \big( \epsilon\abs{\log\epsilon}\norm{\bm{f}}_{\mc{C}^1(-1,1)}+ \epsilon\norm{\bm{f}}_{\mc{C}_a(-1,1)} \big),
\end{aligned}
\end{equation}
where $C$ depends on the constants $c_\Gamma$, $\kappa_{\max}$, $\delta_a$, $c_a$, $c_\varphi$, and $c_\eta$, but not on $\epsilon$. 

\subsection{Error estimate}
With this definition of $\wt\bv$, we may now use \eqref{variational_err_free2} to estimate $\bu^{\rm e}$, closely following the arguments in \cite{closed_loop}. We first rewrite \eqref{variational_err_free2} as
\begin{equation}\label{variational_err_free3}
\begin{aligned}
\int_{\Omega_\epsilon} 2\abs{\E(\bu^{\rm e})}^2 \ts d\bx &= \int_{\Omega_\epsilon} \bigg(2\E(\bu^{\rm e}):\E(\wt\bv) - p^{\rm e} \dive\ts \wt\bv \bigg) d\bx + \int_{-1}^1 \bm{f}^{\rm e}(s)\cdot\bar\bu^{\rm e}(\varphi(s)) \ts ds \\
&\le \abs{\int_{\Omega_\epsilon} 2\E(\bu^{\rm e}):\E(\wt\bv) d\bx}+\abs{\int_{\Omega_\epsilon} p^{\rm e} \dive\ts \wt\bv d\bx } + \abs{\int_{-1}^1 \bm{f}^{\rm e}(s)\cdot\bar\bu^{\rm e}(\varphi(s)) \ts ds}.
\end{aligned}
\end{equation}

Now, the first term on the right hand side of \eqref{variational_err_free3} can be estimated via Cauchy-Schwarz:
\begin{align*}
\abs{\int_{\Omega_\epsilon} 2\E(\bu^{\rm e}):\E(\wt\bv) d\bx} &\le 2\norm{\E(\bu^{\rm e})}_{L^2(\Omega_\epsilon)}\norm{\E(\wt\bv)}_{L^2(\Omega_\epsilon)} \\
&\le \delta\norm{\E(\bu^{\rm e})}_{L^2(\Omega_\epsilon)}^2+ \frac{1}{\delta}\norm{\E(\wt\bv)}_{L^2(\Omega_\epsilon)}^2 \\
&\le \delta\norm{\E(\bu^{\rm e})}_{L^2(\Omega_\epsilon)}^2+ \frac{2}{\delta}\norm{\nabla\wt\bv}_{L^2(\Omega_\epsilon)}^2
\end{align*}
for any $\delta\in\R_+$. \\

Using \eqref{press_err_free} and Cauchy-Schwarz, the second term on the right hand side of \eqref{variational_err_free3} satisfies
\begin{align*}
\abs{\int_{\Omega_\epsilon} p^{\rm e} \dive\ts \wt\bv d\bx } &\le \norm{p^{\rm e}}_{L^2(\Omega_\epsilon)}\norm{\nabla\wt\bv}_{L^2(\Omega_\epsilon)} \\
&\le 4c_P\norm{\E(\bu^{\rm e})}_{L^2(\Omega_\epsilon)}\norm{\nabla\wt\bv}_{L^2(\Omega_\epsilon)} \\
&\le \delta\norm{\E(\bu^{\rm e})}_{L^2(\Omega_\epsilon)}^2 + \frac{4c_P^2}{\delta}\norm{\nabla\wt\bv}_{L^2(\Omega_\epsilon)}^2.
\end{align*}

Finally, the third term in \eqref{variational_err_free3} may be estimated using the trace inequality \eqref{free_trace_ineq}, the Korn inequality \eqref{korn_free}, and Cauchy-Schwarz. Recalling the definition \eqref{first_decay_space} of the space $L^2_a(-1,1)$, we have 
\begin{align*}
\abs{\int_{-1}^1 \bm{f}^{\rm e}(s)\cdot\bar\bu^{\rm e}(\varphi(s)) \ts ds} &\le \norm{\bm{f}^{\rm e}}_{L^2_a(-1,1)}\norm{\bar\bu^{\rm e} \abs{\log(a)}^{-1/2}}_{L^2(-\eta_\epsilon,\eta_\epsilon)} \\
&\le c_T\norm{\nabla\wt\bu^{\rm e}}_{L^2(\Omega_\epsilon)}\norm{\bm{f}^{\rm e}}_{L^2_a(-1,1)}\\
&\le c_Tc_K\norm{\E(\wt\bu^{\rm e})}_{L^2(\Omega_\epsilon)}\norm{\bm{f}^{\rm e}}_{L^2_a(-1,1)}\\
&\le \delta\norm{\E(\wt\bu^{\rm e})}_{L^2(\Omega_\epsilon)}^2+ \frac{c_T^2c_K^2}{4\delta}\norm{\bm{f}^{\rm e}}_{L^2_a(-1,1)}^2\\
&\le \delta\norm{\E(\bu^{\rm e})}_{L^2(\Omega_\epsilon)}^2+ 2\delta\norm{\nabla\wt\bv}_{L^2(\Omega_\epsilon)}^2+ \frac{c_T^2c_K^2}{4\delta}\norm{\bm{f}^{\rm e}}_{L^2_a(-1,1)}^2.
\end{align*}

Taking $\delta=\frac{1}{3}$, by \eqref{variational_err_free3}, we obtain
\begin{equation}\label{variational_err_free4}
\norm{\E(\bu^{\rm e})}_{L^2(\Omega_\epsilon)}^2 \le \frac{3c_T^2c_K^2}{4\delta}\norm{\bm{f}^{\rm e}}_{L^2_a(-1,1)}^2 + \bigg(\frac{20}{3}+3c_P^2 \bigg)\norm{\nabla\wt\bv}_{L^2(\Omega_\epsilon)}^2.
\end{equation}

Then, using Korn's inequality \eqref{korn_free},
\begin{equation}\label{variational_err_free5}
\norm{\nabla\bu^{\rm e}}_{L^2(\Omega_\epsilon)}^2 \le \frac{3c_T^2c_K^4}{4\delta}\norm{\bm{f}^{\rm e}}_{L^2_a(-1,1)}^2 + c_K^2\bigg(\frac{20}{3}+\frac{3c_P}{4} \bigg)\norm{\nabla\wt\bv}_{L^2(\Omega_\epsilon)}^2.
\end{equation}

Recall from Section \ref{stability0} that the Korn constant $c_K$ and the pressure constant $c_P$ are both independent of $\epsilon$, while the trace constant $c_T$ has a $\abs{\log\epsilon}^{1/2}$ dependence.\\

Now, from \eqref{vee_est_free} we have that $\wt \bv$ satisfies
\begin{align*}
\norm{\nabla\wt\bv}_{L^2(\Omega_\epsilon)} &\le  C \big( \epsilon\abs{\log\epsilon}\norm{\bm{f}}_{\mc{C}^1(-1,1)}+ \epsilon\norm{\bm{f}}_{\mc{C}_a(-1,1)} \big).
\end{align*}
Furthermore, using Proposition \ref{free_f_est}, we have 
\begin{align*}
\norm{\bm{f}^{\rm e}}_{L^2_a(-1,1)} &\le \bigg(\int_{-1}^1\abs{\bm{f}^{\rm e}(s)}^2 \abs{\log(a(\varphi(s)))} ds\bigg)^{1/2} \\
 &\le \epsilon C \big(\norm{\bm{f}}_{\mc{C}^1(-1,1)}+\norm{\bm{f}}_{\mc{C}_a(-1,1)} \big) \bigg(\int_{-1}^1\abs{\log(a(\varphi(s)))} ds \bigg)^{1/2}\\
 & \le \epsilon C \big(\norm{\bm{f}}_{\mc{C}^1(-1,1)}+\norm{\bm{f}}_{\mc{C}_a(-1,1)} \big) ,
\end{align*}
since, by Definition \ref{admissible_a}, we have that 
\[  \int_{-1}^1\abs{\log(a(\varphi(s)))} ds <\infty \]
is some finite number due to the spheroidal endpoint condition (2). \\

Plugging the estimates for $\wt\bv$ and $\bm{f}^{\rm e}$ into \eqref{variational_err_free5}, we obtain
\begin{equation}\label{err_est_free}
\norm{\bu^{\rm e}}_{D^{1,2}(\Omega_\epsilon)} \le C \big(\epsilon\abs{\log\epsilon}\norm{\bm{f}}_{\mc{C}^1(-1,1)}+\epsilon\norm{\bm{f}}_{\mc{C}_a(-1,1)} \big) ,
\end{equation}
where $C$ depends on the constants $c_\Gamma$, $\kappa_{\max}$, $\delta_a$, $c_a$, $\bar c_a$, $c_\varphi$, $c_\eta$, and $c_{\eta,0}$, but not on $\epsilon$. Using \eqref{press_err_free}, we also have 
\begin{equation}\label{err_est_withp}
\norm{\bu^{\rm e}}_{D^{1,2}(\Omega_\epsilon)} + \norm{p^{\rm e}}_{L^2(\Omega_\epsilon)} \le C \big(\epsilon\abs{\log\epsilon}\norm{\bm{f}}_{\mc{C}^1(-1,1)}+\epsilon\norm{\bm{f}}_{\mc{C}_a(-1,1)} \big) ,
\end{equation}
where, by Lemma \ref{press_est_free}, $C$ still depends only on $c_\Gamma$, $\kappa_{\max}$, $\delta_a$, $c_a$, $\bar c_a$, $c_\varphi$, $c_\eta$, and $c_{\eta,0}$. \\

Finally, we derive the centerline error estimate \eqref{center_err_free} to complete the proof of Theorem \ref{free_err_theorem}. Notice that this estimate applies only along the effective centerline of the fiber, $s\in(-1,1)$. \\

First, recalling the definition of $\mc{J}_\epsilon$ \eqref{jac_free}, we define
\[ \mc{J}_{\min}:= \int_{-1}^1 \int_0^{2\pi} \mc{J}_\epsilon(s,\theta)d\theta ds.\]
Notice that this is not quite the expression for the surface area of the fiber, as it does not extend to the true endpoints at $\varphi=\pm\eta_\epsilon$. We have
\begin{align*}
\mc{J}_{\min} &\ge \int_{-1}^1\int_0^{2\pi} \epsilon a(1-\epsilon a\wh\kappa) d\theta ds = 2\pi \epsilon \int_{-1}^1a(s) ds \ge 4\pi\epsilon (1-\delta_a)a_0 \ge C\epsilon,
\end{align*}
where we used requirement (3.) of Definition \ref{admissible_a}. \\

We will use the following triangle inequality to show the desired centerline estimate \eqref{center_err_free}. Considering the trace of the true solution ${\rm Tr}(\bu)(s)$, $s\in (-1,1)$ along the effective centerline only, we have  
\begin{equation}\label{triangle_ineq}
\begin{aligned}
\norm{{\rm Tr}(\bu) - \bu^{\SB}_{\rm C}}_{L^2(-1,1)}^2 &\le \frac{1}{\mc{J}_{\min}}\int_0^{2\pi} \int_{-1}^1 \abs{ {\rm Tr}(\bu)(s) - \bu^{\SB}_{\rm C}(s)}^2 \mc{J}_\epsilon(s,\theta) ds d\theta   \\
&\le \frac{C}{\epsilon}\bigg(\int_0^{2\pi} \int_{-1}^1 \abs{ {\rm Tr}(\bu)(s) - {\rm Tr}(\bu^{\SB})(s,\theta)}^2 \mc{J}_\epsilon(s,\theta) ds d\theta \\
&\qquad +\int_0^{2\pi} \int_{-1}^1 \abs{ {\rm Tr}(\bu^{\SB})(s,\theta) - \bu^{\SB}_{\rm C}(s)}^2 \mc{J}_\epsilon(s,\theta) ds d\theta \bigg).
\end{aligned}
\end{equation}
Here ${\rm Tr}(\bu^{\SB})(s,\theta)$ is \eqref{SBT_free} evaluated on $\Gamma_\epsilon$. We will estimate the two terms of \eqref{triangle_ineq} separately. \\

 We begin with the error term ${\rm Tr}(\bu^{\rm e})(s,\theta)= {\rm Tr}(\bu)(s) - {\rm Tr}(\bu^{\SB})(s,\theta)$, $s\in(-1,1)$, on the fiber surface. Recalling \eqref{free_err_PDE}, we have
\begin{align*}
\int_0^{2\pi} \int_{-1}^1 \abs{ {\rm Tr}(\bu^{\rm e})(s,\theta) }^2 \mc{J}_\epsilon(s,\theta) ds d\theta &\le \int_0^{2\pi} \int_{-1}^1 \abs{\bar \bu^{\rm e}(s) }^2 \mc{J}_\epsilon(s,\theta) ds d\theta \\
&\qquad + \int_0^{2\pi} \int_{-1}^1 \abs{\bu^{\rm r}(s,\theta) }^2 \mc{J}_\epsilon(s,\theta) ds d\theta.
\end{align*}
We define
\begin{align*}
\mc{J}_{\max} & := \sup_{s\in(-1,1)} \int_0^{2\pi} \mc{J}_\epsilon(s,\theta) \abs{\log(a(s))} d\theta.
\end{align*}
Note that since we are only considering $s\in(-1,1)$, away from the true endpoints, we have $C\epsilon\le a(s)\le 1$, by Definition \ref{admissible_a}. Then, using that $\abs{a'}\le \bar c_a /a(s)$, we can show 
\begin{align*}
\mc{J}_{\max} &\le \int_0^{2\pi}\big( \epsilon a\abs{\log a}(1-\epsilon a\wh\kappa) + \epsilon^2a\abs{\log a}\abs{a'} \big) d\theta \le 2\pi \epsilon (1+ C\epsilon^2\abs{\log\epsilon})\le C\epsilon.
\end{align*}

Using that $\bar \bu^{\rm e}$ is independent of $\theta$, we then have  
\begin{align*}
\bigg( \int_0^{2\pi}\int_{-1}^1 \abs{\bar \bu^{\rm e}(s) }^2 \mc{J}_\epsilon(s,\theta) ds d\theta \bigg)^{1/2} &= \bigg( \int_{-1}^1 \abs{\bar \bu^{\rm e}(s) }^2  \abs{\log(a(s))}^{-1} \int_0^{2\pi}\mc{J}_\epsilon(s,\theta) \abs{\log(a(s))} d\theta ds\bigg)^{1/2} \\
&\le C\sqrt{\mc{J}_{\max}}\bigg(\int_{-1}^1 \abs{\bar \bu^{\rm e}(s) }^2  \abs{\log(a(s))}^{-1}ds\bigg)^{1/2} \\
&\le C\sqrt{\mc{J}_{\max}}\norm{\bar\bu^{\rm e}  \abs{\log(a)}^{-1/2}}_{L^2(-\eta_\epsilon,\eta_\epsilon)} \le C\sqrt{\epsilon} c_T \norm{\nabla \wt\bu^{\rm e}}_{L^2(\Omega_\epsilon)} \\
&\le C(\epsilon\abs{\log\epsilon})^{1/2} \big( \norm{\bu^{\rm e}}_{D^{1,2}(\Omega_\epsilon)}+\norm{\nabla\wt\bv}_{L^2(\Omega_\epsilon)} \big) \\
&\le C\big( (\epsilon\abs{\log\epsilon})^{3/2} \norm{\bm{f}}_{\mc{C}^1(-1,1)} + \epsilon^{3/2}\abs{\log\epsilon}^{1/2} \norm{\bm{f}}_{\mc{C}_a(-1,1)} \big).
\end{align*}
Here we have used the trace inequality (Lemma \ref{free_trace_lemma}) in the third line and the estimates \eqref{vee_est_free} and \eqref{err_est_free} in the last inequality. \\

Furthermore, by Proposition \ref{ur_ests_free}, we have 
\begin{align*}
\bigg(\int_0^{2\pi} \int_{-1}^1 \abs{\bu^{\rm r}(s,\theta) }^2 \mc{J}_\epsilon(s,\theta) ds d\theta\bigg)^{1/2} &\le C \sqrt{\mc{J}_{\max}} \big( \epsilon\abs{\log\epsilon}\norm{\bm{f}}_{\mc{C}^1(-1,1)}+ \epsilon \norm{\bm{f}}_{\mc{C}_a(-1,1)}\big) \\
& \le C \big(\epsilon^{3/2}\abs{\log\epsilon}\norm{\bm{f}}_{\mc{C}^1(-1,1)} +\epsilon^{3/2}\norm{\bm{f}}_{\mc{C}_a(-1,1)} \big).
\end{align*}

Together, we obtain the $L^2$ surface estimate  
\begin{equation}\label{free_trace_final}
\begin{aligned}
\bigg(\int_0^{2\pi} \int_{-1}^1 &\abs{ {\rm Tr}(\bu^{\rm e})(s,\theta) }^2 \mc{J}_\epsilon(s,\theta) ds d\theta\bigg)^{1/2} \\
& \le C \big( (\epsilon \abs{\log\epsilon})^{3/2} \norm{\bm{f}}_{\mc{C}^1(-1,1)} + \epsilon^{3/2} \abs{\log\epsilon}^{1/2} \norm{\bm{f}}_{\mc{C}_a(-1,1)} \big),
\end{aligned}
\end{equation}
where $C$ depends on $c_\Gamma$, $\kappa_{\max}$, $\delta_a$, $c_a$, $\bar c_a$, $a_0$, $c_\varphi$, $c_\eta$, and $c_{\eta,0}$, but not on $\epsilon$. \\

Furthermore, using Proposition \ref{center_prop}, for $s\in (-1,1)$, we have that the difference between the surface slender body approximation ${\rm Tr}(\bu^{\SB})(s,\theta)$ and the centerline approximation $\bu^{\SB}_{\rm C}(s)$ satisfies 
\begin{equation}\label{free_center_final}
\begin{aligned}
\bigg(\int_0^{2\pi}\int_{-1}^1\abs{{\rm Tr}(\bu^{\SB})(s,\theta) -\bu^{\SB}_{\rm C}(s)}^2 \mc{J}_\epsilon(s,\theta) ds d\theta \bigg)^{1/2} &\le C\sqrt{\mc{J}_{\max}}\epsilon\abs{\log\epsilon}\norm{\bm{f}}_{\mc{C}^1(-1,1)}  \\
&\le C\epsilon^{3/2}\abs{\log\epsilon} \norm{\bm{f}}_{\mc{C}^1(-1,1)} .
\end{aligned}
\end{equation}

Combining the estimates \eqref{free_trace_final} and \eqref{free_center_final} in \eqref{triangle_ineq}, we obtain the centerline estimate
\begin{align*}
\norm{{\rm Tr}(\bu) - \bu^{\SB}_{\rm C}}_{L^2(-1,1)} &\le C \big( \epsilon \abs{\log\epsilon}^{3/2} \norm{\bm{f}}_{\mc{C}^1(-1,1)} + \epsilon \abs{\log\epsilon}^{1/2} \norm{\bm{f}}_{\mc{C}_a(-1,1)} \big).
\end{align*}

\appendix
\section{Appendix}
\subsection{Dependence of well-posedness constants on $\epsilon$}\label{stability}
In this appendix, we prove each of the key well-posedness inequalities stated in Section \ref{stability0}, paying close attention to how each resulting constant depends on $\epsilon$. \\

First, we prove a simple centerline straightening lemma (Lemma \ref{straightening_lemma}) to facilitate computations in the following sections. Using Lemma \ref{straightening_lemma}, we prove the form \eqref{free_trace} of the trace inequality for $\A_\epsilon$ functions. As in the closed loop setting \cite{closed_loop}, we show that the $L^2$ trace along the fiber blows up like $\abs{\log\epsilon}^{1/2}$ as $\epsilon\to 0$; however, unlike in the closed loop case, we show that an additional weight is needed at the fiber endpoints to ensure that the trace does not diverge. This results in the required decay condition \eqref{first_decay_space} for any prescribed force $\bm{f}$. \\

Next we show the existence of an extension operator for slender bodies with free, spheroidal endpoints whose symmetric gradient is bounded independent of $\epsilon$. As far as we know, this result is new and may be more widely useful beyond the scope of this paper. This extension operator is then used to show $\epsilon$-independence for the Korn and Sobolev inequalities in $\Omega_\epsilon$. \\

Finally we verify that the pressure estimate \eqref{freeP_boundE} does not depend on $\epsilon$. The proof here is essentially the same as in the closed loop setting, relying only on being able to write the region $\mc{O}$ \eqref{mc_O} as a finite union of star-shaped domains with respect to an $\epsilon$-independent ball.

\subsubsection{Centerline straightening}
To facilitate the calculations in the following sections, it will be useful to define an $\epsilon$-independent straightening operator within the neighborhood $\mc{O}$ \eqref{mc_O} about $\Sigma_\epsilon$, taking $\X$ as in Section \ref{geometry_section} to a straight line. We show the following lemma. 
\begin{lemma}\label{straightening_lemma}
Consider a fiber centerline $\X(\varphi)$ as in Section \ref{geometry_section} and the neighborhood $\mc{O}$ given by \eqref{mc_O}. Within $\mc{O}$, there exists a $\mc{C}^1$ operator $\Psi$ taking the moving frame basis vectors $\be_{\rm t}(\varphi)$, $\be_\rho(\varphi,\theta)$, $\be_\theta(\varphi,\theta)$ defined with respect to a Bishop frame about $\X_\text{ext}(\varphi)$ to straight cylindrical coordinates $\varphi \be_{\rm t} + \rho \be_\rho + \theta \be_\theta$ about a straight fiber centerline such that $\nabla\Psi$ and $\nabla(\Psi^{-1})$ are both bounded independent of $\epsilon$. 
\end{lemma}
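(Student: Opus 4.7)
By \eqref{rmax} the constant $r_{\max}$ is chosen so that $r_{\max}\kappa_{\max}<1$, hence the Bishop-frame map $(\varphi,\rho,\theta)\mapsto\X_\text{ext}(\varphi)+\rho\be_\rho(\varphi,\theta)$ is a $\mc C^1$ embedding of a tubular neighborhood of $\X_\text{ext}$ of radius $r_{\max}$. Inspecting \eqref{mc_O}, this same parameterization covers both the cylindrical and the spherical-cap pieces of $\mc O$ (the two pieces differ only in the allowed range of $\rho$), so no patching between coordinate charts is required. I would then set
\[
\Psi\bigl(\X_\text{ext}(\varphi)+\rho\be_\rho(\varphi,\theta)\bigr):=\varphi\,\hat e_1+\rho\cos\theta\,\hat e_2+\rho\sin\theta\,\hat e_3,
\]
where $\{\hat e_i\}$ is the standard basis of $\R^3$. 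Since $\X_\text{ext}\in\mc C^2$ makes $\be_{\rm t},\be_{n_1},\be_{n_2}\in\mc C^1$ via \eqref{bishop_ODE}, both $\Psi$ and
\[
\Psi^{-1}(\varphi,y_2,y_3)=\X_\text{ext}(\varphi)+\sqrt{y_2^2+y_3^2}\,\be_\rho\!\left(\varphi,\arctan(y_3/y_2)\right)
\]
are $\mc C^1$ diffeomorphisms onto their images.

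For the gradient bounds I would use the identities \eqref{deriv_IDs}: the orthogonal triple $\partial_\varphi\bx=(1-\rho\wh\kappa)\be_{\rm t}$, $\partial_\rho\bx=\be_\rho$, $\partial_\theta\bx=\rho\be_\theta$ inverts to
\[
\nabla_\bx\varphi=(1-\rho\wh\kappa)^{-1}\be_{\rm t},\qquad \nabla_\bx\rho=\be_\rho,\qquad \nabla_\bx\theta=\rho^{-1}\be_\theta.
\]
Applying the chain rule to each component of $\Psi$ gives
\[
\nabla\Psi=\frac{\hat e_1\otimes\be_{\rm t}}{1-\rho\wh\kappa}+(\cos\theta\,\hat e_2+\sin\theta\,\hat e_3)\otimes\be_\rho+(-\sin\theta\,\hat e_2+\cos\theta\,\hat e_3)\otimes\be_\theta,
\]
so $|\nabla\Psi|\le (1-r_{\max}\kappa_{\max})^{-1}+\sqrt 2$, which depends only on $\kappa_{\max}$ and (through $r_{\max}$) on $c_\Gamma$. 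Differentiating the formula for $\Psi^{-1}$, using $\partial_\varphi\be_\rho=-\wh\kappa\,\be_{\rm t}$ and $\partial_\theta\be_\rho=\be_\theta$, produces columns $(1-r\wh\kappa)\be_{\rm t}$, $\cos\alpha\,\be_\rho-\sin\alpha\,\be_\theta$, $\sin\alpha\,\be_\rho+\cos\alpha\,\be_\theta$ (with $r=\sqrt{y_2^2+y_3^2}$ and $\alpha=\arctan(y_3/y_2)$), giving $|\nabla\Psi^{-1}|\le 1+r_{\max}\kappa_{\max}+\sqrt 2$. Both bounds are manifestly independent of $\epsilon$.

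The only real point to check is that the factor $1/\rho$ appearing in $\nabla_\bx\theta$ does not contaminate $\nabla\Psi$: this matters because $\rho$ can be as small as $\epsilon a(\varphi)$ on $\p\mc O$, which in turn tends to $0$ as $\varphi\to\pm\eta_\epsilon$ by Definition \ref{admissible_a}. The cancellation is exactly built into the definition of $\Psi$, where the last two components carry matching factors of $\rho$ in front of $\cos\theta,\sin\theta$. Once this cancellation is observed, the construction and the uniform bounds follow from the routine computations sketched above, and the lemma is complete.
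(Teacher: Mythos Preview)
Your proposal is correct and follows essentially the same approach as the paper: define the straightening map as the inverse of the tubular-neighborhood parameterization $(\varphi,\theta,\rho)\mapsto \X_\text{ext}(\varphi)+\rho\be_\rho(\varphi,\theta)$, and observe that the bounds depend only on $c_\Gamma$ and $\kappa_{\max}$ through $r_{\max}$. The paper argues more abstractly via the inverse function theorem after checking that $\nabla\Psi_0$ has nonvanishing determinant $1-\rho\wh\kappa$, whereas you compute the gradient matrices explicitly and verify the $1/\rho$ cancellation by hand; both routes yield the same conclusion.
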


\begin{proof}
Let $\mc{O}_{\text{str}}$ denote the region $\mc{O}$ with straight centerline $\X_{\text{ext}}$, parameterized with respect to straight cylindrical coordinates $(\varphi,\theta,\rho)$. Let $\Psi_0: \mc{O}_{\text{str}} \to \mc{O}$ denote the coordinate map taking the straight cylindrical coordinates $(\varphi,\theta,\rho) \mapsto \X_\text{ext}(\varphi)+ \rho \be_{\rho}(\varphi,\theta)$ about the curved slender body centerline. Note that the map $\Psi_0$ is $\mc{C}^1$ due to the Bishop frame \eqref{bishop_ODE}. Furthermore, $\nabla\Psi_0$ is full rank at each point $(\varphi,\theta,\rho)$, which can be seen as follows. Letting $\overline \be_{\rm t},\overline\be_\theta,\overline\be_\rho$ denote the straight cylindrical basis vectors, for any $\varphi_0\in(1-r_{\max},1+r_{\max})$, we can choose a rotation $\bm{R}\in SO(2)$ such that $\bm{R}\be_{\rm t}(\varphi_0)=\overline\be_{\rm t}$, $\bm{R}\be_\rho(\varphi_0,\theta)=\overline\be_\rho$, and $\bm{R}\be_\theta(\varphi_0,\theta)=\overline\be_\theta$. Then, using \eqref{bishop_ODE},
\[ \nabla(\bm{R}\Psi_0) = (1-\rho\wh\kappa)\overline\be_{\rm t}\overline\be_{\rm t}^{\rm T} + \overline\be_\theta\overline\be_\theta^{\rm T}
+ \overline\be_\rho\overline\be_\rho^{\rm T}\]
has positive determinant, since $\rho<r_{\max}$. \\

We then define the straightening operator $\Psi$ on the $\X_\text{ext}$ by $\Psi = \Psi_0^{-1}$. Note $\Psi_0^{-1}\in \mc{C}^1$ by the inverse function theorem. Also, the map $\Psi$ depends only on the shape of the fiber centerline -- in particular, on the constants $c_{\Gamma}$ and $\kappa_{\max}$. 
\end{proof}

\begin{remark}\label{straight_rem}
We can use Lemma \ref{straightening_lemma} to work locally with a straight fiber without introducing additional $\epsilon$-dependence into resulting estimates. \\

Let $\mc{O}_2$ denote the region
\begin{align*}
\mc{O}_2 &:= \bigg\{\bx \in \Omega_\epsilon \ts : \ts \bx= \X(\varphi)+\rho \be_\rho(\varphi,\theta); \ts \abs{\varphi} \le 1, \ts 0\le \theta<2\pi, \ts \epsilon a(\varphi) < \rho< \frac{r_{\max}}{2}  \bigg\} \\
&\hspace{2cm} \bigcup \bigg\{\bx \in \Omega_\epsilon \ts : \ts \bx= \X_\text{ext}(\varphi)+\rho \be_\rho(\varphi,\theta); \ts 1<\abs{\varphi} < 1+\frac{r_{\max}}{2},\\
&\hspace{5cm}  0\le \theta<2\pi, \ts \epsilon a(\varphi) < \rho< \sqrt{r_{\max}^2/4-(\abs{\varphi}-1)^2}  \bigg\},
\end{align*}
i.e. the region $\mc{O}$ with extent $r_{\max}/2$ rather than $r_{\max}$. \\

We use the following partition of unity to divide $\Omega_\epsilon$ in two regions: 
\begin{align*}
\phi_1 &= \begin{cases}
1 & \text{if } \bx \in \mc{O}_2 \\ 
0 & \text{if } \bx \in \R^3\backslash \mc{O}
\end{cases}, \quad \phi_2 = 1-\phi_1,
\end{align*}
with smooth transition between $\mc{O}_2$ and $\mc{O}$ for both $\phi_1$ and $\phi_2$. Since $r_{\max}$ depends only on the constants $c_{\Gamma}$ and $\kappa_{\max}$ and not on $\epsilon$, these cutoffs are both independent of $\epsilon$. Therefore we can consider the region $\mc{O}$ (where $\phi_1$ is supported) separately from $\R^3\backslash \mc{O}_2$. In particular, whenever only information near the slender body surface is needed, we can use Lemma \ref{straightening_lemma} to consider only slender bodies $\Sigma_\epsilon$ with straight centerline. This will be useful for both the trace and extension operators constructed in the following sections, both of which rely only on information very close to $\Gamma_\epsilon$.  
\end{remark}

\subsubsection{Trace inequality}\label{trace_section}
Here we prove the weighted trace inequality (Lemma \ref{free_trace_lemma}) for functions in the space $\A_\epsilon$. We show that the weight $\abs{\log(a)}^{-1/2}$ is needed to avoid a logarithmic divergence in the trace at the fiber endpoints. 


\begin{proof}[Proof of Lemma \ref{free_trace_lemma}]
It suffices to show that \eqref{free_trace_lemma} holds within the region $\mc{O}$. In particular, using the cutoff described in Remark \ref{straight_rem}, we consider only functions $\bu$ supported within a distance $r_{\max}\le 1$ of the fiber centerline. \\

Using Lemma \ref{straightening_lemma}, we proceed to show that the estimate \eqref{free_trace_ineq} holds in the exterior of a straight slender body, parameterized with respect to straight cylindrical coordinates as
 \begin{equation}\label{Sigma_str}
 \Sigma^\epsilon_\text{str}= \{ (\varphi,\theta,\rho) \ts : \ts -\eta_\epsilon <\varphi <\eta_\epsilon, \ts 0\le \theta<2\pi, \ts \rho \le \epsilon a(\varphi) \}
 \end{equation} 
for $a(\varphi)$ as in Definition \ref{admissible_a}. Let 
\begin{equation}\label{Gamma_str}
\Gamma^\epsilon_\text{str} = \p \Sigma^\epsilon_\text{str}, \quad \Omega^\epsilon_\text{str} = \R^3\backslash\overline{\Sigma^\epsilon_\text{str}}.
\end{equation}

We define the admissible set 
\[ \A^\epsilon_\text{str} = \big\{ \bv \in D^{1,2}(\Omega^\epsilon_\text{str}) \ts : \ts  \bv\big|_{\Gamma^\epsilon_\text{str}} = \bv(\varphi); \quad \text{supp}(\bv) \subset \{\bx(\varphi,\theta,\rho) \ts : \ts \rho < 1\} \big\}. \] 

Following the same outline as in the closed loop setting \cite{closed_loop}, we show that \eqref{free_trace_ineq} holds for $\bu \in \mc{C}^\infty_0(\overline{\Omega^\epsilon_\text{str}})\cap \A^\epsilon_\text{str}$, where again the $\overline{\Omega^\epsilon_\text{str}}$ notation denotes functions supported up to $\Gamma^\epsilon_\text{str}$. The result for $\bu\in \A^\epsilon_\text{str}$ then follows by density. \\

Now, for $\bx\in \Gamma^\epsilon_\text{str}$, we use the fundamental theorem of calculus to write $\bu(\bx)=\bu(\epsilon a(\varphi), \theta,\varphi)$ as
\[ \bu(\varphi,\theta,\epsilon a(\varphi)) = \int_{\epsilon a(\varphi)}^1 \frac{\p \bu}{\p\rho} \ts d\rho.\]

Then we have
\begin{align*}
\abs{\bu(\varphi,\theta,\epsilon a(\varphi))} &\le \int_{\epsilon a(\varphi)}^1 \abs{\frac{\p \bu}{\p\rho}} d\rho \le \int_{\epsilon a(\varphi)}^1 \frac{1}{\sqrt{\rho}}\sqrt{\rho} \abs{\frac{\p \bu}{\p\rho}} d\rho \\
&\le \bigg(\int_{\epsilon a(\varphi)}^1 \frac{1}{\rho} d\rho \bigg)^{1/2} \bigg( \int_{\epsilon a(\varphi)}^1 \abs{\frac{\p \bu}{\p\rho}}^2 \rho d\rho\bigg)^{1/2} \\
&= \abs{\log(\epsilon a(\varphi))}^{1/2}  \bigg( \int_{\epsilon a(\varphi)}^1 \abs{\frac{\p \bu}{\p\rho}}^2 \rho d\rho\bigg)^{1/2},
\end{align*}
and therefore
\begin{equation}\label{free_trace_bd} 
\abs{{\rm Tr}(\bu)}^2 \le \abs{\log(\epsilon a(\varphi))} \int_{\epsilon a(\varphi)}^1 \abs{\frac{\p \bu}{\p\rho}}^2 \rho \ts d\rho. 
\end{equation}

Already we can see that the bound \eqref{free_trace_bd} diverges at the fiber endpoints $\varphi=\pm \eta_\epsilon$. This is due to the fact that as $a(\varphi)\to 0$ at the fiber endpoints, we are trying to limit to a truly one-dimensional trace of an $H^1$ function in $\R^3$.  \\

To correct for this, we multiply $\abs{{\rm Tr}(\bu)}^2$ in \eqref{free_trace_bd} by $\abs{\log(a(\varphi))}^{-1}$. Then, since $\bu$ belongs to $\A_\epsilon^{\dive}$, using the $\theta$-independence of ${\rm Tr}(\bu)={\rm Tr}(\bu)(\varphi)$, we may write
\[ \norm{{\rm Tr}(\bu) \abs{\log(a)}^{-1/2}}_{L^2(-\eta_\epsilon,\eta_\epsilon)}^2 = \frac{1}{2\pi} \int_{-\eta_\epsilon}^{\eta_\epsilon} \int_0^{2\pi} \abs{{\rm Tr}(\bu)(\varphi)}^2\abs{\log(a(\varphi))}^{-1} d\theta d\varphi. \]

Using \eqref{free_trace_bd}, we then have that ${\rm Tr}(\bu)(\varphi)$ satisfies
\begin{align*}
&\norm{{\rm Tr}(\bu) \abs{\log(a)}^{-1/2}}_{L^2(-\eta_\epsilon,\eta_\epsilon)}^2 \\
&\qquad \le \frac{1}{2\pi}\int_{-\eta_\epsilon}^{\eta_\epsilon} \int_0^{2\pi} \abs{\log(\epsilon a(\varphi))} \abs{\log(a(\varphi))}^{-1} \int_{\epsilon a(\varphi)}^1 \abs{\frac{\p \bu}{\p\rho}}^2 \rho \ts d\rho d\theta d\varphi\\
&\qquad \le \frac{1}{\pi} \abs{\log\epsilon} \norm{\nabla \bu}_{L^2(\Omega^\epsilon_\text{str})}^2,
\end{align*}
where we have used that $a\le 1$. 
\end{proof} 

Note that, for $\bu\in \A_\epsilon$, this one-dimensional $L^2$ trace inequality along the fiber centerline relates to a (weighted) trace inequality over the entire fiber surface $\Gamma_\epsilon$ by 
\begin{align*}
\norm{{\rm Tr}(\bu)\abs{\log(a)}^{-1/2}}_{L^2(\Gamma_\epsilon)}^2 &= \int_{-\eta_\epsilon}^{\eta_\epsilon}\int_0^{2\pi} \abs{{\rm Tr}(\bu)(\varphi)}^2 \abs{\log(a(\varphi))}^{-1} \mc{J}_\epsilon(\varphi,\theta) \ts d\theta d\varphi \\
&\le \int_{-\eta_\epsilon}^{\eta_\epsilon}\int_0^{2\pi} \abs{{\rm Tr}(\bu)(\varphi)}^2\abs{\log(a(\varphi))}^{-1}(\epsilon a+\epsilon^2a^2\kappa_{\max}+\epsilon^2\bar c_a) \ts d\theta d\varphi \\
&\le 2\pi \epsilon(1+\epsilon\kappa_{\max}+\epsilon\bar c_a)\norm{{\rm Tr}(\bu) \abs{\log(a)}^{-1/2}}_{L^2(-\eta_\epsilon,\eta_\epsilon)}^2. 
\end{align*}
Here we have used \eqref{jac_free} and \eqref{bar_ca} to bound $\mc{J}_\epsilon$. 

\subsubsection{Extension operator for thin fiber with spheroidal endpoints}\label{extension_op}
In order to determine the $\epsilon$-dependence in the Korn inequality (Lemma \eqref{korn_free}), as in \cite{closed_loop}, it will be convenient to show the existence of a $D^{1,2}$ extension into the interior of $\Sigma_\epsilon$ whose symmetric gradient is bounded independent of $\epsilon$. Again, due to Lemma \ref{straightening_lemma}, it suffices to show the existence of such an operator for a slender filament with straight centerline. We again parameterize the straight centerline with respect to cylindrical coordinates $(\varphi,\theta,\rho)$ as $\{ (\varphi,0,0) \ts : \ts -\eta_\epsilon <\varphi <\eta_\epsilon \}$. \\

Let $\Sigma^\epsilon_\text{str}$ and $\Gamma^\epsilon_\text{str}$ be as in \eqref{Sigma_str} and \eqref{Gamma_str}. 
We define the domain $\mc{W}^\epsilon$, a ``fattened'' version of $\Sigma^\epsilon_\text{str}$, as 
 \begin{equation}\label{mcW_def}
 \begin{aligned}
 \mc{W}^\epsilon= \bigg\{(\varphi,\theta,\rho) \ts : \ts  -2\eta_\epsilon+1 &< \varphi < 2\eta_\epsilon-1, \ts 0\le \theta<2\pi, \\
&\qquad  0\le \rho <\begin{cases}
 2\epsilon a(\varphi), & \abs{\varphi}\le 1 \\
2\epsilon a(\frac{\varphi+1}{2}), & 1< \abs{\varphi} <2\eta_\epsilon -1 
 \end{cases} \bigg\},
 \end{aligned}
 \end{equation}
and take
 \begin{equation}\label{mcH_def}
 \mc{H}^\epsilon=\mc{W}^\epsilon\backslash \overline{\Sigma^\epsilon_\text{str}}.
 \end{equation}
 
Note that on the bounded domain $\mc{H}^\epsilon$, the function spaces $D^{1,2}(\mc{H}^\epsilon)$ and $H^1(\mc{H}^\epsilon)$ coincide; thus it suffices to construct an $H^1$ extension from $\mc{H}^\epsilon$ to $\mc{W}^\epsilon$ whose symmetric gradient is bounded independent of $\epsilon$. We show the following lemma.

\begin{lemma}\emph{(Free endpoint slender body extension)}\label{extension_free}
Let $\mc{W}^\epsilon$ and $\mc{H}^\epsilon$ be as in \eqref{mcW_def} and \eqref{mcH_def}, respectively. For $\bu\in H^1(\mc{H}^\epsilon)$, there exists an operator $T_\epsilon: H^1(\mc{H}^\epsilon) \to H^1(\mc{W}^\epsilon)$ extending $\bu$ into the interior of the slender body such that 
\begin{enumerate}
\item $T_\epsilon \bu \big|_{\mc{H}^\epsilon} = \bu$ 
\item $\norm{\E(T_\epsilon \bu)}_{L^2(\mc{W}^\epsilon)} \le C \norm{\E(\bu)}_{L^2(\mc{H}^\epsilon)}$, where the constant $C$ depends on the constants $c_a$, $\bar c_a$, $a_0$, $\delta_a$, $c_{\eta,0}$, and $c_\eta$, but is independent of $\epsilon$.
\end{enumerate}
\end{lemma}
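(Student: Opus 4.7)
The plan is to reduce to the case of a straight centerline via Lemma \ref{straightening_lemma} and then construct $T_\epsilon$ piecewise using a partition of unity. Write $1 = \psi_M + \psi_E^+ + \psi_E^-$ where $\psi_M$ is supported on the ``bulk'' $\{|\varphi| \le 1 - \delta_a/4\}$ (on which $a(\varphi) \ge a_0$) and $\psi_E^\pm$ are supported near the two endpoints $\varphi = \pm\eta_\epsilon$. Because the transition zones have $\epsilon$-independent width proportional to $\delta_a$, the cutoff derivatives are bounded uniformly in $\epsilon$. I build extensions $T_M\bu$ and $T_E^\pm\bu$ of $\psi_M\bu$ and $\psi_E^\pm\bu$ separately and set $T_\epsilon\bu = T_M(\psi_M\bu) + T_E^+(\psi_E^+\bu) + T_E^-(\psi_E^-\bu)$.

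On the bulk region, $\Gamma^\epsilon_{\mathrm{str}}$ is a Lipschitz graph $\rho = \epsilon a(\varphi)$ with $\epsilon$-independent Lipschitz constant (since $a \ge a_0$ and $|a'| \le \bar c_a/a_0$ there), so standard reflection-based extension techniques apply. I use a Piola-type radial reflection that flips the normal component of $\bu$ across the graph,
\[ T_M\bu(\varphi,\theta,\rho) = (\bm{I} - 2\be_{\rho}\be_{\rho}^{\rm T})\,\bu(\varphi,\theta, 2\epsilon a(\varphi)-\rho), \qquad \rho < \epsilon a(\varphi). \]
In the flat case $a' \equiv 0$ this map is an isometry and exactly preserves the symmetric-gradient norm. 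For general $a$, the Jacobian contains off-diagonal entries of size $\epsilon a'$; expanding $\E(T_M\bu)$ produces correction terms involving both the symmetric and antisymmetric parts of $\nabla\bu$, the symmetric contributions being bounded directly by $\|\E(\bu)\|_{L^2}$ while the antisymmetric ones are eliminated by a preliminary subtraction of a slowly-varying rigid motion (which does not alter $\E(\bu)$ at leading order). A change of variables then yields $\|\E(T_M\bu)\|_{L^2(\text{interior})} \le C\|\E(\bu)\|_{L^2(\text{exterior})}$ with $C = C(a_0,\bar c_a,\kappa_{\max})$.

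For each cap I exploit the approximate spheroidicity from Definition \ref{admissible_a}(2): the boundary $\rho = \epsilon a(\varphi)$ is within an $O(\epsilon^2)$ multiplicative factor of the exact prolate spheroid with foci at $\pm\eta_\epsilon$. Passing to prolate spheroidal coordinates $(\xi,\eta,\theta)$ adapted to these foci, the reference spheroid becomes the level surface $\xi = \xi_0$ with $\xi_0 = \operatorname{arccosh}(1/\sqrt{1-\epsilon^2/\eta_\epsilon^2}) = O(\epsilon)$, and the coordinates are orthogonal. I extend by the Piola-type spheroidal reflection $\xi \mapsto 2\xi_0-\xi$ combined with a sign flip of the $\xi$-component of $\bu$. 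To obtain an $\epsilon$-independent bound, I first rescale $\xi' = \xi/\xi_0$, which maps the reference half-shell $\xi \in (0,\xi_0)$ to the fixed half-shell $\xi' \in (0,1)$; on the rescaled domain, subtracting an average rigid motion from $\bu$ and invoking a standard Korn inequality (whose constant is $\epsilon$-independent) then controls the reflected function. Adding the rigid motion back (it contributes zero to $\E$) and undoing the rescaling gives $\|\E(T_E^\pm\bu)\|_{L^2} \le C\|\E(\bu)\|_{L^2}$. The $O(\epsilon^2)$ discrepancy between the actual boundary and the reference spheroid is then bridged by a second reflection across a thin annular shell, contributing only a lower-order correction.

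The main obstacle will be the cap analysis: showing that the Korn constant of the rescaled reference half-spheroid is indeed $\epsilon$-independent. The rescaling $\xi' = \xi/\xi_0$ is \emph{anisotropic} with respect to the ambient Cartesian metric, so the rescaled domain inherits a spheroidal metric that degenerates at the two foci $\xi = 0$. The resolution is that the foci are interior singularities lying at a distance of order $1$ from the fluid--body interface $\{\xi'=1\}$ and so do not interact with the reflection, while the reference half-spheroid with two conical singularities (opening angles bounded away from $0$ and $\pi$) admits Korn's inequality by a direct variational argument on this fixed bounded domain. Combining the bulk and cap pieces through the partition of unity, and absorbing the commutator terms via an $\epsilon$-independent Poincar\'e--Korn estimate on the $\epsilon$-independent overlap regions, yields the claim.
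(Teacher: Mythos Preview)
Your proposal has a genuine gap in the bulk region. The claim that the Piola-type reflection $(\bm{I} - 2\be_\rho\be_\rho^{\rm T})\bu(\varphi,\theta,2\epsilon a-\rho)$ ``is an isometry and exactly preserves the symmetric-gradient norm'' in the constant-radius case is false: the cylinder $\rho = \epsilon a$ has principal curvature $1/(\epsilon a)$ in the $\theta$-direction, and the Nitsche reflection only preserves $\E$ across \emph{flat} boundaries. Concretely, the cylindrical component $\E_{\theta\theta}(T\bu) = \rho^{-1}(\p_\theta u_\theta - u_\rho)$ at an interior point $\rho$ involves $u_\rho/\rho$, which does not match $\E_{\theta\theta}(\bu)$ at the reflected point $2\epsilon a-\rho$; the discrepancy is of size $|\bu|/(\epsilon a)$. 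Your vague ``preliminary subtraction of a slowly-varying rigid motion'' cannot absorb this: a single rigid motion on a tube of length $O(1)$ and thickness $O(\epsilon)$ yields a Korn--Poincar\'e remainder of size $O(1)\|\E(\bu)\|$, not $O(\epsilon a)\|\E(\bu)\|$, so the $(\epsilon a)^{-1}$ factor survives. The same issue recurs in your cap analysis: after the anisotropic rescaling $\xi'=\xi/\xi_0$ the Euclidean symmetric gradient does not transform covariantly, so a Korn inequality on the rescaled reference domain does not transfer back. Finally, the commutator terms from your partition of unity live on overlap regions that are still $\epsilon$-thin in $\rho$, so the ``$\epsilon$-independent Poincar\'e--Korn estimate'' you invoke there is not available either.

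The paper's proof confronts exactly this obstruction and resolves it differently. It uses a naive reflection (no Piola twist), accepting that this only yields $|\nabla(E\bv)| \le C|\nabla\bv|$ and that the cutoff produces a $|\bu|/(\epsilon a)$ term. The key idea is then to partition the fiber into \emph{many} short segments $\mc{H}_n^\epsilon$, each of length $\sim \epsilon a(p_n)$ comparable to its radius, so that after rescaling each segment is uniformly bilipschitz to a fixed annular cylinder. On each segment one subtracts the local $L^2$ projection $P_{\sR}\bu_n$ onto rigid motions; Korn--Poincar\'e on the rescaled domain (with a uniform constant, verified via a varying-domain argument) then gives $\|\bu_n - P_{\sR}\bu_n\|_{L^2(\mc{H}_n^\epsilon)} \le C\epsilon a(p_n)\|\E(\bu_n)\|_{L^2(\mc{H}_n^\epsilon)}$, which exactly cancels the $(\epsilon a)^{-1}$. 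The rigid-motion pieces are reassembled separately and their symmetric gradient is controlled by neighboring differences. This segment-by-segment localization is the missing ingredient in your sketch.
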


As an immediate corollary, due to Lemma \ref{straightening_lemma}, we have:
\begin{corollary}\emph{(Free endpoint extension -- curved centerline)}\label{extension_free2}
Let $\Sigma_\epsilon$ and $\Omega_\epsilon$ be as in Section \ref{geometry_section}. For $\bu\in D^{1,2}(\Omega_\epsilon)$, there exists an operator $\wt{T}_\epsilon: D^{1,2}(\Omega_\epsilon)\to D^{1,2}(\R^3)$ extending $\bu$ into the interior of the slender body $\Sigma_\epsilon$ such that 
\begin{enumerate}
\item $\wt {T}_\epsilon \bu \big|_{\Omega_\epsilon} = \bu$ 
\item $\norm{\E(\wt{T}_\epsilon \bu)}_{L^2(\mc{W}^\epsilon)} \le C \norm{\E(\bu)}_{L^2(\mc{H}^\epsilon)}$, where $C$ now depends on $\kappa_{\max}$ and $c_\Gamma$ in addition to $c_a$, $\bar c_a$, $a_0$, $\delta_a$, $c_{\eta,0}$, and $c_\eta$, but remains independent of $\epsilon$.
\end{enumerate}
\end{corollary}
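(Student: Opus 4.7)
The plan is to derive Corollary \ref{extension_free2} from Lemma \ref{extension_free} by passing through the straightening map $\Psi$ of Lemma \ref{straightening_lemma}. Since $\Psi$ is a $\mc{C}^1$ diffeomorphism from the curved tubular neighborhood $\mc{O}$ onto its straight analog $\mc{O}_{\text{str}}$ preserving the fiber coordinates $(\rho,\theta,\varphi)$, with $\nabla\Psi$ and $\nabla(\Psi^{-1})$ bounded by constants depending only on $c_\Gamma$ and $\kappa_{\max}$, it also sends $\Gamma_\epsilon$ onto $\Gamma^\epsilon_{\text{str}}$ and the curved fattened neighborhood onto $\mc{W}^\epsilon$.

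First I would use the partition of unity $\{\phi_1,\phi_2\}$ of Remark \ref{straight_rem} to write $\bu = \phi_1\bu + \phi_2\bu$. The piece $\phi_2\bu$ vanishes on an $\epsilon$-independent neighborhood of $\Gamma_\epsilon$ and so extends by zero into $\Sigma_\epsilon$ as a $D^{1,2}(\R^3)$ function; since $\nabla\phi_2$ is supported on a fixed bounded annular region contained in $\mc{O}\setminus\mc{O}_2$, the contribution of this piece to $\norm{\E(\wt T_\epsilon\bu)}_{L^2}$ is controlled by $C\norm{\E(\bu)}_{L^2(\Omega_\epsilon)}$. It remains to extend $\phi_1\bu$, which is supported in $\mc{O}$ and meets $\Gamma_\epsilon$.

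For this I would express $\phi_1\bu$ componentwise in the Bishop frame as $\phi_1\bu = u_1\be_{\rm t}+u_2\be_\rho+u_3\be_\theta$ and define $\bv$ on $\mc{O}_{\text{str}}$ by assigning the same component values $(u_1,u_2,u_3)$ to the straight cylindrical frame via $\Psi$; then apply Lemma \ref{extension_free} to $\bv\big|_{\mc{H}^\epsilon}$ to obtain an extension $T_\epsilon\bv\in H^1(\mc{W}^\epsilon)$; then push back via $\Psi^{-1}$ to define $\wt T_\epsilon(\phi_1\bu)$ on the curved fattened region, and set $\wt T_\epsilon\bu := \wt T_\epsilon(\phi_1\bu)+\phi_2\bu$.

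The main obstacle is that the symmetric gradient is not preserved under this frame-transport, so one must relate $\norm{\E(\bv)}_{L^2(\mc{H}^\epsilon)}$ to $\norm{\E(\phi_1\bu)}_{L^2(\mc{O})}$, and likewise on the way back. Using the Bishop frame ODE \eqref{bishop_ODE} together with the identities \eqref{deriv_IDs}, the Euclidean symmetric gradient in Bishop-frame components differs from its straight-cylindrical counterpart only by a bounded multiplicative distortion on derivative terms (from the factor $(1-\rho\wh\kappa)^{-1}$, bounded since $\rho<r_{\max}<1/\kappa_{\max}$) plus additive pieces of the form (coefficient bounded by $\kappa_{\max}$)$\cdot$(component of $\bu$). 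The derivative-level terms produce an $\epsilon$-independent comparison $\norm{\E(\bv)}_{L^2(\mc{H}^\epsilon)} \le C\norm{\E(\phi_1\bu)}_{L^2(\mc{O})}$ up to the lower-order $L^2$ norm of $\bu$ on $\mc{O}$, and the analogous comparison on $\mc{W}^\epsilon$ controls $\norm{\E(\wt T_\epsilon(\phi_1\bu))}_{L^2}$ by $\norm{\E(T_\epsilon\bv)}_{L^2}$ plus a similar lower-order tail. These lower-order terms live on the fixed bounded region $\mc{O}$ (respectively its fattened version), whose geometry is controlled by the admissibility constants $c_a,\bar c_a,a_0,\delta_a,c_\eta,c_{\eta,0}$ and by $c_\Gamma,\kappa_{\max}$; a standard Korn--Poincar\'e inequality on this fixed domain absorbs them into $\norm{\E(\bu)}_{L^2(\Omega_\epsilon)}$ with an $\epsilon$-independent constant, giving the bound asserted in Corollary \ref{extension_free2}.
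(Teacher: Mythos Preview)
Your outline correctly identifies the main obstacle: under the frame transport (or under composition with $\Psi$), the symmetric gradient picks up curvature-driven lower-order terms of size $\kappa_{\max}\abs{\bu}$. The gap is in how you dispose of them. You write that ``a standard Korn--Poincar\'e inequality on this fixed domain'' absorbs $\norm{\phi_1\bu}_{L^2(\mc{O})}$ into $\norm{\E(\bu)}_{L^2(\Omega_\epsilon)}$ with an $\epsilon$-independent constant, but $\mc{O}$ is \emph{not} a fixed domain: its inner boundary is $\Gamma_\epsilon$. A Korn--Poincar\'e inequality on $\mc{O}$ for functions vanishing only on the outer boundary (not on $\Gamma_\epsilon$) with constant independent of $\epsilon$ is precisely what the extension operator is being built to deliver (via Lemma~\ref{korn_free}), so invoking it here is circular. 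The analogous issue appears on the way back, where the lower-order tail lives on the curved $\mc{W}^\epsilon$, which also shrinks with $\epsilon$. The same circularity infects the $\phi_2\bu$ step: bounding $\norm{\bu}_{L^2(\mc{O}\setminus\mc{O}_2)}$ by $\norm{\E(\bu)}_{L^2(\Omega_\epsilon)}$ already requires an $\epsilon$-uniform Korn/Sobolev inequality on $\Omega_\epsilon$.

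The paper's route avoids this by not transporting $\bu$ globally through $\Psi$ at all. The point of Lemma~\ref{straightening_lemma} is that each small curved segment $\mc{H}^\epsilon_n$ is $\mc{C}^1$-diffeomorphic to the reference annular cylinder via the composition of $\Psi$ with the map $\Psi_n$ of Remark~\ref{KP_domain_dep}, with diffeomorphism bound $M$ now depending on $\kappa_{\max},c_\Gamma$ as well. Since the auxiliary Korn-type lemmas (Lemma~\ref{diffeo_KP}, Corollary~\ref{korn_type}, Corollary~\ref{korn_poincare_free}) are already stated uniformly over all such diffeomorphic images, the entire construction of Lemma~\ref{extension_free}---reflection, cutoff, segmentwise projection onto rigid motions, and reassembly---goes through verbatim on the curved segments. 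The curvature never produces a global lower-order term to absorb; it only enlarges the bound $M$ in \eqref{diffeo_bound}, which is why the corollary's constant picks up $\kappa_{\max}$ and $c_\Gamma$. If you want to salvage your transport approach, you would need an independent, $\epsilon$-free bound on $\norm{\phi_1\bu}_{L^2(\mc{O})}$ in terms of $\norm{\E(\bu)}_{L^2(\mc{H}^\epsilon)}$, and that is not available without essentially reproving the lemma.
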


\begin{proof}[Proof of Lemma \ref{extension_free}]  We subdivide the somewhat long and technical proof into 6 main steps. \\
{\bf Step 1: Reflection $E$ into the interior of $\Sigma_\text{str}^\epsilon$:} \\
First, for any $\bx=\bx(\varphi,\theta,\rho)\in \mc{W}^\epsilon$, we let
\[ \delta(\varphi,\rho) = \inf_{s\in(-1,1)} \abs{\bx - (s,0,0)}\]
denote the distance from $\bx$ to the straightened effective centerline \eqref{effective_centerline}. Note that since $\mc{W}^\epsilon$ is radially symmetric for each $\varphi$, the distance $\delta(\varphi,\rho)$ depends only on $\varphi$ and $\rho$ and not on $\theta$. In particular, we have
\[ \delta(\varphi,\rho) = \begin{cases}
\rho, & \abs{\varphi} \le 1 \\
\sqrt{(\abs{\varphi}-1)^2+\rho^2}, & 1 < \abs{\varphi} <\eta_\epsilon.
\end{cases} \]

Now, for any $(\varphi,\theta,\rho)\in \mc{W}^\epsilon$, define $\varphi^*\in (-\eta_\epsilon,\eta_\epsilon)$ such that $(\varphi^*,\theta, \epsilon a(\varphi^*))$ is the projection of $(\varphi,\theta,\rho)$ onto $\Gamma^\epsilon_{\text{str}}$ along the straight line distance $\delta(\varphi,\rho)$ to the effective centerline. Note that for $\abs{\varphi}\le 1$, $\varphi^*=\varphi$. Toward the fiber endpoints, $1<\abs{\varphi}<\eta_\epsilon$, we have 
\begin{equation}\label{varphi_star}
 \begin{cases}
\frac{\epsilon a(\varphi^*)}{\varphi^*-1}=\frac{\rho}{\varphi-1}, & 1<\varphi<\eta_\epsilon \\
\frac{\epsilon a(\varphi^*)}{\varphi^*+1}=\frac{\rho}{\varphi+1}, & -\eta_\epsilon <\varphi < -1,
\end{cases} 
\end{equation}
This means that the pair $(\varphi^*,a(\varphi^*))$ lies on the line connecting $(\varphi,\rho)$ to the effective endpoint $(\pm1,0)$. Note that as $\rho\to 0$, $\varphi^*\to \pm \eta_\epsilon$, and as $\varphi\to 1$, $\varphi^*\to 1$. Also, for $1<\abs{\varphi}<\eta_\epsilon$, we have 
\begin{equation}\label{star_derivs}
\frac{\p\varphi^*}{\p\varphi} = \frac{\epsilon a(\varphi^*)}{\rho+\epsilon |a'(\varphi^*)|(|\varphi| - 1)}, \quad 
\frac{\p\varphi^*}{\p\rho} =\frac{-(|\varphi^*|-1)}{\rho+ \epsilon |a'(\varphi^*)|(|\varphi|- 1)}.
 \end{equation}

Consider $\bv=\bv(\varphi,\theta,\rho)\in H^1(\mc{H}^\epsilon)$ supported only at $(\varphi,\theta,\rho)\in \mc{H}^\epsilon$ satisfying $\delta(\varphi,\rho) \le \frac{5}{3}\delta(\varphi^*,\epsilon a(\varphi^*))$ (see Figure \ref{fig:E_pic}). This initial constraint on the support of $\bv$ is important for the definition \eqref{extE} and will be addressed for functions with more general support in Step 3. We define an extension $E\bv$ into the interior of $\Sigma^\epsilon_\text{str}$ as the reflection of $\bv$ across $\Gamma^\epsilon_\text{str}$ along the shortest straight line connecting $\bx$ to the effective centerline segment $[-1,1]$ (See Figure \ref{fig:E_pic}). More precisely, 
\begin{equation}\label{extE}
E\bv = \begin{cases}
 \bv(\bx), & \text{ if } \bx\in \mc{H}^\epsilon ; \\
\begin{cases}
\bv\big(\varphi,\theta, 2\epsilon a(\varphi)-\rho\big), & \abs{\varphi} \le1 \\
\bv\big(2\varphi^*-\varphi-1,\theta,2\epsilon a(\varphi^*)-\rho\big), & 1 < \varphi \le \eta_\epsilon  \\
\bv\big(2\varphi^*-\varphi+1,\theta,2\epsilon a(\varphi^*)-\rho\big), & -\eta_\epsilon \le \varphi < -1,
\end{cases} & \text{ if } \bx = (\varphi,\theta,\rho)\in \Sigma^\epsilon_\text{str}. 
\end{cases} 
\end{equation}
Here $\varphi^*$ is as in \eqref{varphi_star}. Note that this extension is well-defined for each $\bx\in \Gamma_\text{str}^\epsilon$ due to the limits on supp$(\bv)$. \\

\begin{figure}[!h]
\centering
\includegraphics[scale=0.7]{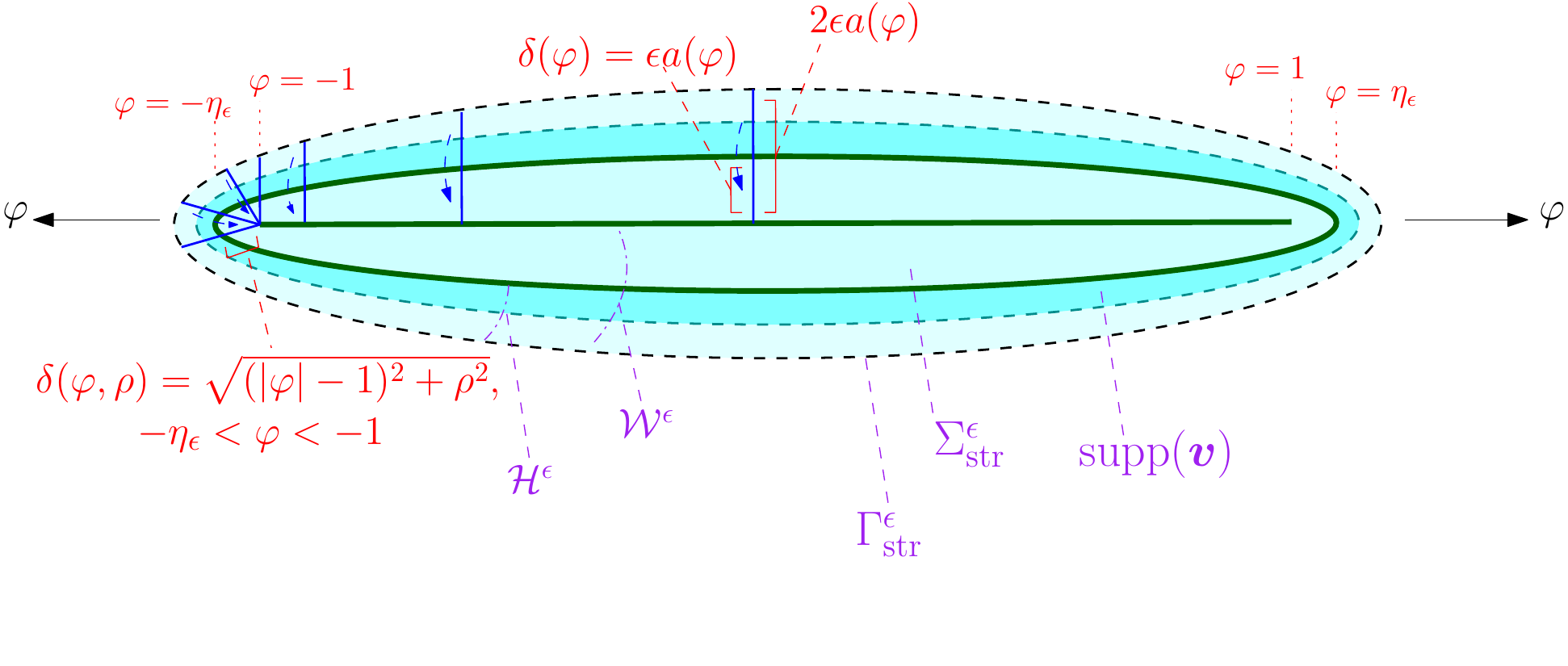}\\
\caption{Consider $\bv(\varphi,\theta,\rho)\in H^1(\mc{H}^\epsilon)$ supported only at $(\varphi,\theta,\rho)\in \mc{H}^\epsilon$ satisfying $\delta(\varphi,\rho) \le \frac{5}{3}\delta(\varphi^*,\epsilon a(\varphi^*))$. We construct the basic extension operator $E: H^1(\mc{H^\epsilon})\to H^1(\mc{W}^\epsilon)$ by reflecting $\bv\in H^1(\mc{H}^\epsilon)$ across the straight line distance to the effective centerline of the fiber ($-1\le \varphi\le 1$). }
\label{fig:E_pic}
\end{figure}

{\bf Step 2: Gradient estimates for $E$:} \\
We now aim to prove the following estimate on the (Euclidean) gradient of the reflection $E$ within $\Sigma_\text{str}^\epsilon$. 

\begin{proposition}\label{Egrad_est}
Let $E$ be as defined in \eqref{extE}. For each $\bx=(\varphi,\theta,\rho)\in \Sigma_{\text{str}}^\epsilon$, we have
\begin{equation}\label{nablaE_bd}
\abs{\nabla(E\bv)(\varphi,\theta,\rho)} \le  C\begin{cases}
\abs{\nabla \bv(\varphi,\theta,2\epsilon a(\varphi^*)-\rho)}, & \abs{\varphi}\le 1 \\
\abs{\nabla \bv(2\varphi^*-\varphi-1,\theta,2\epsilon a(\varphi^*)-\rho)}, & 1<\varphi \le \eta_\epsilon \\
\abs{\nabla \bv(2\varphi^*-\varphi-1,\theta,2\epsilon a(\varphi^*)-\rho)}, & -\eta_\epsilon \le \varphi <-1.
\end{cases} 
\end{equation}
\end{proposition}

\begin{proof}[Proof of Proposition \ref{Egrad_est}]
For $\bx\in \Sigma^\epsilon_{\text{str}}$ and $\abs{\varphi}\le 1$, we differentiate equation 2 of \eqref{extE} to obtain
\begin{equation}\label{nablaE}
\begin{aligned}
\abs{\nabla(E\bv)(\varphi,\theta,\rho)} &\le \abs{\frac{1}{\rho}\frac{\p\bv}{\p\theta}} + \abs{2\epsilon a'(\varphi) - 1} \abs{\frac{\p\bv}{\p\rho}} + \abs{\frac{\p\bv}{\p\varphi}} \\
& \le \abs{\frac{C}{2\epsilon a(\varphi)-\rho}}\abs{\frac{\p\bv}{\p\theta}} + \bigg(\frac{\epsilon \bar c_a}{a(1)}+1\bigg) \abs{\frac{\p\bv}{\p\rho}} + \abs{\frac{\p\bv}{\p\varphi}} \\
&\le C\abs{\nabla\bv(\varphi,\theta,2\epsilon a(\varphi)-\rho)},
\end{aligned}
\end{equation}
where, in the second inequality, we used that supp($E\bv$) extends only to $\rho \ge \epsilon a(\varphi)/3$ to bound the $\p/\p\theta$ term, and Definition \ref{admissible_a} to bound the $\p/\p\rho$ term. Note that all functions on the right hand side in \eqref{nablaE} are evaluated at $(\varphi,\theta,2\epsilon a(\varphi)-\rho)$. \\

Similarly, for $\bx\in \Sigma^\epsilon_{\text{str}}$ and $1< \varphi <\eta_\epsilon$, differentiating equation 3 of \eqref{extE} and using \eqref{star_derivs}, the gradient of the extension $\nabla(E\bv)$ satisfies 
\begin{equation}\label{nablaE_end}
\begin{aligned}
\abs{\nabla(E\bv)(\varphi,\theta,\rho)} &\le \abs{\frac{2\epsilon a(\varphi^*)-2(\varphi^*-1)}{\rho+\epsilon |a'(\varphi^*)|(\varphi-1)} -1}\abs{\frac{\p\bv}{\p\varphi}} + \abs{\frac{1}{\rho}\frac{\p\bv}{\p\theta}} \\
&\qquad + \abs{\frac{2\epsilon^2 a(\varphi^*)a'(\varphi^*) + 2\epsilon |a'(\varphi^*)|(\varphi^*-1)}{\rho+\epsilon |a'(\varphi^*)|(\varphi-1)} -1}\abs{\frac{\p\bv}{\p\rho}}.
\end{aligned}
\end{equation}

Here each function on the right hand side of \eqref{nablaE_end} is evaluated at $(2\varphi^*-\varphi-1,\theta,2\epsilon a(\varphi^*)-\rho)$. We must now bound each of the above coefficients to obtain a bound as in \eqref{nablaE}. Using Definition \ref{admissible_a}, since $\varphi^*\ge 1$ we have $|a'(\varphi^*)|\ge \frac{C}{\epsilon}$. Thus
 \[ \rho+\epsilon\abs{a'(\varphi^*)}(\varphi-1) \ge C\big(\rho +(\varphi-1) \big).\]
 
 Furthermore, using that $\bv(\varphi,\theta,\rho)$ is only supported at $(\varphi,\theta,\rho)\in \mc{H}^\epsilon$ satisfying $\delta(\varphi,\rho)<\frac{5}{3} \delta(\varphi^*,\epsilon a(\varphi^*))$, we have that $\nabla(E\bv)$ is only supported at $(\varphi,\theta,\rho)\in \Sigma_{\text{str}}^\epsilon$ satisfying
 \begin{equation}\label{supp_ineq}
\rho + (\varphi-1) \ge \sqrt{\rho^2+(\varphi-1)^2} \ge \frac{1}{3}\delta(\varphi^*,\epsilon a(\varphi^*)) =\frac{1}{3}\sqrt{\epsilon^2a^2(\varphi^*)+(\varphi^*-1)^2}.
 \end{equation}
Note that this implies that for any pair $(\rho,\varphi)$, 
 \[ \frac{\varphi^*-1}{\rho+(\varphi-1)} \le \frac{\sqrt{\epsilon^2a^2(\varphi^*)+(\varphi^*-1)^2}}{\rho+(\varphi-1)} \le 3. \]
Also, by \eqref{star_derivs}, we have $\frac{\p \varphi^*}{\p\rho}\le 0$, and therefore $\frac{\varphi^*-1}{\rho+(\varphi-1)}$ is largest when $\rho=0$. Thus
\begin{equation}\label{bound_star}
\frac{\varphi^*-1}{\varphi-1} \le 3.
\end{equation}

Then, using \eqref{supp_ineq}, we can address the first coefficient in \eqref{nablaE_end} as
 \[ \abs{\frac{2\epsilon a(\varphi^*)-2(\varphi^*-1)}{\rho+\epsilon |a'(\varphi^*)|(\varphi-1)} -1} \le C\bigg( \frac{\epsilon a(\varphi^*)+(\varphi^*-1)}{\sqrt{(\varphi^*-1)^2+\epsilon^2a^2(\varphi^*)}} +1\bigg) \le C. \]
Similarly, by \eqref{supp_ineq} and \eqref{bound_star}, we can address the third coefficient in \eqref{nablaE_end} as
\begin{align*}
\abs{\frac{2\epsilon^2 a(\varphi^*)a'(\varphi^*) + 2\epsilon |a'(\varphi^*)|(\varphi^*-1)}{\rho+\epsilon |a'(\varphi^*)|(\varphi-1)} -1} &\le \frac{2\epsilon^2 a(\varphi^*)|a'(\varphi^*)|}{\sqrt{\epsilon^2a^2(\varphi^*)+(\varphi^*-1)^2}} + 2\frac{\varphi^*-1}{\varphi-1} +1 \\
&\le \frac{2\bar c_a\epsilon^2}{\sqrt{\epsilon^2a^2(\varphi^*)+(\varphi^*-1)^2} }+ 7.
\end{align*}
Now, if $0<(\varphi^*-1)\le (\eta_\epsilon-1)/2$, then by the spheroidal endpoint requirement of Definition \ref{admissible_a}, $a(\varphi^*) \ge C\epsilon$. If $(\varphi^*-1)\ge (\eta_\epsilon-1)/2$, then by \eqref{eta_epsilon}, $(\varphi^*-1)\ge C\epsilon^2$. Therefore 
\begin{align*}
\abs{\frac{2\epsilon^2 a(\varphi^*)a'(\varphi^*) + 2\epsilon |a'(\varphi^*)|(\varphi^*-1)}{\rho+\epsilon |a'(\varphi^*)|(\varphi-1)} -1} \le \frac{2\bar c_a\epsilon^2}{\sqrt{(\varphi^*-1)^2+\epsilon^2a^2(\varphi^*)} }+ 7 \le C\frac{\epsilon^2}{\epsilon^2}+7 \le C.
\end{align*}

Finally, using \eqref{varphi_star} and \eqref{bound_star}, we can address the middle $\frac{1}{\rho}$ coefficient of \eqref{nablaE_end}. We have 
\[ \abs{2\epsilon a(\varphi^*)-\rho} = \abs{2\frac{\varphi^*-1}{\varphi-1}-1}\rho \le 7\rho,\]
and therefore
\[ \frac{1}{\rho} \le \frac{C}{2\epsilon a(\varphi^*)-\rho}. \]

Altogether, for $\bx\in \Sigma^\epsilon_{\text{str}}$ and $1< \varphi <\eta_\epsilon$, we have that $\nabla(E\bv)$ satisfies 
\begin{equation}\label{nablaE2}
\begin{aligned}
\abs{\nabla(E\bv)(\varphi,\theta,\rho)} &\le C\bigg(\abs{\frac{\p\bv}{\p\varphi}} + \abs{\frac{1}{2\epsilon a(\varphi^*)-\rho}}\abs{\frac{\p\bv}{\p\theta}} + \abs{\frac{\p\bv}{\p\rho}} \bigg) \\
&\le C\abs{\nabla \bv(2\varphi^*-\varphi-1,\theta,2\epsilon a(\varphi^*)-\rho)}.
\end{aligned}
\end{equation}
A similar bound holds for $-\eta_\epsilon < \varphi < -1$. 
\end{proof}

{\bf Step 3: Cutoff and definition of $T$:} \\
To make use of the extension $E$ for $H^1(\mc{H}^\epsilon)$ functions with arbitrary support, we must define a cutoff function $\psi\in \mc{C}^1(\mc{H}^\epsilon)$. In particular, we take 
\begin{equation}\label{free_psi_def}
\psi(\varphi,\rho) = \begin{cases}
 \begin{cases}
1 & \text{ if } \delta(\varphi,\epsilon a(\varphi)) \le \rho \le \frac{4\delta(\varphi,\epsilon a(\varphi))}{3} \\
0 & \text{ if } \rho > \frac{5\delta(\varphi,\epsilon a(\varphi))}{3}
\end{cases} & \text{ if } \abs{\varphi} \le 1 \\
 \begin{cases}
1 & \text{ if } \delta(\varphi^*,\epsilon a(\varphi^*)) \le \sqrt{(\abs{\varphi}-1)^2+\rho^2} \le \frac{4\delta(\varphi^*,\epsilon a(\varphi^*))}{3} \\
0 & \text{ if } \sqrt{(\abs{\varphi}-1)^2+\rho^2} > \frac{5\delta(\varphi^*,\epsilon a(\varphi^*))}{3}
\end{cases} & \text{ if } 1\le \abs{\varphi} \le \eta_\epsilon
\end{cases}
\end{equation}
with smooth transition between 0 and 1, and $\mc{C}^1$ transition from $\abs{\varphi} \le 1$ to the endpoint sections. Here $\varphi^*$ is as in \eqref{varphi_star}. \\

For $\abs{\varphi}<1$, we require that the decay rate $\p\psi/\p\rho$ is such that $\nabla\psi$ satisfies 
\begin{equation}\label{grad_psi_bd1}
\abs{\nabla\psi(\varphi,\rho)} \le \abs{\frac{\p\psi}{\p\rho}}+\abs{\frac{\p\psi}{\p\varphi}} \le C\bigg(\frac{1}{\epsilon a(\varphi)} + \epsilon |a'(\varphi)|\bigg) \le \frac{1}{\epsilon a(\varphi)}C\big(1 + \epsilon^2\bar c_a \big) \le \frac{C}{\epsilon a(\varphi)}
\end{equation}
for $\epsilon$ sufficiently small. Similarly, for $1\le \abs{\varphi}<\eta_\epsilon$, we require that $\nabla\psi$ satisfies 
\begin{equation}\label{grad_psi_bd2}
\begin{aligned}
\abs{\nabla\psi(\varphi,\rho)} &\le \frac{C}{\delta(\varphi^*,\epsilon a(\varphi^*))} \le C\begin{cases}
\frac{1}{\epsilon(1-\epsilon^2c_a)\sqrt{\eta_\epsilon^2 - (\eta_\epsilon+1)^2/4}}, & 1 \le \varphi^* < \frac{\eta_\epsilon+1}{2} \\
\frac{2}{\eta_\epsilon-1}, & \frac{\eta_\epsilon+1}{2}\le \varphi^*<\eta_\epsilon
\end{cases} \\ 
&\le C\begin{cases}
\frac{1}{(\eta_\epsilon-1)\sqrt{c_{\eta,0}}(1-\epsilon^2c_a)}, & 1 \le \varphi^* < \frac{\eta_\epsilon+1}{2} \\
\frac{2}{\eta_\epsilon-1}, & \frac{\eta_\epsilon+1}{2} \le \varphi^*<\eta_\epsilon
\end{cases}\\
& \le \frac{C}{\eta_\epsilon-1},
\end{aligned}
\end{equation}
where we have used Definition \ref{admissible_a} and \eqref{eta_epsilon} to rewrite the bound. \\

We then define our preliminary extension operator $T: H^1(\mc{H}^\epsilon) \to H^1(\mc{W}^\epsilon)$ by
\begin{equation}\label{T_prelim}
 T\bu = E(\psi \bu) + (1-\psi)\bu. 
 \end{equation}

Note that the $L^2$ norm of the extension satisfies 
\begin{equation}\label{firstE_ineq}
\norm{T\bu}_{L^2(\mc{W}^\epsilon)} \le C \norm{\bu}_{L^2(\mc{H}^\epsilon)},
\end{equation}
where the constant $C$ is independent of $\epsilon$. \\

Furthermore, using \eqref{nablaE_bd}, \eqref{grad_psi_bd1}, and \eqref{grad_psi_bd2}, the symmetric gradient $\E(T\bu)$ satisfies 
\begin{equation}\label{extension1_bound}
\begin{aligned}
\abs{\E(T\bu)} &\le 2\abs{\nabla(E(\psi\bu))} + \abs{\E(\bu)} + 2\abs{\nabla\psi}\abs{\bu} \\
& \le C\big(\abs{\nabla(\psi\bu)}+\abs{\E(\bu)} \big)+ C\begin{cases}
\frac{1}{\epsilon a(\varphi)}\abs{\bu}, & \abs{\varphi}<1 \\
\frac{1}{\eta_\epsilon-1}\abs{\bu}, & 1\le \abs{\varphi}<\eta_\epsilon
\end{cases}
\end{aligned}
\end{equation}
for $C$ independent of $\epsilon$. \\

{\bf Step 4: Auxiliary Korn-type lemmas:} \\
Now, we would like to be able to adapt this extension operator $T$ to satisfy Lemma \ref{extension_free}. However, the $L^2(\mc{W}^\epsilon)$ bound for $\E(T\bu)$ is problematic (see the last term of \eqref{extension1_bound}). To address this issue, we follow a similar construction to \cite{mazya1997differentiable}, Chapter 3, which develops an extension operator for a thin, infinite cylinder with gradient bounds independent of $\epsilon$, as well as \cite{closed_loop}, Section 3.2, which adapts these arguments for the symmetric gradient. The new difficulty here is the change in the radius function $a(\varphi)$ from $O(1)$ to 0 along the filament, as well as the introduction of endpoints. \\

The key to removing the $\epsilon$-dependence in the estimate \eqref{extension1_bound} will involve dividing the fiber into many small segments with length roughly equal to radius on each segment. We will then construct an extension operator that makes use of the Korn-Poincar\'e inequality (Lemma \ref{diffeo_KP}) along with homogeneous rescaling (Corollary \ref{korn_poincare_free}) to get rid of the problematic last term of \eqref{extension1_bound}. This construction will involve working with a series of domains that are very similar to each other but not exactly identical. The main goal of this step is therefore to show that the Korn-Poincar\'e inequality (Lemma \ref{diffeo_KP}) holds with a uniform constant on domains that are similar but not exactly the same. To arrive at this result, we must first prove a series of Korn-type inequalities with a uniform constant over slightly deformed domains. \\
 
To this end, for a bounded, Lipschitz domain $\D\subset\R^3$ and neighborhood $\mc{N}$ of $\D$, we will work with $\mc{C}^2$ maps $\Psi: \mc{N}\to \R^3$ satisfying
\begin{equation}\label{diffeo_bound}
\Psi: \mc{N}\to \Psi(\mc{N}) \text{ is a diffeomorphism}; \quad \norm{\Psi}_{\mc{C}^2(\mc{N})} + \norm{\Psi^{-1}}_{\mc{C}^2(\Psi(\mc{N}))} \le M
\end{equation}
for some $M>0$. Note that $\Psi(\D)$ is also a bounded, Lipschitz domain.\\

The reason we consider Lipschitz domains rather than smoother domains is that we will be using the following results in a truncated cylinder domain; in particular, $\D$ must be allowed to have corners. However, the mappings $\Psi$ that we consider will be $\mc{C}^2$ in the sense that second derivatives of $\Psi$ are uniformly continuous up to $\p\D$. \\

We proceed to summarize the necessary tools to show Lemma \ref{diffeo_KP}. We begin by recalling the following important result from elasticity theory, which does not depend on the domain. The proof may be found in \cite{duvaut1976inequalities}.
 \begin{lemma}\emph{(Rigid motion)}\label{rigid_motion_free}
Let $\D \subset \R^3$ be any domain. If $\bu: \D \to \R^3$ with $\nabla \bu \in L^2(\D)$ satisfies $\nabla \bu +(\nabla \bu)^{\rm T}=0$, then $\bu$ is a rigid body motion: $\bu(\bx) = \bm{A}\bx+{\bm b}$ for some constant, antisymmetric $\bm{A}\in \R^{3\times3}$ and constant ${\bm b}\in \R^3$. 
\end{lemma}

Now, for a bounded, Lipschitz domain $\D$, we let $\sR$ denote the space of rigid motions:
\[ \mathcal{R} = \{ \bv\in H^1(\D) \ts : \ts \bv = \bm{A}\bx + \bm{b} \text{ for some } \bm{A}= -\bm{A}^{\rm T} \in \R^{3\times 3} \text{ and } \bm{b}\in \R^3\}.\]
For any $\bu\in H^1(\D)$, we define the $L^2$ projection $P_\sR\bu$ onto the space $\sR$:
\[ P_{\mathcal{R}}\bu = \bv\in \mathcal{R} \text{ such that } \|\bu-\bv\|_{L^2(\D)} \le \|\bu - \bw\|_{L^2(\D)}\quad  \text{for all } \bw\in \mathcal{R}. \]
By Lemma \ref{rigid_motion_free}, for any $\bv\in \sR$, we have $\E(\bv)=0$. \\

The first varying-domain result that we will need is a uniform Ne\v{c}as inequality for slightly deformed domains. Here we verify just the uniformity of the constant for small domain deformations; the proof of the inequality itself can be found in \cite{duvaut1976inequalities,boyer2012mathematical}.

\begin{lemma}[Varying domain Ne\v{c}as inequality]\label{necas_deform}
Let $\D\subset\R^3$ be a bounded, Lipschitz domain and consider any diffeomorphism $\Psi$ as in \eqref{diffeo_bound} for fixed $M>0$. Then for any $\bw\in H^{-1}(\Psi(\D))$ with $\nabla\bw\in H^{-1}(\Psi(\D))$, we in fact have $\bw\in L^2(\Psi(\D))$ and $\bw$ satisfies
\begin{equation}\label{necas}
\norm{\bw}_{L^2(\Psi(\D))} \le C(M) \big( \norm{\bw}_{H^{-1}(\Psi(\D))} + \norm{\nabla\bw}_{H^{-1}(\Psi(\D))} \big),
\end{equation}
where the constant $C(M)$ is uniform for any $\Psi$ satisfying \eqref{diffeo_bound}.
\end{lemma}

\begin{proof}
Let $\wt\bw=\bw\circ\Psi^{-1}\in L^2(\D)$, and let $\wt C$ denote the constant for which \eqref{necas} holds for $\wt \bw$. Note that for any $\bm{\phi}\in H^1_0(\D)$ with $\norm{\nabla \bm{\phi}}_{L^2(\D)}=1$, we have
\begin{align*}
\abs{ \int_{\D}\bm{\phi} \cdot \wt\bw } &= \bigg| \int_{\Psi(\D)}(\bm{\phi}\circ \Psi) \cdot \bw \abs{\det\nabla\Psi} \bigg| = \bigg| \int_{\Psi(\D)}\overline{\bm{\phi}} \cdot \bw \bigg| \\
&= \|\nabla\overline{\bm{\phi}}\|_{L^2(\Psi(\D))} \bigg| \int_{\Psi(\D)}\frac{\overline{\bm{\phi}}\cdot \bw}{\|\nabla\overline{\bm{\phi}}\|_{L^2(\Psi(\D))}} \bigg|,
\end{align*}
where we have defined $\overline{\bm{\phi}} := (\bm{\phi}\circ \Psi) \abs{\det\nabla\Psi}$. Taking the supremum over all such $\bm{\phi}$, we obtain
\[\norm{\wt\bw}_{H^{-1}(\D)} \le C(M) \norm{\bw}_{H^{-1}(\Psi(\D))}.\]
Likewise,
\[ \norm{\nabla\wt\bw}_{H^{-1}(\D)} \le C(M)  \norm{\nabla\bw}_{H^{-1}(\Psi(\D))}. \]
Furthermore,
\[ \int_{\Psi(\D)} \abs{\bw}^2 = \int_{\D} \abs{\wt\bw}^2\abs{\det\nabla\Psi^{-1}} \le M\int_{\D} \abs{\wt\bw}^2.\]
Altogether,
\begin{align*}
 \norm{\bw}_{L^2(\Psi(\D))} &\le M\norm{\wt\bw}_{L^2(\D)} \le M\wt C\big(\norm{\wt\bw}_{H^{-1}(\D)} + \norm{\nabla\wt\bw}_{H^{-1}(\D)} \big) \\
 &\le M \wt C\big(C(M)\norm{\bw}_{H^{-1}(\Psi(\D))} + C(M)\norm{\nabla\bw}_{H^{-1}(\Psi(\D))} \big).
 \end{align*}
\end{proof}

Using Lemma \ref{necas_deform}, we immediately obtain the following result.
\begin{lemma}[Varying domain Korn-type inequality, 1]\label{korn1_deform}
Let $\D\subset \R^3$ be a bounded, Lipschitz domain and let $\Psi$ be as in \eqref{diffeo_bound} for fixed $M>0$. Then $\bv\in H^1(\Psi(\D))$ satisfies
\begin{equation}
\norm{\bv}_{H^1(\Psi(\D))} \le C(M) \big( \norm{\E(\bv)}_{L^2(\Psi(\D))} + \norm{\bv}_{L^2(\Psi(\D))} \big),
\end{equation}
where the constant $C(M)$ is uniform for any $\Psi$ satisfying \eqref{diffeo_bound}. 
\end{lemma}

\begin{proof}
Define the set $K = \{\bv\in L^2(\Psi(\D)) \ts : \ts \E(\bv)\in L^2(\Psi(\D))\}$. Then $K$ is a Hilbert space with norm 
\[ \norm{\bv}_K^2 = \norm{\E(\bv)}_{L^2(\Psi(\D))}^2 + \norm{\bv}_{L^2(\Psi(\D))}^2.\]
Letting $\bx=(x_1,x_2,x_3)$, $\bv=(v_1,v_2,v_3)$, and $\E_{ik}(\bv) = \frac{1}{2}\big(\frac{\p v_i}{\p x_k} + \frac{\p v_k}{\p x_i}\big)$, we have that for $\bv\in K$, $\frac{\p\E_{ik}(\bv)}{\p x_j} \in H^{-1}(\Psi(\D))$ for each $1\le i,j,k\le 3$. Note that
\[ \frac{\p^2 v_k}{\p x_i\p x_j} = \frac{\p\E_{jk}(\bv)}{\p x_i} + \frac{\p\E_{ik}(\bv)}{\p x_j} - \frac{\p\E_{ij}(\bv)}{\p x_k}, \]
and therefore $\norm{\nabla^2\bv}_{H^{-1}(\Psi(\D))} \le 3 \norm{\nabla\big(\E(\bv)\big)}_{H^{-1}(\Psi(\D))}$. Then by Lemma \ref{necas_deform}, 
\begin{align*}
\norm{\nabla\bv}_{L^2(\Psi(\D))} &\le C(M) \big( \norm{\nabla\bv}_{H^{-1}(\Psi(\D))} + \norm{\nabla^2\bv}_{H^{-1}(\Psi(\D))} \big) \\
&\le C(M) \big( \norm{\nabla\bv}_{H^{-1}(\Psi(\D))} + \norm{\nabla\big(\E(\bv)\big)}_{H^{-1}(\Psi(\D))} \big) \\
&\le C(M) \big( \norm{\bv}_{L^2(\Psi(\D))} + \norm{\E(\bv)}_{L^2(\Psi(\D))} \big).
\end{align*}
\end{proof}

We now use Lemma \ref{korn1_deform} to show that the following pair of varying-domain Korn-type inequalities hold with a uniform constant.
\begin{corollary}\emph{(Varying domain Korn-type inequalities, 2)}\label{korn_type}
Consider a bounded, Lipschitz domain $\mc{D}\subset\R^3$ along with any diffeomorphism $\Psi$ satisfying \eqref{diffeo_bound} for fixed $M>0$. Suppose $\bv\in H^1(\Psi(\D))$ satisfies at least one of the following conditions:
\begin{enumerate}
\item $\bv \perp \sR$, the space of rigid motions on $\Psi(\D)$, or
\item $\bv$ vanishes on an open set of $\p(\Psi(\D))$ containing four non-coplanar points. 
\end{enumerate}
Then there exists a constant $C(M)>0$ such that $\bv$ satisfies
\begin{equation}\label{korn_type0}
 \|\nabla\bv\|_{L^2(\Psi(\D))} \le C(M)\|\E(\bv)\|_{L^2(\Psi(\D))}.
 \end{equation}
\end{corollary}

\begin{proof}
We first prove case 1. Assume that \eqref{korn_type0} does not hold. Then there is a sequence of $\mc{C}^2$-diffeomorphisms $\Psi_k$ and functions $\bv_k\in H^1(\Psi_k(\D))$ with $\bv_k\perp\sR$ and $\norm{\bv_k}_{L^2(\Psi_k(\D))} =1$ satisfying
\[ \norm{\nabla\bv_k}_{L^2(\Psi_k(\D))} > k \norm{\E(\bv_k)}_{L^2(\Psi_k(\D))}. \]

Let $\wt\bv_k=\bv_k\circ\Psi_k^{-1} \in H^1(\D)$. Then 
\[ 1=\int_{\Psi(\D)}\abs{\bv_k}^2 = \int_{\D}\abs{\wt\bv_k}^2\abs{\det\nabla\Psi_k^{-1}} \le M\int_{\D}\abs{\wt\bv_k}^2, \]
so $\norm{\wt\bv_k}_{L^2(\D)} \ge \frac{1}{\sqrt{M}} >0$. \\

Using Lemma \ref{korn1_deform}, we also have
\[ \norm{\E(\bv_k)}_{L^2(\Psi_k(\D))} < \frac{1}{k}\norm{\nabla\bv_k}_{L^2(\Psi_k(\D))} \le \frac{1}{k}\norm{\bv_k}_{H^1(\Psi_k(\D))} \le \frac{C(M)}{k}\big( \norm{\E(\bv_k)}_{L^2(\Psi_k(\D))} + 1 \big).\]
 Taking $k$ sufficiently large, we have 
\[ \bigg(1- \frac{C(M)}{k}\bigg)\norm{\E(\bv_k)}_{L^2(\Psi_k(\D))}< \frac{C(M)}{k} \to 0\]
as $k\to \infty$. Then, by definition of $\wt\bv_k$ and Lemma \ref{korn1_deform}, for $k$ sufficiently large we have
\[ \norm{\wt\bv_k}_{H^1(\D)} \le C(M) \norm{\bv_k}_{H^1(\Psi_k(\D))} \le C(M)\bigg(\frac{C(M)}{k-C(M)} +1 \bigg). \]
Passing to a subsequence (which we continue to denote by $\wt\bv_k$) and using Rellich compactness, $\wt\bv_k\to \wt\bv_\infty$ in $L^2(\D)$. Note also that by \eqref{diffeo_bound}, $\Psi_k\to \Psi_\infty$ in $\mc{C}^1$. \\

Now, choose any rigid motion $\bm{A}\bx+\bm{b} \in \sR$ on $\Psi_\infty(\D)$, where $\bm{A}$ is a constant, antisymmetric matrix and $\bm{b}$ is a constant vector. Note that this is also a well-defined rigid motion on $\Psi_k(\D)$ for any $k$. Then we have 
\begin{equation}\label{contrad_eq}
\begin{aligned}
0 &= \int_{\Psi_k(\D)} \bv_k\cdot(\bm{A}\bx+\bm{b}) = \int_{\D}\wt\bv_k \cdot(\bm{A}\Psi^{-1}_k(\bx)+\bm{b})\abs{\det\nabla\Psi^{-1}_k} \\
&\quad \to \int_{\D}\wt\bv_\infty \cdot(\bm{A}\Psi^{-1}_\infty(\bx)+\bm{b}) \abs{\det\nabla\Psi^{-1}_\infty} =  \int_{\Psi_\infty(\D)}\bv_\infty \cdot(\bm{A}\bx +\bm{b}).
\end{aligned}
\end{equation}
Thus $\bv_\infty\perp \sR$ on $\Psi_\infty(\D)$. However, since $\liminf_k\norm{\E(\bv_k)}_{L^2(\Psi_k(\D))}\ge \norm{\E(\bv_\infty)}_{L^2(\Psi_\infty(\D))}$, we have $\E(\bv_\infty)=0$, and by Lemma \ref{rigid_motion_free}, $\bv_\infty\in \sR$. Thus $\bv_\infty=0$, so $\bv_k\to 0$, contradicting $\norm{\bv_k}_{L^2(\Psi(\D))}\ge \frac{1}{\sqrt{M}}>0$. \\

To show case (2.) of Corollary \ref{korn_type}, we instead take $\bv_k\in H^1(\Psi_k(\D))$ vanishing on an open set of $\p(\Psi(\D))$ containing four non-coplanar points. The proof proceeds as above, except instead of arriving at a contradiction via \eqref{contrad_eq}, we use that each $\wt\bv_k$ vanishes on an open set of $\p\D$ and thus $\wt\bv_\infty$ does as well. Then $\bv_\infty$ also vanishes on an open set of $\p(\Psi_\infty(\D))$, but $\E(\bv_\infty)=0$, and therefore $\bv_\infty=0$. 
\end{proof}

Finally, using Corollary \ref{korn_type}, we can show the following: 
\begin{lemma}[Varying domain Korn-Poincar\'e inequality]\label{diffeo_KP}
Let $\mc{D}\subset\R^3$ be a bounded, Lipschitz domain and consider any diffeomorphism $\Psi$ satisfying \eqref{diffeo_bound} for fixed $M>0$. Then there exists a constant $C(M)>0$ such that 
any $\bv\in H^1(\Psi(\D))$ with $\bv\perp\sR$, the space of rigid motions on $\Psi(\D)$, satisfies
\begin{equation}
\norm{\bv}_{L^2(\Psi(\D))} \le C(M) \norm{\E(\bv)}_{L^2(\Psi(\D))}.
\end{equation}
\end{lemma}
 
\begin{proof}
Suppose that Lemma \ref{diffeo_KP} does not hold. Then there exist a sequence of diffeomorphisms $\Psi_k$ and functions $\bv_k\in H^1(\Psi_k(\D))$ with $\bv_k\perp\sR$ and $\norm{\bv_k}_{L^2(\Psi_k(\D))} =1$ such that 
\[ 1 = \norm{\bv_k}_{L^2(\Psi_k(\D))} > k\norm{\E(\bv_k)}_{L^2(\Psi_k(\D))}. \]

As in the proof of Corollary \ref{korn_type}, let $\wt\bv_k = \bv_k\circ \Psi_k^{-1}\in H^1(\D)$. Then $\norm{\wt\bv_k}_{L^2(\D)} \ge \frac{1}{\sqrt{M}} >0$. Furthermore, using Corollary \ref{korn_type}, we have 
\begin{align*}
 \int_{\D} \abs{\nabla\wt\bv_k}^2 &\le \int_{\Psi_k(\D)} \abs{\nabla\bv_k}^2\abs{\nabla\Psi_k^{-1}}^2\abs{\det\nabla \Psi_k} \le M^3\int_{\Psi_k(\D)} \abs{\nabla\bv_k}^2 \\
 &\le C(M) \int_{\Psi_k(\D)} \abs{\E(\bv_k)}^2\le \frac{1}{k^2} \to 0
 \end{align*}
as $k\to\infty$. Passing to a subsequence (which we continue to denote $\wt\bv_k$) and using Rellich compactness, we have that $\wt\bv_k\to\wt\bv_\infty$ in $L^2(\D)$. Also, since $\liminf_k\norm{\nabla\wt\bv_k}_{L^2(\D)} \le \norm{\nabla\wt\bv_\infty}_{L^2(\D)}$, we have $\nabla\wt\bv_\infty=0$, so $\wt\bv_\infty=\bm{b}$ for some constant vector $\bm{b}$. \\

But then, using that $\Psi_k\to\Psi_\infty$ in $\mc{C}^1$, by \eqref{diffeo_bound}, we have $\bv_\infty = \wt\bv_\infty\circ\Psi_\infty = \bm{b}$, and $\bv_\infty\perp\sR$, so $\bm{b}=0$. Thus $\wt\bv_\infty=0$, which contradicts $\norm{\wt\bv_k}_{L^2(\D)} \ge \frac{1}{\sqrt{M}} >0$.
 \end{proof}
 
 Lemma \ref{diffeo_KP} under homogeneous rescaling then gives rise to the following corollary.
 \begin{corollary}[Korn-Poincar\'e rescaling]\label{korn_poincare_free}
Let $\mc{D}\subset\R^3$ be a bounded, Lipschitz domain and $\Psi$ a diffeomorphism as in \eqref{diffeo_bound} for fixed $M>0$. Consider $\bv\in H^1(\Psi(\mc{D}))$ with $\bv\perp\sR$, the space of rigid motions on $\Psi(\D)$. Under homogeneous rescaling of the domain $\Psi(\mc{D})\to \lambda \Psi(\mc{D})$, $\lambda\in \R_+$, we have 
\begin{equation}\label{korn_poincare2}
\norm{\bv}_{L^2(\lambda \Psi(\mc{D}))} \le \lambda C(M)\norm{\E(\bu)}_{L^2(\lambda \Psi(\mc{D}))},
\end{equation}
where the constant $C(M)$ is uniform for all $\Psi$ satisfying \eqref{diffeo_bound}.
\end{corollary}


{\bf Step 5: Decomposition of domain $\mc{W}^\epsilon$:}\\
Now, we would like to make use of the scaling in Corollary \ref{korn_poincare_free} to design an extension operator that removes the $\epsilon$-dependence coming from the lower part of the norm in \eqref{extension1_bound}. To do so, we consider the slender body as the union of many tiny segments, each sufficiently small to be able to make use of the scaling in Corollary \ref{korn_poincare_free}. We consider the positive half of the fiber ($\varphi\ge0$) here; the negative half ($\varphi\le0$) follows by the same arguments. \\

Recalling the constant $\delta_a$ given by Definition \ref{admissible_a}, we partition the segment $[0,1]$ in the following way. We define a collection of points $\{p_n\}$ by
\begin{equation}\label{p_defs}
\begin{aligned}
p_1 = 1;\quad  p_{n+1} = \begin{cases}
p_n-\epsilon a(p_n) & \text{ if } p_n > 1-\delta_a \\
p_n-\epsilon & \text{ if } 0\le p_n \le 1-\delta_a.
\end{cases}
\end{aligned}
\end{equation}
Note that we continue iterating this procedure until we reach $N$ such that $p_{N+1}<0$. It will also be necessary to define $p_{N+2}=p_{N+1}-\epsilon$. We also let $N_\delta$ denote 
\begin{equation}\label{N_delta}
N_\delta := \min \{ n\ge 0 \ts: \ts p_n \le 1-\delta_a \}.
\end{equation}

Using the collection of points $\{p_n\}$, we define an open cover $\{Q_n\}$ of the interval $(0,\eta_\epsilon)$ as follows: for $\varphi\in (p_{N+2},2\eta_\epsilon-1)$, define
\begin{equation}\label{cover_Q}
\begin{aligned}
Q_0 = \big\{\varphi \ts &: \ts  p_1 <\varphi<2\eta_\epsilon-1 \big\}, \quad Q_1 = \big\{\varphi \ts : \ts  p_2 <\varphi<(\eta_\epsilon+1)/2 \big\}, \\ 
\quad Q_n &= \big\{\varphi \ts : \ts  p_{n+1} <\varphi<p_{n-1} \big\}, \quad n\ge2.
\end{aligned}
\end{equation}

Using the cover $\{Q_n\}$, we subdivide the domain $\mc{W}^\epsilon$ as (see Figure \ref{fig:pn_pic})
\begin{equation}\label{cover_W}
\mc{W}_n^\epsilon = \big\{ \bx=\bx(\varphi,\theta,\rho) \in \mc{W}^\epsilon \ts : \ts \varphi\in Q_n \big\}.
\end{equation}

\begin{figure}[!h]
\centering
\includegraphics[scale=0.5]{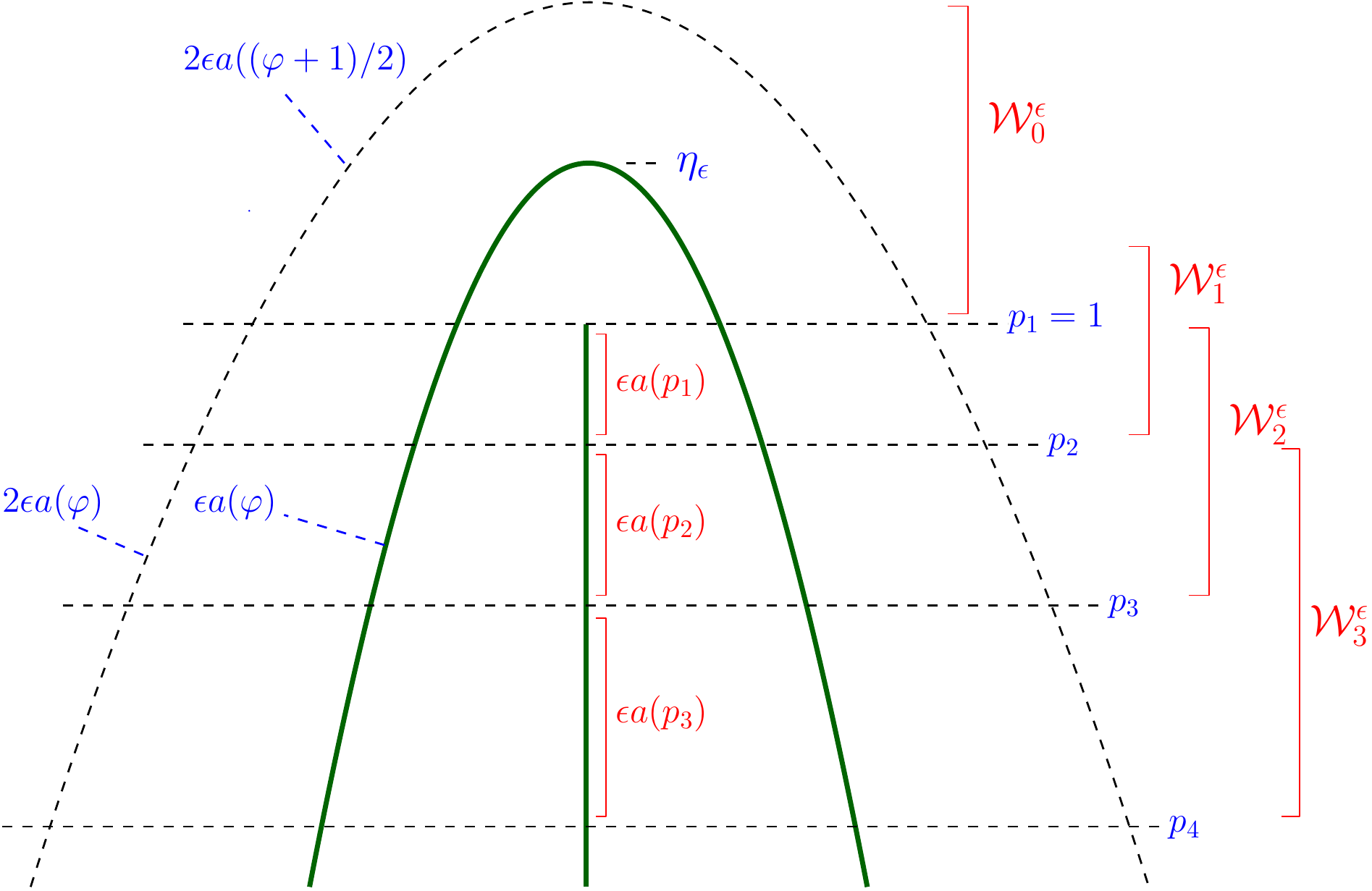}\\
\caption{We subdivide the domain $\mc{W}^\epsilon$ using the points $\{p_n\}$ defined via the process in \eqref{p_defs}. }
\label{fig:pn_pic}
\end{figure}

We also define
\begin{equation}\label{Hn_epsilon}
\mc{H}_n^\epsilon:=\mc{W}_n^\epsilon \cap \mc{H}^\epsilon. 
\end{equation}

Now, using Definition \ref{admissible_a} -- in particular, the endpoint monotonicity requirement -- along with \eqref{p_defs}, for each $n\in [2,N_\delta]$ we can write
\begin{align*}
 a(p_n) &= a(p_{n-1})- \epsilon \overline a(p_n,p_{n-1}) \quad \text{ for some function } \abs{\overline a} \le \bar c_a, 
 \end{align*}
where $\bar c_a$ and $\delta_a$ are as defined in Section \ref{geometry_section}. Then the ratios $a(p_n)/a(p_{n-1})$ and $a(p_{n-1})/a(p_n)$ satisfy 
\begin{equation}\label{ratio_bounds}
0 < \frac{a(p_n)}{a(p_{n-1})}, \frac{a(p_{n-1})}{a(p_n)} \le 1+ \epsilon \bar c_a/a(1), \quad 2\le n\le N_\delta.
\end{equation}
Using the spheroidal endpoint requirement of Definition \ref{admissible_a}, we have that these ratios are bounded independent of $\epsilon$. \\

Furthermore, using \eqref{eta_epsilon} and Definition \ref{admissible_a}, we have that the ratios $(\eta_\epsilon-1)/(\epsilon a(1))$ and $\epsilon a(1)/(\eta_\epsilon-1)$ satisfy
\begin{equation}\label{ratio_bound_end}
0<\frac{\eta_\epsilon-1}{\epsilon a(1)} \le \frac{c_{\eta}}{\sqrt{2c_{\eta,0}}- \epsilon c_a}, \quad 0<\frac{\epsilon a(1)}{\eta_\epsilon-1} \le \frac{\sqrt{2c_\eta}+\epsilon c_a}{c_{\eta,0}}
\end{equation}
and are thus also bounded independent of $\epsilon$. \\

Therefore we can can define the sets $\mc{W}_n^*$ as follows:
\begin{equation}\label{Wn_0}
 \mc{W}_n^\epsilon=\begin{cases}
\epsilon a(p_1)\mc{W}_0^*, &n=0 \\
\epsilon a(p_n)\mc{W}_n^*, & 1\le n\le N_\delta\\
\epsilon \mc{W}_n^*, & N_\delta+1 \le n\le N+1
\end{cases}
\end{equation} 
where each $\mc{W}_n^*$ is contained in a cylinder of radius 
\[ \begin{cases}
2, &n=0 \\
2\frac{a(p_{n+1})}{a(p_n)}, & 1\le n \le N_\delta \\
2, & N_\delta+1 \le n\le N+1
\end{cases} \] 
and height 
\[ \begin{cases}
2\frac{\eta_\epsilon-1}{a(1)}, & n=0 \\
1+\frac{\eta_\epsilon-1}{2a(1)}, & n=1 \\
1+ \frac{a(p_{n-1})}{a(p_n)}, & 2\le n\le N_\delta \\ 
2, & N_\delta+1 \le n \le N+1.
\end{cases} \]
See Figure \ref{fig:W0_pic} for a depiction of $\mc{W}_n^*$ for $2\le n\le N_\delta$. In particular, due to \eqref{ratio_bounds} and \eqref{ratio_bound_end}, the volume of each slice $\mc{W}_n^*$ is bounded independent of $\epsilon$. By the same arguments, we can also write $\mc{H}_n^\epsilon$ \eqref{Hn_epsilon} as
\begin{equation}\label{Hn_0}
 \mc{H}_n^\epsilon= \begin{cases}
\epsilon a(p_1)\mc{H}_0^*, & n=0 \\
\epsilon a(p_n)\mc{H}_n^*, & 1\le n \le N_\delta \\
\epsilon \mc{H}_n^*, & N_\delta+1\le n\le N+1,
\end{cases}
\end{equation} 
where the volume of each $\mc{H}_n^*$ is bounded independent of $\epsilon$. \\

\begin{figure}[!h]
\centering
\includegraphics[scale=0.6]{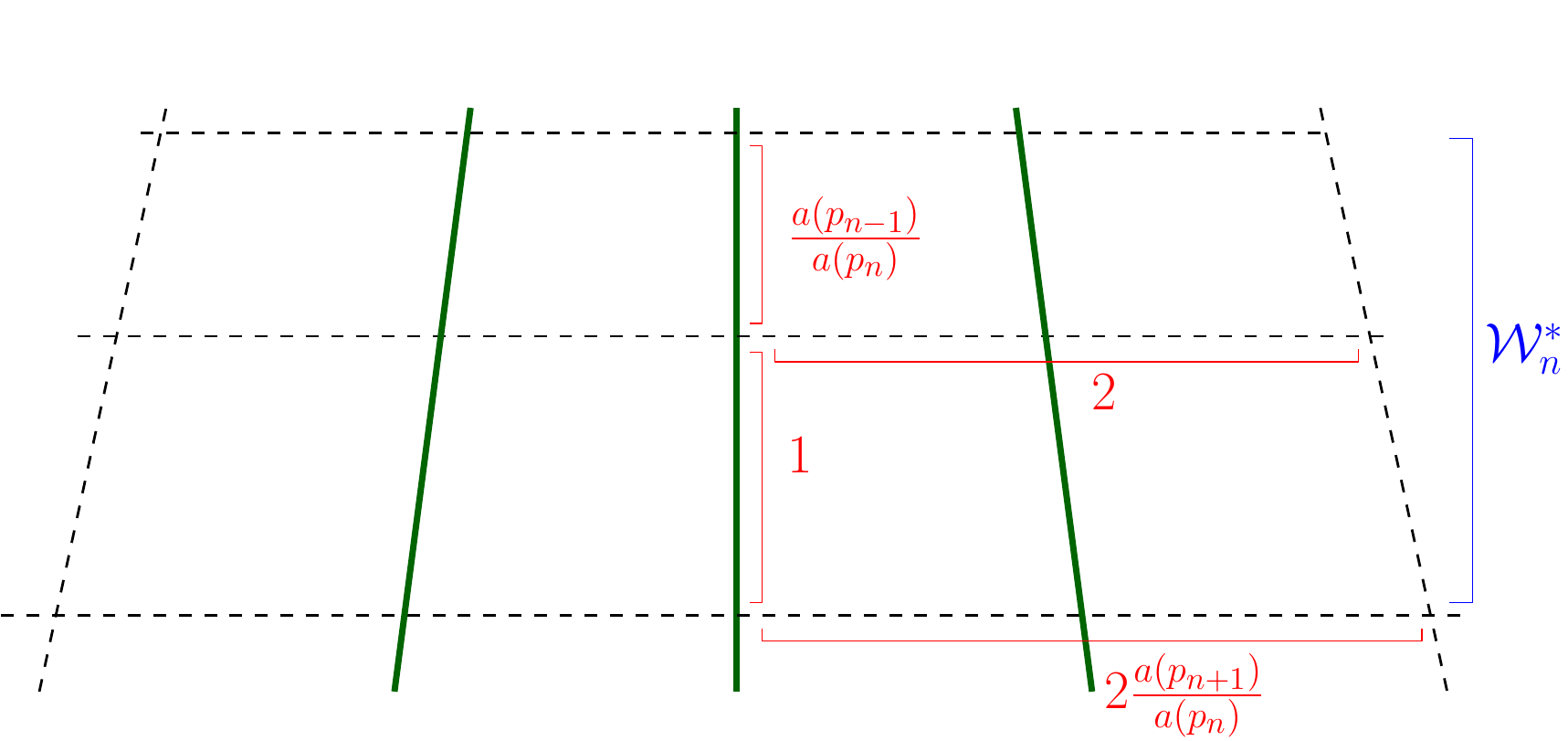}\\
\caption{For $2\le n \le N_\delta$, we may think of the domain $\mc{W}_n^\epsilon$ as $\mc{W}_n^\epsilon=\epsilon a(p_n)\mc{W}_n^*$, a rescaled version of the domain $\mc{W}_n^*$, the volume of which is bounded independent of $\epsilon$. A similar result holds on the endpoint segments $n=0,1$ and on the segments $N_\delta+1\le n\le N+1$. }
\label{fig:W0_pic}
\end{figure}

\begin{remark}\label{KP_domain_dep}
The domains $\mc{H}^*_n$, $1\le n\le N_\delta$, are each slightly different in size and shape; however, each can be deformed via a diffeomorphism satisfying \eqref{diffeo_bound} for $M$ small to a cylinder of height 2 and radius 2 with a cylindrical hole of radius 1. For any $1\le n\le N_\delta$, we may parameterize $\mc{H}^*_n$ with respect to cylindrical coordinates as 
\[ \mc{H}^*_n = \bigg\{ (\varphi,\theta,\rho) \ts :\ts 0\le \varphi\le 1+\frac{a(p_{n-1})}{a(p_n)}, \ts 0\le \theta<2\pi, \ts \frac{a(p_{n+1}-\varphi)}{a(p_n)} \le \rho \le 2\frac{a(p_{n+1}-\varphi)}{a(p_n)} \bigg\}. \]
Then, using \eqref{ratio_bounds}, the transformation
\[ \Psi_n: (\varphi,\theta,\rho) \mapsto \bigg(\frac{a(p_n)}{a(p_n)+a(p_{n-1})}\varphi,\theta,\frac{a(p_n)}{a(p_{n+1}-\varphi) }\rho \bigg) \]
gives the desired diffeomorphism. Note that then, by Lemma \ref{diffeo_KP}, the Korn-Poincar\'e inequality over each of the domains $\mc{H}^*_n$, $1\le n\le N+1$, has a uniform constant. \\

The endpoint segment ($n=0$) must be treated separately from the above construction. Instead, using \eqref{ratio_bound_end}, we may deform $\mc{H}^*_0$ to a hemisphere of radius 2. Since the endpoint shape implicitly depends on $\epsilon$ via Definition \ref{admissible_a}, this will provide us with a uniform Korn-Poincar\'e inequality (Lemma \ref{diffeo_KP}) on the endpoint segment for any value of $\epsilon$. \\

We may then take the maximum of the two different Korn-Poncar\'e constants to serve as a uniform Korn-Poincar\'e constant for all segments of the fiber.  
\end{remark}

Now, for $\bu$ defined in $\mc{H}^\epsilon$, let $\bu_n$ denote the restriction $\bu\big|_{\mc{H}_n^\epsilon}$. On each $\mc{H}^\epsilon_n$, we consider the projection $P_{\sR}\bu_n$ onto rigid motions, and write $P_{\sR}\bu_n = \bm{A}_n\bx+\bm{b}_n$ for some $\bm{A}_n=-\bm{A}_n^{\rm T}\in \R^{3\times 3}$ and $\bm{b}_n\in \R^3$. We omit the dependence of the projection $P_{\sR}$ on $n$ and $\epsilon$. We take
\[ \overline P_{\sR}\bu_n =  \bm{A}_n\bx+\bm{b}_n, \quad \bx\in \Sigma_n^\epsilon\]
to be the extension of $P_{\sR}\bu_n$ to all of $\mc{W}_n^\epsilon$. Note that on each $\mc{W}_n^\epsilon$, we have 
\begin{equation}\label{barPR_bound}
\norm{\overline P_{\sR}\bu_n}_{L^2(\mc{W}_n^\epsilon)} \le C\norm{P_{\sR}\bu_n}_{L^2(\mc{H}_n^\epsilon)}.
\end{equation}

Let $\{\Phi_n \}$ be a partition of unity subordinate to the cover $\{Q_n\}$. We require each $\Phi_n$ to satisfy 
\begin{equation}\label{psiN_bd}
\abs{\nabla\Phi_n}\le C\begin{cases}
\frac{1}{\epsilon a(1)}, & n=0,1 \\
\frac{1}{\epsilon a(p_{n-1})}, & 2\le n\le N_\delta \\
\frac{1}{\epsilon}, & N_\delta+1\le N+1
\end{cases}
\end{equation}
where the constant $C$ is independent of $\epsilon$. Note that due to \eqref{ratio_bound_end}, the bound for $n=0,1$ is equivalent to requiring that $\abs{\nabla\Phi_n}\le C/(\eta_\epsilon-1)$, $n=0,1$. \\

{\bf Step 6: Definition of extension operator $T_\epsilon$:}\\
We now define the extension operator $T_\epsilon: H^1(\mc{H}^\epsilon) \to H^1(\mc{W}^\epsilon)$ that satisfies Lemma \ref{extension_free}. For $\bx\in \Sigma^\epsilon_\text{str}$, we take
\begin{equation}\label{extension_op_free}
\begin{aligned}
T_\epsilon\bu(\bx) &= \bm{T}_1(\bx)+\bm{T}_2(\bx); \\
 \bm{T}_1(\bx) &:= \sum_{n=0}^{N+1} \Phi_n(\varphi) \bigg( \overline P_{\sR}\bu_n\bigg)(\bx) \\
 \bm{T}_2(\bx) &:= \sum_{n=0}^{N+1} \Phi_n(\varphi) \bigg(T(\bu_n-P_{\sR}\bu_n)\bigg)(\bx),
\end{aligned}
\end{equation}
where the operator $T$ is as in \eqref{T_prelim}. Note that $T_\epsilon\bu\big|_{\mc{H}^\epsilon} = \bu$. \\

It remains to estimate the symmetric gradient $\E(T_\epsilon\bu)$ of the extension \eqref{extension_op_free}. We begin by estimating $\norm{\E(\bm{T}_1)}_{L^2(\mc{W}^\epsilon)}$, which is simpler. \\

Define a finer partition $\{\wt Q_n\}$ of the interval $(0,\eta_\epsilon)$:
\begin{equation}\label{part_wtQ}
\wt Q_0 = \big\{\varphi \ts : \ts  p_1 <\varphi<\eta_\epsilon \big\}, \quad \wt Q_n = \big\{\varphi \ts : \ts  p_{n+1} <\varphi\le p_n \big\}, \quad n\ge2.
\end{equation}
For each $n\ge 0$, define the slices $\wt {\mc{W}}_n^\epsilon$ of the fiber as follows:
\begin{equation}\label{tildeW_def}
\wt{\mc{W}}_n^\epsilon = \big\{ \bx = \bx(\varphi,\theta,\rho)\in \mc{W}^\epsilon \ts : \ts \varphi \in \wt Q_n \big\}.
\end{equation}
Accordingly, we define $\wt {\mc{H}}_n^\epsilon = \wt {\mc{W}}_n^\epsilon\cap  \mc{H}^\epsilon$. \\

Note that $\wt{\mc{W}}_n^\epsilon\subset \mc{W}_n^\epsilon$ and $\wt{\mc{W}}_n^\epsilon\subset \mc{W}_{n+1}^\epsilon$ for each $n$, and $\Phi_n(\varphi)+\Phi_{n+1}(\varphi)=1$ on $\wt{\mc{W}}_n^\epsilon$. Then on each $\wt{\mc{W}}_n^\epsilon$, we may write 
\[  \bm{T}_1(\bx) = \overline P_\sR\bu_n + \Phi_{n+1}(\varphi) (\overline P_\sR\bu_{n+1}-\overline P_\sR\bu_n).\]

Using the bounds \eqref{barPR_bound} and \eqref{psiN_bd}, for $2\le n\le N_\delta$ we have 
\begin{align*}
\norm{\E(\bm{T}_1)}_{L^2(\wt{\mc{W}}_n^\epsilon)} &\le \frac{C}{\epsilon a(p_{n-1})}\norm{\overline P_\sR\bu_{n+1}-\overline P_\sR\bu_n}_{L^2(\wt{\mc{W}}_n^\epsilon)} \le \frac{C}{\epsilon a(p_{n-1})}\norm{P_\sR\bu_{n+1}- P_\sR\bu_n}_{L^2(\wt{\mc{H}}_n^\epsilon)} \\
& \le \frac{C}{\epsilon a(p_{n-1})}\bigg(\norm{\bu_{n+1}-P_\sR\bu_{n+1}}_{L^2(\mc{H}_{n+1}^\epsilon)}+\norm{\bu_n -P_\sR\bu_n}_{L^2(\mc{H}_n^\epsilon)} \bigg),
\end{align*}
since $\wt{\mc{H}}_n^\epsilon\subset \mc{H}_n^\epsilon$ and $\wt{\mc{H}}_n^\epsilon\subset \mc{H}_{n+1}^\epsilon$. Then, using Remark \ref{KP_domain_dep}, by \eqref{Hn_0} and Lemma \ref{korn_poincare_free} we have
\begin{align*}
\norm{\E(\bm{T}_1)}_{L^2(\wt{\mc{W}}_n^\epsilon)} &\le C\frac{a(p_n)}{a(p_{n-1})} \bigg(\norm{\E(\bu_{n+1})}_{L^2(\mc{H}_{n+1}^\epsilon)}+\norm{\E(\bu_n)}_{L^2(\mc{H}_n^\epsilon)} \bigg), \quad 2\le n\le N_\delta.
\end{align*}

Following the same arguments on the endpoint segments $n=0,1$ and on segments $n\ge N_\delta+1$, we obtain 
\begin{align*}
\norm{\E(\bm{T}_1)}_{L^2(\wt{\mc{W}}_n^\epsilon)} &\le 
C \bigg(\norm{\E(\bu_{n+1})}_{L^2(\mc{H}_{n+1}^\epsilon)}+\norm{\E(\bu_n)}_{L^2(\mc{H}_n^\epsilon)} \bigg), & n=0,1 \text{ or } n\ge N_\delta+1.
\end{align*}

Squaring and summing over $n$, using orthogonality from the nearly-disjoint supports of each $\bu_n$ along with \eqref{ratio_bounds}, we thus have
\begin{equation}\label{T1_bound}
\norm{\E(\bm{T}_1)}_{L^2(\mc{W}^\epsilon)}^2\le C\sum_{n=0}^{N+1} \norm{\E(\bm{T}_1)}_{L^2(\mc{H}_n^\epsilon)}^2 \le C\norm{\E(\bu)}_{L^2(\mc{H}^\epsilon)}^2
\end{equation}
for $C$ independent of $\epsilon$. \\

Next we estimate $\bm{T}_2$. Using \eqref{psiN_bd}, we have 
\begin{equation}\label{T2_bound0}
\begin{aligned}
\norm{\E(\bm{T}_2)}_{L^2(\mc{W}^\epsilon)}^2 &\le \sum_{n=0}^1 \bigg(\frac{C}{\epsilon a(1)}\norm{T(\bu_n- P_{\sR}\bu_n)}_{L^2(\mc{W}_n^\epsilon)}^2 + \norm{\E\big(T(\bu_n-P_{\sR}\bu_n)\big)}_{L^2(\mc{W}_n^\epsilon)}^2 \bigg) \\
&\quad + \sum_{n=2}^{N_\delta} \bigg(\frac{C}{\epsilon a(p_{n-1})}\norm{T(\bu_n- P_{\sR}\bu_n)}_{L^2(\mc{W}_n^\epsilon)}^2 + \norm{\E\big(T(\bu_n-P_{\sR}\bu_n)\big)}_{L^2(\mc{W}_n^\epsilon)}^2 \bigg) \\
&\quad + \sum_{n=N_\delta+1}^{N+1} \bigg(\frac{C}{\epsilon}\norm{T(\bu_n- P_{\sR}\bu_n)}_{L^2(\mc{W}_n^\epsilon)}^2 + \norm{\E\big(T(\bu_n-P_{\sR}\bu_n)\big)}_{L^2(\mc{W}_n^\epsilon)}^2\bigg).
\end{aligned}
\end{equation}

Now, using \eqref{firstE_ineq} along with Remark \ref{KP_domain_dep}, the scaling \eqref{Hn_0}, and Corollary \ref{korn_poincare_free}, we have 
\begin{equation}\label{T2_bound1}
\begin{aligned}
\norm{T(\bu_n- P_{\sR}\bu_n)}_{L^2(\mc{W}_n^\epsilon)} &\le C\norm{\bu_n- P_{\sR}\bu_n}_{L^2(\mc{H}_n^\epsilon)} \\
&\le C\begin{cases}
\epsilon a(p_1)\norm{\E(\bu_0)}_{L^2(\mc{H}_0^\epsilon)}, & n=0 \\
\epsilon a(p_n)\norm{\E(\bu_n)}_{L^2(\mc{H}_n^\epsilon)}, & 1\le n\le N_\delta \\
\epsilon \norm{\E(\bu_n)}_{L^2(\mc{H}_n^\epsilon)}, & N_\delta+1\le n \le N+1.
\end{cases}
\end{aligned}
\end{equation}

Furthermore, noting that $\E(\bu_n-P_\sR\bu_n) = \bu_n$ and using \eqref{extension1_bound}, we have
\begin{align*}
\norm{\E\big(T(\bu_n-P_{\sR}\bu_n)\big)}_{L^2(\mc{W}_n^\epsilon)} &\le C \big(\norm{\E(\bu_n)}_{L^2(\mc{H}_n^\epsilon)}+\norm{\nabla(\psi \bu_n-\psi P_{\sR}\bu_n)}_{L^2(\mc{H}_n^\epsilon)} \big) \\
&\qquad + C\begin{cases}
\frac{1}{\eta_\epsilon-1} \norm{\bu_n-P_\sR\bu_n}_{L^2(\mc{H}_n^\epsilon)}, & n=0,1 \\
\frac{1}{\epsilon a(p_{n-1})} \norm{\bu_n-P_\sR\bu_n}_{L^2(\mc{H}_n^\epsilon)}, & 2\le n \le N_\delta \\
\frac{1}{\epsilon a_0} \norm{\bu_n-P_\sR\bu_n}_{L^2(\mc{H}_n^\epsilon)}, & N_\delta+1 \le n,
\end{cases}
\end{align*}
where $a_0$ is as in Definition \ref{admissible_a}. \\

Now, due to the cutoff function $\psi$, we have that $\psi \bu_n-\psi P_{\sR}\bu_n$ vanishes for $\delta(\varphi,\rho)> \frac{5}{3}\delta(\varphi^*,\epsilon a(\varphi^*))$, and thus by Corollary \ref{korn_type}, case (2.), we obtain
\begin{align*}
\norm{\nabla(\psi\bu_n-\psi P_{\sR}\bu_n)}_{L^2(\mc{H}_n^\epsilon)} &\le C\norm{\E(\psi\bu_n-\psi P_{\sR}\bu_n)}_{L^2(\mc{H}_n^\epsilon)} \\
&\le C\norm{\E(\bu_n)}_{L^2(\mc{H}_n^\epsilon)}+ C\begin{cases}
\frac{1}{\eta_\epsilon-1} \norm{\bu_n-P_\sR\bu_n}_{L^2(\mc{H}_n^\epsilon)}, & n=0,1 \\
\frac{1}{\epsilon a(p_{n-1})} \norm{\bu_n-P_\sR\bu_n}_{L^2(\mc{H}_n^\epsilon)}, & 2\le n \le N_\delta \\
\frac{1}{\epsilon a_0} \norm{\bu_n-P_\sR\bu_n}_{L^2(\mc{H}_n^\epsilon)}, & N_\delta+1 \le n.
\end{cases} 
\end{align*}

Then, using Remark \ref{KP_domain_dep}, the scaling \eqref{Hn_0}, and Corollary \ref{korn_poincare_free}, we have 
\begin{equation}\label{T2_bound2}
\norm{\E\big(T(\bu_n-P_{\sR}\bu_n)\big)}_{L^2(\mc{W}_n^\epsilon)} \le C\norm{\E(\bu_n)}_{L^2(\mc{H}_n^\epsilon)}\begin{cases}
1+ \frac{\epsilon a(1)}{\eta_\epsilon-1}, & n=0,1 \\
1+ \frac{a(p_n)}{a(p_{n-1})}, & 2\le n \le N_\delta \\
1+ \frac{1}{a_0}, & N_\delta+1 \le n.
\end{cases} 
\end{equation}

Using the estimates \eqref{T2_bound1} and \eqref{T2_bound2} in \eqref{T2_bound0}, and again using orthogonality from the nearly-disjoint supports of each $\bu_n$, we have 
\begin{equation}\label{T2_bound}
\norm{\E(\bm{T}_2)}_{L^2(\mc{W}^\epsilon)}^2 \le C\sum_{n=0}^{N-1}\norm{\E(\bu_n)}_{L^2(\mc{H}^\epsilon_n)}^2 \le C\norm{\E(\bu)}_{L^2(\mc{H}^\epsilon)}^2
\end{equation}
where, by the ratio bounds \eqref{ratio_bounds} and \eqref{ratio_bound_end}, $C$ is independent of $\epsilon$. Combining \eqref{T1_bound} and \eqref{T2_bound}, we obtain the $\epsilon$-independent estimate (2.) of Lemma \ref{extension_free}. 
\end{proof}


\subsubsection{Korn and Sobolev inequalities}\label{korn_stab}
Using the extension operator guaranteed by Corollary \ref{extension_free2}, we can easily show that the Korn and Sobolev inequalities (Lemmas \ref{korn_free} and \ref{sob_free}, respectively) hold in $\Omega_\epsilon$ with constants that are independent of $\epsilon$. 


\begin{proof}[Proof of Lemma \ref{korn_free}:]
Recall that for any $\bv\in D^{1,2}(\R^3)$, the Korn inequality over all of $\R^3$ holds with constant $c_K=\sqrt{2}$: 
\begin{align*}
\int_{\R^3} |\E(\bv)|^2 \ts d\bx &= \int_{\R^3} \bigg(2|\nabla \bv|^2 + 2\nabla\bv:(\nabla\bv)^{\rm T}\bigg) \ts d\bx \\ 
&= \int_{\R^3} 2|\nabla \bv|^2 \ts d\bx + 2\int_{\R^3} |\dive \ts \bv|^2 \ts d\bx \ge \int_{\R^3} 2|\nabla \bv|^2 \ts d\bx.
\end{align*}
Here we have used integration by parts twice on the second integral term. \\

Then, using the extension operator $\wt{T}_\epsilon$ from Corollary \ref{extension_free2}, we have 
\begin{align*}
\norm{\nabla\bu}_{L^2(\Omega_\epsilon)} &\le \norm{\nabla (\wt{T}_\epsilon \bu)}_{L^2(\R^3)} \le \sqrt{2}\norm{\E (\wt{T}_\epsilon \bu)}_{L^2(\R^3)} \le C\norm{\E(\bu)}_{L^2(\Omega_\epsilon)}.
\end{align*}
\end{proof}

Similarly, we can use the extension operator $\wt{T}_\epsilon$ to show that the Sobolev inequality \eqref{sob_free} holds in $\Omega_\epsilon$ with a constant independent of $\epsilon$.

\begin{proof}[Proof of Lemma \ref{sob_free}]
First note that, using the Korn inequality on $\R^3$, the extension $\wt{T}_\epsilon$ from Corollary \ref{extension_free2} satisfies
\begin{align*}
\norm{\nabla(\wt{T}_\epsilon\bu)}_{L^2(\R^3)} \le \sqrt{2}\norm{\E(\wt{T}_\epsilon\bu)}_{L^2(\R^3)} \le C\norm{\E(\bu)}_{L^2(\Omega_\epsilon)} \le C\norm{\nabla\bu}_{L^2(\Omega_\epsilon)}.
\end{align*}

We then have 
\begin{align*}
\norm{\bu}_{L^6(\Omega_\epsilon)} \le \norm{\wt{T}_\epsilon\bu}_{L^6(\R^3)} \le c_R\norm{\nabla(\wt{T}_\epsilon\bu)}_{L^2(\R^3)} \le C\norm{\nabla\bu}_{L^2(\Omega_\epsilon)},
\end{align*}
where $c_R$ is the Sobolev constant over all of $\R^3$. Since $c_R$ is a universal constant independent of the slender body geometry, we do not note its dependence in the statement of Lemma \ref{sob_free} or elsewhere where the Sobolev inequality is used.
\end{proof}

\subsubsection{Pressure estimate}\label{press_stab}
In this section we prove Lemma \ref{press_est_free} detailing the $\epsilon$-independence of the pressure constant $c_P$. The proof is essentially the same as in the closed loop setting \cite{closed_loop}, but we recap the arguments here to make note of slight differences due to the free endpoint geometry. 

\begin{proof}[Proof of Lemma \ref{press_est_free}:]
Consider the region $\mc{O}$ \eqref{mc_O} of the slender body. Recall that by \cite{galdi2011introduction}, Theorem III.3.1, given $P\in L^2(\mc{O})$, there exists $\bm{B}\in H^1_0(\mc{O})$ satisfying 
\begin{equation}\label{constant_CB}
 \dive \ts\bm{B}=P \ts \text{ in }\mc{O}, \quad \norm{\nabla \bm{B}}_{L^2(\mc{O})} \le c_B\norm{P}_{L^2(\mc{O})},
 \end{equation}
 where the constant $c_B$ has a useful explicit representation provided that $\mc{O}$ can be written as a finite union of star-shaped domains. \\
 
 We verify that the region $\mc{O}$ can indeed be written as a finite union of domains star-shaped with respect to balls of uniform radius. Note that by construction, since $\epsilon a \le \epsilon \le \frac{r_{\max}}{4}$ and the fiber is non-self-intersecting \eqref{no_intersect}, the region $\mc{O}$ satisfies a uniform interior sphere condition with radius $r_{\max}/2$. In particular, we can consider $\mc{O}$ as the infinite union of balls of radius $r_{\max}/2$. \\
 
As in \cite{closed_loop}, we can follow the construction in Lemma 2, Chapter 1.1.9 of  \cite{maz2013sobolev} to show that this uniform sphere condition implies that 
 \[ \mc{O} = \bigcup_{k=1}^N \mc{O}_k \]
 where each $\mc{O}_k$ is star-shaped with respect to balls of radius $r_{\max}/2$ and $N$ depends only on $\kappa_{\max}$ and $c_\Gamma$. Then, according to \cite{galdi2011introduction}, equation III.3.27, the constant $c_B$ in \eqref{constant_CB} satisfies 
\[ c_B \le c_0 \bigg(\frac{\delta(\mathcal{O})}{r_{\max}} \bigg)^3\bigg(1+ \frac{\delta(\mathcal{O})}{r_{\max}} \bigg) \]
where $\delta(\mathcal{O})$ is the diameter of $\mathcal{O}$ and $c_0$ depends on the diameters of each $\mathcal{O}_k$, which are bounded independent of $\epsilon$. Using this form of $c_B$, the $\epsilon$-independence of the constant $c_P$ follows by exactly the same construction as in the closed loop case (see \cite{closed_loop}, Appendix A.2.5), which we do not repeat here. 
\end{proof}


\bibliography{SBT_free.bib}{}
\bibliographystyle{siam}


\end{document}